\documentclass [10pt,a4paper,reqno]{amsart}
 \usepackage{amsmath,amscd,amssymb}
 \usepackage{graphicx}
 \makeatletter
\renewcommand*\env@matrix[1][*\c@MaxMatrixCols c]{%
  \hskip -\arraycolsep
  \let\@ifnextchar\new@ifnextchar
  \array{#1}}
\makeatother
\def\dispace{\setlength{\itemsep}{2pt}}

\def\onto{\twoheadrightarrow}

\def\now{\newcount\hours
\newcount\minutes
\hours=\time
\divide\hours by 60
\minutes=\time
\count255=\hours
\multiply\count255 by 60
\advance\minutes by -\count255
\the\hours:\the\minutes
}

\newcommand\smat[4]{
 \left[\begin{smallmatrix}
#1 & #2 \\
#3 & #4\\
\end{smallmatrix}\right]}

\newcommand\bigzero{\mbox{{\huge $0$}}}

\def\mig{\operatorname{mig}}
\def\stab{\operatorname{stab}}
\def\star{\operatorname{st}}
\def\Trns{\operatorname{Trans}}
\def\bTrns{\operatorname{Trans}}
\def\FGH{\operatorname{FGH}}
\def\GS{\operatorname{\Sigma  G}}
\def\QF{\operatorname{QF}}

\def\Dg{\operatorname{Diag}}
\def\dg{\operatorname{diag}}
\def\Tt{\operatorname{T}}
\def\Quot{\operatorname{Quot}}
\def\FG{\operatorname{FG}}

\def\Idm{\operatorname{Idm}}

\def\Bri{\operatorname{Bri}}
\def\Gap{\operatorname{Gap}}
\def\Lo{\operatorname{Lo}}
\def\stab{\operatorname{stab}}

\def\SG{{S}}

\def\as{a.s.\,}
\def\ub{u.b.\,}

\def\NNr{\NN^r}

 \input xy
\xyoption{all}

\newcommand{\longright}[1]{\;{\count255=0 \loop \relbar\mathrel{\mkern-6mu}%
    \advance\count255 by1\ifnum\count255<#1\repeat\rightarrow}\;}
\newcommand{\Right}[2]{\overset{#2}{\longright{#1}}}

\newcommand\isoto{\xrightarrow{
   \,\smash{\raisebox{-0.45ex}{\ensuremath{\scriptstyle\sim}}}\,}}
\newcommand{\Isoto}{\Right{1}{\,\smash{\raisebox{-0.45ex}{\ensuremath{\scriptstyle\sim}}}\,}}

\newcommand{\etype}[1]{\renewcommand{\labelenumi}{(#1{enumi})}}
\def\eroman{\etype{\roman} \dispace}
\def\ealph{\etype{\alph} \dispace}

\def\pSkip{\vskip 1.5mm \noindent}
\newcommand{\ds}[1]{\, {#1} \, }
\newcommand{\dss}[1]{\quad {#1} \quad }

\def\sm{\setminus}
\def\00{ \{ 0 \}}
\def\NN{\N_0}

\def\To{\longrightarrow}

\def\vrp{\varphi}

\def\zt{\zeta}

\def\Ng{\; \neg\;}

\def\E{{\operatorname{E}}}

\def\Ed{{\E_d}}
\def\F{{\operatorname{F}}}
\def\EC{{\operatorname{EC}}}
\def\Idm{{\operatorname{Idm}}}
\def\ext{{\operatorname{ext}}}
\def\X1{X_1}
\def\Y1{Y_1}

\def\hteq{\;  \widehat{=} \; }

\def\tlu{\tilde u}

\def\tlU{\widetilde U}

\def\tlM{\widetilde M}

\def\iS{S^{-1}}

\def\w{w}
\def\iw{\w^{-1}}

\def\tS{\mathcal S}

\def\N{\mathbb N}
\def\R{\mathbb R}
\def\BB{\mathbb B}

\def\hteq{\; \widehat = \; }

\def\htgm{\widehat {\gm}}
\def\htsig{\widehat {\sig}}

\def\dwu{u^\downarrow}

\def\crA{A^\circ}
\def\crB{B^\circ}

\def\olA{\overline A}

\def\eps{\varepsilon}

\def\al{\alpha}
\def\tlal{\widetilde{\al}}
\def\bt{\beta}
\def\gm{\gamma}
\def\Gm{\Gamma}
\def\sig{\sigma}

\newtheorem{thm}{Theorem} [section]
\newtheorem*{thm*}{Theorem}
\newtheorem{cor}[thm]{Corollary}
\newtheorem{lem}[thm]{Lemma}
\newtheorem{lemma}[thm]{Lemma}
\newtheorem{prop}[thm]{Proposition}

\newtheorem{axiom}[thm]{Axiom}

\newtheorem{convention}[thm]{Convention}
\newtheorem*{claim*} {Claim}
\newtheorem*{theorem4.5'} {Theorem 4.5$'$}
\newtheorem{acknowledgment*}[thm] {Acknowledgment}
\newtheorem{example}[thm]{Example}
\newtheorem{examp}[thm]{Example}
\newtheorem*{examp*}{Example}

\newtheorem{examples}[thm]{Examples}
 \newtheorem{assumption}[thm]{Assumption}
 \newtheorem{remark}[thm]{Remark}

 \newtheorem*{remark*}{Remark}
 \newtheorem{defn}[thm]{Definition}
\newtheorem{construction}[thm]{Construction}

\newtheorem{schol}[thm]{Scholium}

\newtheorem{problem}[thm]{Problem}

\newtheorem*{notation*} {Notation}
\newtheorem*{notations*} {Notations}

\theoremstyle{remark}
\newtheorem*{caution*} {Caution}
\newtheorem*{comment*} {Comment}
\newtheorem{comment}[thm]{Comment}

 \renewcommand{\sectionmark}[1]{}

\newcommand{\bfem}[1]{\textbf{#1}}

 \newcommand{\dl}{\delta}
\newcommand{\Dl}{\Delta}
\newcommand{\lm}{\lambda}
\newcommand{\Lm}{\Lambda}
\newcommand{\om}{\omega}
\newcommand{\Om}{\Omega}

 \newcommand{\Eq}{{\operatorname{Eq}}}

 \newcommand{\supp} {\operatorname{supp}}

\begin{document}

\title[Almost subtractive submonoids]
{Almost subtractive submonoids \\[2mm] of additive monoids  \\[2mm]
and related quadratic forms   }
 \author[Z. Izhakian]{Zur Izhakian}
\address{Faculty of Sciences, Tel-Hai University, Upper Galilee,  12210, Israel }
\email{zzur@telhai.ac.il}

%\address{Faculty of Sciences, Tel-Hai University, Upper Galilee,  12210, Israel.}
  %  \email{zzur@telhai.ac.il}
%\email{zzuriz@gmail.com}

\author[M. Knebusch]{Manfred Knebusch}
\address{Department of Mathematics,
NWF-I Mathematik, Universit\"at Regensburg 93040 Regensburg,
Germany} \email{manfred.knebusch@mathematik.uni-regensburg.de}
%\author[L. Rowen]{Louis Rowen}
% \address{Department of Mathematics,
% Bar-Ilan University,  Ramat-Gan 52900, Israel}
% \email{rowen@math.biu.ac.il}

%\thanks{The research of the first and third authors was supported  by the
% Israel Science Foundation (grant No.  448/09).}

%\thanks{The research of the second author was supported in part by
% the Gelbart Institute at
%Bar-Ilan University, the Minerva Foundation at Tel-Aviv
%University, the Department of Mathematics   of Bar-Ilan
%University, and the Emmy Noether Institute at Bar-Ilan
%University.}

%\thanks{The research of the first author was supported  by the
%Oberwolfach Leibniz Fellows Programme (OWLF), Mathematisches
%Forschungsinstitut Oberwolfach, Germany.}

%******************************* AMS classification ***********************
%\subjclass{??? Primary 12K10, 13B25; Secondary 51M20}
%\subjclass[2010]  {Primary: 13A18, 13F30, 16W60, 16Y60; Secondary:
%03G10, 06B23, 12K10,   14T05}

%******************************* AMS classification ***********************
\subjclass[2010]{Primary  20M32, 51M20, 16Y60; Secondary 11E16, 15A63.
.}

%******************************* date *************************************
\date{\today,  \now}
%******************************* keywords *********************************

\keywords{Additive monoids, almost subtractive monoids,  modules over semirings, bilinear forms, quadratic forms, quadratic pairs, minimal ordering.}

%******************************* file name *********************************

%\thanks{\noindent \underline{\hskip 3cm } \\ File name: \jobname}

%******************************* abstract *********************************

\begin{abstract}
We introduce and study the class of almost subtractive submonoids of additive monoids. This property relaxes the strict condition of subtractivity while still allowing for a simulated subtraction operation sufficient for many algebraic applications. We explore the structural properties of these monoids and demonstrate their significance in the theory of modules over semirings, specifically regarding quadratic forms and their companions.

% -----------------------------------
%
%We introduce and study the class of \textit{almost subtractive} (a.s.) submonoids of additive monoids. This property relaxes the strict condition of subtractivity while still allowing for a simulated subtraction operation sufficient for many algebraic applications. We explore the structural properties of these monoids, their generation via additive idempotents, and their crucial role in the theory of modules over semirings. Specifically, we apply this framework to the study of quadratic forms, establishing the uniqueness of companions for forms mapping to a.s. submonoids and developing a geometric theory of ``walks'' and ``trails'' within the set of such forms to analyze their interrelations.
\end{abstract}

\maketitle
\setcounter{tocdepth}{1}

{\small \tableofcontents}

\numberwithin{equation}{section}

\section*{Introduction}

The development of a module theory over semirings \cite{Cos,golan92}, structured parallel to classical module theory over rings, was initiated in \cite{Dec} and has been significantly advanced in recent years \cite{Aml,SA,Gen,Arch}. These works have introduced fundamental algebraic notions such as decomposition, generation, amalgamation, and extension into the semiring context. The central challenge in this theory is the inherent lack of negation in the underlying additive monoid structure of a semiring.

In classical ring theory, the additive group allows for subtraction, which is pivotal for defining kernels, exact sequences, and cancellations. Additive monoids in general lack this operation,
however they may contain a subtractive  submonoid $U$,
that is,  $u \in U$ and $u +z \in U$ imply  $z \in U$.
This property is often too restrictive for broad application in tropical algebra and valuation theory.
%
% Nevertheless, this property is too restricted and is replaced by a weaker property to obtain almost subtractive monoids. These monoids are the main object of study in the current paper, giving rise to modules over compatible semirings and quadratic forms on these modules.
%
 %While some monoids contain \textit{subtractive submonoids}—structures where if $u$ and $u+z$ are in the submonoid, then $z$ must also be—this condition is often too restrictive for broad application in tropical algebra and valuation theory.
 To address this, we introduce and analyze a weaker property, yet robust,  called  \textbf{almost subtractive}, and denoted by  \textbf{\as}. These structures serve as the primary objects of study in this paper and provide the necessary foundation for developing a nuanced theory of quadratic forms over semirings.

The impetus for developing a module theory over semirings stems largely from tropical and supertropical algebra \cite{zur05TropicalAlgebra,nuAlg,IzhakianRowen2007SuperTropical}, and, more generally, from algebraic structures over idempotent semirings, where $a+a=a$ (or variations thereof). Beyond these settings, such module structures arise naturally in diverse mathematical contexts. For instance, additive commutative monoids can be viewed as modules over the semiring $\mathbb{N}_{0}$ of nonnegative integers. Similarly, the sets of positive elements in ordered rings form modules over the semiring of positive scalars.
Modern areas of mathematics make extensive use of monoids and semirings, including discrete mathematics, logic, and automata theory, providing a wide range of natural examples of modules over semirings.

A particularly rich source of examples arises from non-Archimedean valuation theory. Such valuations $v:\mathbb{K}\to~ R$, from a field $\mathbb{K}$ (e.g., a field of Puiseux series) to a semiring $R$, transfer standard linear-algebraic structures, such as polynomials, matrices, and quadratic forms, to the semiring setting by replacing field operations with the corresponding semiring operations \cite{IMS}. However, the absence of additive inverses in $R$ means that the classical definitions of bilinear and quadratic forms do not behave as expected \cite{IKR3}. For example, the relationship between a quadratic form $q$ and its associated bilinear form (its ``companion'') is not necessarily unique or well behaved \cite{QF1,QF2,VR1,QFSym}.

Our study reveals that the {almost subtractive} property is key to recovering this desirable behavior. It ensures that differences, whenever they exist in the target semiring, are unique, thereby restoring rigidity to the theory of quadratic forms over idempotent semirings.
Important non-tropical semirings to which our theory also applies include structures arising in real algebra, such as the positive cone of an ordered field \cite[p.~18]{BCR} and partially ordered commutative rings \cite[p.~32]{Br}. A further application arises in the algebra of groups over a splitting field.%, as described briefly at the end of this overview.

By replacing the rigid requirement of subtractivity with the more flexible notion of almost subtractivity, we provide a framework that is broad enough to encompass key examples from tropical geometry and valuation theory, but also rigid enough to support a rich structure theory for quadratic forms.

\subsection*{Algebraic framework}
An \textbf{additive monoid} $R = (R,  +)$  is a (nonempty) set $R$ equipped  with an   associative
binary operation $+ : R \times R \to R$ and an identity element $0=0_R$.
 $R$ is said to be a \textbf{u.b.} (for ``upper bound''), if $a+b+c=a$ always implies $a+b=a,$. It is said to be \textbf{lacks zero sums}, if it  has the weaker property that  $a+b = 0$ implies $a = b = 0$ \cite{Dec}.

  A submonoid $U = (U,+)$ of an additive monoid $(R,+)$ with a partial ordering  $\leq_R$   is  said to be \textbf{almost subtractive}, if for any two elements  $ u \leq_R \tlu$ in  $U$ there  exists a unique $z \in R$ such that  $u +z = \tlu$.
We call this element $z$ the \textbf{difference} of $u$ and $\tilde{u}$. Unlike strict subtractivity, $z$ need not belong to $U$; it resides in the ambient monoid $R$. This seemingly slight relaxation has profound consequences. Most notably, if $U$ is a.s. in ~$R$, then $U$ itself must be a cancellative monoid.

 A (commutative) \bfem{semiring} is a set~$R = (R,+, \cdot \; )$ equipped with addition and multiplication,
  such that both $(R,+)$ and $(R,\cdot \;)$ are abelian monoids
 % \footnote{A monoid means a semigroup that has a neutral element.
with identity element $0_R$ and $1_R$ respectively, and multiplication
distributes over addition in the usual way.
  % We always assume that $R$ is a commutative semiring with $1.$
Namely, $R$ satisfies all the properties of a commutative
ring except the existence of negation under addition.

 A semiring $R$ is  \textbf{u.b.} (respectively, \textbf{lacks zero sums}) if its underlying additive monoid has this property.
 A semiring $R$ is a  \textbf{bipotent semiring},  if $a+b \in \{ a, b\} $ for all $a,b \in R$.
% A semiring
 %$R$ is a \bfem{semifield}, if every nonzero element of $R$ is invertible;
  % i.e., $R\setminus\{0\}$ is an abelian group.
 The max-plus algebra $(\R \cup \{-\infty\}, \max, + ) $ and the boolean algebra $(\BB, \vee, \wedge)$ are well-known examples of bipotent semirings.

%In classical algebra, semirings appear as the targets of non-archimedean valuations $v: \mathbb K \to R$, applied to the field $\mathbb K$  of \emph{Puiseux series} over an algebraically closed field  $F$ of characteristic $0$. Valuations apply naturally to standard structures in linear algebra (such as polynomials, matrices, and quadratic forms) simply by replacing the classical addition and multiplication with those of the semiring target.  However,  since semirings lack negatives, classical theory is not always accessible, and one has to apply an alternative approach.
%Bilinear and quadratic forms over semirings, defined in the familiar way
%on (semi)modules over
%  a  semiring $R$,   are central example where such discrepancy arises \cite{IKR-LinAlg}.
%

 Recall from \cite[\S1-\S4]{QF1} that a \bfem{module} $V$ over as semiring $R$
(also called a \bfem{semimodule}) is an abelian monoid
$(V,+)$ equipped with a scalar multiplication $R\times V\to V,$
$(a,v)\mapsto av,$ where the same customary axioms of modules over a ring hold: $a_1(bv)=(a_1 b)v,$
$a_1(v+w)=a_1v+a_1w,$ $(a_1+a_2)v=a_1v+a_2v,$ $1_R\cdot v=v,$ and
$0_R\cdot v=0_V=a_1\cdot 0_V$ for all $a_1,a_2,b\in R,$ $v,w\in V.$
%We write~$0$ for both $0_V$ and~$0_R$, and 1 for $1_R.$

Modules over semirings have several
notions  of ``basis'', since dependence and spanning do not coincide \cite{IKR-LinAlg}. This paper uses the
standard categorical notion, i.e., an $R$-module ~$V$ is
\bfem{free}, if there exists a family $(e_i \ds|i\in I)$ in $V$, called a \textbf{basis},
such that every $x\in V$ has a unique presentation
$x=\sum_{i\in I} x_i e_i$ with scalars $x_i\in R$ and only
finitely many nonzero $x_i $.   Any free module with an $n$-element  basis is  isomorphic to $R^{n}$ under the map
$\sum_{i=1}^n x_ie_i\mapsto (x_1, \dots, x_n).$

\subsection*{Paper outline and main results}

%\subsubsection*{Underlying structure}
In \S\ref{sec:1}, we establish the basic theory of \as submonoids, and distinguish  basic properties. We prove that these properties are  preserved under intersection and union of chains (Proposition~ \ref{prop:1.3}), allowing for the existence of maximal \as submonoids (Corollary~ \ref{cor:1.4}). A motivating result for the entire paper (Theorem~\ref{thm:1.2}) links this abstract definition to quadratic forms:
\begin{quote}
    \textit{If $q: V \to R$ is a quadratic form on an $R$-module $V$ such that $q(V) \subseteq U$ and~$U$ is a.s. in $R$, then $q$ is rigid, and its companion bilinear form $b$ is unique.}
\end{quote}
This theorem assures us that over \as monoids, the ambiguity typically associated with semiring quadratic forms vanishes. We also show that amalgamation of certain modules works well in this framework  (Theorem \ref{thm:1.11}).

%\subsubsection*{Constructions involving idempotents}
In \S\ref{sec:2}, we explore methods to construct new \as submonoids from existing ones. A powerful tool here is the utilization of additive idempotents $s \in R$ (where $s+s=s$). We define the construction $[U \neg s]_0$ as the set of elements $x \in R$ such that $x+s \in U$, augmented with~$0$.  Theorem \ref{thm:2.1} proves that if $U$ is a.s., then $[U \neg s]_0$ is also a.s. in $R$. This allows us to ``extend'' the subtractive properties of~$U$ using the idempotent structure of $R$, a technique particularly relevant for supertropical semirings where idempotents (ghost elements) play a structural role.

%\subsubsection*{Gaps, bridges, and structural analysis}

To deeply understand how $U$ locates  inside $R$, we introduce the concepts of {gaps} and {bridges} in~ \S\ref{sec:6} and~\S\ref{sec:a4}.
%\begin{itemize}
    %\item
    A \textbf{gap} is an element $z \in R \setminus U$ that serves as the difference between two elements of $U$ (i.e., $u+z = \tilde{u}$).
    %\item
    A \textbf{bridge} is a sequence or sum of gaps that connects elements within the structure.
% \end{itemize}
We analyze these using equivalence relations with weights, specifically focusing on quotient monoids of the form~ $\mathbb{N}_0^r / E_d$, where $E_d$ is an equivalence relation compatible with a weight function. This analysis provides a class of examples where we can explicitly compute the gap structure and verify the a.s. property. We provide a classification for the equivalence classes of such relations (Theorem \ref{thm:6.8}).

%\subsubsection*{Quadratic forms: walks, trails, and grids}

Sections \S\ref{sec:a4}--\S\ref{sec:a9} of the paper are mainly  devoted  to a geometric and combinatorial analysis of the set of $U$-valued quadratic forms on a free module $V$.
%
%\subsubsection*{Walks and Trails}
We view the set of quadratic forms not just as an algebraic set, but as nodes in a graph. We define a \textbf{walk} as a sequence of forms connected by specific modifications.
%\begin{itemize}
 %   \item
 An \textbf{$S$-walk} in $U$ connects members of $U$ by adding ``gap elements'' from a finite set $S \subset R$.
 %   \item
 A \textbf{blue walk} is a special type of walk where the modifications are restricted to ``blue nodes'', these are points in the structure that exhibit specific stability properties.
%\end{itemize}
This graph-theoretic perspective allows us to define ``distance'' and ``connectivity'' between quadratic forms. In ~\S\ref{sec:b6} and \S\ref{sec:a9}, we define \textbf{trails} as walks without repeating edges, and analyze how one form can be transformed into another via a sequence of elementary gap additions.

%\subsubsection*{Confluences and grids}
In \S\ref{sec:b9}, we study the interaction between different blue walks. We introduce the concept of \textbf{confluence}, where two distinct walks share a sequence of nodes. This leads to the construction of \textbf{grids}, these are rectangular commutative diagrams of walks. The {Translation Lemma} (Lemma ~\ref{lem:b9.4}) is central here, showing that walks can be ``translated'' by adding a minimal gap-sum, preserving their structural properties. This creates a regular, lattice-like structure within the set of quadratic forms.

%\subsubsection*{Ancestors and idempotent supports}
A novel contribution of this work is the theory of \textbf{ancestors} developed in
\S\ref{sec:d10}. Given a blue  walk  in $U \subseteq R$, we can look ``backwards'' in time (algebraically speaking) to find a ``primitive'' walk starting $p_0 \in U$ below $u_0$ from which the current one descends via the addition of elements from $U$. This is metaphorically described as ``drilling into the glacier'' at a node $u_0$ to find an underlying structure.
Furthermore, in \S\ref{sec:e10}--\S\ref{sec:f10}, we analyze walks supported by idempotent gaps. These are walks where the transition between nodes is mediated by an idempotent element. Such walks exhibit high stability and are central to understanding the asymptotic behavior of blue walks in $U$ and $U$-valued quadratic forms forms on the $R$-module  $V$.

\subsection*{Applications}
The general framework of \as submonoids developed in this paper offers powerful theoretical tools across several mathematical disciplines where addition lacks an inverse operation.

%\subsection{Rigidity and Invariant Theory in Supertropical Algebra}
In classical linear algebra, quadratic forms $q(x)$ and bilinear forms $b(x,y)$ are tightly linked through the identity $b(x,y) = q(x+y) - q(x) - q(y)$. In tropical and supertropical semirings, the absence of negation causes this link to break, leading to non-unique companion forms and pathological behavior. Applying Theorem 1.2, restricting target values of quadratic forms to an \as submonoid $U \subseteq R$ guarantees that
%\begin{enumerate}\ealph
 %   \item %\textbf{Unique companions:}
    every $U$-valued quadratic form $q$ admits a unique, balanced companion bil
        inear form $b_q$.
    %\item %\textbf{Invariants and Witt-type theories:}
    Furthermore,
     since the companion $b_q$ is unique and rigid, classical invariants (such as rank, determinant, and discriminant) can be uniquely associated with $q$ over supertropical modules, laying the groundwork for a tropical analogue of Witt Cancellation Theory.
%\end{enumerate}

%\subsection{Valuation Theory and Module Extensions over Semirings}
Non-Archimedean valuations $v: \mathbb{K} \to R$ translate field-theoretic metric structures into semirings.
%\begin{enumerate}\ealph
 %   \item % \textbf{Exact Sequences without Additive Groups:}
    By using \as  submonoids $U$, one recovers cancellative monoid mechanics inside non-cancellative semirings. This enables the definition of kernels and exact sequences for semimodules over valuation semirings without requiring underlying additive groups.
  %  \item % \textbf{Fractional Extensions:}
    The localization theory developed in \S7, i.e., $S^{-1}U \subset S^{-1}R$, allows for seamless extensions of quadratic and bilinear forms from free modules $V$ to localized modules $S^{-1}V$, matching field-of-fractions techniques used in valuation theory and non-Archimedean geometry.
%\end{enumerate}

%\subsection{Discrete Optimization, Grid Walks, and Tropical Geometry}
Quotient monoids of the form $\mathbb{N}_0^r / E_d$ (studied in \S3) and the graph-theoretic analysis of $S$-walks (developed in \S4, \S8--\S10) have direct applications to discrete optimization.
%\begin{enumerate}\ealph
  %  \item %\textbf{Dynamic Programming and State Transitions:}
    Gap elements $z \in \text{Gap}(R,U)$ and bridge sums represent path costs or transition penalties between valid state configurations $u_0, u_1 \in U$.
   % \item %\textbf{Grid Regularity on Tropical Hypersurfaces:}
    The Translation Lemma (Lemma~\ref{lem:b9.4}) and the construction of grids of blue $S$-walks provide lattice-like regularity conditions for optimal paths, modeling stable solution trajectories in tropical linear programming.
% \end{enumerate}

%\subsection{Positivity Certificates in Real Algebraic Geometry}
In real algebraic geometry, the positive cone $C = \{x \in A \mid x \ge 0\}$ of a partially ordered ring $A$ forms a semiring lacking additive inverses. Applying \as submonoids to positive cones yields a cancellation in positive cones. identifying \as submonoids within $C$ allows unique difference calculations $u + z = \tilde{u}$ while keeping $z \in A$, preserving global geometric information without requiring full ring negation. Furthermore,  $U$-valued quadratic forms over positive cones provide well-behaved positivity certificates for polynomials, ensuring uniqueness of underlying bilinear structures when testing for sum-of-squares decompositions over ordered semirings.

\section{Almost subtractive monoids}\label{sec:1}

Let  $R$ be an additive monoid, i.e., an abelian semigroup $(R,+)$ with identity element $0 = 0_R$.
Any submonoid  ~$D$ of $R$ induces a $D$-\textbf{quasiordering}, defined for $x,y  \in R$ by
$$ x \leq_D y \dss{\Leftrightarrow} x + d = y  \text{ for some } d \in D.$$
The monoid  $R$  is called \textbf{upper bound}, if the $R$-quasiordering $\leq_R$ is antisymmetric; in this case it is a partial ordering on $R$. The term ``upper bound'' refers to the fact that $x+y$ serves as a built-in upper bound for  the set $\{x,y\}$.
Assume for simplicity, that $R$ is upper bound, abbreviated as  \textbf{\ub}.
%i.e., $\leq_R$ is a partial order on $R$ (instead of just a quasiordering).
A submonoid $U$ of $R$ is called \textbf{subtractive} in $R$, if, for all $u \in U$ and $z \in R $, the condition  $u +z \in U$  implies that $z \in U$
%for any $u \in U$ and $z \in R$ such that $u +z \in U$
see \cite[p. 154]{golan92}). We  introduce a slightly weaker property of submonoids of $R$.

\begin{defn}\label{defn:1.1} A submonoid $U = (U,+)$ of $R$   is  said to be \textbf{almost subtractive in $R$} (written \textbf{\as}for short), if for any two elements  $ u \leq_R \tlu$ in  $U$ there  exists  a \textbf{unique} $z \in R$ such that  $u +z = \tlu$. That is, if $u, \tlu \in U$, $z,z' \in R$, and $u + z= u +z' = \tlu$, then $z = z'$.
We call such an  element $z$ the \textbf{difference} of $u$ and $\tlu$.
\end{defn}
%Here we mean by $u <_R \tlu$ that $u \leq_R \tlu$, but not $\tlu \leq_R u$. In other words $u \leq_R \tlu$, $u \neq \tlu$.

%Note that, if $u$ is almost   subtractive in $R$, then the monoid $(U,+)$ is cancellative. In particular, $(R,+)$ is cancellative iff $(R,+)$ is \as in $R$.

To measure  the subtractiveness of $U$ in $R$, we define for $u, \tlu \in Y$ the ``gap''
\begin{equation}\label{eq:1.1}
H_R(u,\tlu) := \{ z \in R \sm U \ds| u +z = \tlu \},
\end{equation}
which is either empty or a singleton.
If $U$ is \as in $R$, then the monoid $U$ is cancellative, i.e.,
$$u+z = \tlu = u +z' \dss{\Rightarrow} z = z'$$
for all $z,z',u \in R$.
In particular, $R$ is \as in itself  if and only if $(R,+)$ is cancellative.
Our interest in this condition stems from the following fact.

\begin{thm}\label{thm:1.2} Let $R = (R, \cdot\, , +)$ be a semiring, let  $(U,+)$ is  \as in $(R,+)$, and let   $q: V \to R$ be a quadratic form on an $R$-module $V$ such that  $q(V) \subseteq U$, cf. \cite{QF1} and \S\ref{sec:a5} below. Then,~ $q$ is rigid, and its unique companion $b$ is balanced.\footnote{The intriguing point here is that $V$ may be any $R$-module.}
\end{thm}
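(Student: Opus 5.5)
Recall the definitions from \cite{QF1}. A quadratic form $q:V\to R$ satisfies $q(av)=a^2q(v)$ for $a\in R$, $v\in V$, and admits at least one \emph{companion}, i.e.\ a symmetric bilinear form $b:V\times V\to R$ with
\[
q(x+y)=q(x)+q(y)+b(x,y) \quad \text{for all } x,y\in V.
\]
The form $q$ is called \emph{rigid} if it has only one companion. A companion $b$ is \emph{balanced} if $b(x,x)=2q(x)$ for all $x\in V$. The plan is to derive both properties directly from the uniqueness of differences in Definition~\ref{defn:1.1}, applied to suitable pairs of elements of $U$.

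\emph{Rigidity.} Let $b$ and $b'$ be two companions of $q$, and fix $x,y\in V$. Put $u:=q(x)+q(y)$ and $\tlu:=q(x+y)$. Both lie in $U$: we have $q(V)\subseteq U$ and $U$ is a submonoid. The companion identity gives
\[
u+b(x,y)=\tlu=u+b'(x,y),
\]
so $u\leq_R\tlu$, and $b(x,y)$, $b'(x,y)$ are both differences of $u$ and $\tlu$ in $R$. Since $U$ is \as in $R$, Definition~\ref{defn:1.1} forces $b(x,y)=b'(x,y)$. Hence $b=b'$, and $q$ is rigid. Note that no hypothesis on $V$ is used; this is the point of the footnote.

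\emph{Balancedness.} Take $x=y$ in the companion identity. This gives $q(2x)=2q(x)+b(x,x)$. By homogeneity, $q(2x)=(1+1)^2q(x)=4q(x)$. Therefore
\[
2q(x)+b(x,x)=4q(x)=2q(x)+2q(x).
\]
Set $u:=2q(x)$ and $\tlu:=4q(x)$; both lie in $U$. Then $b(x,x)$ and $2q(x)$ are two differences of $u$ and $\tlu$, and almost subtractivity yields $b(x,x)=2q(x)$. So the unique companion is balanced.

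\emph{Main obstacle.} I expect the only delicate point to be bookkeeping of conventions. First, one must check that homogeneity $q(av)=a^2q(v)$ is applied with $a=1_R+1_R$, so that $q(2x)=4q(x)$. Second, one must confirm that ``balanced'' in \cite{QF1} (and in \S\ref{sec:a5}) means exactly $b(x,x)=2q(x)$. The uniqueness arguments require only that the two elements being compared lie in $U$, which is immediate from $q(V)\subseteq U$. In particular, neither the difference $b(x,y)$ nor $b(x,x)$ needs to lie in $U$.
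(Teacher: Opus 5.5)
Your proposal is correct and follows essentially the same route as the paper. Rigidity comes from uniqueness of the difference between $q(x)+q(y)$ and $q(x+y)$. Balancedness comes from $4q(x)=q(x+x)=2q(x)+b(x,x)$, where you helpfully spell out that $2q(x)$ and $b(x,x)$ are both differences of $2q(x)$ and $4q(x)$, a step the paper leaves implicit.
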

%The intriguing point here is, that $V$ may be any $R$-module.
\begin{proof}
Suppose   that $q$ has two distinct companions $b_1$ and $b_2$. Choose $x_1, x_2 \in V$ such that  $b_1(x_1, x_2) \neq b_2(x_1, x_2)$, and let $u = q(x_1)+q( x_2)$, $\tlu = q(x_1+ x_2)$,
$z_1 = b(x_1, x_2)$, $z_2= b_2(x_1, x_2)$.  Then $\tlu = u + z_1 = u + z_2$, and therefore
$z_1 = z_2 $, since $(U, +)$ is \as in $(R,+)$.  Hence,  $q$ has a unique  companion $b$. For any $x \in V $, the equality
$$ 4 q(x) = q (x +x ) = 2 q(x) + b(x,x)$$
holds in $U$, and thus  $2q(x) = b (x,x)$.
\end{proof}

Before detailing the consequences of Theorem~\ref{thm:1.2} in quadratic form theory, we first identify settings in which \as submonoids occur.
\begin{prop}\label{prop:1.3}
  Let  $U = (U,+)$ be  an \as submonoid of a semiring $R = (R,+)$.
  \begin{enumerate}\ealph
    \item Every submonoid of $U$ is also  \as in $R$.
    \item If $(U_\lm \ds| \lm \in \Lm)$ is a chain of \as submonoids of $R$, then $\bigcup_{\lm \in \Lm} U_\lm$ is \as in $R$.
  \end{enumerate}
\end{prop}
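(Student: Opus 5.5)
Both parts follow directly from Definition~\ref{defn:1.1}. Being almost subtractive is a universally quantified condition on pairs of elements of $U$ and pairs of elements of $R$. Such a condition is inherited by subsets and is finitary, so it passes to directed unions. The plan is simply to unwind the definition in each case.

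For (a), let $U' \subseteq U$ be a submonoid. I take $u, \tlu \in U'$ and $z, z' \in R$ with $u + z = u + z' = \tlu$. Since $U' \subseteq U$, both $u$ and $\tlu$ lie in $U$. The uniqueness clause of Definition~\ref{defn:1.1} for $U$ then gives $z = z'$. Hence $U'$ is \as in $R$. Nothing beyond the inclusion is needed; in particular no property of the ordering $\leq_R$ is used.

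For (b), put $W = \bigcup_{\lm} U_\lm$. I would proceed in two steps.

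First I check that $W$ is a submonoid. It contains $0$ provided $\Lm \neq \emptyset$; I would add this tacit assumption, or else interpret the empty union as $\{0\}$, which is trivially \as. Given $w_1 \in U_{\lm_1}$ and $w_2 \in U_{\lm_2}$, the chain condition gives, say, $U_{\lm_1} \subseteq U_{\lm_2}$. Then $w_1 + w_2 \in U_{\lm_2} \subseteq W$.

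Second I check the uniqueness of differences. Take $u, \tlu \in W$ and $z, z' \in R$ with $u + z = u + z' = \tlu$. Choose indices with $u \in U_\lm$ and $\tlu \in U_\mu$. Using the chain property again, both $u$ and $\tlu$ lie in the larger of $U_\lm$ and $U_\mu$. Since that member of the chain is \as, $z = z'$.

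The only point requiring any care, and thus the "main obstacle" such as it is, is this use of the chain hypothesis. We must place both elements $u, \tlu$ in a single $U_\lm$, since for an arbitrary family the union need be neither a submonoid nor \as. Comparability of any two members of the chain handles this, because the condition only ever involves two elements of $U$ at a time.
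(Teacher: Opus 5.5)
Your proposal is correct and follows the paper's proof. Part (a) is immediate from Definition~\ref{defn:1.1}, and for (b) the paper likewise picks a single $\lambda_0$ with both elements in $U_{\lambda_0}$ and applies the \as property of $U_{\lambda_0}$; your explicit checks that the union is a submonoid and your remark on an empty index set are details the paper leaves implicit.
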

\begin{proof}
a): Obvious by Definition \ref{defn:1.1}.
\pSkip
b): Let $x,y \in \bigcup_{\lm \in \Lm} U_\lm$, $z \in R$, $z' \in R$, and $x+z = x +z' =y$. There exists $\lm_0 \in \Lm$ such that $x,y \in U_{\lm_0}$. It follows that $z = z'$, since $U_{\lm_0}$ is \as in $R$. Thus, $\bigcup_{\lm \in \Lm} U_\lm$ is \as in~ $R$.
\end{proof}

\begin{cor}\label{cor:1.4}
  Every \as submonoid $U$ of $R$ is contained in a maximal \as submonoid $\tlU$. Moreover, every submonoid of~ $\tlU$ is \as in $R$.
\end{cor}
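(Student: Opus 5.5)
We apply Zorn's lemma to the set $\mathcal{M}$ of all \as submonoids of $R$ that contain $U$, partially ordered by inclusion. The set $\mathcal{M}$ is nonempty, since $U \in \mathcal{M}$. So the first step is to check that every nonempty chain $(U_\lm \ds| \lm \in \Lm)$ in $\mathcal{M}$ has an upper bound in $\mathcal{M}$. The natural candidate is the union $\bigcup_{\lm \in \Lm} U_\lm$.

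This union is a submonoid of $R$. It contains $0$, and any two of its elements lie in a common $U_{\lm_0}$ because the family is a chain, so their sum lies in $U_{\lm_0}$ as well. By Proposition~\ref{prop:1.3}(b) the union is \as in $R$. It clearly contains $U$, so it belongs to $\mathcal{M}$ and bounds the chain. Zorn's lemma then yields a maximal element $\tlU$ of $\mathcal{M}$.

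We must also check that $\tlU$ is maximal among all \as submonoids of $R$, not just those containing $U$. Suppose $\tlU \subseteq W$ with $W$ an \as submonoid of $R$. Then $W \supseteq U$, so $W \in \mathcal{M}$, and maximality in $\mathcal{M}$ forces $W = \tlU$. Hence the two notions of maximality agree.

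The second assertion is immediate from Proposition~\ref{prop:1.3}(a) applied to the \as submonoid $\tlU$. Every submonoid of $\tlU$ is \as in $R$.

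There is no real obstacle in this argument. The only point needing care is that the union of a chain of submonoids is again a submonoid, which Proposition~\ref{prop:1.3}(b) takes for granted. This is exactly where the chain hypothesis, as opposed to an arbitrary family, is used.
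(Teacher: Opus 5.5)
Your proof is correct and follows the paper's approach: the paper's proof just says the result is clear from Proposition~\ref{prop:1.3} by Zorn's Lemma. Your explicit checks — that the union of a chain is a submonoid, and that maximality among a.s.\ submonoids containing $U$ coincides with maximality among all a.s.\ submonoids — fill in details the paper leaves implicit.
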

\begin{proof}
Clear from Proposition \ref{prop:1.3} by the use of Zorn's Lemma.
\end{proof}

\begin{prop}\label{prop:1.5}
If $U$ is \as in $(R,+)$, and $S$ is a set of minimal elements of $U \sm \00$ under  $\leq_R$, then~ $U \sm S$ is a submonoid of $U$, and hence  $U\sm S$ is  \as in $R$.
\end{prop}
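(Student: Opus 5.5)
We must show that $U\sm S$ is a submonoid of $U$, i.e.\ that it contains $0$ and is closed under addition. The \as property then follows at once from Proposition~\ref{prop:1.3}(a). Note that $0\notin S$, since $S\subseteq U\sm\00$, so $0\in U\sm S$. The real task is closure: if $x,y\in U\sm S$, we must show $x+y\notin S$.

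We argue by contradiction. Let $x,y\in U\sm S$ and suppose $x+y=s\in S$. If $x=0$, then $s=y\notin S$, which is impossible; likewise $y=0$ is excluded. So $x,y\in U\sm\00$. From $x+y=s$ we get $x\leq_R s$, with $x\in U\sm\00$. Since $s$ is minimal in $U\sm\00$ under $\leq_R$, this forces $x=s$. (Here minimality means that $x\leq_R s$ with $x\in U\sm\00$ implies $x=s$; antisymmetry is available because $R$ is assumed \ub.) But then $x=s\in S$, contradicting $x\notin S$. Hence $x+y\notin S$, and $U\sm S$ is closed under addition.

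The main point to watch is that $x\neq 0$ is genuinely needed to invoke minimality, and this is exactly what the case split above handles. Once $U\sm S$ is known to be a submonoid of $U$, Proposition~\ref{prop:1.3}(a) gives that it is \as in $R$. Notably, the argument never uses the \as property of $U$ for the closure step; that property enters only through Proposition~\ref{prop:1.3}(a).
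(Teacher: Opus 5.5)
Your proof is correct and follows essentially the same route as the paper. Suppose $x+y=s\in S$ with $x,y\in U\setminus\{0\}$; then $x\leq_R s$ and the minimality of $s$ give $x=s\in S$, a contradiction, and Proposition~\ref{prop:1.3}(a) finishes the argument. Your explicit treatment of the cases $x=0$ or $y=0$ just makes precise what the paper leaves implicit.
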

\begin{proof}
  Let $x,y \in U \sm S$, and assume  that $x \neq 0$ and  $y \neq 0$. Then,
  $x + y \in U$. To verify that $x+y \in U \sm S$, suppose that $x + y =s \in S$.  Then, $x \leq s$ and $y \leq s$. Since $s$ is minimal and $x,y \neq 0$, it follows that $x=y =s $,a  contradiction.
\end{proof}

This gives rise to a class of examples of \as submonoids.
\begin{example}\label{exmp:1.6}
  Let $R = (R,+)$ be a  cancellative  monoid, and let $S$ be the set of minimal elements of $R \sm \00$. Then,  $R$ is trivially \as in $R$, and thus  $U= R \sm S$ is \as in~ $R$ by Proposition~\ref{prop:1.5}.
\end{example}
Assume  that $U$ is an \as submonoid of $R$ that is \textbf{not} maximal. Then,  there exists  $t \in R \sm U$ such that
$$ U' := U + \N t = U + \N_0 t$$
is  \as in $R$. We aim to understand  the structure of $U'$ in terms of its components $U$ and $\N_0 t$ under $\leq_R$.
Note that $\N_0 t$ is \as in $R$, and therefore is a cancellative monoid. Thus, for $m \in \N_0$,  the  elements $m t$ pairwise disjoint. In other words, there is  a monoid  isomorphism
\begin{equation}\label{eq:1.2}
  \N_0 \Isoto \N_0 t, \qquad m \mapsto mt.
\end{equation}
For later use, we  explore a more general situation.
\begin{assumption}\label{assumption:1.6} $U$ is a  submonoid of $R$,  $t\in R \sm U$, and $U + \N_0 t$ is cancellative.
\end{assumption} Under  this assumption,
$D = (\N_0 t)\cap U$ is a submonoid of $\N_0 t$, and thus either
$D = \00$ or
\begin{equation}\label{eq:1.3}
  D =  (\N_0 t)\cap U = \N_0 d t = d \N_0 t \quad  \text {for some } d\in \N,
\end{equation} which we call the \textbf{period} of $t$ (with respect to U).  If $D = \00$, we set $u_0 = 0 $. In case of \eqref{eq:1.3},  set $u_0 = dt $, so that  $u_0 > 0 $.  This yields the following diagram of cancellative submonoids of $R$
\begin{equation}\label{eq:1.4}
 \begin{array}{c}
   \xymatrixrowsep{3mm}
\xymatrixcolsep{6mm}
    \xymatrix@R=0.3em@C=0.5em{
    & & U + \N_0 t = U' \ar@{-}[dd] \\
    \N_0 t      \ar@{-}[rru] \ar@{-}[dd] & & \\
    & & U  \\
    D \ar@{-}[rru]   &  & \\
   }
   \end{array}
\end{equation}

We study the exchange equivalence relation $\EC = \EC_{U, \N_0 t}$ associated with the diagram \eqref{eq:1.4}, as defined in~ \cite{Aml}. This relation records what happens to a pair $(u, k t)$ when we either extract copies of $u_0 = dt$ from $k t$ and add them to $u$, or extract copies of $u_0$ from $u$ and add them to $kt$. In other words, the relation $\EC$ is generated by the ``elementary equivalences'':
\begin{equation}\label{eq:1.5}
  \begin{array}{llll}
    (u, kt)  \sim_\EC  (u + u_0, (k-d))t, & & \text{if } k \geq d,   \\[2mm]
    (u, kt)  \sim_\EC  (u_1, (k+d))t, & & \text{if } k < d \text{ and } u = u_1 + u_0.   \\
  \end{array}
\end{equation}

\begin{lem}\label{lem:1.8}
Under Assumption \ref{assumption:1.6}, for every pair $(u', k't)$ with $u' \in U$ and  $ k' \in \N$ there exists a unique exchange equivalent pair $(u, kt)$ where  $u \in U$ and    $k < d$, if $d > 0$,
while $u \in U$ and $ k \in \N_0$, if $d =0$.
  %\begin{equation}\label{eq:1.4}
%    z= u + k t
%  \end{equation}
%  with unique $k < d$, $u \in U$ if $d > 0$, and unique $u,k \in \N_0$ if $d =0$.
\end{lem}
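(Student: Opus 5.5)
Existence is by division with remainder, and uniqueness comes from an invariant of $\EC$ together with the cancellativity in Assumption~\ref{assumption:1.6}.

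\textbf{Case $d=0$.} Here $D=\00$ and $u_0=0$, so both elementary equivalences in \eqref{eq:1.5} leave the pair unchanged. Hence $\EC$ is the identity relation, and $(u',k't)$ itself is the unique representative.

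\textbf{Case $d>0$, existence.} Write $k'=md+k$ with $0\le k<d$. Applying the first elementary equivalence $m$ times gives
$$(u',k't)\sim_\EC (u'+mu_0,\,kt).$$
Since $u_0=dt\in U$, the element $u'+mu_0$ lies in $U$, so this pair has the required form.

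\textbf{Case $d>0$, uniqueness.} I would use the sum map $\sigma(u,kt)=u+kt\in U'$ as an invariant. Each elementary equivalence preserves $\sigma$, because $u_0=dt$. So suppose $(u_1,k_1t)$ and $(u_2,k_2t)$ are $\EC$-equivalent with $k_1,k_2<d$; then $u_1+k_1t=u_2+k_2t$. Say $k_1\le k_2$. Cancellativity of $U'$ gives $u_1=u_2+(k_2-k_1)t$.

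Sums alone cannot separate the classes: if $(k_2-k_1)t=u_1-u_2$ happens to be a difference of elements of $U$, then $(u_2,k_2t)$ and $(u_1,k_1t)$ have equal sums. This is the main obstacle. I see two ways to overcome it.

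The first is to strengthen the invariant to the residue of $k$ modulo $d$. The moves in \eqref{eq:1.5} change $k$ by $\pm d$ only, so $k \bmod d$ is an $\EC$-invariant. Then $k_1<d$ and $k_2<d$ force $k_1=k_2$, and cancelling $k_1t$ in $u_1+k_1t=u_2+k_1t$ gives $u_1=u_2$.

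It remains to check that the $u$-component is then determined. In any chain of elementary moves, track the pair $(u,k)$. The quantity $u+kt$ is constant, and after the chain returns to some $k$-value the $u$-value is recovered by cancellation in $U'$. So the class contains exactly one pair with a given $k$, and in particular exactly one with $k<d$.

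I expect to present the proof this way: the $k\bmod d$ invariant combined with cancellation of $u+kt$ in $U'$. The potential subtlety is that cancellation must be applied in $U+\N_0t$, which is precisely what Assumption~\ref{assumption:1.6} provides. The second elementary move, with $u=u_1+u_0$, is handled by the same two invariants.
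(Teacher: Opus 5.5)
Your proof is correct. Existence is the same division with remainder as in the paper, but your uniqueness argument takes a different route, and that route avoids a weak point in the paper's proof.

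The paper argues purely at the level of sums. From $u+kt=v+\ell t$ with $0\le k\le \ell<d$ it cancels to $u=v+(\ell-k)t$ and then concludes $\ell=k$. Assumption~\ref{assumption:1.6} does not supply that last inference. It would follow if $U$ were subtractive in $U+\N_0t$, since then $(\ell-k)t\in U\cap\N_0t=\N_0dt$.

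The stronger statement the paper is aiming at is that every $z\in U+\N_0t$ has only one presentation $z=u+kt$ with $u\in U$, $k<d$. This fails in general. Take $R=\N_0^2$, $t=(1,0)$ and $U=\N_0(2,0)+\N_0(1,1)+\N_0(0,1)$. Then $t\notin U$ and $U\cap\N_0t=\N_0\cdot 2t$, so $d=2$, yet $(1,1)+0\cdot t=(0,1)+1\cdot t$.

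Your observation that sums cannot separate the $\EC$-classes is exactly this phenomenon. Your invariant $k \bmod d$ is what replaces the unjustified step: it fixes $k$ within an $\EC$-class, and then cancelling $kt$ in $U+\N_0t$ fixes $u$. So your route proves precisely the lemma as stated, namely uniqueness inside an exchange class.

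The paper's route, had it worked, would also have given uniqueness of the reduced presentation of each element. The definition of \emph{the} reduced presentation right after the lemma, and Scholium~\ref{schol:1.9}, presuppose that stronger uniqueness.

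Two small points.
\begin{enumerate}
\item Reading off $k$ from the pair $(u,kt)$ uses that $k\mapsto kt$ is injective. This holds because $\N_0t$ is cancellative, $t\neq 0$, and $R$ is upper bound; cf.\ the isomorphism $\N_0\to\N_0t$ noted before Assumption~\ref{assumption:1.6}. You should say so explicitly.
\item Your final paragraph is redundant once you have $k_1=k_2$ and the sum invariant. As written it also only shows \emph{at most} one pair for each $k$, not exactly one.
\end{enumerate}
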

\begin{proof} Since every $k' \in \N_0$ decomposes as $k' = k + \ell d$, where  $0 \leq k < d$ and  $\ell \in \N_0$, it suffices to prove the uniqueness assertion when
 $d> 0$. Suppose that $ u + kt = v + \ell t$, where  $u,v \in U$ and  $0 \leq k \leq \ell < d $. Since $U + \N_0 t$ is cancellative and $t < u$, this implies that $u = v + (\ell-k)t$, and thus $\ell - k = 0$. Hence, $\ell = k $  and $u=v$.
%Since $t < u $ for revery $u \in U \sm \00$, this forces $\ell = k$, and so $u = v$. The case of $d =0$ is similar.
\end{proof}
The  presentation
$z = u + kt$ of an element $z$ in $U + \N_0 t$ with $u$ and  $k $ as in the lemma, is called  the \textbf{reduced presentation} of $z$.

\begin{schol}\label{schol:1.9}
Assume that $d > 0$, $k_1, k_2 \in [0,d-1]$, and  $u_1, u_2 \in U$. Then,
\begin{enumerate}\ealph
  \item
  $ (u_1 + k_1 t) + (u_2 + k_2 t) = \left\{
    \begin{array}{lll}
    (u_1 + u_2) + (k_1 + k_2)t,  & \text{if } k_1 + k_2 < d,&  \\[2mm]
    (u_1 + u_2 + u_0) + (k_1 + k_2 -d)t,  & \text{if } k_1 + k_2 \geq d.&  \\
  \end{array}
  \right.$
  \item
  $ u_1 + k_1 t \leq u_2 + k_2 t \dss \Leftrightarrow  \left\{
    \begin{array}{lll}
    u_1 \leq u_2, k_1 \leq k_2, & \text{if }  k_2 < d,&  \\[2mm]
    u_1 + u_0 \leq  u_2 ,  k_1 \leq k_2 - d,  & \text{if }  k_2 \geq d.&  \\
  \end{array}
  \right.$
\end{enumerate}
  If $d =0 $, then for all $u_1, u_2 \in U$ and $k_1, k_2 \in \N_0$:
\begin{enumerate}\ealph \setcounter{enumi}{2}

  \item $(u_1 + k_1 t) + (u_2 + k_2 t) = (u_1 + u_2) + (k_1  + k_2) t$.

  \item $(u_1 + k_1 t) \leq  (u_2 + k_2 t) \ds{\Leftrightarrow} u_1 \leq u_2$,  $k_1 \leq k_2 . $
\end{enumerate}
\end{schol}

\begin{prop}\label{prop:1.10}
  Under Assumption \ref{assumption:1.6},
  the pair $(U, \N_0 t)$ has amalgamation in $R$.
\end{prop}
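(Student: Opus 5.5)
The plan is to reduce amalgamation to the uniqueness of reduced presentations in Lemma~\ref{lem:1.8}. I use the notion from \cite{Aml}: the pair $(U,\N_0 t)$, with intersection $D=(\N_0 t)\cap U$, has amalgamation in $R$ if the canonical addition map
$$\Sigma\colon (U\times \N_0 t)/\EC \To U+\N_0 t,\qquad [(u,kt)]\mapsto u+kt,$$
is a bijection (a monoid isomorphism). Equivalently, $u_1+k_1t=u_2+k_2t$ in $R$ holds if and only if $(u_1,k_1t)\sim_\EC(u_2,k_2t)$.

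First I check that $\Sigma$ is well defined and additive. The elementary equivalences \eqref{eq:1.5} replace $(u,kt)$ by $(u+u_0,(k-d)t)$ or by $(u_1,(k+d)t)$ with $u=u_1+u_0$. Since $u_0=dt$, the sum $u+kt$ is unchanged, so $\Sigma$ is well defined. Additivity holds because $\EC$ is a congruence on $U\times\N_0 t$. Surjectivity is immediate from the definition of $U+\N_0 t$.

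For injectivity I first pass to reduced representatives. By the existence part of Lemma~\ref{lem:1.8}, every $\EC$-class contains a pair $(u,kt)$ with $0\le k<d$ (or arbitrary $k$ if $d=0$). This comes from repeatedly applying the first elementary move, i.e.\ writing $k=k_0+\ell d$ and passing to $(u+\ell u_0,k_0t)$. It then suffices to show the following: if $u+kt=v+\ell t$ with both pairs reduced, then $u=v$ and $k=\ell$. This is exactly the uniqueness clause of Lemma~\ref{lem:1.8}, which I would invoke directly. In the case $d=0$ I would use Scholium~\ref{schol:1.9}(c),(d) to get additivity, and the same cancellation argument for injectivity.

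The main obstacle is the uniqueness step itself. Assume $k\le\ell$ and cancel $kt$, which is allowed since $U+\N_0 t$ is cancellative. This gives $u=v+mt$ with $0\le m<d$, and one must conclude $m=0$. Cancellativity alone only yields that $mt$ is the unique difference of $v\le_R u$. To get $m=0$ one needs $mt\in U$, since then $mt\in D=\N_0 dt$, which forces $m=0$ because $m<d$.

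This is precisely where I expect difficulty. Take $U=\N_0\setminus\{1\}\subset\N_0$ and $t=1$, so $d=2$. Then $3=2+1$, yet $(3,0)$ and $(2,t)$ lie in different $\EC$-classes. So I would first try to obtain $mt\in U$ from the ordering argument $t<u$ used in Lemma~\ref{lem:1.8}. If that fails, I would add a subtractivity-type hypothesis on $U$ inside $U+\N_0t$, namely that $v,\,v+mt\in U$ implies $mt\in U$, under which the argument closes.
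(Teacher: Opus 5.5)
Your reduction is the same as the paper's. Its proof of Proposition~\ref{prop:1.10} only cites the reduced presentations of Lemma~\ref{lem:1.8} and Scholium~\ref{schol:1.9}. The step you flag is exactly the one the proof of Lemma~\ref{lem:1.8} skips with ``since $U+\N_0 t$ is cancellative and $t<u$'': after cancelling $kt$ one has $u=v+mt$, and nothing in Assumption~\ref{assumption:1.6} gives $mt\in U$. This gap cannot be closed, because the proposition is false as stated. Take $R=\N_0^2$, $t=(1,0)$ and $U=\{(a,b): a\le b\}$. Then:
\begin{itemize}
\item $t\notin U$, and $U+\N_0 t=R$ is cancellative; both $U$ and $U+\N_0t$ are even a.s.\ in $R$ by Proposition~\ref{prop:1.13}.
\item $D=U\cap \N_0 t=\{0\}$, so the exchange relation is trivial.
\item Yet $(1,1)+0\cdot t=(0,1)+t$ with $(1,1)\neq(0,1)$, so the sum map is not injective.
\end{itemize}
This also refutes Lemma~\ref{lem:1.8} and Scholium~\ref{schol:1.9}(d) for $d=0$. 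Your first fallback via ``$t<u$'' cannot work either. For $U=\{0\}\cup\{(a,b): 1\le a\le b\}$ one has $t<u$ for every $u\in U\setminus\{0\}$ and still $D=\{0\}$, but $(1,2)+t=(2,2)$ with both $(1,2),(2,2)\in U$.

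Your own counterexample, however, does not show this. For $U=\N_0\setminus\{1\}$ and $t=1$ one has $D=(\N_0t)\cap U=\{0,2,3,\dots\}$, which is not of the form $\N_0 dt$. So ``$d=2$'' is not defined by the paper's setup, and what the example actually refutes is the claim \eqref{eq:1.3} that $D$ is cyclic. Use the exchange relation generated by moving arbitrary elements of $D$ between the two components, which is what \eqref{eq:1.5} encodes when $D$ is cyclic. Then $(3,0)\sim(0,3t)\sim(2,t)$; indeed $U\subseteq\N_0 t$ there, so the pair is trivially amalgamated.

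Your proposed repair is the right one. Suppose $v,\ v+mt\in U$ implies $mt\in U$. Then $at,bt\in D$ with $a\le b$ gives $(b-a)t\in D$. Here $\N_0t\cong\N_0$, because $U+\N_0t$ is cancellative, $R$ is u.b.\ and $t\neq 0$. So $D$ is a difference-closed submonoid of $\N_0 t$, hence $D=\N_0 dt$, and \eqref{eq:1.3} becomes true. Injectivity then follows directly. From $u_1+k_1t=u_2+k_2t$ with $k_1\le k_2$, cancellation gives $u_1=u_2+mt$ with $mt\in D$. A single exchange then gives $(u_1,k_1t)\sim_{\EC}(u_2,k_2t)$, with no need for reduced presentations at all.
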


\begin{proof}
  Follows immediately  from the explicit description of the elements of $U ' = U + \N_0 t$ in Lemma \ref{lem:1.8} and Scholium \ref{schol:1.9}.
\end{proof}

The next theorem follows from the considerations following  Proposition \ref{prop:1.5}.

\begin{thm}\label{thm:1.11}
Let $R= (R,+)$ be a \ub  monoid, and let $U$ be a non-maximal  \as submonoid of ~$R$. Then, there exists  $t \in R \sm U$ such that  $(U, \N_0 t)$ has amalgamation in~ $R$,  and $U + \N_0 t$ is \as in ~ $R$.

\end{thm}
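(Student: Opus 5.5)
The plan is to use the non-maximality of $U$ to produce the element $t$, and then obtain amalgamation from Proposition~\ref{prop:1.10}. Since $U$ is not maximal among \as submonoids of $R$, there is an \as submonoid $W$ of $R$ with $U \subsetneq W$. Pick any $t \in W \sm U$ and set $U' := U + \N_0 t$. This is a submonoid of $W$, so by Proposition~\ref{prop:1.3}(a) it is \as in $R$. This already gives the second assertion of the theorem.

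It remains to show that $(U, \N_0 t)$ has amalgamation in $R$. For this I would check Assumption~\ref{assumption:1.6} and then quote Proposition~\ref{prop:1.10}. We have $U \subseteq R$ a submonoid and $t \in R \sm U$ by choice. The only point needing an argument is that $U' = U + \N_0 t$ is cancellative, and this follows from the remark after Definition~\ref{defn:1.1}: an \as submonoid is cancellative. Concretely, suppose $x + z = x + z'$ with $x, z, z' \in U'$. Put $\tlu := x + z \in U'$. Then $x \leq_R \tlu$, and $z, z'$ are both differences of $x$ and $\tlu$ in $R$, so uniqueness in Definition~\ref{defn:1.1} gives $z = z'$. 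Hence Assumption~\ref{assumption:1.6} holds, and Proposition~\ref{prop:1.10}, which rests on the reduced presentations of Lemma~\ref{lem:1.8} and Scholium~\ref{schol:1.9}, yields amalgamation. The \ub hypothesis is used only so that $\leq_R$ is a partial order, which is the setting in which Definition~\ref{defn:1.1} and the Scholium are formulated.

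The main obstacle is not in this deduction but in the results it relies on. In particular, the uniqueness part of Lemma~\ref{lem:1.8} invokes an inequality ``$t < u$'', which I would not take at face value. Instead I would rederive uniqueness directly from cancellativity of $U'$. Suppose $u + kt = v + \ell t$ with $0 \le k \le \ell < d$. Cancelling $kt$ gives $u = v + (\ell - k)t$. Since $u \in U$ and $\ell - k < d$, we would like to conclude $\ell = k$ from the definition of the period $d$. However, that requires $(\ell-k)t \in U$, which is not automatic. So I expect the honest route is to accept Proposition~\ref{prop:1.10} as stated earlier in the paper, which we are permitted to assume, rather than reprove it.
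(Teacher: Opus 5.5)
Your route is the paper's route. The paper deduces the theorem from the remark after Example~\ref{exmp:1.6} (non-maximality gives $t\in R\sm U$ with $U+\N_0 t$ a.s.) together with Proposition~\ref{prop:1.10}. Your a.s.\ half is correct and supplies the justification the paper omits: take $t\in W\sm U$ for a strictly larger a.s.\ submonoid $W$ and apply Proposition~\ref{prop:1.3}(a). Your derivation of cancellativity is also fine.

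The gap is the amalgamation half, at exactly the step you flagged, and citing Proposition~\ref{prop:1.10} does not close it. The uniqueness in Lemma~\ref{lem:1.8} genuinely fails. Moreover, the amalgamation conclusion is false under the stated hypotheses, in the sense Proposition~\ref{prop:1.10} uses, namely that $u+kt=v+\ell t$ holds only for $\EC$-equivalent pairs.

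Here is a counterexample. Take $R=\N_0^2$ and $U=\N_0(2,0)+\N_0(0,1)+\N_0(1,1)$.
\begin{itemize}
\item Since $R$ is cancellative and u.b., $U$ is a.s.\ (Proposition~\ref{prop:1.13}), and it is not maximal because $R$ is a.s.\ in itself.
\item One checks that $(x,y)\in U$ iff $x$ is even or $y\ge 1$. Hence $R\sm U=\{(x,0): x\ \text{odd}\}$.
\item For every such $t=(x_0,0)$ we get $D=U\cap\N_0 t=2\N_0 t$, so $d=2$.
\item Yet $(x_0,1)+0\cdot t=(0,1)+1\cdot t$ with $(x_0,1),(0,1)\in U$. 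These are two reduced presentations of one element.
\item The pairs $((x_0,1),0)$ and $((0,1),t)$ are not $\EC$-equivalent, because the moves \eqref{eq:1.5} change the $\N_0 t$-coordinate only by multiples of $d=2$.
\end{itemize}
So no $t\in R\sm U$ gives amalgamation.

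The missing ingredient is a relative subtractivity condition, for example
(S): if $u,v\in U$ and $u=v+mt$, then $mt\in U$.
\begin{itemize}
\item Under (S) your cancellation step finishes the argument. From $u+kt=v+\ell t$ with $k\le\ell$ you get $u=v+(\ell-k)t$, hence $(\ell-k)t\in D$ and $(u,kt)\sim_{\EC}(v,\ell t)$.
\item (S) also makes $D$ closed under differences, hence equal to $\{0\}$ or $d\N_0 t$. The paper's \eqref{eq:1.3} asserts this without proof, and it fails in general: $\{0,2t,3t,\dots\}$ is a submonoid not of that form.
\item However, non-maximality of $U$ does not produce a $t$ satisfying (S), as the example shows.
\end{itemize}
So the statement needs (S) as an extra hypothesis, or its amalgamation clause must be dropped.
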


\noindent
\emph{Comment.} In view of Theorem~\ref{thm:1.2}, it is of interest in quadratic form theory to find larger \as submonoids of $R$ starting from a given one. However, Theorem~\ref{thm:1.11} leaves much to be desired, as we have not yet obtained an explicit construction of the element $t$.
\pSkip

Nevertheless, we search for explicit examples of \as submonoids $U$ of $R$ that are related to this problem. We begin with an obvious observation.
\begin{lem}\label{lem:1.12}
Let  $R$ be an additive monoid. Then, a subset $U$ of $R$ is a submonoid if and  only if, for all  $x,y \in R$, the condition    $x+y \notin U$ implies that  $x \notin U$ or $y \notin U$ (in particular~ $0\in U$).
\end{lem}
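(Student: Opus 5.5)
This is a direct verification of the definition of submonoid, stated in contrapositive form. A subset $U$ of $R$ is a submonoid exactly when $0 \in U$ and $U+U \subseteq U$. The condition in Lemma~\ref{lem:1.12}, namely ``$x+y\notin U \Rightarrow x\notin U$ or $y\notin U$'', is the contrapositive of ``$x\in U$ and $y\in U \Rightarrow x+y\in U$''. So the plan is to prove the two implications separately and to keep track of the neutral element.

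For the forward direction, assume $U$ is a submonoid. Take $x,y\in R$ with $x+y\notin U$. If both $x$ and $y$ lay in $U$, closure would give $x+y\in U$, a contradiction. Hence $x\notin U$ or $y\notin U$. We also have $0\in U$ because $U$ is a submonoid.

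For the converse, assume the implication holds for all $x,y\in R$ and that $0\in U$. For $x,y\in U$, the contrapositive of the hypothesis yields $x+y\in U$. Thus $U$ is closed under $+$ and contains $0$, so it is a submonoid. Associativity and commutativity are inherited from $R$.

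The only real subtlety is the parenthetical ``in particular $0\in U$''. This cannot literally be derived from the implication, since $U=\emptyset$ satisfies it vacuously. I would therefore read the statement as including $0\in U$ as part of the condition, and make that reading explicit in the write-up. Alternatively, one can require $U$ to be nonempty. Even then, closure under addition alone does not produce $0$ in an arbitrary monoid (take $U=\N$ inside $\N_0$), so the condition $0\in U$ must be imposed rather than deduced. With that interpretation fixed, the remaining argument is routine.
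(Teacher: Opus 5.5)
Your argument is correct and is the routine check the paper has in mind: the paper states the lemma as ``an obvious observation'' and gives no proof, since the condition is just the contrapositive of closure under addition. Your caveat is also right---the implication alone does not force $0\in U$ (take $U=\emptyset$, or $\N\subset\N_0$), so $0\in U$ has to be read as part of the condition, which matches the paper adjoining $0$ explicitly when it forms $[U\Ng s]_0$.
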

The ``easy case'' is when $R$ itself is cancellative.
\begin{prop}\label{prop:1.13} If $R$ is cancellative, then every submonoid $U$ of $R$ is \as in $R$.
\end{prop}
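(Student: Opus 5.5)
The plan is to verify Definition~\ref{defn:1.1} directly; the argument only uses cancellativity of $(R,+)$. Let $U$ be a submonoid of $R$, and take $u,\tlu \in U$ with $u \leq_R \tlu$. By definition of the $R$-quasiordering, some $z \in R$ satisfies $u+z=\tlu$, so a difference exists. For uniqueness, suppose $z,z' \in R$ satisfy $u+z = u+z' = \tlu$. Since $(R,+)$ is cancellative, we may cancel $u$ and conclude $z=z'$. Thus every pair $u \leq_R \tlu$ in $U$ has exactly one difference $z\in R$, and $U$ is \as in $R$.

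Alternatively, I would point out that this is just Proposition~\ref{prop:1.3}(a). As noted after \eqref{eq:1.1}, $R$ is \as in itself precisely when $(R,+)$ is cancellative. Every submonoid of an \as submonoid is again \as in $R$, so applying this to the submonoid $U$ of $R$ gives the claim immediately.

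I do not expect a real obstacle here. The only point to keep straight is that the uniqueness in Definition~\ref{defn:1.1} is uniqueness in $R$, not in $U$, and this is exactly what cancellativity of $R$ supplies. No \ub assumption is needed for the argument.
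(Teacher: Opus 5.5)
Your proposal is correct, and your second paragraph is exactly the paper's proof. Since $R$ is cancellative, $R$ is a.s.\ in itself, and Proposition~\ref{prop:1.3}(a) passes this property to every submonoid $U$; your first paragraph just unfolds the same argument directly from Definition~\ref{defn:1.1}.
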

\begin{proof}
  In this case,  $R$  is \as in $R$, and thus  $U$ is \as in $R$.
\end{proof} \noindent
Note that Example \ref{exmp:1.6} falls in this case.
\pSkip
In our search for examples of \as submonoids of a different type, we first need to fix a non-cancellative \ub monoid ~$R$.
 To this end, we consider the free additive monoid
$$\N_0 \times \N_0 = \N_0 t_1 + \N_0 t_2$$
 with generators
$ t_1 = (1,0)$, $t_2 = (0,1)$.
Let
$$ \w: \N_0 \times \N_0 \longrightarrow  \N_0$$
be a surjective additive map satisfying  $\w(t_1) = \w(t_2) =1$ and  $\w((0,0)) = 0$. Then,
\begin{equation}\label{eq:1.6a}
  \w((n_1, n_2)) = \w(n_1t_1 +n_2 t_2) = n_1 + n_2
\end{equation}
for every $n_1, n_2 \in \NN$.
We call  $\w$ the (standard) \textbf{weight function} on $\NN^2 = \NN \times \NN$. For  $d \geq 1$,  we define an additive equivalence relation $\E_d$ on $\NN^2$ by setting the equivalence class of $z \in \NN^2$ to be
\begin{equation}\label{eq:1.7}
  [z]_{\E_d} = \left\{
                 \begin{array}{ll}
                   \{ z \} & \hbox{if } \w(z) < d, \\[2mm]
                   \iw(z)& \hbox{if } \w(z) \geq d. \\
                 \end{array}
               \right.
\end{equation}
A straightforward verification shows that $\E_d$ is indeed an additive equivalence relation on ~$\NN^2$.

\begin{prop}\label{prop:1.14}
$\E_d$ is the finest additive equivalence relation on $\NN^2$  for which
$\xi = \iw(d)$ is an equivalence class.
\end{prop}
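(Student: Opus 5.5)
We must show two things: that $\E_d$ is an additive equivalence relation on $\NN^2$ having $\xi=\iw(d)$ as a class, and that any additive equivalence relation $F$ on $\NN^2$ with $\xi$ as a class satisfies $\E_d\subseteq F$. The first part is essentially given in the text preceding the statement. Indeed, $\iw(d)$ is one of the classes listed in \eqref{eq:1.7}, and additivity holds because $\w$ is additive: if $z\,\E_d\, z'$ and $y\in\NN^2$, then either $z=z'$, or $\w(z)=\w(z')\ge d$. In the latter case $\w(z+y)=\w(z'+y)\ge d$, so $z+y\,\E_d\,z'+y$. So the real content is minimality.

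Let $F$ be additive with $\xi=\iw(d)$ a class of $F$. The classes of $\E_d$ are singletons together with the fibres $\iw(n)$ for $n\ge d$. It therefore suffices to show that for each $n\ge d$ all elements of $\iw(n)$ are $F$-equivalent. The case $n=d$ is the hypothesis. For $n>d$, take $z=(a_1,a_2)$ and $z'=(b_1,b_2)$ with $a_1+a_2=b_1+b_2=n$. We connect $z$ to $z'$ by a chain of steps that change the coordinates by $\pm(1,-1)$. It then suffices to treat neighbours $z=(a_1,a_2)$ and $z'=(a_1+1,a_2-1)$.

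The key step is to write $z=x+y$ and $z'=x'+y$ with a common summand $y\in\NN^2$ of weight $n-d$ and $x,x'\in\xi$. Then $x\,F\,x'$, and additivity gives $z\,F\,z'$. To choose $y$, we need $y\le z$ and $y\le z'$ componentwise, i.e.\ $y\le(\min(a_1,a_1+1),\min(a_2,a_2-1))=(a_1,a_2-1)$, with total weight $n-d\le n-1$. Such a $y$ exists because $a_1+a_2-1=n-1\ge n-d$, using $d\ge1$; note $a_2\ge1$ since $z'\in\NN^2$. Then $x=z-y$ and $x'=z'-y$ both have weight $d$, so they lie in $\xi$. This argument in fact handles all neighbouring pairs directly, and transitivity of $F$ finishes the proof.

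The main obstacle is only this combinatorial choice of $y$, that is, checking that a common lower bound of weight $n-d$ exists. The computation above settles it. No other earlier results are needed beyond the definitions of additive equivalence and of $\E_d$ in \eqref{eq:1.7}.
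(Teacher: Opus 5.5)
Your proof is correct. It rests on the same mechanism as the paper's: two elements of weight $n$ that share a common summand, and whose complements lie in $\xi$, are $F$-equivalent by additivity. What differs is how you organize it.

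The paper inducts on the weight one generator at a time. Two elements of $\iw(d+1)$ sharing a generator $t_i$ are written as $u+t_i$ and $v+t_i$ with $u,v\in\iw(d)$. The only pair without a common generator, $(d+1)t_1$ and $(d+1)t_2$, is bridged through $dt_1+t_2$, which is where $d\ge 1$ is used. Iterating then covers all higher weights.

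You stay at a fixed weight $n$, chain lattice neighbours $(a_1,a_2)$ and $(a_1+1,a_2-1)$, and peel off a common summand of weight $n-d$ in one step. So you apply the hypothesis on $\iw(d)$ directly, need no special case for the pure powers, and need no claim about intermediate weights.

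That last point is a real gain in precision. The paper phrases its inductive step as ``$\iw(d+1)$ is also an equivalence class of $E$'', which fails for general $E$. For example, take the relation that keeps weights $<d$ as singletons, has $\iw(d)$ as a class, and lumps all elements of weight $>d$ into one class. It is additive, yet $\iw(d+1)$ is not one of its classes. What is true, and sufficient, is that each $\iw(n)$ with $n\ge d$ lies in a single class, which is exactly your reduction. You also check explicitly that $\E_d$ is additive, which the paper only asserts in the sentence before the proposition.

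In return, the paper's one-generator step with its support bookkeeping is the template reused in Theorem~\ref{thm:6.8}. There the weight-$d$ classes form an arbitrary partition, so one cannot jump from weight $d$ to weight $n$ in a single step.
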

\begin{proof}
  It suffices to verify that, if $E$ is an  additive equivalence  relation on $\NN^2$ for which  $\iw(d)$   an equivalence class  of $E$, then  $\iw(d+1)$ is also an equivalence class of $E$.
  Given $x,y \in \iw(d+1)$,
   suppose first that  $x= u +t_i$ and  $y = v+t_i$ for some
   $u,v \in \NN^2$ and  $i = 1,2$. Then, $\w(u)= \w(v) = d$, and hence $u \sim_E v$.
   By the additivity of $\E$, this implies that  $u + t_i \sim_E v + t_i$. In the  remaining  case, $x = (d+1)t_1$ and  $y = (d+1)t_2$. Since $d \geq 1$, we have
  $$ x \sim_\E d t_1 + t_2, \qquad y \sim_\E d t_1 + t_2,$$
 and hence   $x \sim_\E y$.
\end{proof}
% We  take the finest additive equivalence relation $\E_w$ on  $\N_0 \times \N_0$ which for a given $d \geq 2$ identifies the set $\{ z\in \N_0 \times \N_0  \ds | w(z) = d \}$ as one class $\xi$.
The elements of the monoid
\begin{equation}\label{eq:1.8}
  R_d := (\N_0 \times \N_0)/ \E_d
\end{equation}
are the equivalence  classes $[z] = [z]_\Ed$ of the relation $\E_d$, with  addition defined by  $[z_1] + [z_2] = [z_1 + z_2]$. If $x = y+z$ in $\NN^2$ with $z \neq 0$, then $\w(x) > \w(y)$. Thus the monoid $R_d$ is clearly~ \ub.
We identify each class $[z]_\Ed$ with $\w(z)< d$ with  its unique representative $z \in R$. Note that $R$ is violently not cancellative, if $d \geq 2$.

\begin{prop}\label{prop:1.14b}
The \ub monoid $R_d$, with   $d \geq 2$, contains  no nonzero \as submonoid.
\end{prop}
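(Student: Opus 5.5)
Suppose $U \subseteq R_d$ is an \as submonoid and $U \neq \00$; we derive a contradiction. The key observation is that every class of weight $\geq d$ is absorbing in a strong sense: for $x \in R_d$ with $\w(x) \geq d$ (i.e.\ $x = \iw(m)$, $m\ge d$) and any $z,z'$ with $\w(z)=\w(z')$, we have $x + z = x + z'$, because $\E_d$ identifies all elements of the same weight $\geq d$. So cancellation fails badly once we are at weight $\ge d$, and we only need to place a pair $u \leq_R \tlu$ from $U$ in that range with two distinct differences.

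Concretely, pick $u \in U$, $u \neq 0$. Then $\w(u)\ge 1$, and $nu\in U$ for all $n$; choosing $n$ with $\w(nu)\geq d$ gives an element $x := nu \in U$ of weight $m \geq d$. Now consider $x$ and $\tlu := 2x \in U$ (or $x + x$), so $x \leq_R \tlu$ with $x + x = \tlu$. The difference $z = x$ has weight $m \ge d \geq 2$ (indeed $m\geq 2$ suffices here). Then any $z'\in \NN^2$ of weight $m$ gives $x + [z'] = [\,\text{weight } 2m\,] = \tlu$. Take $z' = t_1 + (m-1)t_2$ versus $z'' = 2t_1 + (m-2)t_2$; since $m\ge 2$ these both exist. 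If $m \ge d$ these two represent the same class $x$, so I should instead pick differences of \emph{different} weight or below the threshold. Better: use differences of weight $<d$, which are distinct elements of $R_d$. Since $\w(x)\geq d$, $x + t_1$ and $x + t_2$ both lie in the class $\iw(m+1)$, and are equal; so I take $\tlu := x + u'$ where I need $\tlu\in U$. Cleanest: let $\tlu := x + x\in U$ and note $x + y = \tlu$ for every $y \in \NN^2$ with $\w(y)=m$. Among those, the classes $[y]$ coincide (all equal $x$) since $m\ge d$ — so instead split: $\tlu = x + x$ and also $\tlu$ equals $x+y$ for any $y$ with $\w(y) = m$ only; to get distinct $y$'s I use the base point $x' := x + x$ of weight $2m$ and $\tlu := x' + x$; then $x' + y = \tlu$ for any $y$ of weight $m$, still one class.

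So the real device: vary the weight split. Take $u_1 := x\in U$ (weight $m\ge d$) and $\tlu := 3x$. Then $u_1 + 2x = \tlu$ and also $u_1 + y = \tlu$ for any $y$ with $\w(y)=2m$ — same class again. The obstacle is that differences are determined by weight whenever the difference has weight $\ge d$; hence I must produce differences of weight $<d$ that are distinct. Use $d\ge 2$: take $u_1 := x$ and $\tlu := x + x$ but rewrite $\tlu$'s weight as $m + m$; the pair $(x,\tlu)$ admits difference $y$ of weight exactly $m$ only. Thus instead choose $x$ with $\w(x)\ge d$ and $\tlu := x + e$ where $e:=u^{k}$ multiple of $u$ with $\w(e)$ small. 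Let $w_0=\w(u)$; if $w_0<d$ then $u$ itself is a class of weight $<d$, and $\tlu = x+u$ also equals $x + u^\ast$ for $u^\ast$ any other element of weight $w_0$ — distinct from $u$ when $w_0\ge1$ (e.g.\ swap coordinates, or if $u$ is symmetric-ish pick another of the $w_0+1\ge2$ vectors). This gives two distinct differences, contradicting Definition~\ref{defn:1.1}. If $w_0\geq d$, then $u$ itself has weight $\ge d\ge2$; I expect this to be the main obstacle: there the differences available between elements of $U$ have weight $\ge d$ and are unique classes. I would handle it by noting $u\leq_R 2u$ with difference $u$, and also $u + [t_1 + y] = 2u$ where $y$ has weight $w_0-1$ — still weight $w_0$; so one should check whether the statement needs $U$ to contain a small-weight element, and if not, look for nonuniqueness via $u + z = u$ (i.e.\ $u\le u$): $u + 0 = u$ and $u + t_1 \ne u$ since weights differ. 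Hmm; I would then re-examine whether $\{0\}\cup\{$weight $\ge d\}$-type submonoids are genuinely excluded, treating that case as the crux.
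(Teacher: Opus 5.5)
\textbf{The missing case is not a gap you can close: the statement is false there.}

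Your first case is correct. Suppose $U$ contains some $u\neq 0$ with $w_0=\w(u)<d$, and put $x=nu$ with $nw_0\ge d$. Then the pair $x\leq_R (n+1)u$ in $U$ has two distinct differences: $u$, and any other $u^\ast$ of weight $w_0$ (a different element of $R_d$ because $w_0<d$). So $U$ is not almost subtractive.

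You leave open, as ``the crux'', the case where every nonzero element of $U$ has weight $\ge d$. In that case the proposition fails. Let $\xi$ be the class of weight $d$ and take $U=\NN\xi$.
\begin{itemize}
\item For $u=0$ the difference is $\tlu$ itself.
\item For $u=k\xi\leq_R\tlu=\ell\xi$ with $k\ge 1$, any $z\in R_d$ with $u+z=\tlu$ has weight $(\ell-k)d$. That weight is either $0$, forcing $z=0$, or $\ge d$, and $R_d$ has exactly one element of each weight $\ge d$.
\end{itemize}
So $U$ is a nonzero almost subtractive submonoid of $R_d$. More generally, a submonoid $U\subseteq\00\cup\{\text{classes of weight}\ge d\}$ is almost subtractive if and only if any two distinct nonzero elements of $U$ have weights differing by at least $d$. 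For instance, $U=\NN\xi'$ works with $\xi'$ the class of any weight $m\ge d$.

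The paper's proof breaks on the same example. It takes the element $z=r\xi+n_1t_1+n_2t_2$ of least weight $\ge d$ in $U$ and compares $z+(d-n_1-n_2)t_1$ with $z+(d-n_1-n_2)t_2$. For $U=\NN\xi$ one has $n_1=n_2=0$, so $dt_1$ and $dt_2$ are the same element $\xi$ of $R_d$, and no second difference appears. The proof also tacitly assumes $(r+1)\xi\in U$. This fails, e.g., for $U=\NN\xi'$ with $\xi'$ of weight $d+1$, since $2d$ is not a multiple of $d+1$ when $d\ge 2$.

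What your first case actually proves is the correct replacement statement: every almost subtractive submonoid of $R_d$ is contained in $\00\cup\{\text{classes of weight}\ge d\}$. Together with the spacing criterion above, this describes all of them.
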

\begin{proof}
Assume  that  $U$ is a nonzero submonoid $\neq \00$ of $R_d$. Choose an element $[z]$ of smallest weight~ $\geq d$ in $U$, and write
$ z= r \xi + n_1 t_i + n_2 t_2$,
where  $w(\xi) =d$, $r \geq 1$, and $0 \leq n_1 + n_2 < d$.
Then,  $(r+1) \xi$ can be obtained from $z$  by  two different ways within $R_d$:
$$ z + (d - n_1 - n_2) t_1 = z + (d - n_1 - n_2) t_2 = (r + 1) \xi.$$
Thus, the \as property fails for $U$.
\end{proof}

To positively  answer  the existence of \as submonoids, we restrict
 $R_d$ to  its  submonoid
\begin{equation}\label{eq:1.9}
  R_d^+ := \big\{ [n_1t_1 + n_2t_2] \ds | n_1 > n_2 \big \}.
\end{equation}

\begin{prop}\label{prop:1.15} The \as submonoids of $R^+_d$ are precisely the submonoids of $\N_0 t_1$. If $U$ is such a submonoid and $U \neq \00$, then $U  = Hx$, where  $x$ the element of smallest weight in $U \sm \00$, and $H$ a submonoid of $\N_0$; both uniquely determined by $U$.
\end{prop}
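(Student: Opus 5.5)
The plan is to prove the two inclusions separately and then read off the structure statement from the identification $\N_0 t_1\cong\N_0$ given by $\w$.

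\emph{Step 1: every \as submonoid $U$ of $R^+_d$ lies in $\N_0 t_1$.}
Every class of weight $\geq d$ is an entire fibre $\iw(m)$, and this fibre contains $mt_1$. So these heavy classes already lie in $\N_0 t_1$ (as classes), and only light elements need attention.

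Suppose $U$ contains a light element $x=[n_1t_1+n_2t_2]$ with $n_1>n_2\geq 1$ and $\w(x)<d$. Since $U$ is a submonoid, it contains multiples $kx$, which become heavy once $k\w(x)\geq d$. I would imitate the argument of Proposition~\ref{prop:1.14b}:
\begin{itemize}
\item take $u\in U$ light and $\tlu\in U$ heavy with $u\leq_R\tlu$;
\item produce two distinct light elements $z=[at_1+bt_2]$ and $z'=[(a+1)t_1+(b-1)t_2]$ of $R^+_d$ of the same weight $\w(\tlu)-\w(u)$;
\item then $u+z$ and $u+z'$ have the same weight $\geq d$, so $u+z=u+z'=\tlu$ and $U$ is not \as.
\end{itemize}
Both $z$ and $z'$ must satisfy $n_1>n_2$. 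This forces the weight $e=\w(\tlu)-\w(u)$ to satisfy $3\leq e<d$, or else $z$ itself must be heavy and then unique.

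So the heart of Step 1 is a choice of $u$ and $\tlu$ in the submonoid generated by $x$ with a gap $e$ in that window. I would try $u=x$ and $\tlu=kx$ with $k$ minimal so that $kx$ is heavy, and otherwise pass to $u=(k-1)x$, adjusting by adding $t_1$'s inside $R^+_d$. Exhibiting such a pair in every case, including small $d$ and small $\w(x)$, is the step I expect to be the main obstacle.

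\emph{Step 2: the converse, that every submonoid of $\N_0 t_1$ is \as in $R^+_d$.}
Given $u=mt_1\leq_R\tlu=Mt_1$ in $U$, any difference $z$ must have weight $M-m$.
\begin{itemize}
\item If $M-m\geq d$, the difference is the unique class $\iw(M-m)$.
\item If $M<d$, everything is light. The classes are then honest elements of $\NN^2$, and cancellation in $\NN^2$ gives $z=(M-m)t_1$.
\end{itemize}
The delicate case is $m<d\leq M$ with $M-m<d$. Here several light $z$ of the right weight could exist, and the same counting as in Step 1 shows where uniqueness can fail. I would check this case carefully. If it genuinely fails, the statement has to be read with the constraint built into Step 1, namely that $U$ avoids such configurations, and the characterization is then sharpened accordingly.

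\emph{Step 3: the structure of $U$.}
Transport $U\subseteq\N_0 t_1$ to a submonoid of $\N_0$ via $\w$. Let $x$ be the element of smallest weight in $U\sm\00$. Define $H$ as the set of $h\in\N_0$ with $hx$ in $U$, adjusted so that $U=Hx$. Uniqueness holds because $x$ is intrinsically determined as the $\leq_R$-minimum of $U\sm\00$, and $H$ is then recovered from $U$ and $x$.
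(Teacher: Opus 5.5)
Your Step~1 is sound, and the obstacle you expect is not there. For a light $x=[n_1t_1+n_2t_2]\in U$ with $n_1>n_2\ge 1$ one has $3\le w(x)<d$, and your first attempt already works. Let $k\ge 2$ be minimal with $k\,w(x)\ge d$, and put $u=x$, $\tilde u=kx$, $e=(k-1)w(x)$. Minimality gives $3\le e\le d-1$. Hence $(k-1)x$ and $e\,t_1$ are two distinct light elements of $R^+_d$ with $x+(k-1)x=x+e\,t_1=kx$.

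This argument stays inside the submonoid generated by $x$, so it covers every $U\not\subseteq\mathbb N_0t_1$. The paper's proof instead assumes $U\supsetneq\mathbb N_0t_1$ and compares $x$ with $(r+1)\xi$. That comparison is faulty in two ways. First, $x+(n_1-n_2)t_1+(d-n_1)t_2$ has weight $n_1+(r+1)d$, not $(r+1)d$. Second, $(d-n_1-n_2)t_1$ is the \emph{only} difference whenever $d-n_1-n_2\le 2$ (e.g.\ $d=4$, $x=2t_1+t_2$). So your route is the more reliable one for this half.

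The genuine gap is Step~2, and it cannot be closed, because the converse is false for $d\ge 4$. Your suspicion about the delicate case is right and must be settled negatively. Take $u=t_1$ and $\tilde u=[dt_1]$ in $\mathbb N_0t_1$. Then $t_1+(d-1)t_1=t_1+\big((d-2)t_1+t_2\big)=[dt_1]$, and $(d-2)t_1+t_2\in R^+_d$ because $d-2>1$. So $\mathbb N_0t_1$ itself is not a.s.\ in $R^+_d$.

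Your case split also omits $m\ge d$ with $M-m<d$, which fails the same way (e.g.\ $[dt_1]+3t_1=[dt_1]+(2t_1+t_2)$). $R^+_d$ has exactly one element of weight $e$ for $e\le 2$ and at least two for $3\le e<d$. This gives the correct statement: $Ht_1$ is a.s.\ in $R^+_d$ if and only if there are no $m<M$ in $H$ with $M\ge d$ and $3\le M-m\le d-1$. For $d\le 3$ the condition is vacuous (indeed $R^+_d\cong\mathbb N_0$ via $w$), and the first sentence of the proposition holds. For $d\ge 4$ it does not. The paper's proof never addresses this direction.

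Step~3 also hides a false claim: ``adjusted so that $U=Hx$'' cannot be carried out in general. Two examples:
\begin{enumerate}
\item For $d\le 3$, $U=\{0\}\cup\{mt_1: m\ge 2\}$ is a.s.\ because the ambient monoid is cancellative. Its least nonzero element is $x=2t_1$, but $3t_1\notin\mathbb N_0x$.
\item For $d=4$, $U=\{0\}\cup\{mt_1: m \text{ even}, m\ge 4\}$ is a.s.\ by the criterion above, since all differences are even. Yet $6t_1\notin\mathbb N_0\cdot 4t_1$.
\end{enumerate}
What is true, and trivially unique, is $U=Ht_1$ with $H=w(U)$. Your uniqueness argument should be attached to that form rather than to $Hx$.
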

\begin{proof}
Suppose  that $U$ is a submonoid of $R^+_d$ with $\NN t_1 \subsetneq U$.
Let $ x \in U \sm \NN t_1$ be an  element of smallest weight. Write
$$ x = n_1 t_1 + n_2 t_2 + r \xi ,$$
where $w(\xi) = d$, $ r \geq 0$, $n_1 + n_2 < d$, and  $n _2 < n_1$.
We have
 $n_2 >  0$,
  since otherwise $x$ would belong to ~$\NN t_1$.
  %We can reach $(r+1) \xi$ from $x$ only in one way, by adding $(d - n_1) t_1$. If $n_2 > 0$, $n_1 = n_2 + m$, then also $m > 0$, and
  We can obtain  $(r+1) \xi$  from $x$ in two different ways:
\begin{align*}
     (r+1) \xi & = x + (n_1 - n_2) t_1 + (d- n_1) t_2 \\
 & = x + (d- n_1 - n_2) t_1 .
      \end{align*}
      Note that  $n_1 -n_2 > d - n_1$.
Thus, $U$ is not \as in~ $R_d^+$.
The uniqueness statement for \as submonoids of ~$\N_0 t $ is clear by using  the weight function.
\end{proof}
\begin{cor}\label{cro:1.16}
$\N_0 t_1$ is the unique maximal \as submonoid of $R^+_d.$ It is subtractive in $R^+_d$.
\end{cor}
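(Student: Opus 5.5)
The corollary has two parts: maximality and uniqueness of $\N_0 t_1$, and subtractivity of $\N_0 t_1$ in $R^+_d$. I will prove them in that order.

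\emph{Maximality and uniqueness.} I apply Proposition~\ref{prop:1.15} directly. First I check that $\N_0 t_1$ is itself almost subtractive in $R^+_d$. Suppose $m t_1 + z = m' t_1$ with $z \in R^+_d$. The weight $\w$ is compatible with $\E_d$, so it descends to $R^+_d$, and it forces $\w(z) = m'-m$. If $m' < d$, the classes involved are singletons, so $z = (m'-m)t_1$ is unique. If $m' \geq d$, then $m' t_1$ is the class $\iw(m')$. Here I must show that only one class $z$ of weight $m'-m$ in $R^+_d$ satisfies $m t_1 + z \in \iw(m')$.

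This is the delicate point, since several classes of that weight could a priori work. The case $m \geq d$ is the worst, because then $m t_1 + z$ lies in $\iw(m')$ for every $z$ of the right weight. In that case I will rely on Proposition~\ref{prop:1.15}, which already asserts that the submonoids of $\N_0 t_1$ are almost subtractive. Taking that as given, every almost subtractive submonoid of $R^+_d$ is contained in $\N_0 t_1$, and $\N_0 t_1$ is almost subtractive. Hence $\N_0 t_1$ is the unique maximal one: any maximal almost subtractive submonoid, which exists by Corollary~\ref{cor:1.4}, is contained in it and therefore equals it.

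\emph{Subtractivity.} Let $u = m t_1 \in \N_0 t_1$ and $z \in R^+_d$ with $u + z \in \N_0 t_1$. I need $z \in \N_0 t_1$. Write $z = [n_1 t_1 + n_2 t_2]$ with $n_1 > n_2$, or $z = 0$.
\begin{enumerate}\eroman
\item If $\w(z) < d$, then $z$ is its own unique representative, and I must rule out $n_2 > 0$.
\item If $\w(z) \geq d$, then $z$ is the full class $\iw(\w(z))$, which contains $\w(z) t_1$. So $z = [\w(z) t_1] \in \N_0 t_1$ automatically.
\end{enumerate}
In case (i), if $m + \w(z) < d$, then $m t_1 + z$ is a singleton with a nonzero $t_2$-coordinate, so it is not in $\N_0 t_1$ unless $n_2 = 0$. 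The remaining situation is $n_2 > 0$ with $m + \w(z) \geq d$. Then $u + z$ is a class of weight $\geq d$, and that class does equal $[(m + \w(z)) t_1] \in \N_0 t_1$. This looks like a counterexample to subtractivity: $t_1 + (t_1 + t_2)$ for $d = 2$ would be one.

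So this step is the main obstacle. I will first try to resolve it through the precise conventions of $R^+_d$, namely which representatives count as $n_1 > n_2$ and whether sub-$d$ elements with $n_2 > 0$ really lie in $R^+_d$. If the literal statement fails there, I would restrict the subtractivity claim to the regime where the weights stay below $d$, or reinterpret it as subtractivity relative to the elements allowed by Proposition~\ref{prop:1.15}.
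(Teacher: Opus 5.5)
Your proposal has a genuine gap in both halves, and neither can be closed, because for $d\ge 4$ the corollary is false. You located the trouble correctly in each half. In the first half you fall back on Proposition~\ref{prop:1.15}, which is also all the paper's proof does. But the direction you need, that $\N_0 t_1$ itself is a.s.\ in $R^+_d$, is never actually argued there, and your own remark about differences that land in weight $\ge d$ refutes it.

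Take $d\ge 4$, $u=[t_1]$, $\tilde u=[d t_1]$, $z=[(d-1)t_1]$ and $z'=[(d-2)t_1+t_2]$. Both $z$ and $z'$ have weight $d-1<d$, so they are distinct singleton classes, and both lie in $R^+_d$ because $d-2>1$. Now $u+z=[dt_1]$, while $u+z'=[(d-1)t_1+t_2]$ is the full class $\iw(d)=[dt_1]$. Thus $u+z=u+z'=\tilde u$ with $z\ne z'$, so $\N_0 t_1$ is not a.s.\ in $R^+_d$.

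The same pair refutes subtractivity: $u$ and $u+z'$ lie in $\N_0 t_1$, but $z'$ does not. Even ``there is a unique maximal a.s.\ submonoid'' fails. The submonoids $\N_0[dt_1]$ and $\N_0[(d+3)t_1]$ are each a.s., since every difference between distinct elements of either has weight a positive multiple of $d$ (resp.\ $d+3$) and is therefore a single class. Yet $[dt_1]+[3t_1]=[dt_1]+[2t_1+t_2]=[(d+3)t_1]$, so no a.s.\ submonoid contains both, and Corollary~\ref{cor:1.4} yields two distinct maximal ones.

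Your own candidate counterexample does not work. For $d=2$ the element $t_1+t_2$ has weight $2=d$, so $[t_1+t_2]=[2t_1]\in\N_0 t_1$, which is your case (ii). An element of $R^+_d$ outside $\N_0 t_1$ needs $n_1>n_2\ge 1$ and $n_1+n_2<d$, which forces $d\ge 4$. For $d\le 3$ one has $R^+_d\cup\{0\}=\N_0 t_1\cong\N_0$ via the weight, and the corollary is trivially true.

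What survives for every $d$ is the other half of Proposition~\ref{prop:1.15}: every a.s.\ submonoid $U$ of $R^+_d$ is contained in $\N_0 t_1$. Indeed, if $x=[n_1t_1+n_2t_2]\in U$ with $n_2\ge 1$ and $w(x)<d$, choose $k$ with $k\,w(x)\ge d$. Then both $x$ and $[w(x)t_1]$ are differences from $kx$ to $(k+1)x$. So the right conclusion is not to reinterpret conventions: the statement must be restricted to $d\le 3$, or weakened to this containment.
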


\begin{proof}
  Evident by Proposition \ref{prop:1.15}.
\end{proof}

More generally, we may work in quotients of the free monoid $\N_0^r$ for $r \geq 2$,
\begin{equation}\label{eq:1.10}
\N_0 ^r = \bigoplus_{i=1}^r\N_0 t_i, \qquad t_i = (0, \dots, 0, \underset{i}{1}, 0, \dots, 0).
\end{equation}
We take  the weight function $w$ with $w(t_i) = 1$ for $1 \leq i \leq r$ and $w((0,\dots,0)) = 0$. As seen  above (Proposition ~\ref{prop:1.14}), for any $d \geq 1$, the finest equivalence relation  $\E_d$  for  which $\iw(d) = \xi$ is an equivalence class  is given by ~\eqref{eq:1.7}.
 We can then identify  patterns of \as submonoids of
\begin{equation}\label{eq:1.11}
  R = \N_0^r / \E_d
\end{equation}
by the same method as above.% We give an example.
\begin{examp}
  Let $r = 3 $, and let $d \geq 2$. Let $U$ be the submonoid  of $R = \N_0^3 / \E_d$ consisting of the  classes
  $  [n_1t_1 + n_2 t_2 + n_3 t_3] \in E_d$ such that  $n_1 > n_2 + n_3 $ and $n_1 + n_2 + n_3 > d$. Then, $R$  has the following submonoids containing $t_1$:
  $$
  \begin{array}{lllll}
    U_{12}  & =  \{ [n_1 t_1 + n_2 t_2] \ds | n_1 > n_2  \}, & &
    U_{13}  & =  \{ [n_1 t_1 + n_3 t_3] \ds | n_1 > n_3  \} .
  \end{array}
  $$
  Clearly, $U_{12} \cap U_{13} = \N_0 \xi$, and we have the following amalgamation diagram in $R$
$$
 \begin{array}{c}
   \xymatrixrowsep{3mm}
\xymatrixcolsep{6mm}
  \xymatrix@R=0.5em@C=0.5em{
    &  U  \ar@{-}[rd] \ar@{-}[ld] \\
        U_{12} \ar@{-}[rd] & & U_{13} \ar@{-}[ld]  \\
    & \N_0 \xi    &
   }
   \end{array}
$$
By the same reasoning as above it is easily seen that $\N_0 t_1$ is the unique maximal \as submonoid of each of  three monoids $U$, $U_{12},$ and $U_{13}$, and that $\N_0 t_1$ is subtractive in each of them. Every \as submonoid of $U$ has the form~  $H t_1$, where  $H$ a submonoid of $\N_0$.
\end{examp}

\section{The almost subtractive submonoid $[U \Ng s]_0$ with
 an additive idempotent  $s$}\label{sec:2}

Given a \ub monoid $R$, there is another way to produce new \as submonoids of $R$ from existing  ones. Let
\begin{equation}\label{eq:2.1}
  \Idm(R) := \{ s \in R \ds | s +s =s\}
\end{equation}
denote the set of additive idempotents of $R$.
Clearly, this is a submonoid of $R$.  Given a submonoid $U$ of $R$ and  $s \in \Idm(R)$,  we define
\begin{equation}\label{eq:2.2}
  [U\Ng s] = [U \; \neg_R \; s] := \{ x \in R \ds | x+s \in U\}.
  \end{equation}
It is immediate that $[U \Ng s]$ is closed under addition, and so
\begin{equation}\label{eq:2.3}
  [U \Ng s]_0 := [U \Ng s] \cup \00
  \end{equation}
is a submonoid of $R$. Indeed, if $x_1 + s = u_1$ and $x_2 + s = u_2$, then
$ (x_1 + x_2) + s = (x_1 + s) + (x_2 + s ) = u_1 + u_2,$
since $s +s =s $.

\begin{thm}\label{thm:2.1}
Let  $U$ be an  \as submonoid of~ $R$, and let $s \in \Idm(R)$. Then, $[U \Ng s]_0$ is  \as in~ $R$.
\end{thm}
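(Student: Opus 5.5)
The plan is to verify Definition~\ref{defn:1.1} directly for $[U \Ng s]_0$, transferring the uniqueness of differences from $U$ by adding the idempotent $s$. We already know from the discussion preceding the theorem that $[U \Ng s]_0$ is a submonoid of $R$, so only the uniqueness of differences has to be checked. Take $x, \tilde x \in [U \Ng s]_0$ and $z, z' \in R$ with $x + z = x + z' = \tilde x$. We must show $z = z'$.

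The main case is $x, \tilde x \in [U \Ng s]$. Put $u := x + s$ and $\tlu := \tilde x + s$; by definition both lie in $U$. Adding $s$ to the given equalities gives
$$u + z = (x + z) + s = \tilde x + s = \tlu, \qquad u + z' = (x + z') + s = \tilde x + s = \tlu .$$
Hence $u \leq_R \tlu$ in $U$, and $z, z'$ are both differences of $u$ and $\tlu$. Since $U$ is \as in $R$, it follows that $z = z'$. Note that idempotency of $s$ is not even needed here; it was needed only for $[U \Ng s]$ to be closed under addition.

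It remains to handle the degenerate cases in which $0$ enters only through the adjoined $\00$.

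\begin{itemize}
\item If $x = 0$, then $z = 0 + z = \tilde x = z'$ directly.
\item If $\tilde x = 0$, then $x + z = 0$ gives $x \leq_R 0$. Also $0 \leq_R x$ always holds. Since $R$ is \ub, the ordering $\leq_R$ is antisymmetric, so $x = 0$, and we are back in the previous case.
\end{itemize}

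I expect no real obstacle. The only point needing care is this bookkeeping with the adjoined $0$, where the \ub assumption on $R$ is used.
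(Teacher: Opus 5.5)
Your proof is correct and follows the paper's argument: adding $s$ to $x+z=x+z'=\tilde x$ yields two differences between $x+s$ and $\tilde x+s$ in $U$, and the case $x=0$ is trivial. Your use of the antisymmetry of $\leq_R$ to show that $x \neq 0$ forces $\tilde x \neq 0$ justifies a step the paper states without justification.
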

\begin{proof}
  Let $x,y \in [U \Ng  s]_0$, $z_1, z_2 \in R$, and
  \begin{equation*}
    x+z_1 = x+z_2 = y. \tag{$*$}
  \end{equation*}
 We need to verify that $z_1 =z_2$, which is obvious for  $x = 0$. If $x  \neq 0$, then  $y \neq 0 $ as well, and hence  $x,y \in  [U \Ng  s]$. Thus  $x + s \in U$ and $y +s \in U$.  Adding $s$ to  $(*)$ gives
  $ z_1 = z_2,$ since $U$ is \as\ in $R$.
\end{proof}
If $s =0$, then
$[U \Ng  s]= U$. Thus,  Theorem \ref{thm:2.1} is meaningful only when
$R$ has  nonzero  idempotents.
\begin{remark}\label{rem:2.2} If $s$ is a nonzero idempotent, then $s \notin U$, since $s +s = s + 0 =s$, while $U$ is cancellative.
\end{remark}
\begin{thm}\label{thm:2.3}
Let  $U$ be an  \as~submonoid of $R$.
\begin{enumerate}\ealph
  \item For every $s \in \Idm(R)$, the semigroup $[U \Ng s]$ is a union of cosets $x + U $ of $U$.

  \item The union of all these cosets
  \begin{equation}\label{eq:2.4}
    \tlU := \bigcup \big\{ [U \Ng  s] \ds | s \in \Idm(R)\big\}
  \end{equation}
  is an \as submonoid of  $R$ containing $U$.

\end{enumerate}
\end{thm}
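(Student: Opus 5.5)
For part (a) I would fix $s \in \Idm(R)$ and show that $[U \Ng s]$ is stable under adding elements of $U$. Take $x \in [U \Ng s]$, so $x + s \in U$, and take $u \in U$. Then
$$(x+u) + s = (x+s) + u \in U,$$
because $U$ is closed under addition. Hence $x + u \in [U \Ng s]$, so $x + U \subseteq [U \Ng s]$. Since $x = x + 0$ lies in $x + U$, this gives
$$[U \Ng s] = \bigcup_{x \in [U \Ng s]} (x + U),$$
which is the claim. No cancellation or a.s. hypothesis is needed here.

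For part (b) I would first check that $\tlU$ contains $U$ and is a submonoid.
\begin{itemize}
\item Containment: taking $s = 0 \in \Idm(R)$ gives $[U \Ng 0] = U$, so $U \subseteq \tlU$ and in particular $0 \in \tlU$.
\item Closure: let $x \in [U \Ng s_1]$ and $y \in [U \Ng s_2]$, and put $s = s_1 + s_2$. This $s$ is idempotent, since $\Idm(R)$ is a submonoid of $R$ (addition is commutative). Then
$$(x+y) + s = (x + s_1) + (y + s_2) \in U,$$
so $x + y \in [U \Ng s] \subseteq \tlU$.
\end{itemize}

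The a.s. property is the main step, and I would run it like the proof of Theorem~\ref{thm:2.1}. Suppose $x, y \in \tlU$ and $z_1, z_2 \in R$ satisfy $x + z_1 = x + z_2 = y$. We have $x \in [U \Ng s_1]$ and $y \in [U \Ng s_2]$ for possibly different idempotents. I would pass to the common idempotent $s = s_1 + s_2$. Both $x$ and $y$ lie in $[U \Ng s]$: for instance $x + s = (x + s_1) + s_2$, and adding an idempotent to an element of $[U \Ng s_1]$ must keep us in the relevant set.

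The obstacle is that $s_2$ need not lie in $U$. Indeed, by Remark~\ref{rem:2.2} a nonzero idempotent is never in $U$. So the inclusion $[U \Ng s_1] \subseteq [U \Ng s_1 + s_2]$ is not automatic. I would try the following routes in order.
\begin{enumerate}
\item Show that the sets $[U \Ng s]$ are monotone in $s$ with respect to $\leq_R$, using idempotence: $x + s_1 + s_2 = (x + s_1) + s_2$, and try to show this element is in $U$ via the coset structure from (a).
\item If that fails, exploit the a.s. property of $U$ more cleverly. From $y + s_2 \in U$ and $x + s_1 \in U$, compare $(x+s_1) + (y+s_2)$ with $(x+s_1) + (x + z_i + s_2)$, both formed inside $U$. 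The differences of two elements of $U$ are unique, so adding $x + s_1$ and $s_2$ to the relation $x + z_i = y$ gives
$$(x + s_1) + (z_i + x + s_2) = (x + s_1) + (y + s_2).$$
Here $u := x + s_1 \in U$ and $\tlu := (x + s_1) + (y + s_2) \in U$ satisfy $u \leq_R \tlu$. The a.s. property of $U$ then forces $z_1 + x + s_2 = z_2 + x + s_2$.
\end{enumerate}

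It would remain to cancel $x + s_2$. If $x + s_2 \in U$ we are done by the a.s. property again. Otherwise I would reduce to $s_1 = s_2$ by comparing with the idempotent $s_1 + s_2$. Once both $x$ and $y$ lie in a single $[U \Ng s]$, Theorem~\ref{thm:2.1} gives $z_1 = z_2$ directly.
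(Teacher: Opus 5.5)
Your part (a), the inclusion $U=[U\Ng 0]\subseteq\tlU$, and closure under addition via $s_1+s_2\in\Idm(R)$ are correct and match the paper's argument, namely its inclusion \eqref{eq:2.5}. The gap is the almost subtractivity of $\tlU$, which is the real content of (b). You never place $x$ and $y$ in a common $[U\Ng s]$, so Theorem~\ref{thm:2.1} never becomes applicable.

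\begin{itemize}
\item Route~1 only restates the needed inclusion $[U\Ng s_1]\subseteq[U\Ng s_1+s_2]$. The coset structure from (a) cannot supply it, because those cosets are translates by elements of $U$, and $s_2\notin U$.
\item Route~2 proves nothing. The equality $z_1+x+s_2=z_2+x+s_2$ already follows from the hypothesis $x+z_1=x+z_2$ by adding $s_2$, so the a.s.\ property of $U$ does no work there. ``Cancelling $x+s_2$'' is just the original problem again.
\item The final step, ``reduce to $s_1=s_2$ by comparing with $s_1+s_2$,'' comes with no argument.
\end{itemize}

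The missing idea is that this troublesome case never occurs: for $0\neq s\in\Idm(R)$ one has $[U\Ng s]=\emptyset$. Indeed, if $u:=x+s\in U$, then $u+s=x+s+s=u=u+0$. So the pair $u\leq_R u$ in $U$ has two differences, $0$ and $s$, and almost subtractivity forces $s=0$. This is the argument of Remark~\ref{rem:2.2}, applied to $u$ instead of $s$. Hence in your setting $s_1=s_2=0$, so $x,y\in U$ and $z_1=z_2$ follows directly. In fact $\tlU=[U\Ng 0]=U$, and (a) is trivial as well.

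For comparison, the paper finishes by calling the sets $[U\Ng s]_0$ an upward directed system containing $U$ and taking the union, as in Proposition~\ref{prop:1.3}(b). That directedness is the same issue you flagged. Its stated reason, $U\subseteq[U\Ng s]_0$, is actually false for $s\neq 0$ and $U\neq\{0\}$, since there $[U\Ng s]_0=\{0\}$. The family is directed only because it reduces to $\{U,\{0\}\}$. So your obstacle is genuine, and the emptiness observation is what closes it.
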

\begin{proof} (a):  If $s +x \in U$, then $s +x + U \subseteq U$, and hence  $x + U \subseteq [U \Ng  s]$.
\pSkip (b):
Given $s,t \in \Idm(R)$,  $x \in [U \Ng  s]$, and $y \in[U \Ng s]$,  we have $s +x = u \in U$ and $t +y = v \in U$. Adding these equalities gives $s+t+x + y = u +v \in U$.
Thus,
\begin{equation}\label{eq:2.5}
  [U\Ng  s] + [U \Ng  t] \subseteq [U \Ng  s+t].
\end{equation}
In particular, $[U \Ng  s] + U = [U \Ng  s] $, since $[U \Ng  0] = U$, and so
\begin{equation}\label{eq:2.6}
  U \subset [U \Ng  s]_0
\end{equation}
for all $s \in \Idm(R)$. The sets $[U \Ng  s]_0$ and  $s \in \Idm(R)$ form  an upward  directed system of \as submonoids of $R$. Thus, their union $\tlU$ is again an \as  submonoid of $R$.
\end{proof}

\begin{cor}\label{cor:2.4}
Let $U$ be a maximal  \as submonoid of $R$.
  Then,
  %\begin{equation}\label{eq:1.12}
   $$ s +x \in U \dss{\Rightarrow} x \in U$$
  %\end{equation}
  for every $s \in \Idm(R)$ and $x \in R$.
\end{cor}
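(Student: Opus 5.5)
The plan is to combine the maximality of $U$ with Theorem~\ref{thm:2.1} and the inclusion \eqref{eq:2.6} from the proof of Theorem~\ref{thm:2.3}. Fix $s \in \Idm(R)$. By Theorem~\ref{thm:2.1}, the submonoid $[U \Ng s]_0$ is \as in $R$. By \eqref{eq:2.6}, $U \subseteq [U \Ng s]_0$. Recall why this holds: $[U\Ng s] + U \subseteq [U \Ng s]$, and $[U\Ng s]$ is nonempty as soon as some $x$ satisfies $x+s\in U$.

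Maximality of $U$ among \as submonoids then forces equality, $[U \Ng s]_0 = U$. Now take $x \in R$ with $s + x \in U$. By definition \eqref{eq:2.2}, $x \in [U \Ng s] \subseteq [U \Ng s]_0 = U$, which is the claim.

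The only point needing care is the inclusion $U \subseteq [U\Ng s]_0$ when $s \neq 0$. The argument in Theorem~\ref{thm:2.3} writes $U \subseteq [U \Ng s] + U$, which tacitly uses a starting element of $[U\Ng s]$. In our situation such an element exists by hypothesis: $x$ itself lies in $[U\Ng s]$. Hence $x + U \subseteq [U \Ng s]$ by Theorem~\ref{thm:2.3}(a).

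To avoid any gap, I would argue directly with the submonoid $U' := U + \N_0 x$. Any element of $U'$ is either in $U$ or has the form $u + kx$ with $k \geq 1$; in the latter case $u + kx + s = u + (k-1)x + (x+s) \in U$, using $s+s=s$ and closure of $U$ when $k \ge 2$. Hence $U' \subseteq U \cup [U\Ng s]$. It remains to see that $U \subseteq [U \Ng s]_0$ as well. For this, the cleaner route is to replace $[U\Ng s]_0$ by the submonoid $U \cup [U \Ng s]$. This set is closed under addition, since $[U\Ng s] + U \subseteq [U\Ng s]$ by the definition and closure of $U$. It is also \as: for $a,b$ in it with $a+z_1=a+z_2=b$, adding $s$ lands in $U$ whenever $a \in [U\Ng s]$; if $a\in U$ and $b\in U$, then $U$ being \as applies directly; and if $a\in U$ while $b\in[U\Ng s]$, adding $s$ gives $(a+s)+z_i = b+s\in U$. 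That last case is the main obstacle, because $a+s$ need not lie in $U$. So I would first try to show $a+s \in U$ in that case, and failing that, fall back on \eqref{eq:2.6} as stated in the paper. Granting this, maximality gives $U \cup [U\Ng s] = U$, hence $x\in U$.
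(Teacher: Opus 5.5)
There is a genuine gap, exactly at the step you flag. Maximality of $U$ can only be applied to an a.s.\ submonoid that \emph{contains} $U$, and nothing in your proposal shows $U\subseteq[U\Ng s]_0$. The facts $[U\Ng s]+U\subseteq[U\Ng s]$ and $x\in[U\Ng s]$ give only $x+U\subseteq[U\Ng s]$.

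Falling back on \eqref{eq:2.6} is not legitimate, because that inclusion is false in general, even for maximal $U$. Take $R=\N_0\cup\{\infty\}$ with $\infty$ absorbing. Then $U=\N_0$ is a maximal a.s.\ submonoid: any larger submonoid is all of $R$, and $\infty+0=\infty+1$. The element $s=\infty$ is idempotent, and $[U\Ng s]=\emptyset$, so $[U\Ng s]_0=\{0\}\not\supseteq U$.

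Your direct attempt to show that $U\cup[U\Ng s]$ is a.s.\ also remains incomplete. Besides the case $a\in U$, $b\in[U\Ng s]$ that you identify, the case $a\in[U\Ng s]$, $b\in U\setminus[U\Ng s]$ is not covered either, since then $b+s\notin U$. (A minor point: $u+kx+s\in U$ holds because $u+kx+s=u+k(x+s)$ by $ks=s$. It does not follow via $(k-1)x$, which need not lie in $U$.)

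The missing idea is a one-line consequence of almost subtractivity. If $s\in\Idm(R)$ and $w:=s+x\in U$, then $w+s=x+(s+s)=w=w+0$. Uniqueness of the difference for the pair $w\le_R w$ in $U$ therefore forces $s=0$, and hence $x=w\in U$.

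This proves the statement for every a.s.\ submonoid, with no use of maximality. It also shows $[U\Ng s]=\emptyset$ for each nonzero idempotent $s$, so the problematic cases above never arise and $U\cup[U\Ng s]=U$.

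The paper instead argues via $\tlU=\bigcup_{s\in\Idm(R)}[U\Ng s]$ from Theorem~\ref{thm:2.3}(b) and concludes $\tlU=U$ by maximality. This union contains $U$ automatically, because $0\in\Idm(R)$ and $[U\Ng 0]=U$. If you want the maximality route, that union, not a single $[U\Ng s]_0$, is the right object. The observation above then shows directly that $\tlU=U$ is a.s.
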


\begin{proof}
  In consequence of Theorem \ref{thm:2.3},  we have $\tlU = U$.
\end{proof}

\section{Equivalence relations on additive monoids with weights}\label{sec:6}
To construct \as submonoids of a monoid $R$, we introduce equivalence relations defined by weight functions
\begin{defn}\label{def:6.1} A \textbf{weight function} on an additive monoid $R$ is  an additive map
$\w:R \to \N_{0}$ satisfying  $\iw(0) = \00$.
We then say,  briefly, that $R$ is a \textbf{monoid with weights}.
\end{defn}

As is customary for graded monoids,  we also write $\iw(d) = R_d$, and so
$R = \bigoplus_{d \geq 0} R_d$.

\begin{remark}\label{rem:6.1} If $R$ is a monoid with weights, then $R$ is \ub. Indeed, suppose that $x+a = y$ and  $y+b =x$. Then,
$\w(x)+ \w(a) + \w(b) = \w(x),$
whence $\w(a)+\w(b) = 0$. Since $\w(a), \w(b) \in \NN$,  it follows that $a=b= 0$, and hence  $x=y$.
\end{remark}

The prototype of an additive monoid with weights is the free additive  monoid
\begin{equation}\label{eq:6.1}
  R = \N_0^r = \bigoplus_{i=1}^r \NN t_i,
\end{equation}
where
$t_i = (0, \dots, 0, \overset{i}{1}, 0 , \dots, 0)$ is the $i$-th canonical basis element. This monoid  already appeared in \S\ref{sec:1}.
The monoid ~$\NN^r$ carries a unique weight function $\w:\NN^r \onto \NN$, with~
$\w(t_i) = 1$ for $i =1, \dots, r$, and hence
\begin{equation}\label{eq:6.2}
\w(n_1, \dots, n_r)= \w\bigg(
\sum_{i=1}^{r} n_i t_i \bigg) =
n_1 + \cdots + n_r,
\end{equation}
which we henceforth call the  \textbf{standard weight on} $\NN^r$.

\begin{defn}\label{def:6.3}
An additive equivalence relation $\E$ on a monoid $R$ with weight function
$\w:R \to \NN$ is called \textbf{weight-compatible}, if the elements in every equivalence class
$A = [x]_\E$ have the same weight; that is  $w|A$ is constant. Then, $\w$ induces a weight function
\begin{equation}\label{eq:6.3}
(w/\E)([x]_\E) := \w(x)
\end{equation}
 on the quotient $R/\E$.
\end{defn}

Note that a similar situation appeared in \S\ref{sec:1}, cf. \eqref{eq:1.8}.
Our main concern in this section is to obtain a tractable class of \as submonoids of additive monoids by factoring out a suitable weight-compatible equivalence relation~ $E$ on such a monoid $R$. We carry this out in the case where $R$ is finitely generated by classifying all weight-compatible additive equivalence relations on $R$.

We may choose a suitable surjective additive map
$\pi: \NN^r \onto R$ such that $\w \circ \pi : \NN^r \onto \NN$ is the standard weight on $\NN^r$. Namely, if $R = \sum_{i=1}^{r} \NN x_i$, we take the additive map $\pi: \NN^r \to R$ defined by
$\pi(t_i) = x_i$. Thus, we transfer  the classification problem from $R$ to the free monoid ~ $\NN^r$.

We recall a basic lemma on equivalence relations and omit the straightforward proof.
Here,  $R$ may  be any additive monoid with weights.

\begin{lemma}\label{lem:6.4}
  Assume that $(A_\lm \ds | \lm \in \Lm)$ is a family of subsets of $R$ such that  $\bigcup_{\lm \in \Lm} A_\lm = R$ and that $\w:R \to \NN$ is  constant on each $A_\lm$.
  \begin{enumerate} \ealph
    \item There exists  a unique finest equivalence relation $\F$ on $R$ such that every $A_\lm$ is contained in an equivalence class of  $\F$. The equivalence classes of $\F$ are the maximal unions $B_i = \bigcup _{\lm \in \Lm_i}A_\lm$, with $\Lm_i \subset \Lm$, such that $B_i$ cannot be written as a union  $B_i' \cup B_i''$ of two disjoint  sets $B_i'$ and  $B_i''$
        of this form. Thus, ($B_i$ is a ``connected component'' of the family $\{ A_\lm \ds| \lm \in \Lm\}$.) We denote this finest equivalence relation  by $\F = \Eq(A_\lm \ds | \lm \in \Lm)$.
    \item In this notation,
    \begin{equation}\label{eq:6.4}
      G = \Eq( A_\lm + x \ds | \lm \in \Lm, x \in R)
    \end{equation}
  is the finest \textbf{additive} equivalence relation on $R$ for which every set $A_\lm$ is contained in an equivalence class of $G$.
  \end{enumerate}
\end{lemma}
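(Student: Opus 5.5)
For part (a) I will build $\F$ from the connectivity graph of the family. Put an edge between indices $\lm,\mu\in\Lm$ whenever $A_\lm\cap A_\mu\neq\emptyset$. For $x,y\in R$, declare $x\sim y$ if there is a chain $\lm_0,\dots,\lm_n$ with $x\in A_{\lm_0}$, $y\in A_{\lm_n}$ and $A_{\lm_{i-1}}\cap A_{\lm_i}\neq\emptyset$ for all $i$. This relation has the following properties.
\begin{itemize}
\item It is reflexive, because $\bigcup A_\lm=R$.
\item It is symmetric and transitive, by reversing and concatenating chains.
\item Each $A_\lm$ lies inside a single class.
\end{itemize}
Conversely, let $\F'$ be any equivalence relation in which every $A_\lm$ lies in one class. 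Consecutive sets in a chain share an element, so induction along the chain gives $x\sim_{\F'}y$. Hence $\sim$ is the finest such relation. The classes are unions of the $A_\lm$ over the connected components of the index graph. A component cannot be split into two disjoint subunions of this form, since any split would cut an edge, that is, a nonempty intersection. Such unions are also maximal with this property. Since $\w$ is constant on each $A_\lm$ and overlapping sets share an element, $\w$ is constant on each class. Thus $\F$ is weight-compatible; this is what the surrounding section needs, although the statement does not require it.

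For part (b) I apply (a) to the enlarged family $\{A_\lm+x\}$. This family still covers $R$ because $A_\lm+0=A_\lm$, and $\w$ is constant on each $A_\lm+x$ by additivity. So $G$ is well defined, and every $A_\lm$ lies in a $G$-class.

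\emph{Additivity.} Suppose $a\sim_G b$ via a chain $A_{\lm_i}+x_i$ and $y\in R$. Then $a+y\sim_G b+y$ via the translated chain $A_{\lm_i}+(x_i+y)$, since intersections remain nonempty after translating by $y$. From $a\sim a'$ and $b\sim b'$ I then get $a+b\sim a'+b\sim a'+b'$ by transitivity.

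\emph{Minimality.} Let $E$ be an additive equivalence relation with each $A_\lm$ inside an $E$-class. Additivity of $E$ puts each $A_\lm+x$ inside an $E$-class as well. By the minimality in (a), $G\subseteq E$.

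Everything here is routine. The only point needing care is the description of the classes in (a) as ``connected components'', which I will formalize as unions over connected components of the intersection graph on $\Lm$.
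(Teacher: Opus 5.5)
Your proof is correct. The paper gives no proof here; it calls the argument straightforward and omits it. Your argument is the natural one:
\begin{itemize}
\item In (a), you take chains of overlapping sets.
\item In (b), you apply (a) to the translated family. Translating chains gives additivity, and additivity of $E$ gives minimality.
\end{itemize}
The translation step needs no cancellation, because you only move a witness of each nonempty intersection.

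One point deserves the care you already flag: keep the description of the classes in (a) in your graph-theoretic form, where ``split'' means a partition of the index set of a component. Read literally, with arbitrary subfamilies $\Lambda',\Lambda''\subseteq\Lambda$, the phrase ``cannot be written as a union of two disjoint sets of this form'' is false. For example, take $x=2t_1$ and $y=t_1+t_2$ in $\mathbb N_0^2$, and the family of all singletons together with $\{x,y\}$. Then the class $\{x,y\}=\{x\}\cup\{y\}$ is exactly such a disjoint union. Your argument that ``any split would cut an edge'' is valid precisely for partitions $C=C'\sqcup C''$ of a connected component $C$ of the intersection graph. Components consisting only of empty sets $A_\lambda$ should also be discarded.
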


%We add a facet to our study of the set $[B:A]$ in \S\ref{sec:3}, as follows.

Let $\E$ be an additive equivalence relation on $\NNr = \bigoplus_{i =1} ^r \NN t_i$, where $r \geq 2$, that is compatible with the standard weight function. There exists  a smallest $d \in \N$ such that $\E$ identifies distinct elements of $\iw(d)$; that is,  \textbf{not} every $\E$-equivalence class in $\iw(d)$ is a singleton. We assume that $d > 1$. Let $\Pi = (A_\lm \ds | \lm \in \Lm)$ denote the partition of $\iw(d)$ into its $\E$-equivalence classes.
We will determine the $\E$-equivalence classes in $\iw(d+1)$. These are the connected components of the family of sets $(A_\lm + t_i \ds | \lm \in \Lm, i = 1, \dots, r )$.

%We first establish a more intrinsic defintion of the support of an element in $\NNr$ than used in \S\ref{sec:4}.

\begin{defn}\label{def:6.4} Given a nonzero element $x = \sum_{i=1}^r n_i t_i$ of $\NNr$,  we call $n_i$ the \textbf{coordinate} of~ $x $ at the generator~ $t_i$, and define the \textbf{support} $\supp(x)$ to be the set of generators $t_i$ for which ~ $x$ has a positive coordinate.
 \end{defn}

We exhibit the singleton classes in $\iw(d+1)$.

\begin{defn}\label{def:6.6}
We call an element $x \in \iw(d+1 )$ \textbf{extremal} with respect to $\Pi$,
% (or, for~ $\Pi$),
 if $x = (d+1) t_i$ for some
$i \in \{ 1, \dots, r\} $, and the unique class $A_\lm \subset \iw(d)$ containing $d t_i$ is a singleton $A_\lm = \{ d t_i\}$. We then  call the class $[x]_\E$ extremal.
\end{defn}

In this case there is no $A_\mu \neq A_\lm$ such that  $x \in A_\mu + t_i$, since
 every $z \in A_\mu$ has
 some generator $t_j$, with $j \neq i$, in $\supp(z)$, which then also belongs to
 $\supp(z +t_i)$.

 We write more succinctly
 \begin{equation}\label{eq:6.5}
   \E = \E(\Pi,d) = \E(\Pi,d,r)
 \end{equation}
 and denote  by $\ext(\Pi, d+1)$ the set of extremal elements of $\Pi$ in $\iw(d+1)$. The following remark follows by~ \eqref{eq:6.4} and Definition~ \ref{def:6.6}.

 \begin{remark}\label{rem:6.7}
 If $\Pi'$ is a partition of $\iw(d)$ that  is coarser  than $\Pi$, i.e., every set in $\Pi'$ is a union of sets in~$\Pi$, then $\E(\Pi', d)$ is coarser than $\E(\Pi, d)$. However,
 %\begin{equation}\label{eq:6.6}
  $ \ext(\Pi, d+1) \supset \ext(\Pi',d+1)$.
 %\end{equation}
 \end{remark}

A partition $\Pi$ of a set is said to be \textbf{nontrivial}, if it does not consist entirely of singletons.

 \begin{thm}\label{thm:6.8}
   If $\Pi$ is a nontrivial partition of $\iw(d)$, then the equivalence classes of $\E(\Pi,d)$ in $\iw(d+1)$ consist of the extremal classes with respect to $\Pi$ (which are  singletons)   and one additional class
   $B_{d +1} = B_{d+1} (\Pi)$, which we call  the ``big cell'' and which containe all other elements of $\iw(d+1)$.

 \end{thm}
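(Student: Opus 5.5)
The plan is to work directly with the description of $\E(\Pi,d)$ in weight $d+1$ from Lemma~\ref{lem:6.4}(b). There, the $\E$-classes in $\iw(d+1)$ are the connected components of the family $(A_\lm + t_i \ds| \lm \in \Lm,\ 1\le i\le r)$. I view these components through a graph $\Gamma$ on $\iw(d+1)$: two vertices $x,y$ are joined when $x-t_i$ and $y-t_i$ both exist in $\NNr$ and lie in a common class $A_\lm$ for some $i$. Note that a single $x$ lies in $A_\lm+t_i$ for every $t_i\in\supp(x)$, so overlaps between the sets $A_\lm+t_i$ are recorded automatically. The claim then splits into two parts: (1) the extremal elements are isolated vertices of $\Gamma$; (2) all non-extremal vertices lie in a single component $B_{d+1}$.

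Part (1) is essentially the remark after Definition~\ref{def:6.6}. If $x=(d+1)t_i$ and $\{dt_i\}$ is a singleton class, then the only decomposition is $x = dt_i + t_i$. Moreover, any $z\in A_\mu$ with $A_\mu\neq\{dt_i\}$ has some $t_j$, $j\ne i$, in its support, so $x\notin A_\mu+t_k$ for any $k$. Hence $[x]_\E=\{x\}$.

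For part (2), I fix a non-singleton class $A_\mu$ (it exists since $\Pi$ is nontrivial) and two elements $a\ne a'$ in $A_\mu$. This yields the basic edges $a+t_i \sim a'+t_i$ for all $i$. Every $(d+1)t_i$ with $dt_i$ in a non-singleton class is joined to some element of support $\ge 2$. So the steps are:
\begin{enumerate}\eroman
\item show that all elements of $\iw(d+1)$ with $|\supp|\ge 2$ lie in one component;
\item attach the non-extremal pure powers $(d+1)t_i$ to that component.
\end{enumerate}
For (i), I would move between two such elements by elementary steps $x\mapsto x - t_j + t_k$. I would try to realize each step as a short path in $\Gamma$ passing through a translate of the pair $(a,a')$. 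The intended mechanism is additivity: from $a\sim_\E a'$ one gets $a + t_i \sim a'+t_i$. Combined with the overlaps $a + t_i = a'' + t_j$ for suitable weight-$d$ elements $a''$, this should propagate the identification.

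The main obstacle is exactly step (i), and I expect it to need a careful check. When $A_\mu$ is small, say the two-element class $\{3t_1, 3t_2\}$ with $r=2$ and $d=3$, the graph $\Gamma$ appears to have very few edges. In that example the only edges are $4t_1\sim t_1+3t_2$ and $3t_1+t_2\sim 4t_2$, while $2t_1+2t_2$ is not extremal. So the connectivity argument must either use that the nontrivial class is large, as in Proposition~\ref{prop:1.14} where $A_\mu=\iw(d)$, or rely on an additional hypothesis implicit in the setup. I would first test the statement on such small cases. I would then try to isolate the exact condition on $\Pi$ (for instance, that the union of the non-singleton classes is ``connected under single exchanges'' $t_j\leftrightarrow t_k$) under which the propagation in (i) goes through. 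Once that is done, I would prove (i) by induction on the exchange distance between elements.
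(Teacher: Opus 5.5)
Your part (1) and step (ii) are correct, but step (i) cannot be carried out by any propagation argument. The example you flag is not a hard case; it is a genuine counterexample to the theorem. Singleton classes of $\Pi$ have singleton translates, so the classes of $\E(\Pi,d)$ in $\iw(d+1)$ are the connected components of the family of translates $A_\lambda+t_i$ of the \emph{non-singleton} classes $A_\lambda$, together with singletons for all elements this family does not cover.

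For $r=2$, $d=3$, $\Pi=\{\{3t_1,3t_2\},\{2t_1+t_2\},\{t_1+2t_2\}\}$, these translates are the disjoint sets $\{4t_1,t_1+3t_2\}$ and $\{3t_1+t_2,4t_2\}$. So $\iw(4)$ splits into three classes, $\{4t_1,t_1+3t_2\}$, $\{3t_1+t_2,4t_2\}$ and $\{2t_1+2t_2\}$. Meanwhile $T(\Pi)=\emptyset$, and the theorem predicts the single class $\iw(4)$. Already $d=2$, $\Pi=\{\{2t_1,2t_2\},\{t_1+t_2\}\}$ gives the two classes $\{3t_1,t_1+2t_2\}$ and $\{2t_1+t_2,3t_2\}$.

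Nothing in the setup (weight compatibility, minimality of $d\ge 2$) excludes these examples, so the statement is false as written. Your computation was already complete. You should have reported it as a refutation rather than as an obstacle to an induction on exchange distance.

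The repair you suggest is not sufficient either. Take $r=2$, $d=3$ with $\{3t_1,2t_1+t_2\}$ as the only non-singleton class; it is connected under one exchange. Then the non-extremal element $t_1+3t_2$ is a class by itself, because neither $3t_2$ nor $t_1+2t_2$ lies in a non-singleton class. The conclusion holds in weight $d+1$ exactly when two conditions hold:
\begin{itemize}
\item \emph{coverage}: every non-extremal $u\in\iw(d+1)$ has some $t_i\in\supp(u)$ with $u-t_i$ in a non-singleton class;
\item \emph{connectivity}: the translates $A_\lambda+t_i$ of the non-singleton classes form one connected family.
\end{itemize}

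The paper's own proof breaks at the same point as yours. In the binary case it infers from $u\in(\{z\}+t_i)\cap(\{w\}+t_j)$ that all $u$ with $|\supp(u)|>1$ lie in one class. But both sets are just $\{u\}$, so unless $z$ or $w$ lies in $\{x,y\}$ this links $u$ to nothing. Its list of extremals for $\Pi_{x,y}$ is also wrong when $x$ or $y$ equals some $dt_i$. Part (b) only transfers the false binary claim via Remark~\ref{rem:6.7}.
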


 \begin{proof}
   a) We first verify the statement  for a ``binary partition'' of $\iw(d)$, denoted by $\Pi_{x,y}$,  which consists of one set $\{ x,y \}$, where  $x \neq y$, and singleton sets.  The resulting equivalence relation $\E(\Pi_{x,y}, d)$ is then the finest equivalence relation  $\Eq(x,y)$ identifying  $x$ and $y$. The extremals with respect to $\Pi_{x,y}$
   are the classes $\{ (d+1)t_i \}$, for $i = 1, \dots, r$ (Definition \ref{def:6.6}).
   Every non-extremal element $u$ of $\iw(d+1)$ has a support containing at least two generators $t_i$ and  $t_j$. Hence,  $u = z + t_i = w + t_j$ for  $z \neq w$, and thus
   $$ u \in (\{z\} + t_i) \cap (\{w\} + t_j).$$
   This proves the claims for all binary partitions $\Pi_{x,y}$ and allows us to handle the big cell
   $$ B_{x,y} := \iw (d+1) \sm \ext(\Pi_{x,y}, d+1).$$
   Observe that $B_{x,y}$ contains every element $u$ of weight $d+1$ with $|\supp(u)| > 1$.

   \pSkip b) Assume that $\Pi$ is any (nontrivial) partition of $\iw(d)$. Fix some  $u_0 \in \iw(d+1) $ with $|\supp(u_0)| > 1$, and let $u \in \iw(d+1)$ with $u \in B_{x,y}$. We claim that there exist $x \neq y$ in $\iw(d)$ such that both   $ u$ and $u_0$ belong to $ B_{x,y}$. This is evident, if $|\supp(u)| > 1$. Otherwise,
   $u = (d+1) t_i$ for some $i \in \{ 1, \dots, r\} $, and there is a class  in $\Pi$ that contains $d t_i$  and at least one other element~ $y$. Hence, $u \in B_{dt_i,y}$, and thus there exists a pair $(x,y)$ in some class of $\Pi$ (typically many pairs) such that  both $u$ and $u_0$ in~ $B_{x,y}$. This implies that $\Pi$ is coarser than $\Pi_{x,y}$. By Remark~ \ref{rem:6.7}, we conclude that $u$ and $u_0$ belong to the same class of $\E(\Pi,d)$,
   as desired.
 \end{proof}

 Assume that $\E$ is the additive equivalence relation $\E(\Pi,d)$ appearing in Theorem \ref{thm:6.8}. We will obtain the partition of $\iw(d+2)$ into $\E$-classes by a trick, introducing a new equivalence relation $\F$ on $\NNr$.
 Given  $x,y \in \NNr$,  we stipulate that
 \begin{equation}\label{eq:6.7}
   \begin{array}{lll}
   x \sim_\F y & \text{if} & \text{either $\w(x) < d+1$ and $x =y$,} \\
   & & \text{or $\w(x) = \w(y)$ and $x \sim_\E y$.}
   \end{array}
   \end{equation}
   It is immediate that $\F$ is an additive equivalence relation on $\NNr$. In the above notation,
   \begin{equation}\label{eq:6.8}
     \F = \E(\Psi, d+1),
   \end{equation}
   where $\Psi$ is the partition of $\iw(d+1)$ into the extremal classes of $\E$ and the big cell $B_{d+1} (\Pi)$.
 It follows from Theorem \ref{thm:6.8} that the partition of $\iw(d+2)$ into $\F$-classes is the same as the partition of $\iw(d+2)$ into $\E$-classes.
 Iterating this argument, we obtain the following result.
 \begin{thm}\label{thm:6.9}
  Let $\E$ be an  additive equivalence relation on $\NNr$, where $r \geq 2$, that  is compatible with the standard weight function $\w$ on $\NNr$. Assume  that $\E$ is nontrivial and that all classes of weight $1$ are singletons. Then, there is a minimal $d \geq 2$ such that some $\E$-classes of weight $d$ are not singletons.

 Let $\Pi$ denote the partition of $\iw(d)$ into $\E$-classes, and let $T(\Pi) \subseteq \{ t_i \ds| i =1, \dots, r\}$ denote the set of generators $t_i$ such that the  singletons $\{ d t_i\}$ are members of $\Pi$. For any $e > d$ the partition of $\iw(e)$ into $\E$-classes consists of the singletons $\{ e t_i \} $ with $t_i \in T(\Pi)$ and one additional class
 \begin{equation}\label{eq:6.9}
   B_e (\Pi) := \iw(e) \sm e T(\Pi).
 \end{equation}
 \end{thm}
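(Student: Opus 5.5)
The plan is an induction on $e>d$, carrying the statement one weight level at a time with the ``shift'' trick $\F$ of \eqref{eq:6.7}--\eqref{eq:6.8} and Theorem~\ref{thm:6.8} as the one-step engine. First, the minimal $d$ exists. $\E$ is nontrivial, so some class contains two distinct elements, and by weight-compatibility they have the same weight. Let $d$ be the least such weight. Since weight-$1$ classes are singletons, $d\ge 2$. By minimality, all classes of weight $<d$ are singletons. The partition $\Pi$ of $\iw(d)$ into $\E$-classes is therefore nontrivial, which is exactly the hypothesis of Theorem~\ref{thm:6.8}.

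Next, I compare $\E$ with $\E(\Pi,d)$. By Lemma~\ref{lem:6.4}(b), $\E(\Pi,d)$ is the finest additive equivalence relation making each member of $\Pi$ lie in one class. Since $\E$ is additive and has the members of $\Pi$ as classes, $\E$ is coarser than $\E(\Pi,d)$. For $e=d+1$, Theorem~\ref{thm:6.8} then shows that each $\E$-class in $\iw(d+1)$ is a union of extremal singletons $\{(d+1)t_i\}$ with $t_i\in T(\Pi)$ and possibly the big cell $B_{d+1}(\Pi)$. The big cell is nonempty, since $r\ge 2$ and $d+1\ge 2$ give elements of support $\ge 2$, and it lies in a single $\E$-class. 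I will check via Definition~\ref{def:6.6} that the extremal elements are exactly $(d+1)T(\Pi)$, so the big cell equals $\iw(d+1)\sm (d+1)T(\Pi)$, matching \eqref{eq:6.9}.

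For the inductive step from $e$ to $e+1$, I pass to the relation $\F$ of \eqref{eq:6.7}, with $d$ replaced by $e$. Then $\F=\E(\Psi,e)$, where $\Psi$ is the weight-$e$ partition of $\E$, which by the induction hypothesis consists of the singletons $\{et_i\}$ with $t_i\in T(\Pi)$ and $B_e(\Pi)$. The set $B_e(\Pi)$ contains $et_j$ for every $t_j\notin T(\Pi)$ and contains elements of support $\ge 2$, so $\Psi$ is nontrivial. Theorem~\ref{thm:6.8} applied to $\Psi$ gives the weight-$(e+1)$ classes of $\E(\Psi,e)$. These are the extremal singletons $\{(e+1)t_i\}$, for precisely those $i$ with $\{et_i\}\in\Psi$, that is, with $t_i\in T(\Pi)$. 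The one remaining class is $B_{e+1}(\Pi)$. This is the asserted form at level $e+1$, and the set $T(\Pi)$ is propagated unchanged.

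The main obstacle is the transfer step. Comparing $\E$ with $\E(\Pi,d)$ or with $\F$ only shows that $\E$ is \emph{coarser}, hence that the big cell is contained in one $\E$-class. It does not show that an extremal element $et_i$ ($t_i\in T(\Pi)$) is left alone by $\E$. This is the same step the paper states just before the theorem (``the partition of $\iw(d+2)$ into $\F$-classes is the same as the partition into $\E$-classes''). To prove it I will try to show that any identification of $et_i$ with another element of weight $e$ can be traced back, by additivity and weight-compatibility, to an identification at weight $d$ involving $dt_i$. That would contradict $\{dt_i\}\in\Pi$.

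I am not confident this works for an arbitrary $\E$. An identification first introduced at weight $e$ has no visible preimage at weight $d$, and merging $et_i$ together with all $e't_i$ ($e'\ge e$) into the big cell seems to give a coarser, still additive and weight-compatible, relation. If that construction checks out, the statement as worded is false, and the singleton claim holds exactly for those $\E$ whose identifications in weight $>d$ are all generated by $\Pi$, i.e.\ $\E=\E(\Pi,d)$, the case covered by the induction above. So I would first test this potential counterexample. If it survives, I would prove the theorem under the additional hypothesis $\E=\E(\Pi,d)$, flag the gap in the statement, and record the part valid for every $\E$: in each weight $e>d$ there is one class containing $B_e(\Pi)$, all other classes are singletons $\{et_i\}$ with $t_i\in T(\Pi)$, and the set of such $i$ can only shrink as $e$ grows.
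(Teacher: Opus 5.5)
Your route is the paper's own: Theorem~\ref{thm:6.8} handles the passage from weight $d$ to $d+1$, and the shifted relation $\F$ of \eqref{eq:6.7}--\eqref{eq:6.8} iterates it. You are right that this cannot cover an arbitrary $\E$; the paper's argument explicitly assumes $\E=\E(\Pi,d)$. Your coarsening construction is sound. For example, take $r=2$ and $\Pi=\{\{2t_1,t_1+t_2\},\{2t_2\}\}$, and declare each $\iw(e)$ with $e\ge 3$ a single class. This gives an additive, weight-compatible relation with this $\Pi$, in which $\{3t_2\}$ is no longer a class. The genuine gap is in your fallback. The ``one-step engine'' Theorem~\ref{thm:6.8} is itself false. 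So the statement fails even under the extra hypothesis $\E=\E(\Pi,d)$, and your induction has nothing to run on.

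Here is a counterexample with $\E=\E(\Pi,d)$. Take $r=2$ and let $x\sim y$ iff $\w(x)=\w(y)$ and the $t_1$-coordinates of $x$ and $y$ have the same parity.
\begin{itemize}
\item This relation is additive and weight-compatible, and it separates $t_1$ from $t_2$.
\item Its weight-$2$ classes are $\{2t_1,2t_2\}$ and $\{t_1+t_2\}$, so $d=2$ and $T(\Pi)=\emptyset$.
\item It equals $\E(\Pi,2)$. For $n_1\ge 2$, adding $(n_1-2)t_1+n_2t_2$ to $2t_1\sim 2t_2$ links $n_1t_1+n_2t_2$ with $(n_1-2)t_1+(n_2+2)t_2$.
\end{itemize}
The theorem predicts that $\iw(3)$ is a single class. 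In fact it splits into $\{3t_1,\,t_1+2t_2\}$ and $\{2t_1+t_2,\,3t_2\}$, and every $\iw(e)$ has two classes.

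The proof of Theorem~\ref{thm:6.8} breaks in step (a), in two places. First, a non-extremal element need not have support of size $\ge 2$; here $3t_1$ is such an element. Second, even when $u=z+t_i=w+t_j$, the membership $u\in(\{z\}+t_i)\cap(\{w\}+t_j)$ links $u$ to nothing if $z$ and $w$ lie in singleton classes of $\Pi$. For instance, take $r=2$, $d=3$, and let $\{3t_1,2t_1+t_2\}$ be the only nonsingleton class of $\Pi$. Then $t_1+3t_2$ is a singleton class of $\E(\Pi,3)$, yet it lies in $B_4(\Pi)=\iw(4)\sm\{4t_2\}$.

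These two examples also refute the weaker claim you proposed to record for every $\E$: the parity example has no class containing all of $B_e(\Pi)$, and the second has a singleton $\{t_1+3t_2\}$ not of the form $\{et_i\}$. What $\E(\Pi,d)$ genuinely gives in weight $d+1$ is the description of Lemma~\ref{lem:6.4}(b): the classes are the connected components of the family $(A_\lm+t_i)$. These components depend on $\Pi$ and in general do not reduce to extremal singletons plus one big cell. So the theorem cannot be proved as stated, with or without the hypothesis $\E=\E(\Pi,d)$.
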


 \begin{defn}\label{def:6.10}
 We call $B_e(\Pi)$, for $e >d$,
  the \textbf{big cell} of $E$ in $\iw(e)$, and say that the $t_i \in S$ are the \textbf{extremal generators} with respect to $\E(\Pi,d)$.

 \end{defn}
 \begin{examp}\label{exmp:6.11}

 It is clear from Theorem \ref{thm:6.9} that,  for the  additive equivalence relation $\Ed$ introduced  in \S\ref{sec:1}, cf. ~\eqref{eq:1.10} and \eqref{eq:1.11}, the set $T(\Pi)$ is empty, and hence  $\iw(e)$ is the unique equivalence class of weight  $e > d$. For any other introduced relation $\E(\Pi,d)$, the set $T(\Pi)$ is nonempty.
 \end{examp}

If $T(\Pi) \neq \emptyset$, we further define
$B_d(\Pi) = \iw(d) \sm d T(\Pi)$, and call $B_d(\Pi)$ the \textbf{big cell of weight}~ $d$. Thus,
$B_d(\Pi)$ is  the union of all classes in $\iw(d)$ that are not singletons.
However, $B_d(\Pi)$ itself is  generally  not an equivalence class of ~$E(\Pi,d)$.

% Our study of the \as extension  of $\NN t$ in $R = \NNr / \Ed$ for nonzero $t \in \Rd$ in \S\ref{sec:4} can be readily extended to the present more general monoids $R$.
% \begin{thm}\label{thm:6.12} The statement of Theorem \ref{thm:4.12} remains valid in
% $R = \NNr / \E(\Pi,d)$ if $S \neq \emptyset$. In addition, if $t = n t_i$ for  $t_i \in S$, $n \in \N$ (not necessarily $n <d$), then $\NN t_i$ is the unique maximal \as extension of $\NN t$ in $R$.
% \end{thm}
%
% \begin{proof}
%   Assume that $t \in \Rd$, $t \neq 0$, and $t \notin \NN t_i$ for a generator $t_i \in S$. Running through the arguments following Remark \ref{rem:4.3}, we see that $\NN t_0$ with $t_0 = \prm{t}$ (= the primitive  element of $\NNr$) is still the unique maximal \as extension of $\NN t$. The generators $t_i \in S$ do not play a disturbing role here.
%
%   Assume finally that $t = n t_i$ with $t_i \in S$, $n \in \N$. Given $p < q$ and $z \in R$, for which
%   $pt + z = qt$, we conclude that $\supp(z) = \supp (qt) = \{ t_i\}$, and so $pt,z, qt \in \N t_i$. Thus  $z$ is uniquely determined by its weight, implying  $z = (q-p)t$. This gives the last claim of the theorem.
% \end{proof}
%

\section{Gaps and bridges for an almost subtractive submonoid $U$} \label{sec:a4}
Let $U$ be an \as submonoid of
$(R,+)$, equipped  with the partial ordering ~$\leq_R$ on $R$.

\begin{defn}\label{def:a4.1} Let $x \in R$.

\begin{enumerate} \ealph
  \item Given elements $u_0, u_1  \in U$, we say that $x$ is a \textbf{bridge from $u_0$ to $u_1$}, if $u_0 + x = u_1$.
  \item We call $x$ a \textbf{bridging element} for  $U$ in $R$, or simply say that $x$ is bridging,   if there exist $u_0, u_1 \in U$ such that  $u_0 + x = u_1$, i.e.,
      $(x + U) \cap U \neq \emptyset$. If $x$ is not bridging for $U$, i.e., $(x + U) \cap U = \emptyset$, we say that $x$ is \textbf{lonely for} $U$. (Roughly speaking, this means that  $x $ is useless for bridges.)
  \item We call $x$ a \textbf{gap element} (for $U$), if $x$ is bridging and $x \in R \sm U$.
  We then also say, for short,  that $x$ is a \textbf{gap}.

  \item We denote the set of  bridging elements by
  $\Bri(R,U)$, the subset of gap elements by $\Gap(R,U)$, and the set of lonely elements by $\Lo(R,U)$.
\end{enumerate}
\end{defn}
Note that there can be at most one bridge from $u_0$ to $u_1$, since $U$ is \as in $R$. For formal reasons, we include the case  $u_0 = u_1$, i.e., $x =0$ (the zero bridge).
      In other terms (Definition \ref{defn:1.1}), $x$ is the difference of $u_0$ and ~$u_1$.
      There exists  a bridge from $u_0$ to $u_1$ if and only if $u_0 \leq_R u_1$.
\begin{prop}\label{prop:a4.2}  $ $
\begin{enumerate} \ealph
\item $\Bri(R,U)$ is a submonoid of $(R,+)$ containing $U$.
\item
$\Lo(R,U)+ U \subseteq \Lo(R,U)$. Thus, $\Lo(R,U)$
is a union of cosets of $U$ in $R$.
%consists of cosets of $U$ in $R$.
\item $\Gap(R,U)$ is disjoint from all cosets of $U$ in $R$.
\end{enumerate}

\end{prop}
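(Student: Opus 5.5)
The plan is to verify each of the three assertions directly from Definition~\ref{def:a4.1}, using only that $U$ is a submonoid of $(R,+)$. The almost subtractive property will not be needed; it only guarantees uniqueness of bridges, which plays no role here.

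For (a), I would first note $0 \in \Bri(R,U)$, since $0+0=0$ with $0 \in U$. Next, every $u \in U$ is bridging, since $0+u=u$. For closure under addition, take $x,y \in \Bri(R,U)$ with $u_0+x=u_1$ and $v_0+y=v_1$, where $u_i,v_i\in U$. Adding these equations and using commutativity gives $(u_0+v_0)+(x+y)=u_1+v_1$. Both sides involve elements of $U$, so $x+y$ is a bridge from $u_0+v_0$ to $u_1+v_1$. Hence $\Bri(R,U)$ is a submonoid containing $U$.

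For (b), I would argue by contraposition. Let $x \in \Lo(R,U)$ and $u\in U$, and suppose $x+u$ were bridging, say $v_0+(x+u)=v_1$ with $v_0,v_1\in U$. Rewriting this as $(v_0+u)+x=v_1$ exhibits $x$ as a bridge from $v_0+u\in U$ to $v_1$, contradicting loneliness of $x$. Thus $x+U\subseteq \Lo(R,U)$. Consequently $\Lo(R,U)=\bigcup_{x\in \Lo(R,U)}(x+U)$, so it is a union of cosets of $U$, using $x=x+0\in x+U$. Equivalently, since $\Lo(R,U)$ and $\Bri(R,U)$ partition $R$ by definition, (a) and (b) together say that $\Bri(R,U)$ is a submonoid whose complement is stable under translation by $U$.

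For (c), the literal reading cannot be intended, since a gap $x$ lies in its own coset $x+U$. I would read it, in line with (b), as saying that $\Gap(R,U)$ meets none of the cosets of $U$ that make up $\Lo(R,U)$, nor the coset $U=0+U$ itself. The latter is immediate, because gaps lie in $R\sm U$ by definition. The former holds because gaps are bridging, so $\Gap(R,U)\subseteq \Bri(R,U)$, which is disjoint from $\Lo(R,U)$. By (b), $\Lo(R,U)$ is exactly the union of the cosets $x+U$ with $x$ lonely, so no gap lies in any of them.

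The only real obstacle I foresee is fixing the correct interpretation of (c). Once it is settled as above, every step is a one-line manipulation of the defining equations.
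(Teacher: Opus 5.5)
Your arguments for (a) and (b) are correct and essentially the paper's. The paper phrases (a) through $\Bri(R,U)+U\subseteq\Bri(R,U)$ together with $0\in\Bri(R,U)$, which is equivalent to your direct check $0+u=u$.

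The problem is (c). You are right that the literal wording cannot hold, since $x\in x+U$. But the repair you choose is not the one the authors intend, and under your reading (c) collapses to the tautology $\Gap(R,U)\subseteq\Bri(R,U)\cap(R\sm U)$. The paper's proof of (c) reads: if $x$ is a gap, then $u_0+x=u_1$ for some $u_0,u_1\in U$, so $u_0+x$ is not a gap. So the intended content is that $\Gap(R,U)$ contains no coset of $U$. In contrast to (b), the set of gaps is never stable under translation by $U$ (compare Remark~\ref{rem:a4.4}(b)). Your proposal never makes this observation.

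The missing step is short. Suppose $y+U\subseteq\Gap(R,U)$. Then $y=y+0$ is a gap, hence bridging, so $y+u_0=u_1\in U$ for some $u_0\in U$. But $u_1$ lies in $y+U$, and no element of $U$ is a gap, a contradiction. Thus every coset through a gap meets $U$ and leaves $\Gap(R,U)$. This is the statement you should prove for (c), rather than the disjointness from $U$ and from $\Lo(R,U)$, which is immediate from the definitions.
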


\begin{proof}
  (a): If $u_0 + x = u_1$ with $u_0, u_1 \in U$, then $(u_0 + u') + x = u_1 + u'$ for any $u'
   \in U$. This verifies that $\Bri(R,U) + U \subseteq \Bri(R,U) $. If, further, $v_0 + y = v_1$ with
   $v_0,v_1 \in U$, then
   $(u_0 + v_0 ) + (x+y) = u_1 + v_1$. This proves that $\Bri(R,U)$ is closed under addition. Clearly,  $ 0 \in \Bri(R,U)$, and thus $\Bri(R,U)$ is a submonoid of $R$.
   \pSkip
   (b): Given $x \in \Lo(R,U)$, suppose that $u +x \notin \Lo(R,U)$ for some $u \in U$. Then, $ u +x$ is bridging; that is,
   $u_0 + u +x = u_1$ for some $u_0, u_1 \in U$. Thus, $x $ is a bridge from $u_0 + u$ to $u_1$, a contradiction. Hence,
   $\Lo(R,U) + U \subseteq \Lo(R,U)$.
  \pSkip
   (c): If $x$  is a gap element, then $u_0 +x = u_1$ for some $u_0, u_1 \in U$, and thus $u_0+x$ is not a gap.
\end{proof}

\begin{defn}\label{def:a4.3} $ $
%\begin{enumerate} \ealph
%\item
A \textbf{walk} in $(R,U)$ (or, equivalently,  on $U$ in $R$) is an increasing sequence
\begin{equation}\label{eq:a4.3a}
u_0 \ds{\leq_R} u_1 \ds{\leq_R} u_2 \ds{\leq_R} \cdots \ds{\leq_R} u_r
\end{equation}
of elements of $U$. Then, there are unique bridging elements $z_1, \dots, z_r \in R$ such that
$u_{i-1} + z_i = u_i$ for $i = 1, \dots, r.$
We often denote such
a walk $P$ as
\begin{equation}\label{eq:a4.1}
     \xymatrixrowsep{3mm}
\xymatrixcolsep{6mm}
    \xymatrix@R=4em@C=2em{
  P:u_0 \ar@{->}[r]^{z_1} & u_1 \ar@{->}[r]^{z_2} & u_2 \ar@{->}[r] & \cdots  \ar@{->}[r]^{z_r} & u_r .
   }
\end{equation}
%\item
We say that  $P$ \textbf{starts} at $u_0$ and \textbf{ends} at $u_r$, and call $r$ the \textbf{length} of $P$, although this terminology may be somewhat counterintuitive when some $z_i = 0$.
%    \item
We say that $P$  is a \textbf{cycle},
if $u_r \in u_0 + U$, i.e., $z_1+ \cdots + z_r \in U$.
    We call $P$ a \textbf{gap walk}, if all  $z_i \in R \sm U$.
%    \item
 We call the  $u_i$, $ 0 \leq i \leq r$, the \textbf{nodes} of $P$.
A gap walk $P'$ is a \textbf{refinement} of $P$, if $P'$ starts at $u_0$, ends at $u_r$, and every  node of ~$P$ is also a node of $P'$.
%\end{enumerate}
\end{defn}

\begin{remark}\label{rem:a4.4}
$ $
\begin{enumerate} \ealph
\item In   \eqref{eq:a4.1}, we have
$u_0 + z_1 + \cdots + z_k = u_k$ for $k = 1,\dots, r$, and hence the sum
$z_1 + \cdots + z_k$ is again bridging. We call this sum the \textbf{bridge sum} of the walk.

\item If $x$ is a  gap element and $u' \in U$, then $u' + x$ may not be  a  gap element.  However,  if $u'+x$ is a  gap element, then $x$ is  a gap element.

\item Every walk $P$ on $U$ in $R$ is a walk on $U$ in $R \sm \Lo(R,U)$.

\item There is no gap walk  starting at $u_0 = 0$.
\end{enumerate}

\end{remark}
\begin{examp}\label{def:a4.5}
If $u_1 + z  = u_2$ with $u_1, u_2 \in U$ and  $z \in R$, then, for any
$u \in U$, we obtain the following two walks  of length $2$ from $u$ to $u_1 +u_2$:

\begin{equation}\label{eq:a4.2}
     \xymatrixrowsep{3mm}
\xymatrixcolsep{6mm}
    \xymatrix@R=1.0em@C=2em{
     & u + z  \ar@{->}[rd]^{u_1}
    \\
  u \ar@{->}[ru]^{z}
  \ar@{->}[rd]^{u_1} & & u + u_2
  \\ & u + u_1  \ar@{->}[ru]^{z} &
   }
\end{equation}

\end{examp}

%\begin{defn}\label{def:a4.6} We say that a walk \eqref{eq:a4.1} in $(R,U)$  is a \textbf{cycle},
%if $u_r \in u_0 + U$, i.e., $z_1+ \cdots + z_r \in U$.
%\end{defn}

\begin{lem}\label{lem:a4.7}
Let $u_1,u_2 \in U$ and $z \in R$ such that  $u_1 + z = u_2 + z  = u \in U$. Then,
$u_1 = u_2$.
\end{lem}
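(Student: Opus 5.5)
The plan is to reduce the claim to the cancellativity of $U$, which is noted right after Definition~\ref{defn:1.1}. The element $z$ may lie outside $U$, so we cannot cancel it directly. Instead we add $u_1$ and $u_2$ cleverly, so that $z$ is absorbed into an element of $U$.

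First step: add $u_2$ to the equation $u_1 + z = u$, and add $u_1$ to the equation $u_2 + z = u$. Using commutativity and associativity of $(R,+)$, this gives
\begin{equation*}
u + u_2 = u_1 + u_2 + z = u + u_1 .
\end{equation*}
Thus $u + u_1 = u + u_2$, and every element occurring here, namely $u$, $u_1$, $u_2$, $u+u_1$, lies in $U$.

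Second step: apply the almost subtractive property with base element $u \in U$ and target $\tlu := u + u_1 \in U$. Clearly $u \leq_R \tlu$, and both $u_1$ and $u_2$ are elements $z' \in R$ with $u + z' = \tlu$. By the uniqueness in Definition~\ref{defn:1.1}, $u_1 = u_2$.

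The only point needing care is the first step: we must see that $z$ can be moved into an element of $U$ by symmetric addition. Once this is done there is no real obstacle. In particular, the \ub hypothesis on $R$ is not needed.
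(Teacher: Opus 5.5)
Your proof is correct and matches the paper's: the same symmetric addition gives $u+u_1=u_1+u_2+z=u+u_2$. The paper then cites cancellativity of $U$, while you apply the uniqueness in Definition~\ref{defn:1.1} directly with base $u$ and target $u+u_1$; that is precisely how the cancellativity is derived, so nothing is lost.
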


\begin{proof}
  If $u_1 + z = u_2 + z $, then $u_1 + u  = u_1 +  u_2 + z = u_2  +  u_1 + z = u_2 + u$.
  The claim then follows, since the monoid $U$ is cancellative, as noted already in \S\ref{sec:1}).
\end{proof}

\begin{thm}\label{thm:a4.8}
Let
$$       \xymatrixrowsep{3mm}
\xymatrixcolsep{6mm}
    \xymatrix@R=4em@C=2em{
  u_0 \ar@{->}[r]^{z_1} & u_1 \ar@{->}[r]^{z_2} & u_2 \ar@{->}[r] & \cdots  \ar@{->}[r]^{z_r} & u_r ,
   }
    \quad
         \xymatrixrowsep{3mm}
\xymatrixcolsep{6mm}
    \xymatrix@R=4em@C=2em{
 u'_0 \ar@{->}[r]^{z'_1} & u'_1 \ar@{->}[r]^{z'_2} & u'_2 \ar@{->}[r] & \cdots  \ar@{->}[r]^{z'_s} & u'_s
   }
$$
be gap walks in $(R,U)$ that end at the same element $u_r = u'_s$ and have the same bridge sum. Then,
$u_0 = u'_0$.
\end{thm}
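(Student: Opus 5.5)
The statement reduces directly to Lemma~\ref{lem:a4.7}. I will not use the gap condition on the individual steps at all: only the telescoped relations and the cancellativity of $U$ are needed.

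\textbf{Step 1: telescope both walks.} Put $z := z_1 + \cdots + z_r = z'_1 + \cdots + z'_s$, the common bridge sum. By Remark~\ref{rem:a4.4}(a), summing the relations $u_{i-1} + z_i = u_i$ along the first walk gives $u_0 + z = u_r$. In the same way the second walk gives $u'_0 + z = u'_s$.

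\textbf{Step 2: reduce to Lemma~\ref{lem:a4.7}.} Since $u_r = u'_s =: u \in U$, we obtain
\[ u_0 + z = u'_0 + z = u, \]
with $u_0, u'_0, u \in U$ and $z \in R$. This is exactly the hypothesis of Lemma~\ref{lem:a4.7}, which yields $u_0 = u'_0$.

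For completeness, the argument behind that lemma is short. Adding $u'_0$ to $u_0 + z = u$ and adding $u_0$ to $u'_0 + z = u$ gives
\[ u_0 + u = u_0 + u'_0 + z = u'_0 + u. \]
The monoid $U$ is cancellative, because it is \as in $R$ (\S\ref{sec:1}), so $u_0 = u'_0$.

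There is no real obstacle here. The only point to check is that the telescoping of Remark~\ref{rem:a4.4}(a) is legitimate. It is, because addition in $R$ is associative and commutative.

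Note that the assumption that both are \emph{gap} walks plays no role. The conclusion holds for arbitrary walks with the same endpoint and the same bridge sum. The hypothesis presumably only reflects the setting in which the theorem is used later.
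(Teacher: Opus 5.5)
Your proposal is correct and is the paper's argument: you telescope both walks to $u_0 + z = u_r = u'_s = u'_0 + z$ and then invoke Lemma~\ref{lem:a4.7}, which in turn rests on the cancellativity of $U$. You are also right that the gap condition on the individual bridges is never used.
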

\begin{proof}
   Follows by Lemma \ref{lem:a4.7}, since $u_0 + (z_1 + \cdots + z_r) = u_r = u'_s = u'_0 + (z'_1 + \cdots + z'_s)$.
\end{proof}

\begin{cor}\label{cor:a4.9}
Any two cycles in $(R,U)$ that  end at the same element of $U$ and have the same bridge sum start at the same element of $U$.
\end{cor}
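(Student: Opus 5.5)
The statement is Corollary~\ref{cor:a4.9}. I will deduce it from Theorem~\ref{thm:a4.8}, or more directly from Lemma~\ref{lem:a4.7}. One point needs care. Theorem~\ref{thm:a4.8} is stated for gap walks, whereas a cycle in Definition~\ref{def:a4.3} is any walk whose bridge sum lies in $U$. So I will not rely on the gap hypothesis. The argument behind Theorem~\ref{thm:a4.8} only uses that the two walks have a common endpoint and a common bridge sum, and I will check that this is all that is needed.

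Let the two cycles be
\[
u_0 \to u_1 \to \cdots \to u_r
\quad\text{and}\quad
u'_0 \to u'_1 \to \cdots \to u'_s ,
\]
with $u_r = u'_s =: u \in U$. Let their bridge sums be $z := z_1 + \cdots + z_r$ and $z' := z'_1 + \cdots + z'_s$, and assume $z = z'$. By Remark~\ref{rem:a4.4}(a), telescoping the relations $u_{i-1} + z_i = u_i$ gives $u_0 + z = u_r$. Similarly $u'_0 + z' = u'_s$. Hence
\[
u_0 + z = u = u'_0 + z ,
\]
where $u_0, u'_0, u \in U$ and $z \in R$.

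This is exactly the hypothesis of Lemma~\ref{lem:a4.7} with $u_1 := u_0$ and $u_2 := u'_0$, so $u_0 = u'_0$. For completeness, the mechanism of that lemma is as follows. Adding $u'_0$ to the first equation and $u_0$ to the second gives
\[
u_0 + u = u_0 + u'_0 + z = u'_0 + u .
\]
Cancellativity of $U$, which holds because $U$ is \as in $R$, then yields $u_0 = u'_0$.

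The cycle property, namely that the bridge sum lies in $U$, is not actually used. It is simply part of the setting, so there is no real obstacle. The only thing to watch is the wording: the reference should be to Lemma~\ref{lem:a4.7} (or to the proof of Theorem~\ref{thm:a4.8}), not to the gap-walk formulation of Theorem~\ref{thm:a4.8}.
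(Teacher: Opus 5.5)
Your proof is correct and is essentially the paper's argument. The paper deduces the corollary from Theorem~\ref{thm:a4.8}, whose proof is exactly Lemma~\ref{lem:a4.7} applied to the telescoped identity $u_0 + (z_1+\cdots+z_r) = u_r = u'_s = u'_0 + (z'_1+\cdots+z'_s)$. You are right that neither the gap hypothesis of Theorem~\ref{thm:a4.8} nor the cycle condition is actually used, so invoking Lemma~\ref{lem:a4.7} directly is the cleanest way to state it.
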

\section{$U$-valued quadratic forms}\label{sec:a5}

We recall standard terminology and notation for quadratic forms on a module $V$ over a semiring $R$; cf.~\cite[\S2]{QF1}. A map $q: V\to R$ is a \bfem{quadratic form}, if
\begin{equation}\label{eq:1.1}
q(cx)= c^2q(x)
\end{equation}
for all $x\in V$  and $c\in R,$ and there exists a symmetric
$R$-bilinear form $b: V\times V\to R,$ called a \bfem{companion}
of ~$q$, such that
\begin{equation}\label{eq:1.2}
q(x+y)=q(x)+q(y)+b(x,y)
\end{equation}
for all $x,y\in V.$ We then say that $(q,b)$ is a
\bfem{quadratic pair} on $V.$ It follows from \eqref{eq:1.1} and~
\eqref{eq:1.2} that, for all $x\in V$,
$$4q(x)=2q(x)+b(x,x).$$
We call the companion $b$ of $q$ \bfem{balanced}, if
\begin{equation}\label{eq:1.3.2}
 b(x,x)=2q(x).
\end{equation}
We also  say that the quadratic pair $(q,b)$ is balanced.
A quadratic form $q:V\to R$ is \bfem{rigid}, if it has only one
companion.

Let  $R$ be an \ub semiring and $V$ a free $R$-module with a fixed finite basis
$(e_1, \dots, e_n)$. We denote a quadratic pair $(q,b)$ consisting of a quadratic form
$q: V \to R$ and a companion $b: V \times V \to R$ by a triangular scheme
\begin{equation}\label{eq:a5.1}
  (q,b) \hteq \begin{bmatrix}
                \al_1 & \bt_{12} & \bt_{13} & \cdots & \bt_{1n} \\
                & \al_2 & \bt_{23} & \cdots & \bt_{2n} \\
                &  & \ddots &  & \vdots \\
              &&  & \ddots & \bt_{(n-1)n} \\
                         &      && & \al_n  \\
              \end{bmatrix},
\end{equation}
where $\al_i = q(e_i)$ and $\bt_{ij} = b(e_i, e_j)$.
For any $\lm_1, \dots, \lm_n \in R$, we have
\begin{equation}\label{eq:a5.2}
 q(\lm_1 e_1 + \cdots + \lm_n e_n) = \sum_{i =1} ^n \lm_i^2 \al_i
 + \sum_{i < j }  \lm_i \lm_j  \bt_{ij}.
\end{equation}
This property follows from  the definition of a quadratic form on $V$, see  \cite[Eq. (0.1) and ~(0.2)]{QF1}.

The set of all quadratic forms on $V$, denoted by $\QF(V)$,  is an $R$-module with addition given by
\begin{equation}\label{eq:a5.3}
 (q + q')(x) = q(x) + q'(x),
\end{equation}
and scalar multiplication given  by
\begin{equation}\label{eq:a5.4}
(\lm q ) (x) = \lm \cdot q(x),
\end{equation}
for $q, q' \in \QF(V)$, $x \in V$, and $\lm \in R$.
The monoid $(\QF(V), + )$ is  upper bound, since the underlying semiring $(R,+)$ is upper bound.

Let $\Dg(V)$ denote the set of all quadratic forms $q \in \QF(V)$ with $\bt_{ij} = 0$ for all
$1 \leq i < j \leq n$, and let $\Tt(V)$ denote the set of all $q \in \QF(V)$ with
$\al_i =0 $ for all $1 \leq i \leq n$.
We call these quadratic forms, respectively,  \textbf{diagonal forms} and \textbf{upper trigonal forms} on $V$. Clearly, both $\Dg(V)$ and $\Tt(V)$  are $R$-submodules   of $\QF(V)$, and
\begin{equation}\label{eq:a5.5}
\QF(V) = \Dg(V) \oplus \Tt(V).
\end{equation}
Given a quadratic form  $q \in \QF(V)$ and a decomposition $q = q_0 + q_1$ with
$q_0 \in \Dg(V)$ and  $q \in \Tt(V)$, we call~ $q_0$ the \textbf{diagonal} part and $q_1$ the \textbf{upper triangular part} of $q$ (more precisely, of $(q,b)$). The diagonal part with entries
$\al_1, \dots, \al_n$ will  often be denoted by
$\dg(\al_1, \dots, \al_n)$.

%This is a sloppy notation.
The scheme \eqref{eq:a5.1} may depend on the choice of the basis  $e_1, \dots, e_n$.

\begin{convention}\label{conv:a5.1}
By the $R$-module $V$ we always mean the free $R$-module $V$ equipped with a fixed basis $e_1, \dots, e_n$. \end{convention}

Nevertheless, there exist semirings $R$  for which every free $R$-module has a unique  basis up to permutation. We exhibit one class of such semirings.
\begin{prop}\label{prop:a5.2}
  Let  $R$ be a \ub semiring with $\lm \geq_R 1$ for each $\lm \neq 0$ in $R$. Let~ $V$ be a free $R$-module with basis $(e_i \ds | i \in I)$. Then, $(e_i \ds | i \in I)$ is the set of all minimal elements of $V \sm \00$ with respect to $\leq_V$,\footnote{Note that, $(V,+)$ is \ub}
  and hence $(e_i \ds | i \in I)$ is  the unique basis of $V$ up to permutation.
\end{prop}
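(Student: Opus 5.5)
The plan is to prove two things in order: each $e_i$ is minimal in $V\sm\00$, and every minimal element of $V\sm\00$ equals some $e_i$. Uniqueness of the basis up to permutation then follows formally. For any other basis $(f_j)$, the same statement applied to $(f_j)$ says that $\{f_j\}$ is also the set of minimal elements of $V\sm\00$. Hence $\{f_j\}=\{e_i\}$ as sets. Both families consist of pairwise distinct elements, since a repeated vector would violate uniqueness of presentations. So $(f_j)$ is a permutation of $(e_i)$.

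The tool throughout is coordinatewise comparison. Write $x=\sum x_ie_i$ and $y=\sum y_ie_i$.
\begin{enumerate}\eroman
  \item If $x\le_V y$, say $x+v=y$, then uniqueness of presentations gives $x_i+v_i=y_i$. Hence $x_i\le_R y_i$ for all $i$.
  \item If $x_i\le_R y_i$ for all $i$, then $x\le_V y$: choose $c_i$ with $x_i+c_i=y_i$, taking $c_i=0$ whenever $y_i=0$ so that the sum is finite.
\end{enumerate}
So $\le_V$ is the product order of $\le_R$ on finitely supported families. Antisymmetry of $\le_R$ therefore transfers to $V$, which justifies the footnote that $V$ is \ub.

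Next I show that the only element of $R$ lying strictly between $0$ and $1$, or equal to them, is $0$ or $1$. Suppose $0\neq\lm\le_R 1$. By hypothesis $1\le_R\lm$, and since $R$ is \ub, antisymmetry gives $\lm=1$.

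Minimality of $e_i$: let $0\neq x\le_V e_i$. By (i), $x_j\le_R 0$ for every $j\neq i$. From $x_j+c=0$ and antisymmetry (using $0\le_R x_j$), we get $x_j=0$. Also $x_i\le_R 1$, and $x_i\neq 0$ because $x\neq0$. By the previous paragraph $x_i=1$, so $x=e_i$.

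Every minimal element is a basis vector: let $m\neq0$ be minimal and pick $i$ with $m_i\neq0$. By hypothesis $1\le_R m_i$, and clearly $0\le_R m_j$ for all $j$. So (ii) gives $e_i\le_V m$. Minimality of $m$, together with $e_i\neq0$, forces $m=e_i$.

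The main point needing care is that $\le_R$ is really antisymmetric, which holds because $R$ is \ub. This is what makes $x_j\le_R0$ imply $x_j=0$, and what collapses $1\le\lm\le1$ to $\lm=1$. No cancellativity is required; otherwise the argument is routine.
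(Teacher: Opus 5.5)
Your proof is correct and follows essentially the same route as the paper. The paper's argument is that a nonzero $x$ with support $J$ satisfies $x \geq_V \sum_{j\in J} e_j \geq_V e_i$ for each $i\in J$, because every nonzero coefficient is $\geq_R 1$; your coordinatewise description of $\leq_V$ and your antisymmetry steps ($x_j\leq_R 0\Rightarrow x_j=0$ and $0\neq\lambda\leq_R 1\Rightarrow \lambda=1$) additionally make explicit the minimality of each $e_i$, which the paper's proof leaves implicit.
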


\begin{proof} Let $x \in V \sm \00 $. There is a unique subset $J$ of $I$ such that
$x = \sum_{j \in J} \lm_j e_j$ with  $\lm_j \neq 0$. Then,  $\lm_j \geq_R 1$  for all $j \in J$, which  implies that
$\lm_j e_j \geq_V e_j$,
$x \geq_V  \sum_{j \in J} \ e_j \geq_V e_i$ for each $i \in J$, and $x >_V e_i$ if $\{ i \} \varsubsetneqq J$.
  \end{proof}

  \begin{examp}\label{exmp:a5.3} Let
  $\NN[t_, \dots, t_p]$ be the polynomial semiring in variables $t_1, \dots, t_p$ over the semiring $\NN$ of nonnegative integers. The subsemiring
  $$R = \00 \cup (\N + \NN[t_1, \dots, t_p]), $$
    which consists of all polynomials with constant term in $\N $ together with the zero polynomial,
    has the minimal element $1$ in $R \sm \00$. Hence,  every free $R$-monoid, up to permutation, has a unique basis. The same holds, if we replace $\NN[t_1, \dots, t_p]$ by the semiring
    $\NN[[t_1, \dots, t_p]]$ of formal power series in variables $t_1, \dots, t_p$ over $\NN$.
  \end{examp}

Assume that $U$ is an \as submonoid of $(R,+)$, and $q:V \to U$ is a \textbf{$U$-valued quadratic form}, i.e., $q(x) \in U$ for every $x \in V$. Then,  Theorem \ref{thm:1.2} implies that
$q$ has a unique  companion $b = b_q$. Thus, we may omit the explicit mention of the companion $b$ in \eqref{eq:a5.1}. The coefficient  $\bt_{ij}$ is then determined by
\begin{equation}\label{eq:a5.6}
\al_i + \al_j + \bt_{ij} = q(e_i + e_j).
\end{equation}
Moreover, $q$ is also balanced by  Theorem \ref{thm:1.2}, so
\begin{equation}\label{eq:a5.7}
2 \al_i= b(e_i, e_i).
\end{equation}
When $q$ is $U$-valued, it may happen that some coefficient $\bt_{ij} \notin U$, in which case
$\bt_{ij}$ is a gap element by \eqref{eq:a5.6}.
However, even if all $\bt_{ij} \in U$, we cannot exclude, in general, that $b(x,y) \notin U$ for some
$x,y \in V$. In that case,  $b(x,y)$ is a gap element for $U$, since
$q(x) + q(y)+ b(x,y) = q(x+y)$.

We next consider cases where this does not happen; that is,  $b: V \times V \to R$ take  all its  values in $U$.
In general, we have the following result.
\begin{thm}\label{thm:a5.2}
Let $(q,b)$ be a quadratic pair on $V$ with trigonal  scheme \eqref{eq:a5.1}, where
\begin{equation}\label{eq:a5.8}
  \begin{array}{c}
    \al_i = q(e_i) \in U, \qquad b(e_i,e_i) = 2 \al_i, \qquad
    \bt_{ij} = b(e_i, e_j) \in U \text{ for } i<j.
  \end{array}
\end{equation}
Then, $q(x) \in U$ and $b(x,y) \in U$ for all  $x,  y$ in the additive submonoid $\sum_{i =1}^{n} \NN e_i$ of $(V,+)$ generated by $e_1,\dots, e_n $.
\end{thm}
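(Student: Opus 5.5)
Since $x, y$ lie in $\sum_{i=1}^n \NN e_i$, we write $x = \sum_i m_i e_i$ and $y = \sum_j k_j e_j$ with $m_i, k_j \in \NN$. The plan is to expand $q(x)$ and $b(x,y)$ using the formula \eqref{eq:a5.2} and the bilinearity of $b$. Every coefficient that appears is then a natural number times one of $\al_i$, $\bt_{ij}$ (for $i<j$), or $b(e_i,e_i) = 2\al_i$. Because $U$ is a submonoid of $(R,+)$, it is closed under $\NN$-linear combinations, so every such term lies in $U$. Note that no almost subtractivity is needed here; only closure of $U$ under addition is used.

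For $q(x)$, apply \eqref{eq:a5.2} with $\lm_i = m_i \cdot 1_R$. The scalars $\lm_i^2 = m_i^2 \cdot 1_R$ and $\lm_i\lm_j = m_i m_j \cdot 1_R$ are images of natural numbers, so
$$q(x) = \sum_{i} m_i^2\,\al_i + \sum_{i<j} m_i m_j\,\bt_{ij}.$$
Here $m\,a$ means the $m$-fold sum $a + \cdots + a$. This identification holds because $(m\cdot 1_R)a = a+\cdots+a$ by distributivity. Hence $q(x)\in U$.

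For $b(x,y)$, bilinearity gives $b(x,y) = \sum_{i,j} m_i k_j\, b(e_i,e_j)$. We split this sum into the diagonal terms $i=j$, where $b(e_i,e_i) = 2\al_i = \al_i+\al_i \in U$, and the off-diagonal terms. For the off-diagonal terms we use symmetry of $b$, so that $b(e_j,e_i) = \bt_{ij}\in U$ for $i<j$. Every summand is therefore an $\NN$-multiple of an element of $U$, and so $b(x,y)\in U$.

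The only step that needs care is the justification of \eqref{eq:a5.2} for arbitrary scalars, which the paper takes from \cite{QF1}. If one prefers not to rely on it, one can argue directly by induction on $\sum_i m_i$ using $q(x+e_i) = q(x) + \al_i + b(x,e_i)$, where $b(x,e_i)$ is computed by bilinearity as above. Each step adds only elements of $U$, so the induction shows $q(x)\in U$ without \eqref{eq:a5.2}. I expect no real obstacle; the main point is to use the hypothesis $b(e_i,e_i)=2\al_i$ for the diagonal contributions to $b$, since without balancedness $b(e_i,e_i)$ need not lie in $U$.
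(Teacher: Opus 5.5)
Your proposal is correct and follows essentially the same route as the paper's proof: expand $q(x)$ via \eqref{eq:a5.2} with natural-number coefficients and $b(x,y)$ by bilinearity, then use $b(e_i,e_i)=2\al_i\in U$, symmetry $b(e_j,e_i)=\bt_{ij}\in U$, and closure of $U$ under addition. Your observations that almost subtractivity is not needed and that an induction on $\sum_i m_i$ can replace the appeal to \eqref{eq:a5.2} are correct but optional extras.
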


\begin{proof} Let
$x = \sum_{i =1}^n \lm_i e_i$ and $y = \sum_{j =1}^n \mu_j e_j $ with $\lm_i, \mu_j \in \NN$.
Then,
$$ q(x) = \sum_{i =1}^n \lm_i^2 \al_i + \sum_{i < j} \lm_i \lm_j \bt_{ij} \in U, $$
since all $\lm_i^2$ and $\lm_i \lm_j$ are in $\NN$, and $U$ is closed under addition in $R$ .
In the same vein
\begin{align*}
  b(x,y) & = b\bigg( \sum_{i =1}^n \lm_i e_i,\sum_{j =1}^n \mu_j e_j \bigg) \\
   & =  \sum_{i =1}^n \lm_i \mu_i b(e_i, e_i) + \sum_{i < j} \lm_i\mu_j b(e_i, e_j)
   +  \sum_{i > j} \lm_i\mu_j b(e_i, e_j) \in U,
\end{align*}
because $b(e_i, e_i) = 2 \al_i \in U$, and
$b(e_i, e_j) = b(e_j, e_i) = \bt_{ij} \in U$ for $i <j$.
(Recall that any companion bilinear form is by definition symmetric.)
\end{proof}

\begin{cor}\label{ver:a5.3}
 If $R$ is the semiring $\NN$ of nonnegative integers, then the quadratic pair ~$(q,b)$ with trigonal scheme \eqref{eq:a5.1} represents  a $U$-valued quadratic form with its unique companion if and only if
  all $\al_i \in U$ and all  $\bt_{ij} \in U$.
\end{cor}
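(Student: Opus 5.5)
For $R=\NN$, the plan is to prove the two directions separately. The "if" direction is Theorem~\ref{thm:a5.2}, and the "only if" direction follows from the defining identity \eqref{eq:a5.6}. The key simplification is that over $R=\NN$ every scalar is a nonnegative integer, so the submonoid $\sum_{i=1}^n \NN e_i$ generated by the basis is all of $V$.

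\emph{If.} Assume every $\al_i\in U$ and every $\bt_{ij}\in U$ for $i<j$. Define $q$ by \eqref{eq:a5.2} and $b$ by bilinear extension of $b(e_i,e_i)=2\al_i$ and $b(e_i,e_j)=b(e_j,e_i)=\bt_{ij}$. Then $(q,b)$ is a quadratic pair satisfying \eqref{eq:a5.8}. Theorem~\ref{thm:a5.2} gives $q(x)\in U$ and $b(x,y)\in U$ for all $x,y$ in $\sum_i\NN e_i$, and since $R=\NN$ this submonoid is $V$ itself. So $q$ is $U$-valued. Moreover, $\NN$ is cancellative, so Proposition~\ref{prop:1.13} shows $U$ is \as in $\NN$, and Theorem~\ref{thm:1.2} then says $b$ is the unique companion of $q$ and is balanced.

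\emph{Only if.} Assume $q$ is $U$-valued with its unique companion $b$. Then $\al_i=q(e_i)\in U$ directly. For $i<j$, \eqref{eq:a5.6} gives $\al_i+\al_j+\bt_{ij}=q(e_i+e_j)\in U$.

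This step is the main obstacle: it asks that the difference of two elements of $U$ lie in $U$, which is subtractivity, and an arbitrary submonoid of $\NN$ need not be subtractive. For example, with $U=\{0,2,3,4,\dots\}$, a form with $\al_1=\al_2=2$ and $\bt_{12}=1$ has values $2\lm_1^2+2\lm_2^2+\lm_1\lm_2$, which appear always to lie in $U$, yet $\bt_{12}=1\notin U$. The same phenomenon is already noted in the text before Theorem~\ref{thm:a5.2}, where $\bt_{ij}$ may be a gap element.

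So I would first try to use the specific structure of $\NN$. Every submonoid of $\NN$ is $d\cdot H$ with $H$ a numerical monoid, and the idea would be to show that the values $q(\lm_ie_i+\mu_je_j)=\lm_i^2\al_i+\mu_j^2\al_j+\lm_i\mu_j\bt_{ij}$ force $\bt_{ij}$ into $U$. If the counterexample above stands, I would instead read the corollary as asserting only the "if" direction together with the necessary conditions $\al_i\in U$ and $\al_i+\al_j+\bt_{ij}\in U$, or else restrict to subtractive $U$, in which case $\bt_{ij}\in U$ follows immediately from the displayed identity.
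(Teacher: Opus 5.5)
Your ``if'' direction is the paper's argument. Over $\NN$ the submonoid $\sum_i \NN e_i$ is all of $V$, so Theorem~\ref{thm:a5.2} applies to every $x,y\in V$. The paper's one-line proof (``all $\lm_i^2 q(e_i)\in U$ and all $\lm_i\mu_j b(e_i,e_j)\in U$'') says exactly this and does not address the converse at all.

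On the converse, your counterexample is genuinely correct, not just apparently so. The value $2\lm_1^2+2\lm_2^2+\lm_1\lm_2$ is $0$ when $\lm_1=\lm_2=0$ and is $\geq 2$ otherwise. Hence $q$ is $U$-valued for $U=\{0,2,3,\dots\}$, while $\bt_{12}=1\notin U$. This is an instance of the gap phenomenon the paper itself mentions before Theorem~\ref{thm:a5.2}. Your planned attempt to force $\bt_{ij}\in U$ from the structure of submonoids of $\NN$ therefore cannot succeed and should be dropped: read literally, with only $q$ required to be $U$-valued, the ``only if'' is false.

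What you missed is the reading under which the corollary is true and which fits the paper's setup. Here ``$U$-valued \dots\ with its unique companion'' refers to the pair, i.e.\ both $q$ and $b$ take their values in $U$. This is the situation announced just before Theorem~\ref{thm:a5.2} (``$b\colon V\times V\to R$ take all its values in $U$''), and it is why the paper's proof also checks the $b$-terms $\lm_i\mu_j b(e_i,e_j)$. Under that reading, ``only if'' is immediate from $\al_i=q(e_i)$ and $\bt_{ij}=b(e_i,e_j)$. Your ``if'' argument then gives both $q(V)\subseteq U$ and $b(V\times V)\subseteq U$ via Theorem~\ref{thm:a5.2}, with uniqueness of $b$ from Theorem~\ref{thm:1.2}. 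Restricting to subtractive $U$, as you suggest, also yields a true statement, but it is a different one with a stronger hypothesis.
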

\begin{proof}
  Observe that in the  proof of Theorem \ref{thm:a5.2}  all $\lm_i^2 q(e_i)\in U$ and all
  $\lm_i \mu_j b(e_i, e_j) \in~ U$.
\end{proof}

More generally,  the same arguments give the following result.
\begin{thm}\label{thm:a5.4}
Assume that $U$ is an \textbf{ideal} of the  semiring  $R$, i.e., $U+ U \subseteq U$ and $\lm U \subseteq U$ for every $\lm \in R$, and that  $U$ is  \as in $(R,+)$. Let  $(q,b)$ be a quadratic pair on $V$ with a trigonal scheme \eqref{eq:a5.1}, in which
all $\al_i \in U$ and all $\bt_{ij} \in U$. Then, $q(x) \in U$ for all $x \in V$, and hence  $b$ is the unique companion of the $U$-valued quadratic form $q$.
\end{thm}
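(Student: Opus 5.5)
We follow the proof of Theorem \ref{thm:a5.2}; the only change is that the scalars $\lm_i$ now lie in all of $R$ rather than in $\NN$, and the ideal property of $U$ makes up for this. Throughout, $(q,b)$ is a quadratic pair on the free module $V$ with trigonal scheme \eqref{eq:a5.1}, with $\al_i \in U$ and $\bt_{ij} \in U$.

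First take an arbitrary $x = \sum_{i=1}^n \lm_i e_i \in V$ with $\lm_i \in R$. By \eqref{eq:a5.2},
$$q(x) = \sum_{i=1}^n \lm_i^2 \al_i + \sum_{i<j} \lm_i \lm_j \bt_{ij}.$$
Since $U$ is an ideal and $\al_i, \bt_{ij} \in U$, each summand $\lm_i^2 \al_i$ and $\lm_i\lm_j\bt_{ij}$ lies in $U$. Since $U + U \subseteq U$, the finite sum $q(x)$ lies in $U$ as well. Note that $0 \in U$, because $U$ is a submonoid, so vanishing terms cause no trouble. Hence $q(V) \subseteq U$, i.e., $q$ is $U$-valued.

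Next we apply Theorem \ref{thm:1.2} to $q$. Since $U$ is \as in $(R,+)$ and $q(V) \subseteq U$, the form $q$ is rigid and its unique companion is balanced. By hypothesis $b$ is a companion of $q$, because $(q,b)$ is a quadratic pair. So $b$ is the unique companion of $q$, and it is automatically balanced.

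No step here is a real obstacle. The only point needing care is that \eqref{eq:a5.2} holds for arbitrary scalars $\lm_i \in R$, which the paper states. As an aside, the same argument shows that $b(x,y) \in U$ for all $x,y \in V$: expanding $b(x,y)$ bilinearly as in the proof of Theorem \ref{thm:a5.2} gives terms $\lm_i\mu_j b(e_i,e_j)$, and each lies in $U$ because $b(e_i,e_i) = 2\al_i \in U$ and $b(e_i,e_j) = \bt_{ij} \in U$ for $i \neq j$.
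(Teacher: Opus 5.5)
Your proof is correct and is essentially the paper's own argument: the paper just observes that the proof of Theorem~\ref{thm:a5.2} carries over once the ideal property $\lm U \subseteq U$ replaces the restriction $\lm_i \in \NN$, and uniqueness (and balance) of the companion then follows from Theorem~\ref{thm:1.2}. Your aside is also ordered correctly: $b(e_i,e_i)=2\al_i$ is not a hypothesis of this theorem, but it holds because Theorem~\ref{thm:1.2} makes $b$ balanced.
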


The \as property of $U$ in $(V,+)$ yields   various \as submonoids of $\QF(V)$.

\begin{thm}\label{thm:a5.7}  $ $
\begin{enumerate}\ealph
  \item

  The set $\QF(V,U)$ of $U$-valued quadratic forms on $V$ is an \as submonoid of $(\QF(V), +)$.
  \item The sets $$ \Dg(V,U) : = \Dg(V) \cap \QF(V,U)$$
  and $$ \Tt(V,U) : = \Tt(V) \cap \QF(V,U),$$
 which  consist of the $U$-valued diagonal and upper trigonal forms on $V$, are both \as submonoids of $(\QF(V),+)$.
  \item $\QF(V,U) = \Dg(V,U) \oplus \Tt(V,U)$.
  \item  The trigonal scheme of any $q \in \Tt(V,U)$ contains no gap elements, and
  $$
    q \hteq \begin{bmatrix}
                0 & \bt_{12} & \bt_{13} & \cdots & \bt_{1n} \\
                & 0 & \bt_{23} & \cdots & \bt_{2n} \\
                &  & \ddots &  & \vdots \\
              &&  & \ddots & \bt_{(n-1)n} \\
                         &      && & 0  \\
              \end{bmatrix}
$$
with all $\bt_{ij} \in U$.
\end{enumerate}
\end{thm}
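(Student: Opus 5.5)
The plan is to reduce everything to pointwise statements about values in $R$ and then apply the \as property of $U$ in $R$ value by value.

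For (a), I first check that $\QF(V,U)$ is a submonoid of $(\QF(V),+)$. The zero form is $U$-valued. If $q,q' \in \QF(V,U)$, then $(q+q')(x)=q(x)+q'(x)\in U$ for every $x$, because $U$ is closed under addition. For the \as property, I take $q,\tilde q \in \QF(V,U)$ and forms $p,p' \in \QF(V)$ with $q+p=q+p'=\tilde q$. Evaluating at each $x\in V$ gives $q(x)+p(x)=q(x)+p'(x)=\tilde q(x)$, with $q(x),\tilde q(x)\in U$. Definition~\ref{defn:1.1} then yields $p(x)=p'(x)$ for all $x$, hence $p=p'$ as maps, and so as quadratic forms. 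I should note that the difference of $q$ and $\tilde q$ is only required to lie in the ambient monoid $\QF(V)$, not in $\QF(V,U)$, which matches the definition. If one wants to track companions too, uniqueness of the companion of $p$ is not needed, because elements of $\QF(V)$ are determined by their values.

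For (b), I observe that $\Dg(V)$ and $\Tt(V)$ are submodules of $\QF(V)$. Hence their intersections with $\QF(V,U)$ are submonoids of $\QF(V,U)$, and Proposition~\ref{prop:1.3}(a) applied inside $\QF(V)$ gives that both are \as.

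For (c), I take $q\in\QF(V,U)$ with scheme entries $\al_i,\bt_{ij}$ and use the decomposition \eqref{eq:a5.5}, $q=q_0+q_1$ with $q_0=\dg(\al_1,\dots,\al_n)$ and $q_1$ the upper trigonal part. The main step, and the one I expect to be the obstacle, is to show that $q_0$ and $q_1$ are themselves $U$-valued. Here I plan to use the formula \eqref{eq:a5.2}: $q_0(x)=\sum\lm_i^2\al_i$ and $q_1(x)=\sum_{i<j}\lm_i\lm_j\bt_{ij}$. I would first try to show that all $\al_i=q(e_i)\in U$ and all $\bt_{ij}\in U$, and then argue as in Theorem~\ref{thm:a5.4}. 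This only works if $U$ is stable under multiplication by the scalars $\lm_i^2$ and $\lm_i\lm_j$. Without an ideal hypothesis I suspect the argument must go through the $U$-valuedness of $q$ itself, e.g. via $q(\lm e_i)=\lm^2\al_i\in U$ for the diagonal terms. The off-diagonal terms are harder, since $\bt_{ij}$ is a priori only a bridge, as in \eqref{eq:a5.6}. If this step resists, I would read (c) as holding under the standing hypotheses used for (d), namely that the $\bt_{ij}$ lie in $U$ and that $U$ is an ideal as in Theorem~\ref{thm:a5.4}. Granting this, directness of the sum is inherited from \eqref{eq:a5.5}.

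For (d), for $q\in\Tt(V,U)$ all $\al_i=0$. By \eqref{eq:a5.6}, $\bt_{ij}=q(e_i+e_j)\in U$, so no gap elements appear in the scheme.
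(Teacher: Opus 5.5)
Your arguments for (a), (b) and (d) are the paper's. For (a) you evaluate $q+p=q+p'=\tilde q$ pointwise and apply Definition~\ref{defn:1.1} in $R$. For (b) you use Proposition~\ref{prop:1.3}(a), since both intersections are submonoids of $\QF(V,U)$. For (d) you use \eqref{eq:a5.6} with $\al_i=\al_j=0$, which gives $\bt_{ij}=q(e_i+e_j)\in U$.

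For (c) the paper's entire proof is ``immediate from \eqref{eq:a5.5}''. That only decomposes $q=q_0+q_1$ inside $\QF(V)$. The point you isolated, whether the upper trigonal part $q_1$ is $U$-valued, is exactly what the citation skips, and it cannot be supplied: (c) is false in general.

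Here is a counterexample. Take $R=\NN$ and $U=\{0,2,3,4,\dots\}$; this is Example~\ref{exmp:a9.20} with $d=2$, and $U$ is \as by Proposition~\ref{prop:1.13}. Let $V=\NN e_1+\NN e_2$ and
\[ q(\lm_1e_1+\lm_2e_2)=2\lm_1^2+\lm_1\lm_2+2\lm_2^2, \qquad q\hteq\smat{2}{1}{}{2}. \]
Every nonzero value of $q$ is $\geq 2$, so $q\in\QF(V,U)$. But $q_1(e_1+e_2)=1\notin U$. Since $R$ is cancellative, the decomposition \eqref{eq:a5.5} of $q$ is unique, so $q\notin\Dg(V,U)\oplus\Tt(V,U)$. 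The same example also refutes the ``only if'' direction of Corollary~\ref{ver:a5.3}.

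More conceptually, by (d) no $q$ with a gap in the sense of Definition~\ref{def:a5.8} can lie in $\Dg(V,U)\oplus\Tt(V,U)$, and the paper itself goes on to construct such forms.

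So your fallback should not be phrased as reading (c) ``under the standing hypotheses of (d)''. Part (d) has no such hypotheses; there $\bt_{ij}\in U$ is derived from $\al_i=0$. It should instead be stated as a correction of (c). Your diagonal argument, $q_0(x)=\sum_i q(\lm_i e_i)\in U$, is valid in general. Hence $q\in\Dg(V,U)\oplus\Tt(V,U)$ if and only if $q_1$ is $U$-valued, and therefore
\[ \Dg(V,U)\oplus\Tt(V,U)\subseteq\FG(V,U)\subseteq\QF(V,U). \]
The first inclusion is an equality when $U$ is an ideal or $R=\NN$. Statement (c) therefore requires $\QF(V,U)=\FG(V,U)$, and for $U$ an ideal this condition is also sufficient.
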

\begin{proof} (a): Let $q_1,q_2 \in \QF(V,U)$ and  $\vrp, \psi \in \QF(V)$, and assume that
$q_1 + \vrp = q_1 + \psi = q_2$. Then, for every $x \in V$, we have
$$q_1(x) + \vrp(x) = q_1(x) + \psi(x) = q_2(x), $$
  where    $q_1(x),q_2(x) \in U$ and $\vrp(x), \psi(x) \in R$.
  Since $U$ is \as in $(R,+)$, it follows that $\vrp(x) = \psi(x)$ for every $x \in V$. Hence, $\vrp = \psi$.
\pSkip
    (b): Clear, since these sets are closed under addition in $\QF(V,U)$, where  $\QF(V,U)$ is an \as submonoid of $(\QF(V),+)$.
  \pSkip
  (c): Immediate from \eqref{eq:a5.5}.
  \pSkip (d): A consequence of \eqref{eq:a5.6}.
\end{proof}

\begin{remark}\label{rem:a5.8}
Let $q_1, q_2 \in \QF(V,U)$. If $q_1 + q_2 \in \Tt(V,U)$, then $q_1, q_2 \in T(V,U)$. Ditto for
$\Dg(V,U)$.
\end{remark}
\begin{defn}\label{def:a5.8} Let $V$ be an $R$-module, and let $q$ be $U$-valued quadratic form.
%\begin{enumerate} \ealph
 % \item Given a $U$-valued quadratic form $q$ on $V$,
  We call the entries in the trigonal scheme~
  \eqref{eq:a5.1} of $q$ (with its unique companion $b$) that are not in $U$ the \textbf{gaps of $q$} (with respect to the basis $e_1, \dots, e_n$ of $V$).
  %\item
  Every $\al_i = q(e_i)$ belongs to $U$, but some $\bt_{ij}$, with $i < j$, may lie in $R \sm U$, in which case they  are gap elements of $U$ in $R$, since
      $$ \al_i + \al_ j + \bt_{ij} = q(e_i + e_j) \in U.$$
      In this case,  we say that $q$ \textbf{has a gap at the place $(i,j)$} and that $\bt_{ij}$ is a \textbf{binary gap value} of $q$.
  %\item
  When  all $\bt_{ij} \in U$, we say that $q$ is \textbf{free of gaps}. We denote the set of such  quadratic forms by
      $\FG(V,U)$.
%\end{enumerate}
\end{defn}

Obviously, the sum of $q_1 + q_2$ of two $U$-valued quadratic forms $q_1$ and $ q_2$ that   are free of gaps, is again free of gaps, and  every $U$-valued diagonal form is free of gaps. Thus the following holds.
\begin{prop}\label{prop:a5.9}
$\FG(V,U)$ is a submonoid of $(\QF(V,U), +)$ containing  the submonoid $\Dg(V,U)$ of
$\QF(V,U)$.
\end{prop}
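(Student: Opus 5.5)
The plan is to verify the three conditions of a submonoid directly. Proposition~\ref{prop:a5.9} makes three claims: $\FG(V,U)$ contains the zero form, it is closed under addition, and it contains $\Dg(V,U)$. Once these are in place, $\FG(V,U)\subseteq\QF(V,U)$ is a submonoid. By Theorem~\ref{thm:a5.7}(a) and Proposition~\ref{prop:1.3}(a) it is then even \as in $(\QF(V),+)$, although the statement does not ask for this.

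Throughout I work with the trigonal scheme \eqref{eq:a5.1} and the uniqueness of companions from Theorem~\ref{thm:1.2}. For $q\in\QF(V,U)$, the coefficients $\bt_{ij}(q)=b_q(e_i,e_j)$ are well defined. The zero form is $U$-valued with unique companion $b=0$, so all its $\bt_{ij}=0\in U$, and hence $0\in\FG(V,U)$.

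For closure under addition, take $q_1,q_2\in\FG(V,U)$ with companions $b_1,b_2$. First, $q_1+q_2$ is $U$-valued, since $(q_1+q_2)(x)=q_1(x)+q_2(x)\in U+U\subseteq U$. Second, $b_1+b_2$ is a symmetric bilinear form, and adding the identities \eqref{eq:1.2} for $q_1$ and $q_2$ shows that $b_1+b_2$ is a companion of $q_1+q_2$. Because $q_1+q_2$ is $U$-valued, Theorem~\ref{thm:1.2} says this companion is the unique one. Consequently $\bt_{ij}(q_1+q_2)=\bt_{ij}(q_1)+\bt_{ij}(q_2)\in U$, so $q_1+q_2$ is free of gaps. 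The one point needing care is exactly this identification of the companion of the sum: the gap condition refers to the unique companion, so one must check that it coincides with $b_1+b_2$ rather than some other companion.

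Finally, let $q\in\Dg(V,U)$. By definition of $\Dg(V)$, its scheme has $\bt_{ij}=0$ for all $i<j$. Its companion is unique by Theorem~\ref{thm:1.2}, so these coefficients are the relevant ones, and $0\in U$ shows that $q$ has no gaps. Hence $\Dg(V,U)\subseteq\FG(V,U)$.
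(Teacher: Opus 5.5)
Your proof is correct and follows the paper's route. The paper simply observes that the sum of two gap-free $U$-valued forms is again gap-free and that every $U$-valued diagonal form is gap-free. Your check that $b_1+b_2$ is the unique companion of $q_1+q_2$ (by Theorem~\ref{thm:1.2}) spells out the detail the paper leaves implicit.
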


We look for ways to ``close'' (or to ``cover'') the gaps of $q$ by adding suitable   forms to $q$.

\begin{defn}\label{def:a5.10}
 Given a $U$-valued quadratic form $q$ on $V$, we say that a form $\vrp \in \FG(V,U)$ \textbf{covers
 the gaps of $q$} (or, is a \textbf{gap cover} of $q$), if
 $q + \vrp$ is free of gaps.
 \end{defn}
 For formal reasons, we do not exclude the case that $q$ itself is free of gaps.  In that case, the zero form  $\vrp = 0 $ is a gap cover of $q$.

\begin{prop}\label{prop:a5.11}
Let $\vrp_1, \vrp_2 \in \FG(V,U)$.
\begin{enumerate} \ealph
\item If $q \in \QF(V,U)$ and $\vrp$ is a  gap cover of  $q$, then, for
any $\psi \in \FG(V,U)$,  $\vrp + \psi$
 is also a gap cover of $q$.

\item If $\vrp_1$ is a gap cover of $q_1 \in \QF(V,U)$ and
$\vrp_2$ is a gap cover of $q_2 \in \QF(V,U)$,
then
$\vrp_1 + \vrp_2$ is a gap cover of $q_1 + q_2$.
\end{enumerate}
\end{prop}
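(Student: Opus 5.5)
The plan is to work entirely at the level of the trigonal schemes \eqref{eq:a5.1} and use the fact that addition in $\QF(V)$ is coefficientwise. For $U$-valued forms, the unique companion provided by Theorem~\ref{thm:1.2} is additive, so the schemes add entry by entry. Concretely, if $q,q'\in\QF(V,U)$ have unique companions $b_q,b_{q'}$, then $b_q+b_{q'}$ is a companion of $q+q'$. Since $q+q'$ is again $U$-valued ($U$ is a submonoid), Theorem~\ref{thm:1.2} says this is \emph{the} companion. Hence the off-diagonal entries of $q+q'$ are $\bt_{ij}+\bt'_{ij}$. The first step is to record this observation.

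For (a), I would write $q+\vrp+\psi=(q+\vrp)+\psi$. Here $q+\vrp$ is free of gaps, so all its $\bt$-entries lie in $U$, and $\psi\in\FG(V,U)$ has all its $\bt$-entries in $U$. By the additivity just noted, the entries of $q+\vrp+\psi$ are sums of two elements of $U$, hence lie in $U$. The same reasoning shows $\vrp+\psi\in\FG(V,U)$, using Proposition~\ref{prop:a5.9}, so $\vrp+\psi$ is admissible as a gap cover. This is essentially the statement that $\FG(V,U)$ is a submonoid acting on the set of gap-free forms.

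For (b), commutativity gives $(q_1+q_2)+(\vrp_1+\vrp_2)=(q_1+\vrp_1)+(q_2+\vrp_2)$. This is a sum of two gap-free $U$-valued forms, hence gap-free by Proposition~\ref{prop:a5.9}. Likewise $\vrp_1+\vrp_2\in\FG(V,U)$, again by Proposition~\ref{prop:a5.9}.

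The only point needing care is the additivity of companions, that is, that the scheme of a sum is the sum of the schemes. This holds precisely because of the uniqueness in Theorem~\ref{thm:1.2}, and once it is in place everything else is immediate.
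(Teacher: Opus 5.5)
Your proposal is correct and follows essentially the paper's route. The paper also argues entrywise on the trigonal schemes: for (a), if $\bt_{ij}+u_0\in U$ with $u_0,u_1\in U$, then $\bt_{ij}+u_0+u_1\in U$; for (b), if also $\gm_{ij}+v_0\in U$ with $v_0\in U$, then $\bt_{ij}+\gm_{ij}+u_0+v_0\in U$. This is exactly your reduction to the closure of $\FG(V,U)$ under addition. Your justification that the companion of a sum of $U$-valued forms is the sum of the companions, by the uniqueness in Theorem~\ref{thm:1.2}, makes explicit a step the paper uses only tacitly.
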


\begin{proof} This follows from the study of bridges in $(R,U)$ as discussed in \S\ref{sec:a4}.
\pSkip
(a): Observe that, if $i< j$ and $u_0, u_1 \in U$ with
$ \bt_{ij} + u_0 = u_0' \in U$,
%qquad  \bt_{ij} + u_1 = u_1' \in U$$
then
$ \bt_{ij} + u_0 +u_1 = u_0' + u_1 \in U$.
\pSkip
(b): If  $\bt_{ij}, \gm_{ij}  \in R$ and $u_0,v_0, u_0', v_0' \in U$ are  such that
$ \bt_{ij} + u_0 = u_0' \in U$ and $\gm_{ij} + v_0 = v_0' \in U$,
then
$ \bt_{ij} + \gm_{ij} + u_0 +v_0 = u_0' + v_0' \in U.$ \end{proof}

For a  $U$-valued quadratic form $q \in \QF(V,U)$ with trigonal scheme \eqref{eq:a5.1}  we define the  form
$C(q) \in \Tt(V,U)$ to be the form whose entries are
$$ C(q)_{ij} :=
\left\{
  \begin{array}{ll}
    \al_i + \al_j, & 1 \leq i < j \leq n, \\[1mm]
    0, & i =j.
  \end{array}
\right.
$$
\begin{thm}\label{thm:a5.12}
$C(q)$ covers the gaps of the $U$-valued quadratic form  $q$.
\end{thm}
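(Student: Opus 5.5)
We need to show two things: that $C(q)\in\FG(V,U)$ (so that it is admissible as a gap cover in the sense of Definition~\ref{def:a5.10}), and that $q+C(q)$ is free of gaps. Both reduce to statements about the entries of the trigonal schemes, so the plan is to compute these schemes explicitly and use $q(e_i+e_j)\in U$.

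\emph{Step 1: $C(q)$ is a well-defined $U$-valued upper trigonal form without gaps.} The entries of $C(q)$ are $0$ on the diagonal and $\al_i+\al_j$ off the diagonal. Each $\al_i=q(e_i)$ lies in $U$, and $U$ is a submonoid, so every $\al_i+\al_j$ lies in $U$. Formula \eqref{eq:a5.2} defines a quadratic form with this scheme, and hence $C(q)\in\Tt(V)$. We then need $C(q)$ to be $U$-valued on all of $V$. This is the delicate point, since for general scalars $\lm_i\lm_j(\al_i+\al_j)$ need not lie in $U$. I would handle it as in Theorem~\ref{thm:a5.2}/Theorem~\ref{thm:a5.4}: the paper already silently asserts $C(q)\in\Tt(V,U)$ in the definition, so I take this as the intended standing setting. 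That is, either $U$ is an ideal, or one reads $U$-valuedness on $\sum\NN e_i$ as in Theorem~\ref{thm:a5.2}. Granting this, all off-diagonal entries lie in $U$, so $C(q)\in\FG(V,U)$, consistent with Theorem~\ref{thm:a5.7}(d).

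\emph{Step 2: compute the scheme of $q+C(q)$.} The trigonal schemes add entrywise. This holds because the companion of a sum is the sum of the companions, and companions are unique by Theorem~\ref{thm:1.2} since $\QF(V,U)$ is closed under addition. Hence $q+C(q)$ has diagonal entries $\al_i$ and off-diagonal entries $\bt_{ij}+\al_i+\al_j$ for $i<j$. By \eqref{eq:a5.6}, the latter equal $q(e_i+e_j)$, which lies in $U$ because $q$ is $U$-valued. Therefore every off-diagonal entry of $q+C(q)$ lies in $U$, so $q+C(q)$ is free of gaps, which is exactly the assertion of the theorem.

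The main obstacle is Step 1, namely the $U$-valuedness of $C(q)$ on all of $V$. The covering computation in Step 2 is a one-line consequence of \eqref{eq:a5.6}. If the ideal hypothesis is unavailable, the fallback is to note that Definition~\ref{def:a5.8} of ``free of gaps'' only concerns the entries $\bt_{ij}$. The essential content is then the entry identity $\bt_{ij}+(\al_i+\al_j)=q(e_i+e_j)\in U$, which closes each binary gap.
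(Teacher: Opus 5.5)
Your Step~2 is exactly the paper's proof: the paper only computes the entries of $q+C(q)$, namely $\al_i\in U$ on the diagonal and $\bt_{ij}+\al_i+\al_j=q(e_i+e_j)\in U$ off it, and takes $C(q)\in\Tt(V,U)$ for granted from the definition of $C(q)$. Your caveat in Step~1 is a fair point rather than a flaw in your argument: without a hypothesis like that of Theorem~\ref{thm:a5.4}, $C(q)$ need not be $U$-valued (for $R=\NN[t]$, $U$ the set of sums of squares, and $q(\lm_1e_1+\lm_2e_2)=(\lm_1+\lm_2)^2$, one gets $C(q)(e_1+te_2)=2t\notin U$), so in general what you and the paper both establish is the entry-level statement.
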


\begin{proof}
The entries of $q + C(q)$ are
$$ (q + C(q))_{ij}
= \left\{
\begin{array}{ll}
  \bt_{ij} + \al_i + \al_j = q(e_i +e_j) \in U, & i \neq j, \\[1mm]
  \al_i \in U , & i =j,
\end{array}
\right.
$$
for all
$1 \leq i < j \leq n$.
\end{proof}

\begin{remark}\label{rem:a5.13}
$ $
\begin{enumerate} \ealph
\item  The map
$$  \QF(V,U) \To \Tt(V,U), \qquad  q \mapsto C(q),$$
is an additive surjection. It restricts to the zero map on $\Tt(V,U)$ and to a monoid endomorphism
$\Dg(V,U) \hookrightarrow \Tt(V,U)$ of $\Dg(V,U)$.

\item If $U$ is an ideal of the semiring $R$, then $\QF(V,U) $ is an $R$-submodule of $\QF(V)$ by Theorem \ref{thm:a5.7}. Both $\QF(V,U)$ and $\Tt(V,U)$ are $R$-submodules of $\QF(V)$, and $q \mapsto C(q)$ respects scalar multiplication, i.e.,
%\begin{equation}\label{eq:a5.9}
 $ C(\lm q ) = \lm C(q)$
% \end{equation}
 for every $\lm \in R$ and $q \in \QF(V,U)$.

\item Let $q \in \QF(V)$. Then,
$$ q \in \Tt(V,U) \dss \Leftrightarrow C(q) = 0 \dss \Leftrightarrow q + C(q) = q.$$
\end{enumerate}

\end{remark}

\section{Walks and trails in a set of $U$-valued quadratic forms}\label{sec:b6}
Let $V$ be a free module with finite  basis $e_1, \dots, e_n$ over an \ub semiring ~$R$, and let  $U$ be  an \as submonoid of $(R,+)$. As stated in \S\ref{sec:a5}, the set $\QF(V)$ of forms on $V$ is a \ub $R$-module, and the set $\QF(V,U)$ of $U$-valued  forms on $V$
is an \as submonoid of $(\QF(V), +)$, which, under suitable conditions, is a submodule of $\QF(V)$, cf. Theorem \ref{thm:a5.4}\}.

We complement the terminology on gaps and bridges for $U$-valued forms introduced at the end of \S\ref{sec:a5} with analogues of the definitions in \S\ref{sec:a4} for  elements of $(R, U)$ (Definitions~\ref{def:a4.1} and~\ref{def:a4.3}).
\begin{defn}\label{def:b6.3} $ $
\begin{enumerate} \ealph
   \item
  We call a form $\zt \in \QF(V)$ a \textbf{bridge} from   $q_0 \in \QF(V,U)$ to
  $q_1 \in \QF(V,U)$, if $q_0 + \zt = q_1$. Such a bridge exists if and only if $q_0 \leq_{\QF(V)} q_1$, in which case it is unique.
  \item
  We call $\zt \in \QF(V)$ \textbf{bridging for $U$}, if there exists
  $q_0 \in \QF(V,U)$ such that  $q_0 + \zt \in \QF(V,U)$. Then, $\zt$ is a bridge  from $q_0$ to
  $q_0 + \zt$. We denote the set of all bridge forms by $\Bri(V,U)$, and the set of forms with gaps by $\Gap(V,U)$.
  \item

  We say that a form $\zt \in \QF(V)$ is \textbf{lonely} (for $U$), if $\zt$ is not bridging. In other words, $\zt$ is lonely if and only if
      $ (\zt + \QF(V,U)) \cap \QF(V,U) = \emptyset.$
      We denote the set of lonely forms on $V$ for $U$ by $\Lo(V,U)$.
\end{enumerate}

\end{defn}

\begin{prop}\label{prop:b6.2} $ $
 \begin{enumerate}\ealph
   \item $\Bri(V,U)$ is a submonoid of $(\QF(V), + )$ containing $\QF(V,U)$.
   \item  $\Lo(V,U) + \QF(V,U) \subseteq \Lo(V,U)$. Thus, $\Lo(V,U)$ consists of cosets of $\QF(V,U)$ in $\QF(V)$.
   \item $\Gap(V,U)$ is disjoint from all cosets of $\QF(V,U)$ in $\QF(V)$.
 \end{enumerate}
\end{prop}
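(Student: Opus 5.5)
The plan is to copy the proof of Proposition~\ref{prop:a4.2} almost verbatim, with the pair $(R,U)$ replaced by $(\QF(V),\QF(V,U))$. The only facts needed are that $\QF(V,U)$ is a submonoid of $(\QF(V),+)$, which is Theorem~\ref{thm:a5.7}(a), and that it is \as in $\QF(V)$, which is the same theorem. Throughout, $\Gap(V,U)$ is read as the set of bridging forms not lying in $\QF(V,U)$, in analogy with Definition~\ref{def:a4.1}(c).

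For (a), let $\zt\in\Bri(V,U)$, so that $q_0+\zt=q_1$ with $q_0,q_1\in\QF(V,U)$. For any $q'\in\QF(V,U)$ we have $(q_0+q')+\zt=q_1+q'$. The forms $q_0+q'$ and $q_1+q'$ lie in $\QF(V,U)$, so $\zt$ is still bridging after shifting, and $\Bri(V,U)+\QF(V,U)\subseteq\Bri(V,U)$. Next take a second bridging form, $p_0+\eta=p_1$. Adding the two relations gives $(q_0+p_0)+(\zt+\eta)=q_1+p_1$, so $\zt+\eta$ is bridging. Finally, the zero form is bridging via $q_0=0$, and every $q\in\QF(V,U)$ is bridging via $0+q=q$. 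Hence $\Bri(V,U)$ is a submonoid of $\QF(V)$ containing $\QF(V,U)$.

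For (b), argue by contradiction. Suppose $\zt$ is lonely, $q\in\QF(V,U)$, and $\zt+q$ is bridging, say $q_0+q+\zt=q_1$ with $q_0,q_1\in\QF(V,U)$. Then $\zt$ is a bridge from $q_0+q\in\QF(V,U)$ to $q_1$, which is impossible. Therefore $\Lo(V,U)+\QF(V,U)\subseteq\Lo(V,U)$, so $\Lo(V,U)$ is a union of cosets $\zt+\QF(V,U)$.

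For (c) I follow the statement of Proposition~\ref{prop:a4.2}(c) and its one-line proof: if $\zt\in\Gap(V,U)$, then $q_0+\zt=q_1\in\QF(V,U)$ for some $q_0$, so the translate $q_0+\zt$ is not a gap form. The main obstacle is how to read ``disjoint from all cosets''. Read literally, it must mean that a gap form cannot lie entirely in such a coset together with its translates in $\Gap(V,U)$. I would simply mirror the paper's argument for (c) at the level of forms rather than try to strengthen it, and I would note Remark~\ref{rem:a4.4}(b) as the analogous caveat: translates $q'+\zt$ of a gap form need not be gaps, but whenever such a translate is a gap, $\zt$ itself is one.
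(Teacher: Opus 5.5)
Your proposal is correct and matches the paper's proof, which reads, for the pair $(\QF(V),\QF(V,U))$, ``the proof of Proposition~\ref{prop:a4.2}, mutatis mutandis''; note that only the submonoid property of $\QF(V,U)$ is actually used, not almost subtractivity. For (c), the sensible reading is that no coset $\zt+\QF(V,U)$ lies entirely inside $\Gap(V,U)$ (literal disjointness is impossible since $\zt\in\zt+\QF(V,U)$), and your observation that the translate $q_0+\zt\in\QF(V,U)$ is not a gap form proves exactly this.
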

\begin{proof}
  The proof is the same as that of Proposition~\ref{prop:a4.2}, mutatis mutandis.
\end{proof}

\begin{defn}\label{def:b6.3}  $ $

\begin{enumerate}\ealph
   \item
A \textbf{walk} in $\QF(V,U)$ is an increasing sequence
$ q_0 \ds{\leq_{\QF(V)}} q_1 \ds{\leq_{\QF(V)}} \cdots \ds{\leq_{\QF(V)}} q_r$
 in $\QF(V,U)$. Then, there exist unique forms
 $\zt_1, \dots, \zt_r$ in $\QF(V)$ for which  $q_{i-1} + \zt_i = q_i$ for every $1 \leq i \leq r$.
 We often denote such a walk $P$ by
 \begin{equation}\label{eq:b6.1}
     \xymatrix@R=0.9em@C=1.7em{
  P: \ q_0 \ar@{->}[r]^{\quad  \zt_1} & q_1 \ar@{->}[r]^{\quad  \zt_2} & q_2 \ar@{->}[r] & \cdots \ar@{->}[r]^{\zt_r} & q_r.
   }
   \end{equation}
\item
When every bridge $\zt_i$ has gaps, we say that $P$ is a \textbf{gap walk} in $\QF(V)$
for $U$.
\item
We call a form $\zt \in \QF(V)$ a \textbf{gap-zero form}, or say that $\zt$ is a \textbf{purely gap},
if   the diagonal entries of its trigonal scheme are in $U$, while  its off-diagonal entries are either zero or  elements of $R \sm U$.
 \item
In the case $n=2$, i.e.,
on $V = R e_1 + R e_2$,  we call a gap form a \textbf{binary gap form}.
\end{enumerate}

\end{defn}

 Based on binary gap forms, we construct special gap-zero walks  in $\QF(V,U)$, where $V = Re_1 + \cdots + R e_n$.
To this end,  we need the following elementary but central fact.

\begin{lem}\label{lem:b6.3}  $ $
\begin{enumerate} \ealph
  \item  The binary gap forms are precisely the forms
$\left[\begin{smallmatrix}
u_1 & z \\
& u_2 \\
\end{smallmatrix}\right]$
with $u_1, u_2 \in U$, $z \in R \sm U$, and $u_1 + z = u_2$ or
$u_2 + z = u_1$, which forces that $u_1 \neq 0$ and $u_2 \neq 0$.

  \item
  If $\left[\begin{smallmatrix}
u_1 & z \\
& u_2 \\
\end{smallmatrix}\right]$ is a binary gap form, then, for any $v \in U$, the form
$\left[\begin{smallmatrix}
u_1 + v & z \\
& u_2 + v \\
\end{smallmatrix}\right]$
is again a binary gap form.
\end{enumerate}
\end{lem}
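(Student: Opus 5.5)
The plan is to unwind the definitions of ``binary gap form'' and ``$U$-valued form'' for $n=2$ and use the identity \eqref{eq:a5.6}. By Definition~\ref{def:b6.3}(c),(d), a binary gap form is a $U$-valued quadratic form $q$ on $V=Re_1+Re_2$ whose trigonal scheme $\smat{u_1}{z}{}{u_2}$ has $u_1=q(e_1)$, $u_2=q(e_2)$ in $U$ and off-diagonal entry $z\in R\sm U$ (the nonzero case; $z=0$ would lie in $U$). Since $q$ is $U$-valued, Theorem~\ref{thm:1.2} makes the companion unique, and \eqref{eq:a5.6} gives $u_1+u_2+z=q(e_1+e_2)\in U$. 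Thus $z$ is a gap element in the sense of Definition~\ref{def:a4.1}.

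For (a), I will pin down the relation between $u_1$, $u_2$, $z$ by evaluating $q$ at further vectors. First, evaluating on $\lm_1e_1+\lm_2e_2$ with $\lm_i\in\NN$ via \eqref{eq:a5.2} gives elements $\lm_1^2u_1+\lm_2^2u_2+\lm_1\lm_2z\in U$. The intended conclusion is that $z$ must bridge one diagonal entry to the other. My first attempt is to compare $q(e_1+e_2)=u_1+u_2+z$ with $q(2e_1+e_2)=4u_1+u_2+2z$ and $q(e_1+2e_2)$, and then use the a.s.\ cancellation (Lemma~\ref{lem:a4.7}, Theorem~\ref{thm:a4.8}) to strip off common summands from $U$, forcing $u_1+z=u_2$ or $u_2+z=u_1$. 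Conversely, if $u_1+z=u_2$ with $z\notin U$, I will check via \eqref{eq:a5.2} that every value $\lm_1^2u_1+\lm_2^2u_2+\lm_1\lm_2z$ lies in $U$. Here I expect that scalars must be handled through the ideal-type hypotheses of Theorem~\ref{thm:a5.4}, or else restricted to $\sum\NN e_i$ as in Theorem~\ref{thm:a5.2}. This makes $\smat{u_1}{z}{}{u_2}$ a $U$-valued form with a gap, hence a binary gap form.

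The nonvanishing claim is then immediate. If $u_1=0$ and $u_1+z=u_2$, then $z=u_2\in U$, a contradiction. If instead $u_2+z=u_1=0$, then $u_2+z=0$, and the u.b.\ property (indeed just the lack of zero sums) gives $z=0\in U$, again a contradiction. The case $u_2=0$ is symmetric.

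For (b), I add the diagonal form $\dg(v,v)$, with $v\in U$, to the given form. The diagonal entries become $u_i+v\in U$ and the off-diagonal entry stays $z\notin U$. If $u_1+z=u_2$, then $(u_1+v)+z=u_2+v$, and similarly in the other case, so the characterization in (a) applies. Alternatively, I can argue directly: the sum of a $U$-valued form and a $U$-valued diagonal form is $U$-valued because $U$ is closed under addition, and its gap is unchanged.

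The main obstacle is the forward direction of (a). Deriving the one-sided relation $u_1+z=u_2$ (rather than merely $u_1+u_2+z\in U$) from the definitions seems to require either a reading of ``gap form'' in which the bridge condition is built in, or a delicate use of cancellation in $U$. I would first check whether the intended definition already encodes this relation, and if it does, treat (a) as a restatement.
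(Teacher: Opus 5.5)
Your fallback is what the paper does. Its proof only says that (a) is clear from the definition of a gap in the trigonal scheme (Definition~\ref{def:a5.8}) and that (b) follows from (a). Your argument that $u_1,u_2\neq 0$ (using the u.b.\ property to force $z=0$ from $u_2+z=0$) and your proof of (b) via $(u_1+v)+z=u_2+v$ fill in exactly those steps. However, the route you propose first for the forward direction of (a) cannot be completed. Read literally (a $U$-valued form on $Re_1+Re_2$ whose off-diagonal entry lies in $R\setminus U$), the implication is false. Take $R=\NN$, $U=\{0\}\cup\{d,d+1,\dots\}$ with $d\geq 2$ (a.s.\ by Proposition~\ref{prop:1.13}), and $q=\smat{d}{1}{}{d}$, i.e.\ $q(\lm_1e_1+\lm_2e_2)=d\lm_1^2+d\lm_2^2+\lm_1\lm_2$. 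Every nonzero value is $\geq d$, so $q$ is $U$-valued and has the gap $1\notin U$ at $(1,2)$. Yet $u_1+z=u_2+z=d+1\neq d$. So no cancellation applied to $q(e_1+e_2)$, $q(2e_1+e_2)$, $q(e_1+2e_2)$ can produce the bridge relation. The only information available in general is $u_1+u_2+z=q(e_1+e_2)\in U$.

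The converse check fails for a related reason. Theorems~\ref{thm:a5.2} and~\ref{thm:a5.4} both require $\beta_{12}\in U$, which is exactly what a gap form violates. Moreover, the converse can genuinely fail even with scalars from $\NN$. In $R=\NN\times\NN$ with componentwise operations and $U=\{(m,n)\mid m\geq n\}$, take $u_1=(1,0)$, $z=(0,1)$, $u_2=u_1+z=(1,1)$. Then $q(e_1+2e_2)=u_1+4u_2+2z=(5,6)\notin U$. Hence (a) holds only if the bridge condition ($u_1+z=u_2$ or $u_2+z=u_1$) is taken as part of the definition of a binary gap form, which is how the paper uses it; with that reading your proof is complete. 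Separately, your alternative argument for (b), adding $\operatorname{diag}(v,v)$, would also need $\lm^2v\in U$ for all $\lm\in R$, which is not assumed.
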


\begin{proof} Part
(a) is clear by Definition \ref{def:a5.8}.(b), and part (b) follows from part (a).
\end{proof}

\begin{defn}\label{def:b6.5} Let $V = Re_1 + \cdots + R e_n$.
 \begin{enumerate}\ealph
\item
A
 \textbf{binary gap-zero} form $\zt$
 on $V$
 is a $U$-valued form on $V$ whose trigonal scheme exactly three nonzero entries  at
 positions $(r,r)$, $(r,s)$, $(s,s)$ with
 $1 \leq r < s \leq n$, such that
 $\eta = \smat{\al_r}{\bt_{rs}}{}{\al_s}$
  is a binary gap form.
  We then also say, that  $\zt$ is \textbf{inflated from $\eta$
to a form on} $V$ by inserting  zeros.
\item
A \textbf{gap-zero trail} (or, more briefly, \textbf{gap trail}) in $\QF(V,U)$ is a gap walk whose bridges are all  binary gap-zero forms.

\end{enumerate}
\end{defn}
\begin{construction}\label{cons:b6.5}

  We construct  a gap-zero trail
   \begin{equation}\label{eq:b6.2}
     \xymatrix@R=0.9em@C=1.7em{
  P: \ q_1 \ar@{->}[r]^{\quad  \zt_2} & q_2 \ar@{->}[r]^{\zt_3} & q_3 \ar@{->}[r] & \cdots \ar@{->}[r]^{\zt_n} & q_n
   }
   \end{equation}
on $V = R e_1 + \cdots + R e_n$, where $n \geq 2$,  starting with a form  $q_1$,  instead of $q_0$, since this labeling is easier to handle.
The  trigonal schemes of the forms $\zt_k$ and $q_k$  have
%Every form $\zt_k$ and $q_k$ will have in its trigonal scheme
only zeros in each row beyond the $k$'th row. We do not write  these rows.   We choose $\al_{10} \in U$ and begin  with
\begin{equation*}\label{eq:b6.3a}
q_1 =
\begin{bmatrix}
  \al_{10} & 0 & \cdots & 0
\end{bmatrix}.
\end{equation*} Then, we choose a binary gap form
  $\smat{  \al_{11} }{\bt_{12} }{}{
   \al_{2}},$
     define
  $$ \zt_2 = \footnotesize  \begin{bmatrix}
  \al_{11} & \bt_{12}  & 0 & \cdots & 0 \\
   & \al_{2}  & 0 & \cdots & 0  \\
    \end{bmatrix} =
  \begin{bmatrix}  \begin{array}{ll}
    \al_{11} & \bt_{12}  \\
   & \al_{2}    \\
   \end{array} &
   \bigzero &
   \end{bmatrix}
        ,$$
     and  obtain
  $$ q_2 = q_1  + \zt_2 = \footnotesize \begin{bmatrix}
  c_{1} & \bt_{12}  & 0 & \cdots & 0  \\
   & \al_{2}  & 0 & \cdots & 0  \\
    \end{bmatrix}
    =\begin{bmatrix}
    \begin{array}{ll}
  c_{1} & \bt_{12}   \\
   & \al_{2}    \\
   \end{array} & \bigzero &
    \end{bmatrix},$$
     where  $c_1  = \al_{10} + \al_{11}$.

 Next  we choose a binary gap form
  $\smat{  \al_{12} }{ \bt_{13} }{} {\al_{3}},$
define
  $$ \zt_3 = \footnotesize \begin{bmatrix}
  \al_{12} & 0 & \bt_{13}  &   \\
  & 0 & 0 &  \bigzero  &  \\
   &  & \al_{3}  &   \\
    \end{bmatrix},$$
     and  obtain
  $$ q_3 = q_2   + \zt_3  = \footnotesize \begin{bmatrix}
  c_{2} & \bt_{12}  & \bt_{13 }&   \\
   & \al_{2}  & 0 &  \bigzero     \\
   & & \al_{3} &  &
    \end{bmatrix},$$
     where $c_2  =  c_1 + \al_{12}$.

 In the $4$'th step, using  a  binary gap form
  $\smat{  \al_{13}}{ \bt_{14}}{}{ \al_{4}},$
we take
  $$ \zt_4 = \footnotesize \begin{bmatrix}
  \al_{13} & 0 & 0 &  \bt_{14}  \\
  & 0 & 0 &  0  & \bigzero & \\
&  & 0 & 0 &     & \\
   &  & & \al_{4}  &  \\
    \end{bmatrix},$$
     and  obtain
  $$ q_4 = q_3   + \zt_4  = \footnotesize \begin{bmatrix}
  c_{2} & \bt_{12}  & \bt_{13 }& \bt_{14}&   \\
   & \al_{2}  & 0 & 0 & \bigzero &  \\
   & & \al_{3} & 0 & \\
  &  & & \al_{4} &
    \end{bmatrix},$$
     where  $c_3  =  c_2 + \al_{13}$.

 In the final   step, using  a  binary gap form
  $\smat{  \al_{1 (n-1)} }{ \bt_{1 n} } {} { \al_{n}},$
we arrive at
  \begin{equation}\label{eq:b6.3}
 q_n  = \footnotesize \begin{bmatrix}
                c_{n-1} & \bt_{12} & \bt_{13} & \cdots & \bt_{1n} \\
                & \al_2 & 0& \cdots & 0 \\
                &  & \ddots &  & \vdots \\
              &&  & \ddots & 0 \\
                         &      && & \al_n  \\
              \end{bmatrix},
\end{equation}
where  $c_{n-1} = \al_{10} + \cdots + \al_{1(n-1)}$. Note that all diagonal entries  are in
$U \sm \00$.

\end{construction}

\begin{defn}\label{def:b6.7} We say that a gap-zero form $q$ on $V$ is \textbf{full}, if
every off-diagonal entry $\bt_{ij}$ of its trigonal scheme
 is in $R \sm U$.
%, i.e., $\bt_{ij}$ is a bridge in $(R,U)$
%from $\al_i$ to $\al_j$.
\end{defn}
We describe two ways to construct full gap-zero forms on $V = R e_1 + \cdots + R e_n$.
\begin{construction}\label{cons:b6.8}
We iterate Construction \ref{cons:b6.5}, starting with the form \eqref{eq:b6.3}. We omit the first row and the first column, and, by Construction \ref{cons:b6.5}, obtain from the diagonal form $\dg(\al_2, \dots, \al_n)$
 a gap-zero form which, together with the first row  and column, gives
 $$ \footnotesize
     \begin{bmatrix}
                c_1 & \bt_{12} & \bt_{13} & \cdots & \bt_{1n} \\
                & c_2 & \bt_{23} & \cdots & \bt_{2n} \\
                &  & \al_{3} &  & \vdots \\
              &&  & \ddots  & \bt_{(n-1)n} \\
                         &      && & \al_n  \\
              \end{bmatrix}.
$$
Proceeding  in this way,  after $n-1$ steps we obtain  a full gap-zero form
  $$ \footnotesize
     \begin{bmatrix}
                c_1 & \bt_{12} & \bt_{13} & \cdots & \bt_{1n} \\
                & c_2 & \bt_{23} & \cdots & \bt_{2n} \\
                &  & \ddots &  & \vdots \\
              &&  & c_{n-1} & \bt_{(n-1)n} \\
                         &      && & \al_n  \\
              \end{bmatrix}.
$$
Altogether, this form depends on the choice of $\al_{10} \in U$ in  Construction \ref{cons:b6.5}
and on the  $\frac{n(n-1)}2$ binary gap forms.
\end{construction}

\begin{construction}\label{cons:b6.9}
We choose a $U$-valued diagonal form
$ q_0 = \dg(\al_{10}, \al_{20}, \dots, \al_{n0}),$
and for every pair of indices $(i,j)$, $1 \leq i < j \leq n$, a binary gap form
$ \eta_{ij} = \smat{ \gm_{ij} }{ \bt_{ij}} {} { \dl_{ij} }$, where
$\gm_{ij},  \dl_{ij} \in U$ and $\bt_{ij} \in R \sm U$
              (and so $\gm_{ij} \neq 0$,$  \dl_{ij} \neq 0$).
              We inflate $\eta_{ij}$ to a gap-zero form with a trigonal scheme
              whose entries are all zero except at the positions
              $(i,i)$, $(j,j)$, $(i,j)$, where the  entries
              $\gm_{ij}$,  $\dl_{ij}$, and $\bt_{ij}$ respectively, are retained in their  place.
               Then,
                \begin{equation}\label{eq:b6.4}
                q = q_0 + \sum_{1 \leq i < j \leq n} \zt_{ij}
 \end{equation}
               is a full gap-zero form with trigonal scheme
  $$
    q =  \footnotesize \begin{bmatrix}
                \tlal_1 & \bt_{12} & \bt_{13} & \cdots & \bt_{1n} \\
                & \tlal_2 & \bt_{23} & \cdots & \bt_{2n} \\
                &  & \ddots &  & \vdots \\
              &&  & \tlal_{n-1} & \bt_{(n-1)n} \\
                         &      && & \tlal_n  \\
              \end{bmatrix} .
$$
Here all $\tlal_i \in U \sm \00$ and all $\bt_{ij} \in R \sm U$. The $\tlal_{i} $'s  can be computed explicitly  from the entries  $\al_{10},  \gm_{ij}, \dl_{ij} \in U$ with
$\gm_{ij} \neq 0$ and  $\dl_{ij} \neq 0$.
\end{construction}

Construction~\ref{cons:b6.5} provides a gap-zero trail \eqref{eq:b6.2}. We next describe a general method for constructing gap-zero trails based on \textbf{splicing} two gap-zero trails. \begin{construction}\label{cons:b6.10}
Given two gap-zero trails
$$
     \xymatrix@R=0.9em@C=1.7em{
  P: \ q_0 \ar@{->}[r]^{\quad  \zt_1} & q_1 \ar@{->}[r]^{\zt_2} &  \cdots \ar@{->}[r]^{\zt_k} & q_k} ,
  \qquad \xymatrix@R=0.9em@C=1.7em{
    P': \ q_k' \ar@{->}[r]^{\quad  \zt_{k+1}} & q_{k+1}' \ar@{->}[r]^{\zt_{k+2}} & \cdots \ar@{->}[r]^{\zt_{k+\ell}} & q_{k+ \ell}' ,
   }
 $$
 of length $k \geq 1$ and $\ell \geq 1$  in $\QF(V,U)$, respectively,  we use Lemma \ref{lem:b6.3}(b) repeatedly.
 Choose two forms $\gm, \dl \in \QF(V,U)$ such that  $q_k + \gm = q_k' + \dl$. This is always possible; for example, we may take $\gm = q'_k$, $\dl = q_k$. After adding $\gm$ to every
 $q_i $ in $P$ and $\dl $ to every  $q_j'$
 in $P'$, we  combine the trails $P + \gm$ and $P' + \dl$ to a gap-zero trail
$$
     \xymatrix@R=0.9em@C=1.5em{
  (P + \gm) \circ (P' + \dl): q_0 + \gm\ar@{->}[r]^{\qquad \quad   \zt_1} & q_1 + \gm \ar@{->}[r]^{\zt_2} & \cdots \ar@{->}[r]^{\hskip -25pt \zt_k} & q_k + \gm  =
   q_k' + \dl \ar@{->}[r]^{ \quad  \zt_{k+1}} & q_{k+1}' +\dl \ar@{->}[r]^{\quad \zt_{k+2}} & \cdots \ar@{->}[r]^{\zt_{k+\ell}} & q_{k+\ell}' .
   }
 $$
We call the pair $(\gm, \dl)$ a \textbf{clutch} for splicing $P$ and $P'$.
  \end{construction}

\begin{construction}\label{cons:b6.11}
The splicing process can be iterated in the obvious way. Given
 a sequence  $(\zt_1,  \dots , \zt_r)$ of binary  gap-zero forms on $V$ (for $U$), we  choose forms
$q_{00}, \dots, q_{(r-1)( r-1)}$ such that
$$ q_{00} + \zt_1 = q_{01}, \quad  q_{11} + \zt_2 = q_{12}, \quad  \dots, \quad q_{(r-1)(r-1)} + \zt_r = q_{(r-1)r}$$
with all $q_{ij}$ in $\QF(V,U)$. Splicing these forms together, we obtain a gap-zero trail
in $\QF(V,U)$
of length $r$ with bridges $\zt_1, \dots, \zt_r$.
\end{construction}

\begin{thm}\label{thm:b6.12}
Let $
     \xymatrix@R=0.9em@C=1.7em{
  P:  q_0 \ar@{->}[r]^{\quad  \zt_1} & q_1 \ar@{->}[r]^{\zt_2} & \cdots \ar@{->}[r]^{\zt_m} & q_m ,
}$ be a gap-zero trail of length  $m \geq 1$, the  $\zt_i$ need not be distinct.
Then every gap-zero trail $Q$ starting at $q_m$ and using only bridges from the set
$S = \{\zt_1, \dots, \zt_m \}$ can be spliced with $P$ to form a trail
$P \circ Q$ by clutches  $(\gm, \dl) = (0,0)$.
\end{thm}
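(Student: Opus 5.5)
The statement is essentially definitional once the splicing data of Construction~\ref{cons:b6.10} are specialized, so the plan is to check directly that the clutch $(\gm,\dl)=(0,0)$ is admissible and that the concatenation is again a gap-zero trail.

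\textbf{Step 1: the clutch condition.} Write $Q$ as
$$
\xymatrix@R=0.9em@C=1.7em{
Q: q_m' \ar@{->}[r]^{\quad \zt'_{1}} & q_{m+1}' \ar@{->}[r] & \cdots \ar@{->}[r]^{\zt'_{\ell}} & q_{m+\ell}' ,}
$$
with $q_m'=q_m$ and each $\zt'_j\in S$. The only requirement on a clutch in Construction~\ref{cons:b6.10} is that $\gm,\dl\in\QF(V,U)$ and $q_m+\gm=q'_m+\dl$. Since $0\in\QF(V,U)$ ($U$ is a submonoid containing $0$) and $q_m'=q_m$, the choice $\gm=\dl=0$ satisfies this. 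Then $P+\gm=P$ and $Q+\dl=Q$, so the spliced sequence is the plain concatenation
$$
q_0\to q_1\to\cdots\to q_m=q'_m\to q'_{m+1}\to\cdots\to q'_{m+\ell}.
$$

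\textbf{Step 2: the concatenation is a gap-zero trail.} All nodes $q_i$ and $q'_j$ lie in $\QF(V,U)$, because $P$ and $Q$ are walks in $\QF(V,U)$. The sequence is increasing with respect to $\leq_{\QF(V)}$, because each consecutive pair comes from $P$ or from $Q$. By Theorem~\ref{thm:a5.7}(a), $\QF(V,U)$ is \as in $\QF(V)$, so each bridge is unique. Hence the bridges of $P\circ Q$ are exactly $\zt_1,\dots,\zt_m,\zt'_1,\dots,\zt'_\ell$. Each of these is a binary gap-zero form: the $\zt_i$ by hypothesis on $P$, and the $\zt'_j$ because they belong to $S$. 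Therefore every bridge has gaps, and $P\circ Q$ is a gap-zero trail in the sense of Definition~\ref{def:b6.5}(b).

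\textbf{Main obstacle.} The only delicate point is interpretive. One must decide whether ``trail'' here also carries the ``no repeated edges'' connotation mentioned in the introduction. The statement explicitly allows repeated $\zt_i$, and $Q$ reuses bridges from $S$, so I would read ``trail'' in the sense of Definition~\ref{def:b6.5}(b), where no non-repetition condition appears. Under that reading the argument is complete.

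If a non-repetition condition on edges (node pairs) were intended, I would first try to show that it is inherited. This is not automatic, because an edge of $Q$ could coincide with one of $P$. I would then note that the theorem as stated makes no such claim.
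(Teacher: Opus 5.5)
Your argument is correct, and it checks the statement as literally formulated more directly than the paper does. The paper's proof instead observes that for each $\zeta\in S$ the one-step walk $q_m\to q_m+\zeta$ is already a gap-zero trail. The real reason is that, by Definition~\ref{def:b6.5}(a), a binary gap-zero form is itself $U$-valued, so $q_m+\zeta\in\QF(V,U)$ because $\QF(V,U)$ is a submonoid. The paper then splices such one-step trails with clutches $(0,0)$. That route gives an extra fact: every finite sequence of bridges from $S$ is realized by a gap-zero trail starting at $q_m$, which is made explicit in Theorem~\ref{thm:b6.14}. Your route takes $Q$ as given and needs only that $P$ and $Q$ share the junction node $q_m=q'_m$. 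This makes $(0,0)$ an admissible clutch, and the concatenation then satisfies Definition~\ref{def:b6.5}(b).

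Your closing caveat is unnecessary, and the claim that non-repetition of edges is ``not automatic'' is wrong. Each bridge is nonzero, since its off-diagonal gap entry lies in $R\setminus U$ and $0\in U$. Since $\QF(V,U)$ is almost subtractive in $\QF(V)$, an equality $q+\zeta=q=q+0$ with $q\in\QF(V,U)$ would force $\zeta=0$. Hence $q_0<q_1<\cdots<q_m<q'_{m+1}<\cdots<q'_{m+\ell}$ is strictly increasing for the partial order $\leq_{\QF(V)}$, so all nodes are pairwise distinct. Therefore no edge of $Q$ can coincide with an edge of $P$, and no edge repeats within $Q$.
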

\begin{proof}
 Since
  $ q_0 \ds{\leq_{\QF(V)}} q_1 \ds{\leq_{\QF(V)}}  \cdots \ds{\leq_{\QF(V)}} q_m$
 for every $\zt \in S$,  we have a gap-zero trail
$ \xymatrix@R=0.9em@C=1.1em{
  q_m \ar@{->}[r]^{  \zt\quad } & q_m + \zt
}$ of length $1$. These trails can be spliced together by clutches  $(0,0)$ to gap-zero trails of arbitrary length.
\end{proof}

In what follows, as in Definition \ref{def:b6.3}.(a),  we denote a walk in $\QF(V,U)$ by an increasing sequence of elements in
$\QF(V,U)$.
\begin{thm}\label{thm:b6.14}
Let $S$ be a finite set of binary gap-zero forms on $V$ and
$q_0 \in \QF(V,U)$ be  a $U$-valued form such that
$q_0 + \zt \in \QF(V,U)$ for every $\zt \in S$.\footnote{N.B. We encountered such a situation in Theorem \ref{thm:b6.12},  there with $q_m$ in place of  $q_0$.}
For any finite sequence $(\zt_1,  \dots, \zt_r)$ in $S$, the walk
\begin{equation}\label{eq:b6.5}
  q_0 \ds < q_0 + \zt_1  \ds < q_0 + \zt_1 +\zt_2 \ds< \cdots < q_0 + \zt_1 + \cdots \zt_r
\end{equation}
is a gap-zero trail. These are precisely the gap-zero trails starting at $q_0$ whose bridges belong to $S$.
\end{thm}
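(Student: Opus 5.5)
The plan is to argue by induction on $r$ that every partial sum $q_0+\zt_1+\cdots+\zt_k$ lies in $\QF(V,U)$. Then the displayed sequence is an increasing sequence in $\QF(V,U)$ whose successive bridges are the binary gap-zero forms $\zt_k$. Such a sequence is by definition a gap-zero trail: every bridge $\zt_k$ has gaps, because its off-diagonal entry lies in $R\sm U$.

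The key observation is the same mechanism as in Proposition \ref{prop:a5.11}(a) and Lemma \ref{lem:b6.3}(b). Suppose $q_0+\zt\in\QF(V,U)$ and $\psi\in\QF(V,U)$. Then
\[
(q_0+\psi)+\zt=(q_0+\zt)+\psi
\]
is a sum of two $U$-valued forms. Since $U$ is a submonoid of $(R,+)$, its values are pointwise sums of elements of $U$, so it is again $U$-valued.

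The induction step is as follows. Put $p_{k-1}=q_0+\zt_1+\cdots+\zt_{k-1}$ and $\psi=\zt_1+\cdots+\zt_{k-1}$. Each $\zt_i$ is by Definition \ref{def:b6.5}(a) itself a $U$-valued form, so $\psi\in\QF(V,U)$. Hence
\[
p_k=(q_0+\zt_k)+\psi\in\QF(V,U),
\]
using the hypothesis $q_0+\zt_k\in\QF(V,U)$ for $\zt_k\in S$. In this step I would only invoke the hypothesis for the single form $\zt_k$, which avoids any need to know intermediate nodes in advance.

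The relation $p_{k-1}\leq_{\QF(V)}p_k$ is witnessed by $\zt_k$. This is the unique bridge, because $\QF(V,U)$ is \as in $(\QF(V),+)$ by Theorem \ref{thm:a5.7}(a). The strict inequalities $<$ hold since $\zt_k\neq0$ and $\QF(V)$ is \ub: equality $p_k=p_{k-1}$ together with $p_{k-1}+0=p_{k-1}$ would force $\zt_k=0$ by the \as property.

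For the converse ("precisely"), let $q_0\to q_1\to\cdots\to q_r$ be any gap-zero trail starting at $q_0$ with bridges in $S$. By uniqueness of bridges, $q_k=q_{k-1}+\zt_k$ with $\zt_k\in S$. Inductively $q_k=q_0+\zt_1+\cdots+\zt_k$, so the trail is of the form \eqref{eq:b6.5} for the sequence $(\zt_1,\dots,\zt_r)$.

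The main subtle point is the claim that binary gap-zero forms are $U$-valued, which is where the bridge element lying in $R\sm U$ could seem to conflict. Definition \ref{def:b6.5}(a) asserts it directly. I would double-check it via Theorem \ref{thm:a5.7}(c), or via Lemma \ref{lem:b6.3}(a): the value $q(\lm_re_r+\lm_se_s)=\lm_r^2\al_r+\lm_r\lm_s\bt_{rs}+\lm_s^2\al_s$ would need to be checked to lie in $U$. If that check fails in general, I would fall back on the stated definition of binary gap-zero form as $U$-valued.
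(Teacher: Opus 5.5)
Your argument is correct and is essentially the paper's. The paper also defines $q_i=q_{i-1}+\zt_i$ inductively and splices the resulting length-one trails with clutches $(0,0)$, as in Theorem~\ref{thm:b6.12}, but it only asserts $q_i\in\QF(V,U)$; you actually justify this by writing $q_k=(q_0+\zt_k)+(\zt_1+\cdots+\zt_{k-1})$ and using that binary gap-zero forms are $U$-valued by Definition~\ref{def:b6.5}(a). Falling back on that definition is the right call, since $U$-valuedness does not follow from the conditions in Lemma~\ref{lem:b6.3}(a) alone (and once it is granted, the hypothesis $q_0+\zt\in\QF(V,U)$ is automatic).
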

\begin{proof}
We have forms $q_i \in \QF(V,U)$
 defined inductively by
 $$ q_i = q_{i-1} + \zt_i, \qquad 1 \leq i \leq r.$$
 These forms can be spliced together using clutches $(0,0)$,  by the same arguments as in the proof of Theorem ~\ref{thm:b6.12}.
\end{proof}

\begin{example}\label{exmp:b6.15}
  We look for gap-zero trails of length $3$ when $S$ has $2$ or $3$ elements.

  \begin{enumerate}\ealph
    \item Let $S = \{ \zt_1, \zt_2\}$. Starting from $q_0$ as in Theorem~ \ref{thm:b6.14}, we have one gap-zero trail to $q_0 + 2 \zt_1$
        with sequence of bridges $(\zt_1, \zt_1) $, and $3$ gap-zero trails from $q_0$ to $q_0 + 2 \zt_1 + \zt_2$ with sequences of bridges
        $(\zt_1, \zt_1,\zt_2) $, $(\zt_1, \zt_2,\zt_1) $, $(\zt_2, \zt_1,\zt_1) $
        given by the diagrams

        $$
     \xymatrix@R=0.9em@C=1.7em{ \\
   q_0 \bullet \ar@{->}[r]^{\quad  \zt_1} & \bullet \ar@{->}[r]^{\zt_1} &\bullet \ar@{->}[r]^{\hskip -15 pt \zt_1} & \bullet q_0 + 3\zt_1 &   \text{and} \qquad
}
     \xymatrix@R=0.7em@C=2.3em{
     &  \bullet \ar@{->}[r]^{  \zt_1}  \ar@{->}[rd]_{  \zt_2} & \bullet \ar@{->}[rd]^{  \zt_2} \\
   q_0 \bullet \ar@{->}[ru]^{\quad  \zt_1}  \ar@{->}[rd]_{\zt_2}  &  &\bullet \ar@{->}[r]_{\zt_1  } & \bullet & \hskip -10mm q_0 + 2\zt_1 + \zt_2 .\\
   &\bullet  \ar@{->}[ru]_{\zt_1}& &
}$$
The other gap-zero trails run from $q_0$ to $q_0 + 3 \zt_2$ and to $q_0 +  \zt_1 + 2 \zt_2$. They
are obtained from these trails by switching $\zt_1$ and $\zt_2$.
    \item
    Let $S = \{ \zt_1, \zt_2, \zt_3\}$.
     There are $6$ gap-zero trails of length $3$, which  use all three gap-zero forms
     $\zt_1, \zt_2, \zt_3$ as bridges. These are the trails from $q_0$ to
     $q_0 + \zt_1 + \zt_2 + \zt_3$ with sequences of bridges
      $(\zt_1, \zt_2,\zt_3) $,  $(\zt_1, \zt_3,\zt_2) $,  $(\zt_2, \zt_3,\zt_1) $,
       $(\zt_2, \zt_1,\zt_3) $,  $(\zt_3, \zt_1,\zt_2) $,  $(\zt_3, \zt_2,\zt_1) $.

 $$
     \xymatrix@R=0.7em@C=2.3em{
     &  \bullet \ar@{->}[r]^{  \zt_1 }  & \bullet \ar@{->}[rd]^{  \zt_3} \\
   q_0 \bullet \ar@{->}[ru]^{\zt_2}  \ar@{->}[rd]_{\zt_3} \ar@{->}[r]^{\quad \zt_1}   &\bullet \ar@{->}[ru]^{\zt_2} \ar@{->}[rd]^{\zt_3  }& & \bullet & \hskip -10mm q_0 + \zt_1 + \zt_2 +\zt_3.\\
   &\bullet  \ar@{->}[r]_{\zt_1 \quad}& \bullet \ar@{->}[ru]_{\zt_2}&
}$$

     \end{enumerate}
\end{example}
From Construction~\ref{cons:b6.5} onward, we have aimed to replace zero off-diagonal entries in the trigonal scheme of a gap-zero form by nonzero entries. There is a somewhat reverse process for an arbitrary $U$-valued quadratic pair on $V$.

\begin{prop}\label{prop:b6.15}
Let $(q,b)$ be a $U$-valued quadratic pair on $V$ with trigonal scheme \eqref{eq:a5.1}.
Given $r,s \in \N$, where $1 \leq r < s \leq n$,  replacing the entry $\bt_{rs}$ by zero yields  the scheme of a $U$-valued quadratic pair on $V$.
\end{prop}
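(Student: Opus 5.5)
The plan is to define the new pair $(q',b')$ directly from the scheme and then check two things: that it is a quadratic pair, and that $q'$ is $U$-valued. Concretely, $q'$ and $b'$ have the same trigonal scheme as $(q,b)$, except that the entry at place $(r,s)$ is replaced by $0$. So for $x=\sum_i \lm_i e_i$ and $y=\sum_j \mu_j e_j$ we set
$$q'(x)=\sum_{i}\lm_i^2\al_i+\sum_{i<j,\ (i,j)\neq(r,s)}\lm_i\lm_j\bt_{ij},$$
and $b'(x,y)=b(x,y)$ with the two terms $\lm_r\mu_s\bt_{rs}$ and $\lm_s\mu_r\bt_{rs}$ deleted.

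First I will check that $b'$ is a symmetric $R$-bilinear form. This holds because $b'$ is the bilinear form on the free module $V$ determined by its values on basis pairs, and those values stay symmetric. Next, $q'(cx)=c^2q'(x)$ is immediate from the formula. Expanding both sides by means of \eqref{eq:a5.2} gives $q'(x+y)=q'(x)+q'(y)+b'(x,y)$. Hence $(q',b')$ is a quadratic pair, and $b'(e_i,e_i)=2\al_i$ keeps it balanced.

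The substantive step is $q'(V)\subseteq U$. Fix $x=\sum\lm_ie_i$ and form three vectors: $y$, obtained from $x$ by setting $\lm_s=0$; $z$, obtained by setting $\lm_r=0$; and $w$, obtained by setting both $\lm_r=\lm_s=0$. Comparing the expansions \eqref{eq:a5.2} term by term (each monomial not involving both $r$ and $s$ appears in $q(y)+q(z)$ as often as in $q'(x)+q(w)$) should give the identity
$$q'(x)+q(w)=q(y)+q(z).$$
Here $q(w)$, $q(y)$ and $q(z)$ all lie in $U$. So $q'(x)$ is the unique difference (Definition \ref{defn:1.1}) between $q(w)$ and $q(y)+q(z)$ in $U$.

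I expect the main obstacle to be promoting this difference from $R$ into $U$: a difference of elements of $U$ may be a gap. My first attempt would be to peel off coordinates one at a time, inducting on the number of nonzero $\lm_i$. This would realize $q'(x)$ directly as a sum of values $q(\cdot)$ of vectors supported on index sets avoiding the pair $\{r,s\}$, together with the cross terms $\lm_i\lm_j\bt_{ij}$ for $(i,j)\neq(r,s)$. If that fails in general, I would fall back on the situation of Theorem \ref{thm:a5.4}, where $U$ is an ideal and the scheme entries lie in $U$, or on Theorem \ref{thm:a5.2} for $x$ in $\sum\NN e_i$. In those cases the new scheme has entries $\al_i\in U$ and $\bt_{ij}\in U\cup\{0\}$, so $U$-valuedness follows at once.
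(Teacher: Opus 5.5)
\textbf{There is a genuine gap, and it cannot be closed, because the statement fails for $n\ge 3$.}

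What you carry out is correct. $(q',b')$ is a balanced quadratic pair. The identity $q'(x)+q(w)=q(y)+q(z)$ holds, since every monomial appears equally often on both sides; the monomial $\lm_r\lm_s\bt_{rs}$ appears on neither.

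The gap is that you stop at the one step with content, $q'(V)\subseteq U$. The coordinate-peeling induction is only announced, and it cannot work as described. The mixed entries $\bt_{ij}$ with $(i,j)\neq(r,s)$ may be gap elements, and a sum of $q$-values and gap terms need not lie in $U$. The fallbacks to Theorems \ref{thm:a5.2} and \ref{thm:a5.4} assume all scheme entries already lie in $U$, which is not the hypothesis. Your identity does settle two cases:
\begin{itemize}
\item if $U$ is subtractive in $R$, the identity gives $q'(x)\in U$ directly;
\item for $n=2$ the claim is immediate, since $q'(\lm_1e_1+\lm_2e_2)=q(\lm_1e_1)+q(\lm_2e_2)$.
\end{itemize}

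\textbf{Counterexample for $n=3$.} It fails for exactly the reason you feared.
\begin{itemize}
\item Let $R=\NN$. It is cancellative, so every submonoid is \as.
\item Let $U=\NN 4+\NN 9$, which contains every integer $\ge 24$.
\item On $V=R^3$ let $q(x)=4x_1^2+4x_2^2+4x_3^2+24x_1x_2+x_1x_3+x_2x_3$.
\end{itemize}
Then $q$ is $U$-valued:
\begin{itemize}
\item if $x_1x_2\neq 0$, then $q(x)\ge 32$;
\item if $x_1=0$, then $q(x)=4x_2^2+4x_3^2+x_2x_3$, whose values below $24$ are $0,4,16,9$ and $22=18+4$, all in $U$;
\item the case $x_2=0$ is symmetric.
\end{itemize}
Replacing $\bt_{12}=24$ by $0$ gives $q'(e_1+e_2+e_3)=14\notin U$. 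Here your identity reads $14+4=9+9$, and $14$ is a gap.

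\textbf{The paper's proof.} It breaks at the same point. Setting $\lm_r=\lm_s=0$ kills not only $\lm_r\lm_s\bt_{rs}$ but every mixed term involving exactly one of the indices $r,s$. Adding back $\lm_r^2\al_r+\lm_s^2\al_s$ does not restore those terms. So that argument is valid only for $n=2$, or when those mixed entries vanish.
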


\begin{proof}
  By formula \eqref{eq:a5.2}, we have
  \begin{equation}\label{eq:srt}
  q(\lm_1 e_1 + \cdots + \lm_n e_n) = \sum_{i=1}^{n} \lm_i ^2 \al_i + \sum_{i <j} \lm_i \lm_j \bt_{ij} \in U.
    \tag{$*$}
  \end{equation}
  Setting  $\lm_r = \lm_s =0$, we obtain
  $$
  \sum_{i\neq r,s } \lm_i ^2 \al_i + \sum'_{i <j} \lm_i \lm_j \bt_{ij} \in U,
  $$
  where the term
  $\lm_s \lm_r \bt_{rs}$ is omitted from  the second sum. We then add the sum
  $\lm_r^2 \al_r + \lm_s^2 \al_s \in U$. Thus, in \eqref{eq:srt},  only the off-diagonal term
   $\lm_s \lm_r \bt_{rs}$   is replaced by zero, while the resulting value remains in~ $U$.
Note that, if $q$ is gap-zero, then the new form is again gap-zero.
\end{proof}
\section{Extension of forms from $V$ to $S^{-1}V$}\label{sec:a6}

Let $R$ be a \ub semiring without zero divisors, i.e., the set $R \sm \00 $ is closed under multiplication (and hence is a monoid under multiplication).
\begin{construction}\label{cons:a6.1}
   Given a subset $S \subseteq R \sm \00$ that  is closed under multiplication (so $S \cup \{1 \}$ is a submonoid of $R \sm \00$),  we have an extension $S^{-1} R \supset R$ of the semiring $R$, consisting of formal  quotients $\frac xs$ (a very well known construction  in the case of rings rather than semirings), with the rules
  \begin{equation}\label{eq:a6.1}
  x = \frac {x s}s \  (\text{and so $x = \frac x1$, if $1  \in S$}), \quad \frac xs = \frac{tx}{ts}, \quad
    \frac{x_1}{s_1} \cdot
  \frac{x_2}{s_2} = \frac{x_1  x_2}{s_1 s_2 },
  \quad \frac{x_1}{s} +
  \frac{x_2}{t} = \frac{tx_1 + sx_2}{st}.
 \end{equation}
   The semiring $S^{-1}R$ is again \ub, and the ordering $\leq_{S^{-1}R}$ extends $\leq_R$.
  When $S = R \sm \00$, we obtain the unique maximal  extension,  denoted by $\Quot(R)$; that is,  the semiring  of all formal quotients of $R$.
\end{construction}

This construction is  compatible with \as submonoids of $(R,+)$ and $(S^{-1}R, +)$.
\begin{prop}\label{prop:a5.9} $ $
\begin{enumerate}\ealph
  \item If $U$ is an \as submonoid of $(R,+)$, then
  $$ S^{-1}U = \big \{ \textstyle{\frac xs} \ds | x \in U, s \in S \big \}$$
  is an \as submonoid of $(S^{-1}R,+)$.
  \item If $U'$ is an \as  submonoid of $(S^{-1}R,+)$, then $U = U' \cap R $ is an
  \as submonoid of $(R,+)$, and $ S^{-1}U$ is an  \as submonoid of $(S^{-1}R,+)$.
  % with    $S^{-1}U \subset U'$

  \item If $1 \in S$, then $U \subseteq  S^{-1}U$, and
  $S^{-1}U $ is  \as in $(S^{-1}R,+)$. Moreover, $tU \subseteq U$ for every $t\in S$.
\end{enumerate}
\end{prop}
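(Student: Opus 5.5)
The plan is to reduce everything to the a.s.\ property of $U$ in $R$ by clearing denominators, using the rules \eqref{eq:a6.1}. Throughout I read equality of formal quotients in $S^{-1}R$ as $\frac xs=\frac yt$ iff $tx=sy$ (possibly after multiplying by a further element of $S$). This identification is where the argument is delicate; see the last paragraph.

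For (a), first note that $S^{-1}U$ is a submonoid: $\frac{x_1}{s}+\frac{x_2}{t}=\frac{tx_1+sx_2}{st}$. This lies in $S^{-1}U$ provided $tx_1, sx_2\in U$. So I will assume, as the statement (c) suggests, that $tU\subseteq U$ for $t\in S$, and I will establish this closure property first.

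For the a.s.\ property, suppose $\frac xs+\frac{z_1}{s_1}=\frac xs+\frac{z_2}{s_2}=\frac yt$ with $x,y\in U$. Put $m=s s_1 s_2 t$ and rewrite all four terms over the common denominator $m$ using $\frac xs=\frac{tx}{ts}$. This gives an equation $x'+z_1'=x'+z_2'=y'$ in $R$ with numerators $x'=(m/s)x$ and $y'=(m/t)y$ in $U$, and $z_i'=(m/s_i)z_i\in R$. The a.s.\ property of $U$ then yields $z_1'=z_2'$, hence $\frac{z_1}{s_1}=\frac{z_1'}{m}=\frac{z_2'}{m}=\frac{z_2}{s_2}$.

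For (b), $U=U'\cap R$ is a submonoid. It is a.s.\ in $R$ because an equation $u+z_1=u+z_2=\tilde u$ in $R$ is also an equation in $S^{-1}R$, and $\leq_{S^{-1}R}$ extends $\leq_R$; so $z_1=z_2$ by the a.s.\ property of $U'$. The second half of (b) then follows from (a).

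For (c), $U\subseteq S^{-1}U$ via $x=\frac x1$. Closure $tU\subseteq U$ is the point where I must use something. If $U$ is the $U'\cap R$ of (b), I would derive it from $\frac{tx}{t}=x$. Otherwise I will argue that multiplication by $t\in S$ is an injective additive map on $R$ (no zero divisors, plus the quotient rules), so $tU$ is a.s.\ as well. I will then replace $U$ by the submonoid generated by $SU\cup U$, checking that it is still a.s.\ by the same clearing-of-denominators computation.

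The main obstacle is precisely this closure property and the meaning of equality of formal quotients: is $\frac ab=\frac cd$ iff $ad=bc$? Without cancellativity of multiplication in $R$, the step $z_1'=z_2'\Rightarrow \frac{z_1}{s_1}=\frac{z_2}{s_2}$ and the numerators landing in $U$ both need care. I would try first to state explicitly the standing convention that $U$ is $S$-stable and that the defining relation is $\frac xs=\frac{tx}{ts}$. With that convention, every verification reduces to a single common-denominator equation in $R$.
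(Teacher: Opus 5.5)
Your clearing-of-denominators argument for (a) and your restriction argument for (b) are the paper's own. You also correctly identify the crux: the closure $tU\subseteq U$ for $t\in S$. The paper uses this closure tacitly in (a), when it writes $u'=\frac us$, $v'=\frac vs$ over a common denominator ``with $u,v\in U$''. In (c) it simply asserts $tU'=\frac t1U'\subseteq U'$.

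The gap is your plan to \emph{establish} this closure. Both routes you offer fail.
\begin{enumerate}
\item From $\frac{tx}{t}=x\in U'$ nothing follows about whether $tx=\frac{tx}{1}$ lies in $U'$. That would need $\frac t1U'\subseteq U'$, which is exactly the unproved step in the paper.
\item The second route fails at each stage. Absence of zero divisors does not make $y\mapsto ty$ injective in a semiring: in $(\{0,1,2\},\max,\min)$ one has $1\cdot 1=1\cdot 2$. Even with injectivity, you would only control differences that lie in $tR$. Finally, replacing $U$ by the submonoid generated by $U\cup SU$ changes the statement, since the claim $tU\subseteq U$ concerns the given $U$. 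It can also destroy the a.s.\ property. In $\mathbb{N}_0[x]$ modulo $2x=x$, take $S=\{x^k\}$ and $U=\mathbb{N}_0$, which is a.s.\ there. Then $x\cdot 1=x$ is an additive idempotent, so the enlarged monoid is not even cancellative.
\end{enumerate}

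In fact the closure cannot be derived from the stated hypotheses. Take $R=\mathbb{N}_0[x]$ and $S=\{x^k\mid k\ge 0\}$, so $S^{-1}R=\mathbb{N}_0[x,x^{-1}]$. Let $U=\mathbb{N}_0$, or for (b) let $U'=\mathbb{N}_0\subset S^{-1}R$, so that $U'\cap R=\mathbb{N}_0$. Both additive monoids are cancellative, so these submonoids are a.s., and $1\in S$. Yet $x\cdot 1\notin U$, and $1+\frac1x=\frac{x+1}{x}\notin S^{-1}U$, so $S^{-1}U$ is not even a submonoid.

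Hence (a), the second half of (b), and the last assertion of (c) hold only under an explicit stability hypothesis. You need $SU\subseteq U$ in (a), and $\frac t1U'\subseteq U'$ for all $t\in S$ in (b) and (c). The right repair is to adopt this as a hypothesis, as your last paragraph suggests, not to derive it. With it:
\begin{itemize}
\item $tU\subseteq \frac t1U'\cap R\subseteq U'\cap R=U$, which gives (c) and lets (a) apply in (b).
\item Your reading of equality of fractions also settles a point the paper skips. Suppose $e(u+z_i)=ev$ with a common $e\in S$ for $i=1,2$. Then $eu,ev\in U$, so $ez_1=ez_2$ by the a.s.\ property, and $\frac{z_1}{s}=\frac{ez_1}{es}=\frac{ez_2}{es}=\frac{z_2}{s}$. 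No multiplicative cancellation is needed.
\end{itemize}
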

\begin{proof}
  (a): Given  $u' , v' \in S^{-1}U$ and $z'_1, z_2' \in S^{-1}R$,  assume that $u' + z_1' = u' + z_2' =  v'$. We may write $u' = \frac us$, $v' = \frac vs$, $z_1' = \frac {z_1} s$,
  $z_2' = \frac {z_2} s$, with $u, v\in U$, $z_1, z_2 \in R$, and a common denominator $s$. We then obtain from
  $$ \frac us + \frac {z_1} s = \frac us + \frac {z_2} s = \frac vs,$$
  that  $u+ z_1 = u+ z_2 = v$. Since $U$ is \as in $R$, this implies that
  $z_1 = z_2$, and jence  $z_1' = z_2'$.
  \pSkip
  (b): $U = U' \cap R$ is an  \as submonoid of $S^{-1}R$, since $U$ is a submonoid of $U'$, and so
  $U$ is an  \as submonoid of $R$. By (a), it follows that  $S^{-1}U$  is \as in  $S^{-1}R$.
  \pSkip
  (c): If $1 \in S$, then $u= \frac u1 \in S^{-1}U$ for every $u \in U$.  If $t \in S$, then $tU' = \frac t1 U' \subseteq U'$, and thus
  $tU \subseteq R \cap U' = U$.
\end{proof}

\begin{problem}\label{prob:a5.10}
  If $U'$ is an \as submonoid of $S^{-1}R$ and  $U =  U' \cap R$, when do  both
  $U'$ and $S^{-1}U$ lie in a single maximal \as submonoid $U''$ of $S^{-1}R$?
  (Recall Corollary \ref{cor:1.4}).
\end{problem}

We exhibit a system of \as submonoids of $(\iS R, + )$ with a good control over the gap elements.

\begin{prop}\label{prob:a5.4}
  Let  $U$ be an \as submonoid of $(R,+)$, and let $S$ be a subset of $R \sm \00$ that is  closed under multiplication.
  \begin{enumerate} \ealph
    \item For every $s \in S$, the set $\frac 1 s R $ is a submonoid of $(\iS R, +)$. When  $s U \subseteq U $, the set
        $$\textstyle{\frac 1 s} U = \big\{ \textstyle{\frac us} \ds | u \in U \big \}$$
        is \as submonoid of $(\iS R, +)$ and also of $(\frac 1s R,  +)$.
    \item For any $z \in R$m the element  $\frac zs$ is a gap element of $(\frac 1s R, \frac 1 s U)$ if and only if $z$
    is a gap element of $(R,U)$.
  \end{enumerate}
\end{prop}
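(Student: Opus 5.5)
The plan is to reduce everything to the map $\iota_s\colon R \to \iS R$, $x \mapsto \frac xs$, and to show that it is an isomorphism of additive monoids from $R$ onto $\frac 1s R$ which carries $U$ onto $\frac 1s U$. Both parts then become transports of structure, together with one appeal to Proposition~\ref{prop:a5.9}(a) and Proposition~\ref{prop:1.3}(a).

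First, additivity. The rules \eqref{eq:a6.1} give
$$\frac xs + \frac ys = \frac{sx+sy}{s^2} = \frac{s(x+y)}{s\cdot s} = \frac{x+y}s,$$
and $\frac 0s = 0$. So $\iota_s$ is a monoid homomorphism, and its image $\frac 1s R$ is a submonoid of $(\iS R,+)$. The same computation shows that $\frac 1s U$ is closed under addition and contains $0$. Next, injectivity: $\frac xs = \frac ys$ should imply $x=y$. This cancellation of a common denominator is exactly what was used in the proof of Proposition~\ref{prop:a5.9}(a), where $u+z_1 = u+z_2 = v$ was read off from the corresponding equality of fractions. I will invoke it in the same way here, as part of Construction~\ref{cons:a6.1}, given that $R$ has no zero divisors.

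For the almost subtractive property in (a), I plan to use Proposition~\ref{prop:a5.9}(a). It says that $\iS U$ is \as in $(\iS R,+)$. Since $\frac 1s U$ is a submonoid of $\iS U$, Proposition~\ref{prop:1.3}(a) shows that $\frac 1s U$ is \as in $\iS R$. It is then also \as in the smaller ambient monoid $\frac 1s R$, because any differences $z_1', z_2' \in \frac 1s R$ already lie in $\iS R$, where uniqueness holds. The hypothesis $sU \subseteq U$ does not seem strictly necessary for this argument. Its role is presumably to make $\frac 1s U$ compatible with the elements $\frac u1$ (note $u = \frac{su}s$), so I will only remark on it.

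For (b), I transport along $\iota_s$. The element $\frac zs$ is bridging for $\frac 1s U$ in $\frac 1s R$ if and only if $\frac us + \frac zs = \frac vs$ for some $u,v \in U$. By additivity and injectivity of $\iota_s$, this holds if and only if $u+z=v$, that is, if and only if $z$ is bridging for $U$ in $R$. Likewise, $\frac zs \notin \frac 1s U$ if and only if $z \notin U$, by injectivity. Combining these two equivalences gives the gap criterion of Definition~\ref{def:a4.1}(c).

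The main obstacle I expect is the injectivity claim $\frac xs = \frac ys \Rightarrow x=y$. Without negation, equality of formal quotients is generated by the identifications $\frac xs = \frac{tx}{ts}$. So I will have to check that cancelling the common denominator is legitimate in Construction~\ref{cons:a6.1}, or at least adopt it as the convention under which the paper already argued in Proposition~\ref{prop:a5.9}(a).
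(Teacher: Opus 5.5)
Your proposal is correct and is essentially the paper's argument. For (a), the paper likewise uses the fact that $S^{-1}U$ is a.s.\ in $S^{-1}R$ together with the addition rules; your appeal to Proposition~\ref{prop:1.3}(a) just makes the passage to the submonoid $\frac 1s U$ explicit. For (b), the paper uses the equivalence $u+z=\tilde u \Leftrightarrow \frac us+\frac zs=\frac{\tilde u}s$, which is exactly your transport along $x\mapsto \frac xs$. The injectivity of this map, which you rightly flag, is used implicitly in the same way by the paper. Strictly it needs cancellation of multiplication by elements of $S$, not merely the absence of zero divisors, so your treatment is no weaker than the paper's.
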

\begin{proof}
  (a): This is clear by Proportion \ref{prop:a5.9}(a) and rules \eqref{eq:a6.1}.
  \pSkip
  (b): Assume that $u,\tlu \in U$, $z \in R$, and $z' \in \frac 1s S$. Then,
  $$ u + s z' = \tlu \dss{\Leftrightarrow} \frac u s + z' = \frac{\tlu} s.$$
  Thus, the ``gap set'' $H_R(u, \tlu)$ in \eqref{eq:1.1} is a singleton $\{ z \}$ if and only if $H_{\frac 1s R} (\frac u s, \frac{\tlu}  s)$ is a singleton $\{ \frac zs\} .$
 \end{proof}

 \begin{remark}
 It follows by  rules
 \eqref{eq:a6.1} that, if $s,t \in S$, $sU \subseteq U$, and $t U \subseteq U$, then $st U \subseteq U $ and
 $\frac 1 s U  + \frac 1t U\subseteq \frac1 {st} U $.
 \end{remark}
We return to quadratic forms over semirings. Let again $V$ be a free $R$-module with basis $e_1, \dots, e_n$ and let $(q,b)$ be a quadratic pair on $V$. In direct analogy with Construction~
\ref{cons:a6.1}, we obtain a free $\iS R$-module $\iS V$ consisting of quotients $\frac xs$, where  $x  \in V$ and   $s \in S$,  with rules as in \eqref{eq:a6.1}, and, furthermore, a quadratic pair $(\iS q, \iS b)$ on $\iS V$. More explicitly, we view $V$ as a subset of $\iS V$ by identifying  $x = \frac{xs} s$ for every $x \in V$ and  $s \in S$, and then have
\begin{equation}\label{eq:a6.2}
  (\iS q) \bigg( \frac xt  \bigg) = \frac{q(x)}{t^2}, \qquad
  (\iS q , \iS b )\bigg( \frac xt, \frac yt   \bigg) = \frac{b(x,y)}{t^2},
  \end{equation}
for $x,y \in V$ and  $t \in S$. The trigonal scheme \eqref{eq:a5.1} representing $(q.b)$ can be read in $\iS R$, and then represents   $( \iS q, \iS b)$,
\begin{equation}\label{eq:a6.3}
  (\iS q, \iS b) \hteq \footnotesize \begin{bmatrix}
                \al_1 & \bt_{12} & \bt_{13} & \cdots & \bt_{1n} \\
                & \al_1 & \bt_{23} & \cdots & \bt_{2n} \\
                &  & \ddots &  & \vdots \\
              &&  & \ddots & \bt_{(n-1)n} \\
                         &      && & \al_n  \\
              \end{bmatrix}
\end{equation}
with $\al_i = q(e_i)$ and  $\bt_{ij} = b(e_i, e_j)$.

\section{Sums of gap elements}\label{sec:a8}

Let $U$ be an \as submonoid of $(R,+)$, equipped by the partial ordering $\leq_ R$ on $R$, usually  denoted by $\leq$.
\begin{defn}\label{def:a8.1} $  $
\begin{enumerate}
\ealph
\item
We call an element $u\in U$ \textbf{initial}, if there exists $z\in R \sm U$ such that
$u + z \in U$, and  $u$ \textbf{terminal}, if there exists $u_0 \in U$ and $z \in R \sm U$ with $u_0 + z = u$. In other words, $u$ is initial, if there is a gap walk
$ \xymatrix@R=0.9em@C=1.1em{
  u \ar@{->}[r]^{z \ } & u_1
}$
of length $1$ starting at $u$, and $u$ is terminal,
if there is a gap walk
$ \xymatrix@R=0.9em@C=1.1em{
  u_0 \ar@{->}[r]^{z} & u
}$.

\item We say that an element $x \in R $ is a \textbf{gap-sum of length} $r \geq 1$ in $R$, if
there exists a gap walk
$
     \xymatrix@R=0.9em@C=1.7em{
  P:  u_0 \ar@{->}[r]^{\quad  z_1} & u_1 \ar@{->}[r]^{z_2} & \cdots \ar@{->}[r]^{z_r} & u_r
}$
of length $r  $ with $ x = z_1 + \cdots + z_r$, cf. Definition  \ref{def:b6.3}.
Observe that  $x$ is then a bridge from $u_0$ to $u_r$.
% \item
We denote the set of all gap-sums in $R$ with respect to  $U$ by  $\GS(R,U)$.
\end{enumerate}
\end{defn}

\begin{remark}\label{rem:a8.2} $ $
\begin{enumerate}\eroman
  \item A lonely element of $U$ (cf. Definition \ref{def:a4.1}(b)) is neither initial nor terminal. However,  an element $u\in $ that  is not lonely can be both initial and  terminal. This happens when there exists  a gap walk $ \xymatrix@R=0.9em@C=1.1em{
  u_0 \ar@{->}[r]^{z_1} & u \ar@{->}[r]^{z_2} & u_2
}$ of length $2$ with $u$ in its middle.
  \item It can well happen that a gap-sum $x$ is an element of $U$.

\end{enumerate}

\end{remark}
\begin{prop}\label{prop:a8.3} The set
$\GS(R,U)$ is closed under addition, and thus is a subsemigroup of $(R,+)$. More precisely, if $x$ and $y$ are gap-sums of lengths $r$ and $s$, respectively, then $x+y$ is a gap-sum of length
$r+s$.
\end{prop}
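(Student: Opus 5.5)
The plan is to concatenate the two gap walks after shifting each by a suitable element of $U$. This is the same splicing idea as in Construction~\ref{cons:b6.10} and Example~\ref{def:a4.5}, now carried out in $(R,U)$ instead of $\QF(V,U)$.

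Let $x$ be a gap-sum of length $r$, realized by a gap walk
$u_0 \to u_1 \to \cdots \to u_r$ with bridges $z_1,\dots,z_r \in R\sm U$, so $x=z_1+\cdots+z_r$. Let $y$ be a gap-sum of length $s$, realized by a gap walk $v_0 \to v_1 \to \cdots \to v_s$ with bridges $w_1,\dots,w_s\in R\sm U$, so $y=w_1+\cdots+w_s$.

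First I translate both walks. Adding a fixed $u\in U$ to every node of a gap walk gives again a walk in $U$, because $U$ is a submonoid, and we still have $(u_{i-1}+u)+z_i=u_i+u$. The bridges are unchanged. They are the unique differences by the \as property (Definition~\ref{defn:1.1}), so they remain the same elements $z_i\notin U$. Hence a translate of a gap walk is a gap walk with the same bridges and the same bridge sum.

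Next I choose the clutch, mimicking the choice $\gm=q_k'$, $\dl=q_k$ in Construction~\ref{cons:b6.10}. I add $v_0$ to every node of the first walk and $u_r$ to every node of the second. The first translated walk ends at $u_r+v_0$, and the second translated walk starts at $v_0+u_r$. These agree, so the two walks can be concatenated:
$$u_0+v_0 \to \cdots \to u_r+v_0 = u_r+v_0 \to u_r+v_1 \to \cdots \to u_r+v_s.$$
This is a gap walk of length $r+s$ whose bridges $z_1,\dots,z_r,w_1,\dots,w_s$ all lie in $R\sm U$. Its bridge sum is $x+y$, so $x+y$ is a gap-sum of length $r+s$.

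Closure of $\GS(R,U)$ under addition is then immediate. There is no genuine obstacle here. The only point needing care is that translation preserves the gap property of each bridge, and this holds simply because the bridge elements themselves do not change.
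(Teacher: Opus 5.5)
Your proof is correct and is essentially the paper's argument: the paper also translates the first walk by $v_0$ and the second by $u_r$, and concatenates them at $u_r+v_0$ to get a gap walk of length $r+s$ with bridges $z_1,\dots,z_r,w_1,\dots,w_s$. Your added remark is right: the \as\ property makes the translated bridges exactly the original elements, so they stay outside $U$.
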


\begin{proof}
We have increasing sequences
$u_0 < u_1 < \cdots < u_r$
and
$v_0 < v_1 < \cdots < v_s$ in $U$, with gap bridges
$z_1, \dots, z_r$ and  $z_{r+1}, \dots, z_{r+s}$, respectively, and gap-sums $x$ and $y$, respectively.
These sequences give rise to the  sequences
$u_0 + v_0 < u_1 + v_0  < \cdots < u_r + v_0$ and
$u_r + v_0 < u_r + v_1  < \cdots < u_r+ v_s$.
with the same gap bridges
$z_1, \dots, z_r$ and  $z_{r+1}, \dots, z_{r+s}$.
These sequences combine to form   the  sequence
\begin{equation}\label{eq:a8.0}
u_0 + v_0 < u_1 + v_0  < \cdots < u_r + v_0 < u_r + v_1  < \cdots < u_r+ v_s,
\end{equation}
in $U$,
with bridges
$z_1, \dots, z_r$, $z_{r+1}, \dots, z_{r+s}$ and gap-sum $x + y$.
\end{proof}
Given an element  $u \in U$, we  focus on  understanding the set  $\dwu  \cap (R \sm U)$ of gap elements in the downset ~$\dwu$. More precisely, we search for gap-walks
\begin{equation}\label{eq:a8.1}
     \xymatrix@R=0.9em@C=1.7em{
  P: \ u_0 \ar@{->}[r]^{\quad  z_1} & u_1 \ar@{->}[r]^{z_2} & u_2 \ar@{->}[r] & \cdots \ar@{->}[r]^{z_k} & u_k = u ,
}
\end{equation}
ending at $u$. Clearly, this is of interest only if $u$ is terminal. The following question is therefore natural.
Are there good cases in which the supremum of the lengths of these walks for a given $ u \in U$ is finite?
We then call this supremum  the \textbf{gap height}  of~ $u$, denoted by $h(u)$. If $u$ is not terminal, we assign
$h(u) = 0$.

In \S\ref{sec:6}, we already encountered a broad class of monoids $(R, U)$ with finite gap height, as follows.
\begin{thm}\label{thm:a8.4}
  Let $\NN^r$ be the free monoid on $r \geq 2$ generators (cf. Definition \ref{eq:6.1}),  and let
  $w: \NN^r \to \NN$ be the standard weight function on $\NN^r$ (cf. \eqref{eq:6.2}).
Let $R = \NN^r / E$, where  $E$ is a nontrivial weight-compatible additive equivalence on $\NN^r$ (cf. Theorem \ref{thm:6.9}, in particular  Example~ \ref{exmp:6.11}).
 Let  $d$ denote the largest number such that $E$ is trivial below height $d$, i.e., all
 $E$-equivalence classes of weight $< d$ are singletons, and assume that $d \geq 2$. If $U$ is an \as submonoid of $R$ with $U \neq R$, then for every $u \in U$, every gap walk $P$ ending at $u$ (cf.~ \eqref{eq:a8.1}) has a finite gap height $h(u) \leq w(u)$.
 \end{thm}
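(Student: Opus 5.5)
The bound comes from weights. Since $E$ is weight-compatible, the standard weight descends to an additive map $\w: R \to \NN$ with $\iw(0)=\00$ (Definition~\ref{def:6.3}). By Remark~\ref{rem:6.1}, $R$ is then \ub. The key point is that every gap element $z \in R \sm U$ is nonzero, because $0 \in U$. Hence every gap element has $\w(z) \geq 1$.

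I would argue in three steps.

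\emph{Step 1.} Take any gap walk $P$ ending at $u$, as in \eqref{eq:a8.1}:
$$u_0 \to u_1 \to \cdots \to u_k = u.$$
Apply $\w$ to the relations $u_{i-1}+z_i = u_i$. This gives $\w(u_i) = \w(u_{i-1}) + \w(z_i)$.

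\emph{Step 2.} Telescope these equalities to get
$$\w(u) = \w(u_0) + \sum_{i=1}^k \w(z_i) \geq \w(u_0) + k.$$
Here I use $\w(z_i)\geq 1$ for every $i$, which holds because each $z_i \notin U$ and therefore $z_i \neq 0$.

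\emph{Step 3.} Conclude $k \leq \w(u) - \w(u_0) \leq \w(u)$. Taking the supremum over all gap walks ending at $u$ gives $h(u) \leq \w(u) < \infty$. If $u$ is not terminal, then $h(u)=0 \leq \w(u)$ trivially.

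I can sharpen the bound slightly. By Remark~\ref{rem:a4.4}(d), no gap walk starts at $0$, so $u_0 \neq 0$ and hence $\w(u_0)\geq 1$. This yields $k \leq \w(u)-1$ whenever $u$ is terminal.

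The only step needing care is making sure the induced weight on $R=\NN^r/E$ is well defined and additive, with only $0$ having weight $0$. This follows because $E$ is weight-compatible and the class of $0$ is $\{0\}$: every element of weight $0$ in $\NN^r$ is $0$ itself. The hypotheses $d\geq 2$ and $U\neq R$ are not really needed for the inequality. They only guarantee that gap elements, and hence nontrivial gap walks, can occur at all. I would note this in the write-up rather than use it.
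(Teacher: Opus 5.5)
Your proof is correct and is essentially the paper's argument: the paper just notes that the weight increases by at least $1$ at each step of a gap walk, which forces $k \leq w(u)$, and it also remarks that the hypotheses $r \geq 2$ and $d \geq 2$ only serve to avoid trivialities. Your added justifications are valid refinements of that one-line proof: that $w$ descends to $R$ with only $0$ of weight $0$, that gap elements are nonzero because $0 \in U$, and the sharper bound $k \leq w(u)-1$ obtained from $u_0 \neq 0$.
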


\begin{proof}
  In the setup  of \eqref{eq:a8.1}
the weights of the $u_i$'s  increase  by at least $1$ at each step $ \xymatrix@R=0.9em@C=1.1em{
  u_{i-1} \ar@{->}[r]^{z_i} & u_i
}$, which forces $ k \leq w(u)$. (The assumptions $r \geq 2$ and $d \geq 2 $ are  made to only avoid trivialities.)
\end{proof}

The following holds for any additive monoid with an \as \; submonoid $U$.
\begin{thm}\label{thm:a8.5}
Let $u,u' \in U$. If $u + u'$ has finite gap height, then both
$u$ and $u'$ have finite gap heights, and
$$ h(u) + h(u') \leq h(u +u '). $$
\end{thm}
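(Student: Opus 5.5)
The plan is to take any gap walk ending at $u$ and any gap walk ending at $u'$, and build from them a single gap walk ending at $u+u'$ whose length is the sum of the two lengths. The splicing trick is exactly the one used in the proof of Proposition~\ref{prop:a8.3}.

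Let
$$u_0 \to u_1 \to \cdots \to u_k = u \qquad\text{and}\qquad u'_0 \to u'_1 \to \cdots \to u'_\ell = u'$$
be gap walks with gap bridges $z_1,\dots,z_k$ and $z'_1,\dots,z'_\ell$, all in $R\sm U$. We translate the first walk by $u'_0$ and the second by $u$:
$$u_0+u'_0 \to u_1+u'_0 \to \cdots \to u+u'_0 = u+u'_0 \to u+u'_1 \to \cdots \to u+u'.$$
All nodes lie in $U$, since $U$ is a submonoid. Each step is still bridged by the same element: $(u_{i-1}+u'_0)+z_i=u_i+u'_0$, and similarly for the second half. These bridges are the unique differences, because $U$ is \as in $R$. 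Hence every bridge is still in $R\sm U$, and the concatenation is a gap walk of length $k+\ell$ ending at $u+u'$.

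From this, $k+\ell\le h(u+u')$ for every such pair of walks. We now split into cases. If $u'$ is terminal, it has a gap walk of length $\ell\ge 1$, so all $k$ are bounded by $h(u+u')-1$. If $u'$ is not terminal, take $\ell=0$, i.e.\ the trivial walk at $u'$; then all $k\le h(u+u')$. Either way the supremum of the lengths of gap walks ending at $u$ is finite, so $h(u)$ is finite, and symmetrically $h(u')$ is finite. The convention $h(\cdot)=0$ for non-terminal elements is consistent with allowing length-$0$ walks.

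Taking suprema over $k$ and $\ell$ independently gives $h(u)+h(u')\le h(u+u')$. Each supremum is attained, being a supremum of a bounded set of integers.

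The only point needing care is verifying that the translated bridges remain gap elements, i.e.\ that the step $u_{i-1}+u'_0\to u_i+u'_0$ is not secretly bridged by an element of $U$. This is where the \as hypothesis enters: it makes the difference unique, so that difference is $z_i\notin U$. We also need to handle the degenerate case where one of the elements is not terminal, and this is covered by using length-$0$ walks.
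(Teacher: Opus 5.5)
Your proof is correct and takes the paper's approach: the paper's proof splices a gap walk ending at $u$ with one ending at $u'$, exactly as in Proposition~\ref{prop:a8.3}, to get a gap walk of length $r+s$ ending at $u+u'$. Your two additions only supply details the paper leaves implicit: you use the \as property to see that the translated bridges are still the unique gap differences, and you use length-$0$ walks to get finiteness of each height separately.
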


\begin{proof}
  Any two gap walks ending at $u$ and $u'$,  of lengths $r$ and $s$, respectively, can be combined to form a gap walk of length $r+s$ ending at $u + u '$, as in the proof of Proposition \ref{prop:a8.3}.
\end{proof}

Thus, the set $\FGH(R,U)$ of elements of $U$ with finite gap height in $R$ is a submonoid of $(U, +)$.
In what follows, \textbf{we assume that every $u\in U$ has finite gap height in $(R,+)$}, so that  $U = \FGH(R,U)$.

Let $\SG$ be a finite subset of $\Gap(R,U)$, where $(R,+)$
an additive submonoid and $U$ an \as~submonoid.
We tacitly assume that $\SG \neq \emptyset$.
We have a general way to obtain subpairs
$(R',U')$ of $(R,U)$ in which  every $u' \in U' $ has finite gap height.
We define $Z(\SG)$ to be the subsemigroup  of $R \sm \00$ consisting of all finite  sums of elements of $\SG$, and put $Z(S)_0= Z(S)\cup \00$.

We introduce the submonoids $R_S = U + Z(S)_0$ of $(R,+)$ and
$U_S = [U \cap Z(S)] \cup \00$ of~ $(U,+)$, and have the diagram

$$
   \xymatrix@R=2em@C=1.2em{
U \ar@{^{(}->}[rr]  && U + Z(S)_0   \\
U_S \ar@{^{(}->}[rr]  \ar@{_{(}->}[u]  && Z(S)_0   \ar@{_{(}->}[u]
}
$$
of submonoids of $(R,+)$.
Note that, if $T$ is another finite subset of $\Gap(R,U)$, then
\begin{equation}\label{eqL}
  R_S + R_T \subseteq R_{S\cup T}, \qquad U_S + U_T \subseteq U_{S \cup T}.
\end{equation}
We remain  with a fixed finite set
$S \subseteq \Gap(R,U)$, and  often write $Z(S)= Z$ for short.

Assume   without essential  loss of generality  that
$\SG \neq \emptyset$ and
all its  elements  pairwise distinct, i.e.,
$\SG = \{s_1, \dots, s_p \}$  with $s_i \neq s_j$  for $ i\neq j$ and $\ p \geq ~1  $.
Then, $Z= (\NN s_1 + \cdots + \NN s_p ) \sm \00$.

\begin{defn}\label{def:a8.8}
Every $z \in Z$ has a \textbf{presentation}
\begin{equation}\label{eq:a8.3}
z= n_1 s_1 + \cdots + n_p s_p \text{ with }n_i \in \NN \text{ and }  \ \sum_{i= 1}^p  n_i >0.
\end{equation}
We call
$n_1 + \cdots + n_p$
 the \textbf{length} of the sum $z$ in the presentation \eqref{eq:a8.3}. \end{defn}
%the set $\{s_i \ds | n_i \neq 0 \} $ the \textbf{support} of $z$ in $S$, and

The  presentation may not be unique, however  $z$ has only finitely many presentations, since the set $\SG$ is finite.
Note that $z$ is a gap element of $(R,U)$. Indeed, if $u_i + s_i = \tlu_i$ with
$u_i, \tlu_i \in U$ for   $1 \leq i \leq p$, then
 \begin{equation}\label{eq:a8.9}
  n_1 u_1 + \cdots + n_p u_p + z =
 n_1 \tlu_1  + \cdots + n_p \tlu_p.
  \end{equation}

\begin{defn}\label{def:a8.11}
We call a gap walk in $(R,U)$ an \textbf{$\SG$-walk}, if all its gaps are belong to $\SG$, i.e., are sums of elements of~ $\SG$.
\end{defn}

\begin{thm}\label{thm:a8.9}
Let $u_0, u \in U_\SG$ and $z \in Z$ be such that  $u_0 + z = u$. Then, every $\SG$-walk
  $$
   \xymatrix@R=0.9em@C=1.7em{
   u_0 \ar@{->}[r]^{\quad  z_1} & u_1 \ar@{->}[r]^{z_2} & u_2 \ar@{->}[r] & \cdots \ar@{->}[r]^{z_k} & u_k = u ,
}$$
in $(R_\SG, U_S)$ has length $k \leq r(u_0,u)$ for some
$r(u_0,u) \in \NN$ depending only on $u_0$ and $ u$.
\end{thm}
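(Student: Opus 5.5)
Since $u_0 + z = u$ and $U$ is \as\ in $R$, the element $z$ is the unique difference of $u_0$ and $u$. For any $\SG$-walk from $u_0$ to $u$ with bridges $z_1,\dots,z_k$, we have $u_0 + (z_1+\cdots+z_k) = u$, so uniqueness of the difference forces
$$ z_1 + \cdots + z_k = z .$$
This is the key reduction: the walk determines a decomposition of the \emph{fixed} element $z$ into $k$ summands, each lying in $Z = Z(\SG)$, hence each a nonzero sum of elements of $\SG$. This uses only Definition~\ref{defn:1.1}.

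Next, each $z_i \in Z$ has a presentation \eqref{eq:a8.3} of length at least $1$. Concatenating these presentations gives a presentation of $z$ of length at least $k$. So $k$ is bounded by the maximal length of a presentation of $z$, provided this maximum is finite, and I would take $r(u_0,u)$ to be that maximum.

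The obstacle is that finiteness of presentation lengths is not automatic from the finiteness of $\SG$. The remark after Definition~\ref{def:a8.8}, that $z$ has only finitely many presentations, is false in general, for instance when some $s_i$ is idempotent. I would instead extract finiteness from the standing hypothesis that every element of $U$ has finite gap height, i.e.\ $U = \FGH(R,U)$. Given any presentation $z = n_1 s_1 + \cdots + n_p s_p$ of length $N$, I would build a gap walk of length $N$ ending at an element of $U$ controlled by $u_0,u$. Two routes are available:
\begin{itemize}
\item Use the splicing trick of Proposition~\ref{prop:a8.3}. Fix once and for all $v_j, \tilde v_j \in U$ with $v_j + s_j = \tilde v_j$, and add the $s_j$ one at a time starting from $u_0 + \sum n_j v_j$. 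This gives a gap walk of length $N$ ending at $u + \sum n_j v_j$. That endpoint still depends on $N$, so this alone does not bound $N$.
\item Better, exploit the walk structure directly. Any $\SG$-walk $u_0 \to \cdots \to u_k = u$ is itself a gap walk ending at $u$, provided each $z_i \in R\sm U$. Then $k \le h(u)$, which is finite by assumption. Hence I would set $r(u_0,u) := h(u)$.
\end{itemize}

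So the plan is to check that the steps of an $\SG$-walk are genuine gaps, which holds by Definition~\ref{def:a8.11} since an $\SG$-walk is a gap walk, and then to invoke the finite gap height of $u$. In $(R_\SG, U_\SG)$ this bound is even easier, because $U_\SG \subseteq U$ and $R_\SG\sm U_\SG \supseteq$ the relevant gaps. The main subtle point I expect is confirming that the gap-height hypothesis transfers to the subpair $(R_\SG, U_\SG)$. I would argue this transfer simply: any gap walk in the subpair is a gap walk in $(R,U)$, because its steps lie in $Z \subseteq \Gap(R,U) \subseteq R\sm U$. Therefore $k \le h(u)$, and the bound depends only on $u$, hence on $(u_0,u)$.
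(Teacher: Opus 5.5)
Your final argument is correct, but it is not the paper's route. Under the standing assumption stated just before the theorem (every $u\in U$ has finite gap height), an $\SG$-walk from $u_0$ to $u$ is, by Definition~\ref{def:a8.11}, a gap walk in $(R,U)$ ending at $u$. Hence $k\le h(u)$, so you may take $r(u_0,u)=h(u)$, which does not even depend on $u_0$.

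The paper instead argues along your first two paragraphs. It uses $z_1+\cdots+z_k=z$, so that $k$ is at most the largest length of a presentation of $z$, and then asserts that $z$ has only finitely many presentations. The paper phrases this as refining the walk to one with all bridges in $\SG$. Your concatenation of presentations is the cleaner form of that step, because the intermediate partial sums need not lie in $U$.

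The trade-off is this. Your route makes the theorem an immediate restatement of the finite-gap-height hypothesis. The presentation route does not use that hypothesis at all. That independence is what the paper needs if $(R_\SG,U_\SG)$ is meant to be ``a general way to obtain subpairs in which every element has finite gap height.''

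Your reason for abandoning the presentation route is mistaken, however. A nonzero idempotent $t$ can never be a gap: if $x+t=u$ with $x,u\in U$, then $u+t=x+2t=u=u+0$, so $t=0$ by almost subtractivity.

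Moreover, finiteness of presentations does hold, though not merely ``since $\SG$ is finite.'' Suppose $n\neq n'$ are presentations of $z$ with $n_i\le n'_i$ for all $i$. Then $z+y=z$ with $y=\sum_i (n'_i-n_i)s_i$. This $y$ is nonzero: if $y=0$, then $s_j\le_R y=0$ for some $j$, forcing $s_j=0\in U$ by antisymmetry of $\leq_R$. Since $z$ is bridging, $w_0+z=w_1$ for some $w_0,w_1\in U$. Then $w_1+y=w_1+0$, so $y=0$, a contradiction. Hence the presentations of $z$ form an antichain in $\mathbb{N}_0^p$, which is finite by Dickson's lemma. This completes your second paragraph without any gap-height hypothesis.

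Finally, the inclusion $Z\subseteq\Gap(R,U)$ that you invoke is false in general: in Example~\ref{exmp:a9.20}, $d\cdot 1\in U$. You only need that the bridges of an $\SG$-walk lie in $R\sm U$, and that holds by definition.
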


\begin{proof}
 There is a unique $z \in Z$ for which  $u_0 + z = u$. We refine the given gap walk to a gap walk
  $$
   \xymatrix@R=0.9em@C=1.7em{
   \ u_0 = u'_0 \ar@{->}[r]^{\quad  z'_1} & u'_1 \ar@{->}[r]^{z'_2} & u'_2 \ar@{->}[r] & \cdots \ar@{->}[r]^{z'_\ell} & u'_\ell = u ,
}$$
with all $z'_j \in \SG$ and  $\ell \geq k$. (Typically, $\ell$ much larger  than $k$.)
This provides a presentation
$
z = z'_1 +  \cdots + z'_\ell$ of $z$ with $u_0 + z = u$. Since, for a given $u$, there are only finitely many such presentations $r = r(u_0,u) \in \N$ of length
$\ell \leq r$, where $r$ depends only on $u_0$ and ~$u$. Thus,
$k \leq r(u_0, u)$.
 \end{proof}

\begin{cor}\label{cor:a8.12}
Let
  $
   \xymatrix@R=0.9em@C=1.7em{
  P:   u_0 \ar@{->}[r]^{\quad  z_1} & u_1 \ar@{->}[r]^{z_2} & \cdots \ar@{->}[r]^{z_k\quad} & u_k = u
}$
be  an $\SG$-walk in $(R,U)$. Then every $\SG$-walk \\
  $
   \xymatrix@R=0.9em@C=1.7em{
  P':  u_0 \ar@{->}[r]^{\quad  z'_1} & u'_1 \ar@{->}[r]^{z'_2} & \cdots \ar@{->}[r]^{z'_\ell\qquad } & u_\ell' =  u_k = u
}$ that
refines  $P$ has length
$ \ell \leq r(u_0, u_1) + \cdots + r(u_{k-1}, u).$

\end{cor}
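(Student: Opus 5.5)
The plan is to work segment by segment and then add up the bounds. Let $P'$ be an $\SG$-walk that refines $P$. Every node of $P$ is a node of $P'$, and the nodes of a walk form an increasing sequence. So the nodes $u_0=u_0'$, $u_1,\dots,u_k=u$ of $P$ occur in $P'$ in this order, at indices $0=j_0\le j_1\le\dots\le j_k=\ell$ with $u'_{j_i}=u_i$. If a node $u_i$ occurs several times in $P'$, we take $j_i$ to be any one occurrence compatible with the ordering. In the intended setting, where consecutive gap steps are strict, each node occurs only once.

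This cuts $P'$ into $k$ consecutive pieces
$$P'_i:\; u_{i-1}=u'_{j_{i-1}}\to u'_{j_{i-1}+1}\to\cdots\to u'_{j_i}=u_i,\qquad 1\le i\le k.$$
Each piece $P'_i$ is again an $\SG$-walk, because its bridges are among the bridges of $P'$, and these are gaps that are sums of elements of $\SG$. Moreover, $P'_i$ starts at $u_{i-1}$ and ends at $u_i$. Its bridge sum is $z_i$: by the \as property, the unique difference of $u_{i-1}$ and $u_i$ equals both $z_i$ and the sum of the bridges of $P'_i$. Hence $z_i\in Z$.

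Now apply Theorem~\ref{thm:a8.9} to each piece, taking $u_{i-1}$ as the starting node, $u_i$ as the final node and $z_i$ as the gap. This gives that $P'_i$ has length $j_i-j_{i-1}\le r(u_{i-1},u_i)$. Summing over $i$ telescopes to
$$\ell=\sum_{i=1}^k (j_i-j_{i-1})\le r(u_0,u_1)+\cdots+r(u_{k-1},u).$$

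The main obstacle is checking that Theorem~\ref{thm:a8.9} really applies to each segment. That theorem is formulated for nodes in $U_\SG$ and for walks in $(R_\SG,U_\SG)$, whereas the corollary speaks of $\SG$-walks in $(R,U)$. The proof of Theorem~\ref{thm:a8.9}, however, uses only three facts: the bridge sum $z_i$ lies in $Z$, every $\SG$-walk from $u_{i-1}$ to $u_i$ refines to one whose steps lie in $\SG$, and $z_i$ has only finitely many presentations. None of these depends on the nodes lying in $U_\SG$. I would therefore note that the bound $r(u_{i-1},u_i)$ is available verbatim for $\SG$-walks in $(R,U)$. It can be taken, for instance, as the maximal length of a presentation~\eqref{eq:a8.3} of $z_i$, which is finite because $z_i$ has only finitely many presentations. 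If strictness of steps is in doubt, I would note that zero bridges are excluded because gaps lie in $R\sm U$ and $0\in U$. Consequently each node occurs in $P'$ exactly once and the indices $j_i$ are well defined.
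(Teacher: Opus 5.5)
Your proposal is correct and follows the paper's proof, which applies Theorem~\ref{thm:a8.9} to each segment of $P'$ between consecutive nodes $u_{i-1},u_i$ of $P$ and adds the resulting bounds. You go further than the paper by fixing the indices $j_i$, by checking that each segment has bridge sum $z_i$, and by noting that the hypothesis $u_0,u\in U_\SG$ of Theorem~\ref{thm:a8.9} is never actually used. The per-segment bound also follows directly by concatenating presentations of the segment's bridges into a presentation of $z_i$. That route avoids the refinement into single $\SG$-steps, which need not stay inside $U$.
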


\begin{proof}
  Apply Theorem \ref{thm:a8.9} to each of the  gap walks
  $ \xymatrix@R=0.9em@C=1.1em{
  u_{i-1} \ar@{->}[r]^{z_i} & u_i
}$ for
  $1 \leq i \leq k$.
\end{proof}
Assume now that $R$ is a semiring (rather than merely an additive monoid) without zero-divisors, i.e., $R \sm \00$ is closed under multiplication.
As before, let $U$ be an \as submonoid of $(R, +)$. \{In contrast to \S\ref{sec:a6}, we now use the letter ``$t$'' instead of
``$s$'' to avoid a conflict with the notation above.\}

\begin{defn}\label{def:a8.9}
Let $\Om(R,U)$ denote the set of all $t \in R$ such that
$$ \forall x \in R: \ tx \in U \dss \Leftrightarrow x \in U, $$
equivalently: $tU \subseteq U$ and $t(R \sm U) \subseteq R \sm U$.
\end{defn}

Note that $\Om(R,U)$ is closed under multiplication and thus   is a multiplicative submonoid of $R \sm \00$. In what follows, we write $\Om$ for
$ \Om (R,U)$ for short.
\begin{lem}\label{lem:a8.10}
For any gap element $z$ of $(R,U)$ and any $t \in \Om$, the element $tz$ is a gap element of $(R,U)$.
\end{lem}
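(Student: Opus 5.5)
We have to check the two defining properties of a gap element for $tz$: that $tz$ is bridging for $U$, and that $tz\notin U$. Both come straight from the definitions. Bridging will follow from distributivity in the semiring $R$, and the membership condition from the defining property of $\Om=\Om(R,U)$ in Definition~\ref{def:a8.9}.

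First, since $z$ is a gap element (Definition~\ref{def:a4.1}(c)), it is bridging, so there are $u_0,u_1\in U$ with $u_0+z=u_1$. Multiplying by $t$ and using distributivity gives
\[
tu_0 + tz = tu_1 .
\]
Because $t\in\Om$, we have $tU\subseteq U$, so both $tu_0$ and $tu_1$ lie in $U$. Hence $(tz+U)\cap U\neq\emptyset$, and $tz$ is a bridge from $tu_0$ to $tu_1$. In particular $tz$ is bridging.

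Second, $z\in R\sm U$ and $t(R\sm U)\subseteq R\sm U$, again by the definition of $\Om$. This is just the implication $tx\in U\Rightarrow x\in U$ read contrapositively with $x=z$. So $tz\notin U$.

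Together these show that $tz$ is bridging and lies outside $U$, i.e.\ $tz\in\Gap(R,U)$. There is no real obstacle. The only point to watch is that the bridging step uses the half $tU\subseteq U$ of the definition of $\Om$, while the non-membership step uses the other half, so both directions of the equivalence in Definition~\ref{def:a8.9} are needed. The standing assumption that $R$ has no zero divisors is not needed here, though it ensures $t\neq 0$ is harmless.
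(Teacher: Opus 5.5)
Your proof is correct and is the same as the paper's: multiply the bridge equation $u_0+z=u_1$ by $t$, use $tU\subseteq U$ to keep $tu_0$ and $tu_1$ in $U$, and use $t(R\sm U)\subseteq R\sm U$ to get $tz\notin U$.
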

\begin{proof}
  If $u_0 + z = u_1$ with $u_0,u_1 \in U$ and $z \in R \sm U$, then
  $t u_0 + t z = t u_1$, where  $t u_0, t u_1 \in U$ and $tz \in R \sm U$.
\end{proof}

We have at hand a  \textbf{gap height function}
$$ h: U \to \NN \text { with } h^{-1}(0) =\00.$$
 In this setup, Theorem \ref{thm:a8.5} reads simply as:
   \begin{equation*}\label{eq:a8.8}
h(u) + h(u') \leq h(u +u')
\end{equation*}
for all $u \in U$.
We also have the following fact.
\begin{thm}\label{thm:a8.11} $h(u) \leq h(tu)$
 for every  $u \in U$ and $t \in \Om(R,U)$.
 %
% If $su$ has finite gap height in $(R,U)$, then $u $ has
%   finite gap height in $(R,U)$ and $h_R(u) \leq h_R(su)$.
%%  \item  Let $u \in \Om^{-1} U$. Then $u'$ has  finite gap height in $(\Om^{-1}R,\Om^{-1}U)$ iff $u' = \frac ut$ for some $u \in U$, $t \in \Om$ with $h(u) < \infty$, and
%      $h_{\Om^{-1}}(u) = h_R(tu)$.
\end{thm}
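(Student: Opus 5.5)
The plan is to transport gap walks ending at $u$ to gap walks ending at $tu$ by multiplying every node and every bridge by $t$. Lemma~\ref{lem:a8.10} makes this work, so the inequality should follow with very little extra effort.

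Fix $u \in U$ and $t \in \Om = \Om(R,U)$. If $u$ is not terminal, then $h(u)=0 \le h(tu)$ and there is nothing to show. Otherwise, take an arbitrary gap walk
$$
   \xymatrix@R=0.9em@C=1.7em{
   u_0 \ar@{->}[r]^{\quad  z_1} & u_1 \ar@{->}[r]^{z_2} & \cdots \ar@{->}[r]^{z_k} & u_k = u
}
$$
as in \eqref{eq:a8.1}, where all $u_i \in U$ and all $z_i \in R \sm U$. Multiplying each relation $u_{i-1}+z_i=u_i$ by $t$ and using distributivity gives $tu_{i-1}+tz_i = tu_i$. Since $tU\subseteq U$, every $tu_i$ lies in $U$. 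Since $t(R\sm U)\subseteq R\sm U$ (Definition~\ref{def:a8.9}, or directly Lemma~\ref{lem:a8.10}), every $tz_i$ lies in $R\sm U$.

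Hence $tu_0 \to tu_1 \to \cdots \to tu_k = tu$ is again a gap walk of the same length $k$, now ending at $tu$. The inequalities $tu_{i-1}\le_R tu_i$ are witnessed by the bridges $tz_i$, so the walk is increasing as Definition~\ref{def:a4.3} requires. Strictness of the steps is not needed there; and if it were, $tz_i\neq 0$ holds anyway because $0\in U$.

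Consequently every length $k$ of a gap walk ending at $u$ is also the length of a gap walk ending at $tu$. Taking suprema gives $h(u)\le h(tu)$. Under the standing assumption $U=\FGH(R,U)$ both sides are finite. Without that assumption the same argument still shows that finiteness of $h(tu)$ forces finiteness of $h(u)$.

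The only point needing care is that the multiplied bridges $tz_i$ stay outside $U$, since otherwise the image would merely be a walk and not a gap walk. This is exactly what the defining property of $\Om$ provides, so I expect no real obstacle.
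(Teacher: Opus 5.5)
Your proposal is correct and follows essentially the same route as the paper: multiply a gap walk ending at $u$ by $t$ and use Lemma~\ref{lem:a8.10} (equivalently, $tU\subseteq U$ and $t(R\sm U)\subseteq R\sm U$) to see that it remains a gap walk of the same length ending at $tu$. Your extra remarks on the non-terminal case and on finiteness are harmless additions to the same argument.
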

\begin{proof}
  If $     \xymatrix@R=0.9em@C=1.7em{
  P:  u_0 \ar@{->}[r]^{\quad  z_1} & u_1 \ar@{->}[r]^{z_2} &  \cdots \ar@{->}[r]^{z_k\quad } & u_k = u ,
}
$ is a gap walk in $(R,U)$ of length~ $k$, then \\
$      \xymatrix@R=0.9em@C=1.7em{
  tP:  t u_0 \ar@{->}[r]^{\quad  t z_1} & tu_1 \ar@{->}[r]^{t z_2}  & \cdots \ar@{->}[r]^{tz_k \quad } & tu_k = tu
}
$
is also a gap walk by Lemma \ref{lem:a8.10}.
\end{proof}

\section{$\SG$-sums and $\SG$-walks in $(R,U)$}\label{sec:a9}
We proceed by studying $\SG$-sums and $\SG$-walks in detail, cf.
Definition \ref{def:a8.11}. Let $U$ be a cancellative submonoid  of $(R,+)$.

\begin{prop}\label{prop:a9.1}
 If $     \xymatrix@R=0.9em@C=1.7em{
  P: u_0 \ar@{->}[r]^{\quad  z_1} & u_1 \ar@{->}[r]^{z_2} & u_2 \ar@{->}[r] & \cdots \ar@{->}[r]^{z_k\quad } & u_k
}
$ is an $\SG$-walk in $(R,U)$, then,
for every $u' \in U$,   the sequence
 $$     \xymatrix@R=0.9em@C=1.7em{
  P + u' :  u_0 + u' \ar@{->}[r]^{\quad  z_1} & u_1 + u' \ar@{->}[r]^{z_2} & u_2 + u'\ar@{->}[r] & \cdots \ar@{->}[r]^{z_k\quad } & u_k + u'
}
$$ is again an $\SG$-walk in $(R,U)$. It has the same length $k$ as $P$ and  the same
bridges $z_i \in Z$.
\end{prop}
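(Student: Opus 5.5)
The plan is a direct verification from the definitions of walk, gap walk and $\SG$-walk (Definitions \ref{def:a4.3} and \ref{def:a8.11}), in the same spirit as Example \ref{def:a4.5} and the proof of Proposition \ref{prop:a8.3}.

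First I will show that the translated sequence lies in $U$ and is increasing. Each $u_i + u'$ lies in $U$ because $U$ is a submonoid. For $1 \leq i \leq k$, adding $u'$ to $u_{i-1} + z_i = u_i$ and using commutativity and associativity of $(R,+)$ gives
\[
(u_{i-1}+u') + z_i = u_i + u'.
\]
Hence $u_{i-1}+u' \leq_R u_i + u'$, and $z_i$ is a bridge from $u_{i-1}+u'$ to $u_i+u'$. So $P+u'$ is a walk of length $k$ in $(R,U)$.

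Next I will identify the bridges of $P+u'$. Definition \ref{def:a4.3} refers to ``the'' bridges of a walk, and the difference of two comparable elements of $U$ is unique when $U$ is \as. In the present section only cancellativity of $U$ is assumed. Even so, $z_i$ is \emph{a} bridge at the $i$-th step. Under the standing assumption of \S\ref{sec:a8} that $U$ is \as, it is in fact the unique one, so the bridges of $P+u'$ are exactly $z_1,\dots,z_k$. Each $z_i$ is unchanged, so it still lies in $R\sm U$ and is still a sum of elements of $\SG$, i.e.\ in $Z$. Therefore $P+u'$ is a gap walk whose bridges all lie in $Z$, which makes it an $\SG$-walk, with the same length and the same bridges as $P$.

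The only point needing care is this uniqueness of bridges, which justifies calling the $z_i$ \emph{the} bridges of $P+u'$. If only cancellativity of $U$ is available, I would instead read ``$\SG$-walk'' as a walk together with a chosen family of bridges in $Z\cap(R\sm U)$. Then the statement holds verbatim with the same chosen bridges. Everything else is routine.
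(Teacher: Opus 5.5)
Your proof is correct and is essentially the paper's own argument. The paper just notes that $u_{i-1}+z_i=u_i$ implies $(u_{i-1}+u')+z_i=u_i+u'$. Your extra remark that the $z_i$ are the unique bridges of $P+u'$ (because $U$ is almost subtractive) is a harmless refinement that the paper leaves implicit.
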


\begin{proof}
  Clear, since $u_{i-1} + z_i = u_i$ implies
  $u_{i-1} + u' + z_i = u_i + u'$.
\end{proof}

\begin{defn}\label{def:a9.2}
Given $u_0 \leq_R u$ in $U$, we define $h_\SG(u_0,u)$ to be  the maximum of the lengths of all $\SG$-walks from~ $u_0$ to $u$. When $\SG$ is fixed, we  write
 $h_\SG(u_0,u) = h(u_0,u)$.
\end{defn}
This maximum is finite by Theorem~\ref{thm:a8.11}, since there are only finitely many such $\SG$-walks.
% Now a key  notion for the whole section.
\begin{defn}\label{def:a9.3}
The \textbf{stability region}  of $\SG$ in $U$, denoted by $\stab_S(R,U)$, is the set of all $u \in U$ such that  $u+s \in U$ for every $s \in \SG$.

\end{defn}
\begin{thm}\label{thm:a9.3} The stability region
$\stab_S(R,U)$ is a  nonempty upset in the cancellative monoid $U $, i.e.,  if $ u\in \stab_S(R,U)$, then $ u  + u' \in \stab_S(R,U)$ for all
$ u'
 \in U$.
\end{thm}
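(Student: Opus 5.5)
The upset property is immediate from the definitions. Let $u \in \stab_S(R,U)$ and $u' \in U$. For each $s \in \SG$ we know $u + s \in U$. Since $U$ is a submonoid, closure under addition gives
$$(u+u') + s = (u+s) + u' \in U .$$
Hence $u + u' \in \stab_S(R,U)$. This is the same translation argument used in Proposition~\ref{prop:a9.1} and Proposition~\ref{prop:a4.2}(a). Cancellativity of $U$ plays no role here; it only makes ``upset in the cancellative monoid $U$'' the natural phrasing.

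The substantive point is nonemptiness, which I will obtain by using the fact that $\SG$ is a finite subset of $\Gap(R,U)$. Write $\SG = \{s_1,\dots,s_p\}$. Because each $s_i$ is a gap element, it is in particular bridging (Definition~\ref{def:a4.1}), so there exist $u_i, \tlu_i \in U$ with $u_i + s_i = \tlu_i$. Set
$$u := u_1 + \cdots + u_p \in U .$$
For a fixed $j$, we have
$$u + s_j = (u_j + s_j) + \sum_{i \neq j} u_i = \tlu_j + \sum_{i\neq j} u_i \in U .$$
So $u \in \stab_S(R,U)$, and the region is nonempty. This is essentially the computation in \eqref{eq:a8.9}.

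The only point requiring care is nonemptiness. It depends on two things: the standing assumptions that $\SG \neq \emptyset$ is finite (so the sum defining $u$ exists) and that $\SG \subseteq \Gap(R,U)$ (so each $s_i$ is bridging and the $u_i$ exist). If $\SG$ were empty, the condition would be vacuous and $\stab_S(R,U) = U$, which is still nonempty since $0 \in U$. So the statement holds in either case, and I expect no real obstacle beyond recording these hypotheses.
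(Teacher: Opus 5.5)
Your proof is correct and follows the paper's argument: nonemptiness via the element $u_1+\cdots+u_p$ built from bridges $u_i+s_i=\tilde{u}_i$, and the upset property by translating by $u'\in U$. You separate the two parts more cleanly than the paper does. The paper's single computation only verifies the upset condition for translates of that particular element, whereas you prove $(u+u')+s=(u+s)+u'\in U$ for an arbitrary $u\in\stab_S(R,U)$.
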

\begin{proof}
Choose
$u_i, u'_i \in U$ with
$u_i + s_i = \tlu_i$ for $1 \leq i \leq p$. Then,
$ u = u_1 + \cdots+ u_p + u' \in U$ for any
$u' \in U \cup \00$,
and
$$ u' + u +s_i = u' +\tlu_i + \sum_{j \neq i} u_j \in U.$$
This implies  that $ u\in \stab_S(R,U)$  (take $u' =0$),  and hence that  $ u' + u \in \stab_S(R,U)$.
\end{proof}

\begin{defn}\label{def:a9.4}
We call an $\SG$-walk $P$ \textbf{maximal}, if there is no $\SG$-walk $P' \neq P$ that refines~ $P$.
\end{defn}
This notion of maximality  strongly depends on the choice of $\SG$. Often,  an $\SG$-walk can be refined further by enlarging  $\SG$.
\begin{thm}\label{thm:a9.5}
 If $     \xymatrix@R=0.9em@C=1.7em{
  P: u_0 \ar@{->}[r]^{\quad  z_1} & u_1 \ar@{->}[r]^{z_2} & \cdots \ar@{->}[r]^{z_k\quad } & u_k = u
}
$ is a maximal  $\SG$-walk  from $u_0$ to $u$ in $(R,U)$, then
every
 $ \xymatrix@R=0.9em@C=1.1em{
  u_{i-1} \ar@{->}[r]^{z_i} & u_i
}$ is a
maximal $\SG$-walk from $u_{i-1}$ to $u_i$,
and
\begin{equation*}\label{eq:a9.1}
  h(u_0, u ) \ds \geq h(u_0, u_1 ) + h(u_1, u_2 ) + \cdots + h(u_{k-1}, u_k ).
\end{equation*}
\end{thm}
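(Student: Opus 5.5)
The plan is to prove both assertions by \emph{splicing} walks, i.e.\ concatenating an $\SG$-walk on one segment with the unchanged pieces of $P$. This uses only Definitions \ref{def:a9.2} and \ref{def:a9.4}. The observation doing all the work is that a concatenation of $\SG$-walks $u_0\to\cdots\to u_{i-1}$, $u_{i-1}\to\cdots\to u_i$, $u_i\to\cdots\to u$ is again an $\SG$-walk from $u_0$ to $u$. Each step of the concatenation has a bridge lying in $Z$ (a sum of elements of $\SG$, in particular a gap). The nodes still form an increasing sequence in $U$, and the bridges are the unique differences guaranteed by the \as property.

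For the first assertion, fix $i$ and suppose the one-step walk $u_{i-1}\xrightarrow{z_i}u_i$ is not maximal. Then there is an $\SG$-walk $Q\neq(u_{i-1}\to u_i)$ from $u_{i-1}$ to $u_i$ refining it; $Q$ has length at least $2$ and some intermediate node $v\notin\{u_{i-1},u_i\}$. Replace the $i$-th step of $P$ by $Q$ to obtain an $\SG$-walk $P'$ from $u_0$ to $u$. Every node of $P$ is a node of $P'$, so $P'$ refines $P$, and $P'\neq P$ because it is strictly longer. This contradicts the maximality of $P$. The only care needed is to make sure the definition of refinement (every node of $P$ is a node of $P'$, same endpoints) is satisfied, and that ``$\neq P$'' is witnessed by the length. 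I do not expect either point to be an obstacle.

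For the inequality, I choose for each $i$ an $\SG$-walk $Q_i$ from $u_{i-1}$ to $u_i$ of maximal length $h(u_{i-1},u_i)$. Such a walk exists because this maximum is finite and attained, as noted after Definition \ref{def:a9.2} via Theorem \ref{thm:a8.9}. Splicing $Q_1,\dots,Q_k$ end to end gives an $\SG$-walk from $u_0$ to $u$ of length $\sum_i h(u_{i-1},u_i)$. Hence $h(u_0,u)\ge\sum_{i=1}^k h(u_{i-1},u_i)$ by the definition of $h(u_0,u)$ as a maximum. Maximality of $P$ is in fact not needed for this inequality, only that $P$ is an $\SG$-walk.

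The main subtlety, rather than an obstacle, is a consistency point. By the first part, each step of $P$ is itself maximal, yet $h(u_{i-1},u_i)$ may exceed $1$ if a longer non-refining walk existed. This does not happen: any $\SG$-walk from $u_{i-1}$ to $u_i$ automatically refines the one-step walk, since it contains both endpoints. The first part therefore forces $h(u_{i-1},u_i)=1$, and the inequality becomes $h(u_0,u)\ge k$, which is consistent. I will note this remark but keep the general splicing argument as the proof.
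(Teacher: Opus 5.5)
Your proof is correct, but it takes a different route from the paper. The paper argues that maximality of $P$ forces every bridge $z_i$ to lie in $S$ itself rather than merely in $Z$. From this it concludes that each one-step piece $u_{i-1}\to u_i$ has a single bridge from $S$ and is ``trivially maximal''; the inequality is not argued at all. You instead obtain maximality of each step by a splicing contradiction, and the inequality by concatenating walks of maximal length, using nothing beyond the definitions of refinement and of $h$.

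This makes your argument more robust. The paper's intermediate claim needs hypotheses that are not in the statement. Partial sums of a bridge can be realized as nodes only if they land in $U$, as happens when the nodes lie in $\stab_S(R,U)$. Also, a single bridge in $S$ gives maximality only when $S$ is reduced. For instance, take $R=\mathbb{N}_0$, $U=\langle 3,5\rangle$ and $S=\{1\}$: the walk $3\xrightarrow{2}5$ is a maximal $S$-walk because $4\notin U$, yet its bridge $2\notin S$.

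Your closing remark also goes beyond the paper. Every $S$-walk from $u_{i-1}$ to $u_i$ refines the one-step walk, so the first assertion gives $h(u_{i-1},u_i)=1$. The inequality therefore reduces to $h(u_0,u)\ge k$, which is witnessed by $P$ itself. Your splicing argument in fact proves the inequality for any $S$-walk, maximal or not.

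One small correction: an element of $Z$ need not be a gap, since it may lie in $U$ (in the example above, $3\in Z\cap U$). The spliced walk is a gap walk because its bridges are bridges of the gap walks $Q_i$, not because they lie in $Z$.
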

\begin{proof}
Every bridge $z_i$ in $P$ is an element of $\SG$ rather than merely of $Z$. Hence,
 $ \xymatrix@R=0.9em@C=1.1em{
  u_{i-1} \ar@{->}[r]^{z_i} & u_i
}$  has the single  bridge $z_i \in \SG$, and  is therefore trivially maximal.
\end{proof}

\begin{schol}\label{sch:a9.3}
Let $u_0,u \in U$ such that $u_0 < _R u$.
\begin{enumerate} \ealph
  \item $h_\SG(u_0,u) = 0$ if and only if the unique $x \in R$ such that  $u_0 + x = u $ is not in $Z$.
  \item Let $h_\SG(u_0,u) > 0$. Every maximal $\SG$-walks from $u_0$ to has a length determined by a presentation of the gap
      $z \in Z$ satisfying $u_0 + z = u$, as follows.  We successively add   elements $s_i \in \SG$ to $u_0$ occurring in the presentation of $z$ until we reach $u$. This can be added in any order. Every  such construction  gives us a maximal $\SG$-walk  from $u_0$ to $u$.

\end{enumerate}
\end{schol}

For elements $u$ in the submonoid $U_\SG$ of $U$ defined in \eqref{eq:a8.0}, the gap height
$h(u)$ is related to the numbers $h_\SG(u_0, u)$ as follows.
\begin{prop}\label{prop:a9.7} Let
$u \in U_\SG$. Then
$ h(u) \geq \sup \{ h_\SG(u_0, u) \ds | u_0 \in U, u_0 < u \}.$
\end{prop}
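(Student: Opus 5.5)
The approach is direct: for each $u_0 \in U$ with $u_0 < u$, every $\SG$-walk from $u_0$ to $u$ is already a gap walk ending at $u$, so its length is at most $h(u)$. Taking the maximum over such walks gives $h_\SG(u_0,u)\leq h(u)$, and then the supremum over $u_0$ gives the claim.

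The steps, in order, are these. First, fix $u_0\in U$ with $u_0<u$. If $h_\SG(u_0,u)=0$ the inequality is trivial, since $h(u)\geq 0$. Otherwise, by Scholium \ref{sch:a9.3}(b) there is an $\SG$-walk
$$u_0 \to u_1\to\cdots\to u_k=u$$
with $k=h_\SG(u_0,u)$; the maximum in Definition \ref{def:a9.2} is attained because there are only finitely many such walks. Second, by Definition \ref{def:a8.11} every bridge $z_i$ of this walk lies in $Z=Z(\SG)$ and is a gap, so $z_i\in R\sm U$ and all nodes $u_i\in U$. Hence the walk is a gap walk in $(R,U)$ ending at $u$ in the sense of \eqref{eq:a8.1}. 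Third, by the definition of the gap height as the supremum of lengths of gap walks ending at $u$, and with finiteness guaranteed by the standing assumption $U=\FGH(R,U)$, we get $k\leq h(u)$. Taking the supremum over $u_0$ finishes the proof.

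The main point needing care is the second step: an element of $Z$ must lie in $R\sm U$ for the walk to count as a gap walk. A sum of elements of $\SG$ can in principle lie in $U$ (Remark \ref{rem:a8.2}(ii)). However, the definition of an $\SG$-walk explicitly requires it to be a \emph{gap} walk, so its bridges lie in $R\sm U$ by definition, and no issue arises. If one instead reads $\SG$-walks as merely walks with bridges in $Z$, I would first refine such a walk to one whose bridges are single elements of $\SG\subseteq\Gap(R,U)\subseteq R\sm U$, as in the proof of Theorem \ref{thm:a8.9}. This refinement only increases the length, so the bound $k\leq h(u)$ still follows.
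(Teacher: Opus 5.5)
Your main argument is correct and is essentially the paper's one-line proof: by Definition~\ref{def:a8.11} every $\SG$-walk from $u_0$ to $u$ is a gap walk ending at $u$, so its length is at most $h(u)$. You do not even need the maximum to be attained; it suffices to bound every such walk.

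Drop the fallback paragraph, though, because its claim is false. Refining a bridge $z_i\in Z(\SG)$ into single steps from $\SG$ requires the intermediate partial sums $u_{i-1}+s_{j_1}+\cdots$ to lie in $U$, and they need not. In fact, under that looser reading the statement itself fails. In Example~\ref{exmp:a9.20}, take $u_0=0$ and $u=d\in U_\SG$: the one-step walk $0\to d$ has bridge $d\in Z(\SG)$, but $d$ is not terminal, so $h(d)=0$.
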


\begin{proof}
  Every gap walk in $(R_\SG, U_\SG)$ ending at $u$ starts at some
  $u_0 < u$.
\end{proof}
%
%\begin{defn}\label{def:a9.8}
%We call a finite set $\SG \subset \Gap(R,U)$ \textbf{minimal}, if no element of $\SG$ is a sum of other elements of $\SG$, i.e., if $\SG = \{s_1, \dots, s_p \}$ and all $s_i$ are different,
%then
%$s_i \neq  \sum_ {j \in J} s_j$ for $1 \leq i \leq p$, $J \subset \{ 1, \dots, p\}$, $i \notin J$.
%\end{defn}
%
%In the following \emph{ unless otherwise is specified we  assume that $\SG$ is minimal}, but we do not exclude the case   that $\sum_{i \in J} s_i =
%\sum_{i \in K} s_i$ for
%some $J, K \subset \{ 1, \dots, p\}$ with $J \cap K = \emptyset$, $|J| > 1$, $|K| > 1$.
%Given  a finite subset $\SG'$ of $\Gap(R,U)$, a minimal set $\SG \subset \SG'$ can be obtained by omitting  elements of $\SG'$.

\begin{defn}\label{def:a9.10}
Let $u \in U$.
\begin{enumerate}\ealph
  \item  The  \textbf{$\SG$-star} of $u$ is the subset
      $$ \star_\SG(u) := (u +\SG) \cap U$$
      of $U$. The elements  of $ \star_\SG(u)$ give  the $\SG$-walks  of length $1$ that
      start  at $u$.
  \item  We say that  $u$ is \textbf{lonely} for $\SG$ (or $\SG$-\textbf{lonely} for short), if $\star_\SG(u) = \emptyset$, and denote the set of such $u \in U$ by $\Lo_S(R,U)$.
\end{enumerate}
\end{defn}

We examine the family of $\SG$-stars $\star_\SG(u) \subseteq U$, with  $u \in U$, and its impact on the stability regions $\stab (\SG',U)$, where $\emptyset \neq \SG' \subseteq \SG$.

%The stability regions of the nonempty subsets $T$ of $S$ are governed by the $S$-stars $\star_S(u)$, $u \in U$ as follows.
%\begin{remark}\label{rem:a9.10}
%Clearly
%$\star_T(u) \subset \star_S(u) \cap T$, and so
%$$ \star_T(u) = T \dss\Leftrightarrow \star_S(u) \supset T.$$
%Thu $\stab(T,U)$ is the intersection of all $S$-stars $\star_S(u)$, $u \in U$, containing $T$.
%\end{remark}

\begin{lem}\label{lem:a9.10}
$\star_\SG(u_1) \cup \star_\SG (u_2) \subseteq \star_\SG (u_1+ u_2)$ for any $u_1, u_2 \in U$.
%  \begin{enumerate}\ealph
 %   \item  $\star_\SG(u_1) \subset \star_\SG (u_1+ u_2)$,
 %   \item $\star_\SG(u_1) + \star_\SG  (u_2) \subset \star_\SG (u_1+ u_2)$,
    %\item

 % \end{enumerate}
\end{lem}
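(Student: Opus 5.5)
The proof is a direct check from the definitions, using only that $U$ is closed under addition. I will read the inclusion, as the comment after Definition~\ref{def:a9.10} suggests, as a statement about the \emph{labels} $s\in\SG$ of the $\SG$-walks of length $1$. Read literally, $\star_\SG(u_1)=(u_1+\SG)\cap U$ consists of elements $u_1+s$, while $\star_\SG(u_1+u_2)$ consists of elements $u_1+u_2+s'$, so the two sets cannot be compared element by element. An element $u+s\in\star_\SG(u)$ is the same thing as the walk $u\xrightarrow{\,s\,}u+s$. I therefore identify $\star_\SG(u)$ with the set of $s\in\SG$ such that $u+s\in U$, and identify $u_1+s\in\star_\SG(u_1)$ with $u_1+u_2+s\in\star_\SG(u_1+u_2)$; this is translation by $u_2$ in the sense of Proposition~\ref{prop:a9.1}. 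So the claim to prove is: every $s\in\SG$ with $u_1+s\in U$, or with $u_2+s\in U$, also satisfies $u_1+u_2+s\in U$.

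The argument has three steps. First, take $s\in\SG$ with $u_1+s=\tlu_1\in U$. Second, add $u_2$ and use commutativity and associativity of $+$ to get $(u_1+u_2)+s=\tlu_1+u_2$. The right-hand side lies in $U$ because $U$ is a submonoid, so $u_1+u_2+s\in\star_\SG(u_1+u_2)$. Equivalently, the length-$1$ $\SG$-walk $u_1\xrightarrow{\,s\,}\tlu_1$ translates to the $\SG$-walk $u_1+u_2\xrightarrow{\,s\,}\tlu_1+u_2$, exactly as in Proposition~\ref{prop:a9.1} with $u'=u_2$. Third, the case $s\in\star_\SG(u_2)$ follows by exchanging the roles of $u_1$ and $u_2$, since $u_1+u_2=u_2+u_1$. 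Taking the union of the two cases gives the inclusion.

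I expect no real obstacle; the only delicate point is bookkeeping. One must make clear that the inclusion is between the sets of gap labels, or equivalently between stars after translation by $u_2$ (respectively $u_1$). Under that reading the identification is canonical. The map $s\mapsto u+s$ is injective on elements landing in $U$: if $u+s=u+s'\in U$ for $s,s'\in R$, then $s=s'$ by the \as property of $U$, taking $u\le_R u+s$ in Definition~\ref{defn:1.1}. Hence nothing is lost in passing between elements of the star and their labels. As a by-product, $u\mapsto\star_\SG(u)$ is monotone along $\le_U$, which is consistent with Theorem~\ref{thm:a9.3}.
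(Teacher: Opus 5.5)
Your proof is correct and uses the same argument as the paper: add $u_2$ to $u_1+s=\tlu_1$ to get $(u_1+u_2)+s=\tlu_1+u_2\in U$, then conclude by symmetry. Your reinterpretation is also exactly what the paper's proof establishes, namely $\star_S(u_1)+u_2\subseteq\star_S(u_1+u_2)$ and $\star_S(u_2)+u_1\subseteq\star_S(u_1+u_2)$, and this reading is necessary because the literal element-wise inclusion fails in general (e.g.\ $R=\NN$, $U=\{0,2,3,\dots\}$, $S=\{1\}$, $u_1=u_2=2$ give $\star_S(2)=\{3\}\not\subseteq\{5\}=\star_S(4)$).
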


\begin{proof}
   Since $u_1 + s = \tlu_1 \in U$, we have  $u_1 + u_2 + s = \tlu_1 + u_2 \in U $
   whence $\star_S(u_1) + u_2 \subseteq \star_S(u_1 + u_2)$. By symmetry,  we obtain that
   $\star_S(u_2) + u_1 \subseteq \star_S(u_1 + u_2)$.
   %
%   . This proves (a) and (c).
%  If $u_1 + s_1 = \tlu_1 \in U$ and $u_2 + s_2 = \tlu_2 \in U$, then
%  $(u_1 +u_2) + (s_1 + s_2) = \tlu_1 + \tlu_2 \in U$. This proves (b).
\end{proof}

\begin{thm}\label{thm:a9.11}
$\stab_S(R, U)$ is a subsemigroup of $(U,+)$, consisting of cosets of $U$ in $R$.
%The set of $S$-stars
%$\{ \star_\SG(u) \ds | u \in U \}$ is an upward directed family of subsemigroups of $(U \sm \00,+)$, and gives an upward directed family
%$\{ \star_S(u) \cup \00 \ds | u \in U\} $
%of submonoids of $(U,+)$. It has the unique maximal element $U$.
\end{thm}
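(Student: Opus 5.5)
We will prove the two assertions of Theorem~\ref{thm:a9.11} separately: first that $\stab_S(R,U)$ is closed under addition, then that it is a union of cosets $u+U$. Both follow directly from the definition of the stability region (Definition~\ref{def:a9.3}) and the fact that $U$ is a submonoid. The second part is already essentially contained in Theorem~\ref{thm:a9.3}.

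For closure under addition, let $u_1,u_2\in\stab_S(R,U)$ and fix $s\in\SG$. By definition $u_1+s\in U$, and $u_2\in U$, so
$$(u_1+u_2)+s=(u_1+s)+u_2\in U,$$
since $U$ is closed under addition. As $s$ was arbitrary, $u_1+u_2\in\stab_S(R,U)$. Nonemptiness is supplied by Theorem~\ref{thm:a9.3}: the element $u_1+\cdots+u_p$ constructed there, from bridges $u_i+s_i=\tlu_i$, lies in $\stab_S(R,U)$. This is where the standing assumption $\SG\subseteq\Gap(R,U)$ is used. Hence $\stab_S(R,U)$ is a subsemigroup of $(U,+)$. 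Note that $0\in\stab_S(R,U)$ only if $\SG\subseteq U$, which is excluded since the elements of $\SG$ are gaps; so in general it is only a semigroup, not a monoid.

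For the coset statement, take $u\in\stab_S(R,U)$ and $u'\in U$. For each $s\in\SG$,
$$(u+u')+s=(u+s)+u'\in U.$$
Thus $u+U\subseteq\stab_S(R,U)$, and since $u=u+0\in u+U$, we get
$$\stab_S(R,U)=\bigcup_{u\in\stab_S(R,U)}(u+U).$$
Equivalently, the stability region is an upset in $U$ with respect to $\leq_U$, exactly as in Theorem~\ref{thm:a9.3}.

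There is no substantive obstacle here. The only points needing care are interpretive: "cosets of $U$ in $R$" must be read as the sets $u+U$ with $u$ in the region, and $0$ is excluded from the region whenever $\SG\neq\emptyset$ consists of gap elements.
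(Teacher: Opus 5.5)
Your proof is correct and is essentially the paper's argument. The identity $(u+u')+s=(u+s)+u'$ you use is exactly Lemma~\ref{lem:a9.10}; the paper uses it to get $\stab_S(R,U)+U=\stab_S(R,U)$ and deduces closure under addition from $\stab_S(R,U)\subseteq U$, whereas you check closure directly with the same computation (your remarks on nonemptiness via Theorem~\ref{thm:a9.3} and on $0\notin\stab_S(R,U)$ are accurate).
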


\begin{proof}
Lemma \ref{lem:a9.10} implies that
$\stab_S(R,U) + U \subseteq \stab_S(R,U)$. Since $0 \in U$, we have
  $\stab_S(R,U) + U =  \stab_S(R,U)$, and hence
  $\stab_S(R,U) + \stab_S(R,U) \subseteq \stab_S(R,U) + U = \stab_S(R,U)$.
  %This is an immediate consequence of Lemma \ref{lem:a9.10}.
\end{proof}
These cosets are disjoint from the set $\Lo_S(R,U)$ of $S$-lonely elements of $U$, as observed earlier.

\begin{remark}\label{rem:a9.12} Let $S_1$ and $S_2$ be  nonempty subsets of $S$.
\begin{enumerate}\ealph
  \item $\stab_{S_1}(R,U) \cap  \stab_{S_2}(R,U) = \stab_{S_1 \cup S_2}(R,U) $.

  \item Trivially,
  $ u + (S_1 \cup S_2) = (u +S_1) \cup (u + S_2)$, and thus
  $$ u + (S_1 \cup S_2) \subseteq U \dss{\Leftrightarrow} (u + S_1) \subseteq U \text{ and }
  (u + S_2) \subseteq U.$$
  In particular,    $S_1 \subseteq S_2 \ds \Rightarrow \stab_{S_1}(R,U) \supseteq   \stab_{S_2}(R,U) $.
\item The maximal elements of  the family
$\{ \stab_{S'}(R,U) \mid \emptyset \neq S' \subseteq S,\} $   are among the submonoids
$ \stab_{s}(R,U) := \stab_{\{ s \}}(R,U) = \{ u\in U \mid u+s \in U\}$
with $s$ range over the set $S$.

\item
$\stab_{S}(R,U) =  \bigcap _{s\in S}\stab_{\{s\}}(R,U)$ .

\end{enumerate}

\end{remark}
We establish a filtration of the subsemigroup $\stab_{S}(R,U)$  of $(U,+)$, partially ordered by $\leq_R$. %First notice the following.

\begin{defn}
We say that a finite subset $S$ of $\Gap(R,U)$ is \textbf{reduced}, if no element $s \in S$ is a sum of elements of  $S \sm \{ s \}$. In other words, every $s \in S$ is an atom of $(Z(S),+)$.
\end{defn}
Note that from any finite subset of $\Gap(R,U)$,  we can obtain a reduced set $S'$ with
$Z(S') = Z(S)$ by  successively omitting ``superfluous'' elements.
%  until we have got a reduced set.

\begin{remark}
For any $u \in \stab_S(R,U)$, there exists a minimal element $u_0$ in $\stab_S(R,U)$ such that
$u_0 \leq u$, as follows from Theorem \ref{thm:a8.5}.
\end{remark}

We now define the filtration of $\stab_S(R,U)$ as follows.

\begin{defn} Assume that $S$ is reduced.
\begin{enumerate} \ealph
  \item $\stab_S(R,U)_0$ is the set of minimal elements of $\stab_S(R,U)$. For
  $m \in \NN$, we define inductively
  $\stab_S(R,U) _{m+1} = \stab_S(R,U)_m + S$. Since
  $u' + S \subset U$ for every
  $u' \in \stab_S(R,U)$, it follows that
  $\stab_S(R,U)_m = \stab_S(R,U)_0 + Z_m(S) \subset \stab_S(R,U)$.

  \item We say that a maximal $S$-walk
  $     \xymatrix@R=0.9em@C=1.7em{
  P:  u_0 \ar@{->}[r]^{\quad  s_1} & u_1 \ar@{->}[r]^{s_2} & \cdots \ar@{->}[r]^{s_m\quad } & u_m
}
$ in $\stab_S(R,U)$ is \textbf{rooted}, if
$u_0 \in \stab_S(R,U)$, and we then call $u_0$ the \textbf{root} of $P$.

\end{enumerate}
\end{defn}

The following is now obvious.

\begin{thm} If $S$ is reduced, then a rooted maximal
  $S$-walk
  $     \xymatrix@R=0.9em@C=1.7em{
  P: u_0 \ar@{->}[r]^{\quad  s_1} & u_1 \ar@{->}[r]^{s_2} &  \cdots \ar@{->}[r]^{s_r\quad } & u_r
}
$ of length $r$ stays in $\stab_S(R,U)_m$ if and only if
$r \leq m$.
\end{thm}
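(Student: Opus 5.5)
The plan is to prove the statement by induction on the length $r$, using only the definition of the filtration. Recall that $\stab_S(R,U)_0$ is the set of minimal elements of $\stab_S(R,U)$ and that $\stab_S(R,U)_{m+1} = \stab_S(R,U)_m + S$. I read ``stays in $\stab_S(R,U)_m$'' as: every node $u_0,\dots,u_r$ of $P$ lies in $\bigcup_{j\le m}\stab_S(R,U)_j$. I also read ``rooted'' as: the root $u_0$ lies in $\stab_S(R,U)_0$. The node $u_i$ is reached after $i$ steps, so $u_i = u_0 + s_1 + \cdots + s_i \in \stab_S(R,U)_0 + Z_i(S) = \stab_S(R,U)_i$. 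Here $Z_i(S)$ denotes sums of exactly $i$ elements of $S$, as in the description $\stab_S(R,U)_m = \stab_S(R,U)_0 + Z_m(S)$. Because every bridge $s_i$ lies in $S$, the walk is indeed maximal (Theorem~\ref{thm:a9.5}).

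The ``if'' direction is then immediate. If $r \le m$, then each node $u_i$ lies in $\stab_S(R,U)_i$ with $i \le r \le m$. The nodes also stay inside $U$, since $\stab_S(R,U)$ is an upset that absorbs addition of elements of $S$ (Theorem~\ref{thm:a9.3} and the definition of the stability region).

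For the converse, I need that the last node $u_r$ does not lie in $\stab_S(R,U)_j$ for any $j<r$. This needs a length invariant, and I would define it through the gap height (or the weight, when it exists). Suppose $u_r = v_0 + z$ with $v_0 \in \stab_S(R,U)_0$ and $z \in Z_j(S)$. Then $u_0 + (s_1+\cdots+s_r) = v_0 + z$. I would compare $u_0$ with $v_0$ in two steps. First, both are minimal in $\stab_S(R,U)$ and both lie below $u_r$; I would use Theorem~\ref{thm:a4.8} and Lemma~\ref{lem:a4.7}, together with the cancellativity of $U$, to identify them whenever the bridge sums agree. Second, I would compare lengths using the maximal $S$-walk heights $h_S(u_0,u_r)$ from Scholium~\ref{sch:a9.3}(b). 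Every maximal $S$-walk from $u_0$ to $u_r$ has length equal to the length of a presentation of the unique difference $z$. Reducedness of $S$ ensures that each $s_i$ is an atom of $Z(S)$, so no step can be split further.

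The main obstacle is this last point. Presentations in $Z(S)$ need not have a unique length, and different minimal roots might reach the same node with different numbers of steps. If I cannot prove that lengths are well defined in general, I would first try to use the standing finite gap height hypothesis. Then $h(u_i)$ increases strictly along the walk by Theorem~\ref{thm:a8.5}, and I would try to show that $\stab_S(R,U)_j$ consists of elements whose height above the minimal stratum is at most $j$. This would force $r \le m$. If even that fails, I would state the theorem as the paper evidently intends, namely with ``stays in'' meaning that the walk is realized inside the layers $\stab_S(R,U)_0,\dots,\stab_S(R,U)_m$ along its own presentation. The equivalence then follows directly from the computation $u_i \in \stab_S(R,U)_i$, which is consistent with the paper's remark that the result is ``now obvious''.
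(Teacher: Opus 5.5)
Your ``if'' direction is right, and the computation $u_i=u_0+s_1+\cdots+s_i\in\stab_S(R,U)_0+Z_i(S)$ is all that the paper's ``now obvious'' contains. Your two readings are also the right ones: the root lies in $\stab_S(R,U)_0$, and the layers are taken cumulatively. With exact layers $\stab_S(R,U)_{m+1}=\stab_S(R,U)_m+S$, even the ``if'' direction fails, since in general $u_0\notin\stab_S(R,U)_m$ for $m\ge 1$.

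The genuine gap is the ``only if'' direction. The obstruction you flagged is not a technicality: it makes that direction false as stated. Take $R=(\NN,+)$, $U=\{0\}\cup\{n\in\NN\mid n\ge d\}$ with $d\ge 4$, and $S=\{2,3\}$. All the standing hypotheses hold:
\begin{itemize}
\item $R$ is cancellative, so $U$ is a.s.
\item $2$ and $3$ are gaps, and $S$ is reduced.
\item Every gap height is finite.
\item $\stab_S(R,U)=\{n\mid n\ge d\}$ and $\stab_S(R,U)_0=\{d\}$.
\end{itemize}
The walk $d\to d+2\to d+4\to d+6$ with bridges $2,2,2$ is a rooted maximal $S$-walk of length $3$. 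Yet $d+4=d+2+2$ and $d+6=d+3+3$, so all its nodes lie in $\stab_S(R,U)_0\cup\stab_S(R,U)_1\cup\stab_S(R,U)_2$.

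Consequently none of your proposed repairs can work.
\begin{itemize}
\item Identifying roots via Theorem~\ref{thm:a4.8} and Lemma~\ref{lem:a4.7} does not help. In the example the root is unique and the bridge sums agree ($2+2+2=3+3$), but the numbers of summands differ.
\item The gap-height route fails because gap walks may use arbitrary gaps, not only elements of $S$. In the example $h(d+k)=k$, so $d+6\in\stab_S(R,U)_2$ has height $6$. Hence no bound of the form ``height $\le j$ on layer $j$'' can hold.
\item A minor point: the strict increase $h(u_{i-1})<h(u_i)$ comes from extending a gap walk by one step. It does not follow from Theorem~\ref{thm:a8.5}, which needs both summands in $U$.
\end{itemize}

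What is missing is a length invariant, and it has to be added as a hypothesis. For instance, assume an additive $\lambda\colon R\to\NN$ with $\lambda(s)=1$ for all $s\in S$ and $\lambda$ constant, say equal to $c$, on $\stab_S(R,U)_0$. Then every element of $\stab_S(R,U)_0+Z_j(S)$ (exactly $j$ summands) has $\lambda=c+j$, whereas $\lambda(u_r)=c+r$, and ``only if'' follows. In the example no such $\lambda$ exists, since $3\lambda(2)=2\lambda(3)$. Without such a hypothesis, only your final, tautological reading is provable: node $u_i$ is counted in layer $i$ along the walk's own presentation.
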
%
%$$ --------------------------$$
%\begin{prop}\label{prop{a9.14}}
%For any $u \in \stab_{S}(R,U)$ which in not minimal in $\stab_{S}(R,U)$ with respect to
%$\leq_R$, there is a \textbf{maximal $S$-walk} $     \xymatrix@R=0.9em@C=1.7em{
%  P: \ u_0 \ar@{->}[r]^{\quad  s_1} & u_1 \ar@{->}[r]^{s_2} & u_2 \ar@{->}[r] & \cdots \ar@{->}[r]^{s_r\quad } & u_r = u
%}
%$     in $\stab_{S}(R,U)$, i.e., $P$ is an $S$-walk in $U$ (Definition \ref{def:a8.11})
%ending at $u$ with all bridges in $S$ and $u_0$ minimal in $\stab_{S}(R,U)$.
%\end{prop}
%
%\begin{proof}
%  Clear by Corollary \ref{cor:a8.12}.
%\end{proof}
%
%\begin{defn}\label{def:a9.15} Let $\stab_{S}(R,U)_0$ to be the set of minimal elements of $\stab_{S}(R,U)$. For $m \in \NN$ we define inductively the set
%$\stab_{S}(R,U)_{m+1} :=  \stab_{S}(R,U)_m + S$.
%\end{defn}
%Since $u' + S \subset U$ for every
%$u' \in \stab_{S}(R,U)$, it is evident that
%$\stab_{S}(R,U)_m \subset \stab_{S}(R,U)$ for any $n \in \NN$, and
%$$\stab_{S}(R,U)_m = \stab_{S}(R,U)_0 + Z_m(S),$$
%where $Z_m(S) = \{ n_1 s_1 + \cdots + n_p s_p \mid \sum_{1}^{p} n_i \leq m\} $.
%The following is then obvious
%\begin{prop}\label{prop{a9.16}}
%A maximal $S$-walk starting at $\stab_{S}(R,U)_0$ stays in $\stab_{S}(R,U)_m$ iff it has length
%$\leq m$.
%\end{prop}
%
%$$ -------------------- $$
We may also consider $S$-walks in $\stab_{S}(R,U)$ that  do not start at a minimal element.

\begin{defn}\label{def:a9.17} Let
$u_1, u_2 \in \stab_{S}(R,U)$ with $u_1 <_R u_2 $, and let their difference be
$z$, % (Definition ~\ref{defn:1.1}),
i.e., $u_2 = u_1 + z$ for a unique $z = \sum_{1}^{p} n_i s_i \in Z(S)$. We call $z$ the \textbf{stable distance} from $u_1$ to $u_2$. We  extend this  to the case $u_1 \leq u_2$ by setting $d^+(u_1, u_2) =0 $, if $u_1 = u_2$.
\end{defn}

\begin{remark}\label{rem:a9.18} If $u_1 \leq_R u_2 \leq _R u_3$ and $u_1 \in \stab_{S}(R,U) $, then also $u_2$ and $u_3$ belong to $\stab_{S}(R,U)$, and
$ d^+ (u_1 ,u_2) + d^+(u_2,u_3) = d^+(u_1, u_3)$.
\end{remark}

Stable distances can be defined more generally for elements of $U$ that are not necessarily in $\stab_{S}(R,U)$.
\begin{remark}\label{def:a9.19} Let
$u_1, u_2 \in U$ such that $u_1 <_R u_2$,  and suppose that $u_2 = u_1 + z $ for a (unique) $z \in R \sm U$.
Given   $w \in \stab_{S}(R,U)$, both  $u_1 +w$ and $u_2 + w$ belong to  $\stab_{S}(R,U)$. Hence,
$u_2 + w = (u_1 + w) + z$,  so $z = \sum_i n_i s_i \in Z(S) $ and
$d^+(u_1  + w, u_2 + w ) = z$. We define
$d^+(u_1 , u_2 ) = d^+(u_1  + w, u_2 + w )$.
\end{remark}
\noindent
\emph{Comment.} This is an element of $Z(S)$, independent of the choice of $w$. There may be no maximal $S$-walk from $u_1$ to $u_2$. However, maximal $S$-walks from $u_1+w$ to $u_2 + w$
and from $u_1+w'$ to $u_2 + w'$ with $w,w' \in \stab_{S}(R,U)$ can both be ``lifted'' to the same walk from  $u_1+w+ w'$ to $u_2 + w + w'$ by adding $w$ and $w'$ respectively.
We  discuss a natural problem concerning sums of elements of $\SG$ in $U$,
which we label MSU (``minimal $\SG$-sums in~ $U$'').
\pSkip
\emph{Problem MSU}. Which elements $u \in U$ can be expressed as sums of elements of $\SG$ (not necessarily distinct) such that every proper partial sum lies in $R \sm U $?

\begin{defn}\label{def:a9.20}
We call such an element $u$ a \textbf{minimal gap-sum in $U$} (for $S$);  more precisely, we say that~ $u$ is a \textbf{minimal $S$-gap-sum in $(R,U)$}.
\end{defn}

\begin{examp}\label{exmp:a9.20}
  Let $R = (\NN, + )$, $U = \{0 , d, d+1, d+2,  \dots \} $ for some $d \geq 2$, and $\SG = \{ 1 \} $. Then,
  $\underbrace{1 + \cdots + 1}_{d} \in U$, but $k d \notin U$ for $1 \leq k \leq d- 1$.
\end{examp}

%We slightly modify this example to point an intricacy in Definition \ref{def:a9.8} of minimality of sets $\SG \subset \Gap(R,U)$.
\begin{examp}\label{exmp:a9.21}
  Let $R = (\NN, + )$, $U = \{0 , d, d+k, d+k+1,  \dots \} $ for some
  $1 < k <d$, where $d \geq 3$, and  $\SG = \{ 1,k  \} $. Then, $\SG= \Gap(R,U)$ and the  sum
  $\underbrace{1 + \cdots + 1}_{d}+k$ is in $ U$, but the proper partial sums are not in ~$U$.
\end{examp}

These examples can be extended to the additive monoid $R = \NN^r / E$, where $E$ is a weight-compatible additive equivalence relation on $\NN^r$, as described in \S\ref{sec:6}
(Theorems \ref{thm:6.8} and \ref{thm:6.9}), and is trivial for all weights $< d$.
%We now allow $d \geq 2$ (instead of $d \geq 3$).
Let~ $T$ be  the set of standard generators
$\{t_1, \dots, t_r \} $ of~ $\NN^r$, and let $B_e = \iw(e) \sm T$ denote the associated big cell, cf.~ \eqref{eq:6.9}. \{Here we use the letter ``$T$'' instead of ``$\SG$'', to avoid confusion with the notation used  above.\}
Recall from~ \S\ref{sec:6} that the $E$-classes in $\iw(d)$ may form  an arbitrary
nontrivial partition of $\iw(d)$, i.e., not all $E$-classes in $\iw(d)$ are singletons.

\begin{examp}\label{exmp:a9.22}
In this setting, consider the sets  $\SG = \{ [t_i] \in T \ds |  1  \leq i \leq r\} $ and
$U = \{ x \in R \ds | w(x) \geq ~d\}  \cup~ \00$. Then, $U$ is an \as submonoid of $(R,+)$, and every $u \in U$ of weight $d$ can be expressed as a sum od elements of ~$\SG$ such that all proper partial sums lie in $R \sm U$. No other elements of (U) have this property.
\end{examp}

Here is a class of examples of a somewhat different kind, based on a monoid introduced in \S\ref{sec:1}.
\begin{examp}\label{exmp:a9.23} Let $R_d = \NN^2 / E_d$, as introduced in \S\ref{sec:1}, cf.  \eqref{eq:1.8}, and let $$ R^+_d = \{ [n_1 t_1 + n_2 t_2] \ds| n_1 > n_2\}, $$
cf. \eqref{eq:1.9}. By Proposition \ref{prop:1.5}, the \as submonoids of $R^+_d$ are precisely the submonoids of ~$\NN t$. If $U$ is such a submonoid and $U \neq \00$, then $U = Hx$, where $x$ is the element of minimal weight in $U \sm \00$.  In particular,
$U = Hx \cong H$, where $H$ is a submonoid of $\NN$. Then, there is
  a unique weight-compatible bijection $H \isoto Hx$. For various choices of $\SG$ in $Hx = U$,  the element $x$ is  minimal, and can be written as  a sum of $\SG$-gap elements whose proper partial sums are all lie in $R^+_d \sm U$.
\end{examp}
%\begin{proof}
 % This is immediate from the construction of $R$ and $U$ in \S\ref{sec:6}.\end{proof}

The submonoids $[U \Ng  s]_0$ constructed in \S\ref{sec:2} provide examples in which  the MSU-problem has a negative solution. Recall that
$\Idm(R) = \{ s \in R \ds | s + s = s\} $ is an additive subset of $(R,+)$, which, for any \as submonoid~ $U$ of $(R,+)$, gives rise to a new \as submonoid
$$ [U \Ng  s]_0 = [U \Ng  s] \cup \00 $$
containing $U$, cf. \eqref{eq:2.6}.
These submonoids form  an upward-directed system of \as submonoids of $R$, and their union $\tlU$ is again an \as submonoid of $(R,+)$ containing $U$. The set
$$ \SG = (\Idm(R)) \sm \00$$
is an additive subset of $R \sm \tlU$, since $s+ s = s + 0$ for every  $s \in \SG$. Indeed,  if~ $s$ belongs to the cancellative monoid~ $\tlU$,  this would imply  $s =0 $. Hence,  no $u \in \tlU$ can be expressed as a sum of elements of~ $\SG$. The same  holds for every monoid $[U \Ng  s]_0 \supset U$.

We return to minimal gap-sums in general.

\begin{thm}\label{thm:a9.25} Assume that $S$ is a finite subset of
$\Gap(R,U)$, and let $t \in S$ be a minimal gap-sum in $U$ for~ $R$. Then, there exist elements
$s_1, \dots, s_k \in S$ such that  $t= s_1 + \cdots + s_k \in U$, while none of the proper partial sum of
$s_1 + \cdots + s_k $ belongs to  $U$. We choose some $u_0 \in \stab_S(R,U)$.
\begin{enumerate}\ealph
  \item Any two $S$-walks  $     \xymatrix@R=0.9em@C=1.7em{
  P: u_0 \ar@{->}[r]^{\quad  s_1} & u_1 \ar@{->}[r]^{s_2}  & \cdots \ar@{->}[r]^{s_k\quad } & u_k
}
$
and
 $     \xymatrix@R=0.9em@C=1.7em{
  P': u_0 \ar@{->}[r]^{\quad  s'_1} & u_1 \ar@{->}[r]^{s'_2} &  \cdots \ar@{->}[r]^{s'_\ell \quad } & u_\ell
}
$
have the same length $k = \ell $ and the same gap-sum $t$, i.e.,
$(s'_1, \dots, s'_\ell)$ is a permutation of
$s_1, \dots, s_k$ and $u_k = u_\ell = u_0 +t$.
  \item For every $s_i$, where $1 \leq i \leq k =\ell$, there exists  a unique element
  $\check{s}_i \in Z(S)$ such that $s_i + \check{s}_i = t $.
\end{enumerate}
\end{thm}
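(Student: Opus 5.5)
The plan is to read the statement in the form the authors evidently intend. Then $t \in U$ is a minimal $S$-gap-sum with a fixed decomposition $t = s_1 + \cdots + s_k$, all of whose proper partial sums lie in $R \sm U$. The walks $P$ and $P'$ are maximal $S$-walks from $u_0$ to $u_0+t$; that is, their bridges lie in $S$ and their intermediate nodes realize the partial sums. The first sentence of the theorem is just a restatement of Definition~\ref{def:a9.20}, so nothing needs proving there. For the walks, start from $u_0 \in \stab_S(R,U)$. By Theorem~\ref{thm:a9.11}, $u_0 + x \in \stab_S(R,U) \subseteq U$ for every $x \in Z(S)$ with $u_0 + x \in U$. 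More importantly, $u_0 + s \in U$ for each $s\in S$, and this propagates: every node $u_0 + s_1 + \cdots + s_i$ lies in $U$ by induction, because at each step we add an element of $S$ to an element of the upset $\stab_S(R,U)$ (Theorem~\ref{thm:a9.3}). So $P$ is a genuine $S$-walk ending at $u_k = u_0 + t$, and likewise for $P'$.

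For (a), let $P'$ be any $S$-walk from $u_0$ ending at $u_0 + t$. Its bridge sum $x' = s'_1 + \cdots + s'_\ell$ satisfies $u_0 + x' = u_0 + t$. Since $U$ is a.s. in $R$ and $u_0, u_0+t \in U$, the difference is unique, so $x' = t$. This already gives $u_\ell = u_k$ and equal gap-sums. To get $\ell = k$ and the permutation statement, I would use the minimality of $t$. Every proper partial sum of $s'_1+\cdots+s'_\ell$ is a bridge from $u_0$ to an intermediate node, so it is a gap sum, and $t$ is a sum of $S$-elements with no proper partial sum in $U$. The claim that $(s'_i)$ is a permutation of $(s_i)$ really needs uniqueness of the presentation of $t$ in $Z(S)$. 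I would argue this via reducedness of $S$ (each $s$ an atom) combined with cancellativity inside the a.s. monoid: compare two presentations and peel off common elements using Lemma~\ref{lem:a4.7}-type cancellation, where adding $u_0$ moves everything into $U$.

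For (b), put $\check s_i := s_1+\cdots+\widehat{s_i}+\cdots+s_k \in Z(S)$ (or $0$ if $k=1$); then $s_i+\check s_i = t$. For uniqueness, suppose $s_i + y = s_i + y' = t$ with $y,y'\in Z(S)$. Add $u_0 + s_i \in U$ to compare: $u_0+s_i$ is in $U$, and $(u_0+s_i)+y = (u_0+s_i)+y' = u_0+t \in U$. Almost subtractivity then forces $y=y'$.

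The main obstacle is the permutation claim in (a). Equality of bridge sums follows cleanly from the a.s. property, but equality of lengths and multisets of summands does not follow from a.s. alone unless presentations in $Z(S)$ are unique. I expect to have to interpret the statement as concerning maximal walks, whose bridges come from the given decomposition, or else to invoke reducedness of $S$. I would first try to derive it from minimality together with (b) by induction on $k$: remove $s'_1$, identify it with some $s_i$ via the uniqueness of $\check s_i$, and recurse on $\check s_i$ starting from $u_0 + s_i$.
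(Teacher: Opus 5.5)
You handle the parts that actually follow from almost subtractivity correctly, and by the same mechanism as the paper's one-line proof, which just invokes uniqueness of differences via Theorem~\ref{thm:a4.8}. The bridge sums of $P$ and $P'$ agree because $u_0+x'=u_0+t$ with $u_0,u_0+t\in U$. In (b), uniqueness of $\check s_i$ follows from $(u_0+s_i)+y=(u_0+s_i)+y'=u_0+t$ together with $u_0+s_i\in U$.

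\textbf{The gap} is the clause ``$k=\ell$ and $(s'_1,\dots,s'_\ell)$ is a permutation of $(s_1,\dots,s_k)$'' under your reading, where $P'$ is an arbitrary (maximal) $S$-walk from $u_0$ to $u_0+t$. Under that reading the clause is false, so no argument from reducedness or cancellation can establish it. Take $R=\N_0$, which is cancellative, so every submonoid is \as. Let $U=\{0\}\cup\{n\ge 6\}$ and $S=\{2,3\}$; $S$ is reduced and consists of gaps.
\begin{itemize}
\item $t=6=2+2+2=3+3$ is a minimal $S$-gap-sum for both presentations.
\item $u_0=6\in\stab_S(R,U)$.
\item $6\to8\to10\to12$ and $6\to9\to12$ are maximal $S$-walks from $u_0$ to $u_0+t$, of lengths $3$ and $2$.
\end{itemize}
This is exactly where your induction breaks: $s'_1=3$ equals no $s_i$, so the uniqueness of $\check s_i$ has nothing to act on. The permutation clause holds only if $P'$ is required to use the given summands $s_1,\dots,s_k$ in some order, and then it holds tautologically. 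That is what the paper's proof tacitly assumes. You should adopt that reading, under which the only content of (a) is $u_k=u_\ell=u_0+t$, and drop the reducedness plan.

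Separately, your claim that all nodes $u_0+s_1+\cdots+s_i$ lie in $U$ because $\stab_S(R,U)$ is an upset is unjustified. Theorems~\ref{thm:a9.3} and~\ref{thm:a9.11} give closure of $\stab_S(R,U)$ under adding elements of $U$, not under adding elements of $S$ or $Z(S)$. For example, take $R=\N_0$, $U=\{0,3,5,6\}\cup\{n\ge 8\}$ and $S=\{2\}$. Then $t=2+2+2=6$ is a minimal $S$-gap-sum and $u_0=3\in\stab_S(R,U)$, yet $u_0+2+2=7\notin U$. The theorem already presupposes that $P$ and $P'$ are $S$-walks, so you do not need this step and should not assert it.
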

\begin{proof}
This is immediate by Theorem \ref{thm:a4.8} and the previous definitions.
\end{proof}
We name $\check{s}_i$ the \textbf{complement} of $s_i$ in $t$ (for $S$); it  is independent of the choice of $u_0$ in $\stab_S(R,U)$.

Assume that  $\xymatrix@R=0.9em@C=1.7em{
  P: u_0 \ar@{->}[r]^{\quad  s_1} & u_1 \ar@{->}[r]^{s_2} & \cdots \ar@{->}[r]^{s_r\quad } & u_r
}
$
is a maximal gap walk in $\stab_S(R,U)$,  i.e., all bridges are element of $S$.  Assume also that $S$ is reduced. Among the $u_i$, we define inductively the\textbf{ blue nodes}~ $u_i$ of $P$ as follows.
\begin{defn}\label{def:a9.26} The
node $u_0$ is declared blue. If $u_k$ is blue, then the first node $u_\ell$ with $\ell > k$ and  $u_\ell \in U$  is blue as well.  In other terms, $u_0 \in U$, $u_k \in U$, and all $u_i$ with
$k < i < \ell $ do not belong to~ $U$.
\end{defn}
Thus, we have associated to $P$ a walk
 \begin{equation}\label{eq:9.2}
    \xymatrix@R=0.9em@C=1.7em{
  \sig(P): u_0 = u_{h(0)} \ar@{->}[r]^{\quad  t_1} & u_{h(1)} \ar@{->}[r]^{t_2} & u_{h(2)} \ar@{->}[r] & \cdots \ar@{->}[r]^{t_k\quad } & u_{h(k)},
}
 \end{equation}
where all $t_i$ are minimal gap-sums as studied in Theorem \ref{thm:a9.25}. Thus,
$$ u_{h(i)} = u_{h(i-1)} + t_i \qquad (1 \leq i \leq k)$$
$t_i \in Z(S) \cap U$, but no proper subsum of $t_i$ belongs to  $U$. We call
$\sig(P)$ the \textbf{blue walk of $P$}.
The following properties are immediate from   Definition \ref{def:a9.17}.

\begin{remark}\label{rem:a9.26}  $ $
\begin{enumerate}\ealph
  \item The length of $\sig(P)$ is the stable distance $d^+(u_0, u_{h(k)})$.
  \item $d^+(u_0, u_{h(k)}) = 0 $ if and only if $u_0$ is the only blue node of $P$.
  \item Either $u_r$ is blue, or $u_{h(k)}$ is the last node $u_s$ with $u_s \in U$ and $s < r$.
  \item Assume that  $u < _R v$ are nodes of $P$, and let $P[u,v]$ denote the part of $P$ starting at $u$ and ending at $v$. Then, in this notation,
      $$ \sig(P)[u_{h(k)}, u_{h(\ell)}] = \sig(P[u_{h(k)},u_{h(\ell)}]). $$
\end{enumerate}
\end{remark}

%\begin{defn}\label{def:a9.28}
%We say that two maximal $S$-walks $P$ and $P'$ in $\stab_S(R,U)$ are \textbf{confluent}, if they have at least two blue nodes in common.
%\end{defn}
%
%
%
%\begin{thm}\label{thm:a9.29}
%If $P$ and $P'$ are confluent, then $\sig(P)$ and $\sig(P')$  have same length, and we may write
%$     \xymatrix@R=0.9em@C=1.7em{
%  \sig(P): u_{h(P,0)} \ar@{->}[r]^{\quad  t_1} & u_{h(P,1)} \ar@{->}[r]^{t_2}  & \cdots \ar@{->}[r]^{t_k\quad } & u_{h(P,k)}
%}
%$ and \\
%$     \xymatrix@R=0.9em@C=1.7em{
%  \sig(P'): u_{h(P',0)} \ar@{->}[r]^{\quad  t'_1} & u_{h(P',1)} \ar@{->}[r]^{t'_2}  & \cdots \ar@{->}[r]^{t'_k\quad } & u_{h(P',k)}
%}
%$
%with
%$u_{h(P,i)} = u_{h(P',i)}$ and bridges $t_i = t'_i \in Z(S) \cap U$.
%The length $k$ is the stable distance $k = d^+(u_{h(P,0)},u_{h(P,k)})
%= d^+(u_{h(P',0)},u_{h(P',k)})
% $.
%\end{thm}
%\begin{proof}
%  This is evident from Theorem \ref{thm:a9.25}, if $\sig(P)$ or $\sig(P')$ has length one, and then follows by induction from Remark \ref{rem:a9.26}.(d) if $\sig(P)$ or $\sig(P')$ has any length $k$.
%\end{proof}
%
%
%\begin{defn}\label{def:a9.30}
%We name the part $[u_{h(P,0)}, u_{h(P,k)}]$ of $P$ the \textbf{confluence  interval} of $P$ and $P'$ on $P$.
%\end{defn}
%This is the subwalk of $P$ starting and ending with a blue node, which contains all common blue nodes
%of $P$ and $P'$.

Exploiting  Theorem \ref{thm:a9.25}(a), we obtain the following result.

\begin{cor}\label{cor:a9.31}
If $P$ starts and ends at distinct blue nodes, then every other such $S$-walks~ $P'$ with $\sig(P) = \sig(P')$ is obtained  from $P$ by arbitrary permuting the  bridges between each pair of two consecutive blue nodes $u_{h(P,s)}$ and $u_{h(P, s+1)}$.
\end{cor}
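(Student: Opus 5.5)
The approach is to decompose $P$ and $P'$ at their blue nodes and then treat each segment between consecutive blue nodes with Theorem~\ref{thm:a9.25}(a). Since $\sig(P) = \sig(P')$, the two walks have the same blue nodes in the same order; write them as $u_{h(0)} < u_{h(1)} < \cdots < u_{h(k)}$. The blue bridges $t_1, \dots, t_k$ also coincide, because by Definition~\ref{defn:1.1} the difference $t_i$ with $u_{h(i-1)} + t_i = u_{h(i)}$ is unique. Because $P$ starts and ends at blue nodes (and so does $P'$, since its first and last nodes are the end nodes of $\sig(P')=\sig(P)$), both walks are the concatenation of their segments $P[u_{h(s)},u_{h(s+1)}]$ and $P'[u_{h(s)},u_{h(s+1)}]$, for $0 \leq s < k$. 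This uses Remark~\ref{rem:a9.26}(d) together with the definition of blue nodes.

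It therefore suffices to treat a single segment. Fix $s$, and let $Q = P[u_{h(s)},u_{h(s+1)}]$ and $Q' = P'[u_{h(s)},u_{h(s+1)}]$. Both are $S$-walks whose bridges lie in $S$ (maximality), and both start at $u_{h(s)} \in \stab_S(R,U)$, since $P$ lies in $\stab_S(R,U)$. By Definition~\ref{def:a9.26}, no interior node of either walk is in $U$. Hence the bridge sequences $(s_1,\dots,s_m)$ of $Q$ and $(s'_1,\dots,s'_{m'})$ of $Q'$ both add up to $t_{s+1}$, and every proper initial partial sum lies outside $U$. Here I would use cancellativity: if $s_1+\cdots+s_j \in U$ with $j<m$, then $u_{h(s)} + (s_1+\cdots+s_j)$ would be a node in $U$, a contradiction. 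So $t_{s+1}$ is a minimal $S$-gap-sum, presented by both sequences. Theorem~\ref{thm:a9.25}(a), applied with root $u_{h(s)}$, then gives $m = m'$ and shows that $(s'_1,\dots,s'_m)$ is a permutation of $(s_1,\dots,s_m)$.

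For the converse, I would take any permutation of the bridges in each segment. Starting at $u_{h(s)}$, the resulting sequence of partial sums ends at $u_{h(s)} + t_{s+1} = u_{h(s+1)}$. Each intermediate node lies in $R_S$, and I would check that it is not in $U$, so that the blue nodes are unchanged and $\sig$ is preserved. This also implicitly requires the interpretation that a walk between blue nodes may pass through non-$U$ nodes, as the paper's usage indicates.

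The main obstacle is exactly this check: intermediate sums of a permuted order must avoid $U$. That amounts to the claim in Theorem~\ref{thm:a9.25} that no proper partial sum (in any order) of a minimal gap-sum lies in $U$. I would invoke that statement directly, reading ``proper partial sum'' as any proper subsum, in line with the remark after \eqref{eq:9.2} that no proper subsum of $t_i$ belongs to $U$. Concatenating the segment statements over $s = 0, \dots, k-1$ then gives the corollary.
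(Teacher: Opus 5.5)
Your forward half is the paper's own argument. The paper proves the corollary only by ``exploiting Theorem~\ref{thm:a9.25}(a)'', and you do exactly that segment by segment, after noting that $P$ and $P'$ share blue nodes and blue bridges because differences are unique when $U$ is \as. (The step showing that initial partial sums avoid $U$ uses closure of $U$ under addition, not cancellativity.)

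The gap is in the converse, which the word ``arbitrary'' (and Proposition~\ref{prop:a9.31}) asks for. You reduce ``no interior node of a reordered segment lies in $U$'' to ``no proper subsum $x$ of $t_{s+1}$ lies in $U$''. The second does not imply the first: $x\notin U$ with $u_{h(s)}+x\in U$ is precisely what it means for $x$ to be a gap. The implication you legitimately used in the forward direction (node $\notin U$ $\Rightarrow$ subsum $\notin U$) runs the other way.

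The converse in fact fails. Take $R=\NN$, $U=\{0,4,5\}\cup\{n\geq 7\}$, and the reduced set $S=\{2,3\}\subseteq\Gap(R,U)$. The walk $4\xrightarrow{2}6\xrightarrow{3}9$ has blue walk $4\to 9$, with blue bridge $5=2+3\in U$ whose proper subsums $2,3$ lie outside $U$. The reordering $4\xrightarrow{3}7\xrightarrow{2}9$ passes through $7\in U$, so it has a different blue walk.

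Your standing assumption $u_{h(s)}\in\stab_S(R,U)$ also makes the whole question degenerate. By Definition~\ref{def:a9.3}, $u_{h(s)}+s\in U$ for every $s\in S$, so every segment between consecutive blue nodes has length one and there is nothing to permute. The statement has content only for blue nodes outside the stability region, and there even the forward half can fail. With $U=\{0\}\cup\{n\geq 6\}$ and $S=\{2,3\}$, the walks $0\to2\to4\to6$ and $0\to3\to6$ have the same blue walk $0\to 6$, but their bridge sequences are not permutations of each other. So outside the trivial case, neither half can rest on Theorem~\ref{thm:a9.25}(a).
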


We express this fact in another way. As usual, let $\tS(n)$ denote the symmetric group of all  permutations of $\{ 1, \dots, n\}$. Assume that $P$ is a maximal $S$-walk in $\stab_S(R,U)$ that ends (and of course also starts) at a blue node of
$$     \xymatrix@R=0.9em@C=1.7em{
  P: u_0  = u_{h(P,0)} \ar@{->}[r]^{\qquad  \quad    s_1} & u_1 \ar@{->}[r]^{s_2}  & \cdots \ar@{->}[r]^{s_k\quad } & u_{h(P,k)}= u_r.
}
$$
For consecutive blue nodes
$$  u_{h(P, i-1)},  u_{h(P, i)}, \qquad i = 1,\dots, k, $$
the symmetric group
$$  \tS\big(h(P, i) -1 - h(P, i-1)\big) = \tS(d^+ ( u_{h(P, i-1)},  u_{h(P, i)})-1  )$$
acts naturally on the set  of nodes of $P$ strictly between these blue nodes. Hence, the
direct product of these symmetric groups, for  $(i = 1, \dots , k)$, acts on the set of nodes of $P$ that  are not blue. We extend this action to the set of all nodes of $P$ by fixing the blue nodes.

\begin{defn}\label{def:a9.30} We denote the image of this group in the symmetric group of all nodes of~ $P$ by $\Gm(P)$, and call it \textbf{the tolerance group of} $P$.
\end{defn}
%The orbit of $\Gm(P)$ is the set of all maximal $S$-walks $P'$ which have the same blue walk as~ $P$, $\sig(P) = \sig(P')$.

The following is now obvious.

\begin{prop}\label{prop:a9.31} The orbit
$\{ \gm(P) \ds  | \gm \in \Gm(P) \}$ is the set of all maximal $S$-walks $P'$ in $\stab_S(R,U)$ that start and end at blue nodes and
have the same blue walk as $P$, i.e., $\sig(P) = \sig(P')$.
\end{prop}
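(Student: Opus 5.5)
We prove the two inclusions separately, with Corollary~\ref{cor:a9.31} as the main tool. Fix $P$ as displayed above: a maximal $S$-walk in $\stab_S(R,U)$ starting at $u_0=u_{h(P,0)}$ and ending at the blue node $u_r=u_{h(P,k)}$, whose blue walk $\sig(P)$ has bridges $t_1,\dots,t_k$. Each $t_i$ is a minimal $S$-gap-sum $t_i=s_{h(P,i-1)+1}+\cdots+s_{h(P,i)}$.

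\emph{The orbit is contained in the stated set.} Let $\gm\in\Gm(P)$, and let $\gm(P)$ be the sequence obtained from $P$ by permuting the bridges $s_j$ inside each block $h(P,i-1)<j\le h(P,i)$. Since the blocks are permuted independently and blue nodes are fixed, it suffices to treat one block. Between $u_{h(P,i-1)}$ and $u_{h(P,i)}$ the new intermediate nodes are $u_{h(P,i-1)}$ plus proper partial sums of a reordering of the summands of $t_i$. All these nodes lie in $\stab_S(R,U)$, because this set is an upset (Theorem~\ref{thm:a9.3}) and $u_0\in\stab_S(R,U)$. In particular each node plus the next $s\in S$ lies in $U$, so the permuted sequence is again a walk in $\stab_S(R,U)$ with all bridges in $S$, and it is maximal for that reason. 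By Theorem~\ref{thm:a9.25}(a) applied at the starting node $u_{h(P,i-1)}$, the order of summation does not affect the endpoint $u_{h(P,i-1)}+t_i=u_{h(P,i)}$. Hence the blue nodes of $\gm(P)$ coincide with those of $P$, and $\sig(\gm(P))=\sig(P)$.

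\emph{The stated set is contained in the orbit.} Let $P'$ be a maximal $S$-walk in $\stab_S(R,U)$ that starts and ends at blue nodes and satisfies $\sig(P')=\sig(P)$. Then $P'$ has the same blue nodes as $P$, and between consecutive blue nodes its bridges sum to the same $t_i$. Corollary~\ref{cor:a9.31}, or Theorem~\ref{thm:a9.25}(a) directly, says the bridge sequence of $P'$ in each block is a permutation of that of $P$. Between two consecutive blue nodes the walk is determined by its starting node and its sequence of bridges. Therefore $P'=\gm(P)$, where $\gm$ is the product over $i$ of these block permutations, which is an element of $\Gm(P)$.

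The one delicate point is the reading of Definition~\ref{def:a9.26}. Blue nodes are defined by membership in the ambient set, so we must check that permuting the bridges inside a block does not create an earlier node that qualifies as blue. Theorem~\ref{thm:a9.25} supplies this: a minimal gap-sum has no proper partial sum in $U$, in any order of its summands.
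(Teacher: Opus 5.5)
Your two-inclusion route is the one the paper intends; the paper gives no proof and calls the proposition obvious after Corollary~\ref{cor:a9.31}. Read blue nodes as intended: a node is blue when its difference from the previous blue node lies in $U$. With that reading, both inclusions rest on steps that do not follow from the definitions, and in fact fail.

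In the forward inclusion, Theorem~\ref{thm:a9.3} only gives $\stab_S(R,U)+U\subseteq\stab_S(R,U)$. The new interior nodes of a reordered block are $u_{h(P,i-1)}$ plus proper partial sums of the block, i.e.\ plus elements of $Z(S)$ outside $U$, so the upset property says nothing about them. Concretely, take $R=\mathbb{N}_0^2$, $U=\{(x,y)\mid y\le x\}\setminus\{(1,0)\}$, and $S=\{a,b\}$ with $a=(1,0)$, $b=(0,1)$. The walk $(2,1)\to(3,1)\to(3,2)$ (bridges $a,b$) is a maximal $S$-walk in $\stab_S(R,U)$ with blue ends. Its reordering passes through $(2,2)$, and $(2,2)+b\notin U$, so it leaves $\stab_S(R,U)$.

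Your closing remark fails as well. Definitions~\ref{def:a9.20} and~\ref{def:a9.26}, and Theorem~\ref{thm:a9.25}, only control the initial partial sums in the given order, not all sub-sums. Take $R=\mathbb{N}_0$, $U=\{5m+7n\mid m,n\in\mathbb{N}_0\}$, $S=\{2,3\}$. The walk $24\to26\to28\to31$ (bridges $2,2,3$) has blue nodes $24$ and $31$. Its reordering $24\to27\to29\to31$ meets the partial sum $5\in U$, so $29$ becomes blue and $31$ does not. In both examples the reordered walk falls outside the stated set.

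In the reverse inclusion, almost subtractivity only tells you that the bridges of $P'$ between two consecutive blue nodes sum to the same $t_i$ as those of $P$. It does not give the same multiset of summands, because an element of $Z(S)$ can have several presentations. Theorem~\ref{thm:a9.25}(a), whose proof merely invokes Theorem~\ref{thm:a4.8}, cannot supply this. Take $R=\mathbb{N}_0$, $U=\{0\}\cup\{n\mid n\ge 6\}$, $S=\{2,3\}$; $S$ is reduced and $\stab_S(R,U)=\{n\mid n\ge 6\}$. The walks $6\to8\to10\to12$ and $6\to9\to12$ are maximal $S$-walks in $\stab_S(R,U)$ with the same blue walk $6\to12$. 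They have different lengths, so neither lies in the orbit of the other.

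So the proposition needs extra hypotheses. For the reverse inclusion, one would be unique presentations of elements of $Z(S)$ in terms of $S$. For the forward inclusion, one would be that every reordering of a block keeps its proper partial sums outside $U$ and its nodes in $\stab_S(R,U)$. Under such hypotheses your argument goes through as written.

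You also silently read $\Gm(P)$ as permuting the $h(P,i)-h(P,i-1)$ bridges of each block. The paper instead lets a symmetric group act on the $h(P,i)-h(P,i-1)-1$ interior nodes. Yours is the only reading that produces walks, but it should be stated.
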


\begin{remark}\label{rem:a9.32}
The set of nodes of $P$ is totally ordered by the upper-bound ordering $\leq_R$. The group
$\Gm(P)$ fixes the blue nodes of $P$, but ignores the ordering between
consecutive blue nodes of $P$. If $u <_R v$ and $u$ or $v$ is blue, then  $\gm(u) <_R \gm(v)$ for every
$\gm \in \Gm(P)$.
\end{remark}

\section{Confluences, grids, and translates of blue $S$-walks}\label{sec:b9}
Assume  that $\sig(P)$ and $\sig(P')$ are   $S$-walks with blue ends in $\stab_S(R,U)$, as considered earlier, and that there is a sequence $A$ of consecutive blue nodes, contained in both  $\sig(P)$ and $\sig(P')$.  We say that
$\sig(P)$ and $\sig(P')$  are \textbf{confluent}, and that $A$ is a \textbf{confluence interval} of
$\sig(P)$ and $\sig(P')$.

\begin{remark}\label{rem:b9.3} $ $
\begin{enumerate}\ealph
  \item
If $B$ is a second  confluence interval of $\sig(P)$ and $\sig(P')$, and
$A \cap B \neq \emptyset$, then clearly
$A \cup B$ is a confluence interval of $\sig(P)$ and $\sig(P')$. Thus, every
confluence interval is contained in a unique \textbf{maximal confluence interval}.
It may well happen that $\sig(P)$ and $\sig(P')$  have several maximal confluence intervals,
which are then disjoint.
We illustrate such a scenario by the following diagram
   $$    \xymatrixrowsep{3mm}
\xymatrixcolsep{6mm}
    \xymatrix@R=0.8em@C=1.8em{
    \bullet  \ar@{->}[rd]^{\sig(P)} & && & \bullet \ar@{.>}[r] & \bullet \ar@{->}[rd]^{\sig(P)} \\
    & \bullet  \ar@{->}[r] \ar@{->}[rr]^{A} &   & \bullet \ar@{->}[ru]^{\sig(P)} \ar@{->}[rd]_{\sig(P')} & & & \bullet  \ar@{->}[r] \ar@{->}[rr]^B & & \bullet \ar@{.>}[r] & \\
    \bullet\ar@{->}[ru]_{\sig(P')} &&& & \bullet \ar@{.>}[r] & \bullet \ar@{->}[ru]_{\sig(P')}
     }
$$

  \item Consequently,  $\sig(P)$ and $\sig(P')$ are confluent if and only if these blue walks have at least two consecutive nodes in common, which then form a \textbf{minimal confluence interval} of  $\sig(P)$ and $\sig(P')$.
\end{enumerate}
\end{remark}

We   construct a ``rectangular grid of blue walks'', which  provides a large variety  of blue confluences.  We start with an obvious fact.

\begin{lem}[Translation Lemma]\label{lem:b9.4}
  Let $P$ be an  $S$-walk of length $k \geq 1 $ with blue end,  and let
  $t$ be a minimal gap-sum in $U$ for $S$. Then,
  $P '  = P +t $ is  an  $S$-walk  with blue end, and
  the walks walks from $u_{h(P,0)}$ to
$u_{h(P',k)}$
  in the  diagram
   $$    \xymatrixrowsep{3mm}
\xymatrixcolsep{6mm}
    \xymatrix@R=1.3em@C=1.5em{
     u_{h(P',0)} \ar@{->}[r] & \cdots \ar@{->}[r] &  u_{h(P',k)} \\
     u_{h(P,0)} \ar@{->}[r] \ar@{->}[u] & \cdots \ar@{->}[r] &  u_{h(P,k)} \ar@{->}[u]. \\
   }
$$ have the same gap-sum and the same length.
Thus, it represents two blue $S$-walks with the same gap-sum and length, cf.~\eqref{eq:9.2}.

%of blue $S$-walks, cf.~\eqref{eq:9.2}.
\end{lem}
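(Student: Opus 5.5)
The plan is to treat $P'=P+t$ as the translate of $P$ by the element $t\in Z(S)\cap U$, using Proposition~\ref{prop:a9.1}. Write
$$P:\ u_0\xrightarrow{s_1}u_1\xrightarrow{s_2}\cdots\xrightarrow{s_r}u_r,$$
with all $s_i\in S$ and blue nodes $u_{h(P,0)},\dots,u_{h(P,k)}=u_r$. Since $t$ is a minimal gap-sum in $U$, we have $t\in U$. Proposition~\ref{prop:a9.1} then shows that $P+t$, with nodes $u_i+t$, is again an $S$-walk with the same bridges $s_1,\dots,s_r$ and the same length.

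Next I show that $P'$ lies in $\stab_S(R,U)$ and has blue end. By Theorem~\ref{thm:a9.3}, $\stab_S(R,U)$ is an upset in $U$, so $u_0\in\stab_S(R,U)$ gives $u_0+t\in\stab_S(R,U)$. The blue end is the delicate point. For the nodes of $P$ lying in $U$, adding $t\in U$ keeps them in $U$. However, a node $u_i\notin U$ might become $u_i+t\in U$, so the blue nodes of $P'$ need not simply be the translates of those of $P$.

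I would handle this as follows. Define the blue nodes of $P'$ by Definition~\ref{def:a9.26}. The last node $u_r+t$ lies in $U$, but blueness requires more than membership in $U$. To get it, I would compare blue segments using Theorem~\ref{thm:a9.25}(a). Between consecutive blue nodes of $P$ the bridge sum is a minimal gap-sum $t_i$. Adding $t$ does not alter the bridge sums, so the blue walk of $P'$ again has total gap-sum $t_1+\cdots+t_k$. I expect this step to be the main obstacle. If intermediate translated nodes fall into $U$, the blue decomposition changes, and the paper's claim implicitly identifies $u_{h(P',i)}$ with $u_{h(P,i)}+t$. I would first try to argue that the blue nodes are exactly the translates, reading the lemma as asserting precisely this, as the diagram suggests. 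Failing that, I would read ``blue end'' as only requiring that $u_r+t\in U$ be the terminal blue node in the sense of Remark~\ref{rem:a9.26}(c).

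Finally I check the commutativity claim for the square. The two walks from $u_{h(P,0)}$ to $u_{h(P',k)}=u_{h(P,k)}+t$ are obtained by going up by $t$ and then along $\sigma(P')$, or along $\sigma(P)$ and then up by $t$. Each has total bridge $t+t_1+\cdots+t_k$, since addition is commutative and bridges are unique because $U$ is \as (Definition~\ref{defn:1.1}). Both consist of $k+1$ blue steps, the vertical step $t$ being itself a minimal gap-sum. Hence they have the same gap-sum and the same length, and both are blue $S$-walks in the sense of \eqref{eq:9.2}.
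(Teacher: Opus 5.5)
There is a genuine gap, and it is the step you flag yourself and then leave open: the identification $u_{h(P',i)}=u_{h(P,i)}+t$, that is, $\sigma(P+t)=\sigma(P)+t$. Your gap-sum computation does not need it, since both composites are bridges from $u_{h(P,0)}$ to $u_r+t$ and equal $t+s_1+\cdots+s_r$ by commutativity. The ``same length'' claim does need it, and so does the shape of the diagram, whose top row has $k$ arrows. In your last paragraph you simply assume that $\sigma(P')$ has $k$ steps with bridges $t_1,\dots,t_k$.

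Neither fallback helps. Weakening ``blue end'' says nothing about the length of $\sigma(P')$. Your worry is in fact justified if blueness of a node means literally that the node lies in $U$. Take $R=\N_0$, $U=\langle 3,5\rangle=\{0,3,5,6,8,9,\dots\}$ and $S=\{1\}$. Then $t=3$ is a minimal gap-sum, because $1,2\notin U$. Let $P: 5\to 6\to 7\to 8$, which starts at $5\in\stab_S(R,U)$. Its nodes in $U$ are $5,6,8$, so $k=2$. All nodes of $P+3: 8\to 9\to 10\to 11$ lie in $U$, so its blue walk has length $3$, and the two composites have lengths $3$ and $4$. Note that the blue bridges $1,2$ of $P$ here are not in $U$. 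Under this reading no argument can close the gap.

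The lemma is ``obvious'' in the paper only under the reading used right after Definition~\ref{def:a9.26} and in Construction~\ref{cons:b9.3}. There, consecutive blue nodes are joined by minimal gap-sums $t_i=s_{h(i-1)+1}+\cdots+s_{h(i)}\in Z(S)\cap U$ with no proper initial subsum in $U$ (Definition~\ref{def:a9.20}, Theorem~\ref{thm:a9.25}). So $h(i)$ is the least index $j>h(i-1)$ with $s_{h(i-1)+1}+\cdots+s_j\in U$. This segmentation depends only on the bridge sequence $(s_1,\dots,s_r)$, not on the base point $u_0$.

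You already have from Proposition~\ref{prop:a9.1} that $P+t$ has the same bridges. Hence it has the same segmentation: $h(P',i)=h(P,i)$, the blue walk $\sigma(P+t)=\sigma(P)+t$ has bridges $t_1,\dots,t_k$, and its last node $u_r+t$ is blue. With this observation stated explicitly, your final paragraph goes through verbatim and is exactly the paper's argument (compare Proposition~\ref{prop::b10.2}). Your other steps, including the use of Theorem~\ref{thm:a9.3} for the starting node, are fine.
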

%\begin{proof}
%  Clear by Theorem \ref{thm:a9.29}, since both $S$-walks from $ u_{h(P,0)}$ to
%   $u_{h(P',k)}$ have the same gap-sum.
%\end{proof}

\begin{construction}\label{cons:b9.3}
  Given two blue $S$-walks
$$
\xymatrixcolsep{6mm}
    \xymatrix@R=1.3em@C=1.5em{ \sig(Q):
     u_{0,0} \ar@{->}[r]^{\quad t_{1,0}} & u_{1,0} \ar@{->}[r]^{t_{1,0}} & \cdots \ar@{->}[r]^{t_{r,0}} &  u_{r,0}}, \qquad \xymatrix@R=1.3em@C=1.5em{
       \sig(P):   u_{0,0} \ar@{->}[r]^{\quad t_{0,1}} & u_{0,1} \ar@{->}[r]^{t_{0,2}} & \cdots \ar@{->}[r]^{t_{0,s}} &  u_{0,s},
     }
$$
that start at the same node $u_{0,0} \in \stab_S(R,U)$,    by iterative use of the translation lemma~ \ref{lem:b9.4} we obtain a rectangular \textbf{grid of blue $S$-walks} of size $r \times s$, which consists of meshes
   \begin{equation}\label{eq:b9.1}
   \xymatrixcolsep{6mm}
    \xymatrix@R=1.3em@C=1.5em{
     u_{i+1,j,} \ar@{->}[rr]^{t_{0,j+1}} &  &  u_{i+1,j+1} \\
     u_{i,j} \ar@{->}[rr]^{t_{0,j+1}} \ar@{->}[u]^{t_{i+1,0}} &  &  u_{i,j+1}  \ar@{->}[u]^{t_{i+1,0}}\\
     }
     \end{equation}
of blue $S$-walks of length one ($0 \leq i < r$, $0 \leq j < s$).
 \end{construction}

 Here, any two blue $S$-walks
from a fixed node $u_{i,j}$ to a fixed node $u_{k,\ell}$ with $i \leq k$, $j \leq \ell $, that run along horizontal and vertical edges, have the same gap-sum and length.
 We call this diagram a \textbf{blue grid $G$ of $S$-walks based on $\sig(P)$ and $\sig(Q)$}.

\begin{defn}\label{def:b9.4}
We say that a pair of $S$-walks $(\gm, \dl )$ in $G$ is a \textbf{pen}\footnote{A memory aid: think of $(\gm,\dl)$ as a fenced area in a meadow.}, if the following conditions  hold:
\begin{enumerate}\ealph
  \item $\gm $ and $\dl$ start and end at the same nodes, and thus have the same length and the same gap-sum,
  \item their length is $> 1$,
  \item $\gm$ and $\dl$ have no other blue nodes in common.
\end{enumerate}
\end{defn}
If this holds, then, perhaps after  interchanging, $\gm$ and $\dl$, the first bridge of
$\gm$ goes east (i.e., is horizontal), while the first bridge of $\dl$
goes north. We call $\gm$ the \textbf{lower part} and  $\dl$ the \textbf{upper part} of the pen.
The following is now obvious.

\begin{prop}\label{prop:b9.4}
Any pair of blue walks in $G$ with the same start and end consists of a succession of maximal confluence intervals and pens.
\end{prop}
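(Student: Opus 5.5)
The idea is to cut both walks at the blue nodes they share. Let $\gm$ and $\dl$ be blue walks in $G$ from a common node $u$ to a common node $v$, each running along horizontal and vertical edges of the grid. By the remark after Construction~\ref{cons:b9.3}, both have the same gap-sum and the same length; call this length $m$. I first list the common blue nodes of $\gm$ and $\dl$ in increasing order with respect to $\leq_R$:
$$u = w_0 <_R w_1 <_R \cdots <_R w_N = v .$$
The nodes of each walk are totally ordered by $\leq_R$ (cf. Remark~\ref{rem:a9.32}), so both $\gm$ and $\dl$ pass through $w_0,\dots,w_N$ in this order. Each pair $(w_{j-1},w_j)$ then cuts $\gm$ and $\dl$ into subwalks $\gm_j=\gm[w_{j-1},w_j]$ and $\dl_j=\dl[w_{j-1},w_j]$. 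These start and end at the same nodes, so again they have equal gap-sum and equal length (the same grid remark applies to the subrectangle).

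Next I classify the segments. If $\gm_j$ and $\dl_j$ both have length one, they share the same single bridge, because the difference between $w_{j-1}$ and $w_j$ is unique by almost subtractivity. So $\gm_j=\dl_j$, and $\{w_{j-1},w_j\}$ is a minimal confluence interval in the sense of Remark~\ref{rem:b9.3}(b). Otherwise the common length is $>1$. By the choice of the $w_j$ as \emph{all} common blue nodes, $\gm_j$ and $\dl_j$ have no interior blue node in common, so $(\gm_j,\dl_j)$ satisfies (a)--(c) of Definition~\ref{def:b9.4} and is a pen.

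Finally I merge the consecutive runs of length-one segments. By Remark~\ref{rem:b9.3}(a), unions of overlapping confluence intervals are confluence intervals, so each maximal run of consecutive length-one segments forms a maximal confluence interval. Reading from $u$ to $v$, the pair $(\gm,\dl)$ is therefore a succession of maximal confluence intervals alternating with pens.

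The main obstacle is the step showing that every node of either walk lies between consecutive $w_j$'s in the same order, i.e.\ that the common nodes really split both walks compatibly. For this I rely on the fact that in the grid each step increases both coordinates weakly, that is, each step strictly increases $\leq_R$. Hence any common node occupies the same relative position in both walks, namely its position under the order $\leq_R$.
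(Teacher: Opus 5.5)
Your argument is correct, and it is the reasoning the paper leaves implicit when it says Proposition~\ref{prop:b9.4} is obvious: cut both walks at all their common nodes, observe that segments of length one coincide while longer segments satisfy (a)--(c) of Definition~\ref{def:b9.4}, and merge runs of length-one segments into maximal confluence intervals. One wording fix: such a run can be empty, so two pens may meet at a single common node (as in the paper's own illustration), and the conclusion should therefore read ``a succession of'' rather than ``alternating with''; also, for the compatible splitting it is enough that each walk is a strictly increasing chain under $\leq_R$, so your ``same relative position'' remark is not needed.
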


We illustrate  such a scenario by the  diagram
$$
   \xymatrix@R=1.2em@C=1.9em{
   &&&&&& \bullet \ar@/_0.7pc/[r]<0ex>
   \ar@/^0.5pc/[r]<0ex> &  \bullet \cdots
   \\
      &&   \bullet \ar@/_0.7pc/[r]<0ex>
   \ar@/^0.5pc/[r]<0ex> & \bullet \ar@/_0.7pc/[r]<0ex>
   \ar@/^0.5pc/[r]<0ex> &  \bullet \ar@{-}[r]^B & \bullet \ar@/_0.7pc/[r]<0ex>
   \ar@/^0.5pc/[r]<0ex> &  \bullet \ar@{-}[u]^C
   \\
   \bullet \ar@/_0.7pc/[r]<0ex>
   \ar@/^0.5pc/[r]<0ex> &  \bullet \ar@{-}[r]_A &  \bullet \ar@{-}[u]^A
}
$$
without going into details about the pens.
The pens may have very different shapes. We illustrate one  such pen
      \begin{equation}\label{eq:b9.2}    \xymatrixrowsep{3mm}
\xymatrixcolsep{6mm}
    \xymatrix@R=1em@C=1.8em{
    &\bullet \ar@{-}[rr] \ar@{->}[r] & & \bullet &&
    \\
    \bullet \ar@{->}[r] \ar@{->}[r] &\bullet  \ar@{->}[u] &&&&
        \\ && \bullet \ar@{->}[r] & \bullet \ar@{->}[u] \ar@{-}[uu] &
        \\
    \bullet \ar@{-}[rr]_{\gm} \ar@{->}[r] \ar@{->}[u] \ar@{-}[uu]^{\dl} &  &\bullet   \ar@{->}[u] && &
     }
\end{equation}

Assume that $G$ is a blue grid of size $r \times s$ ($r \geq 1$, $s \geq 1$),
based on $\sig(P)$ and $\sig(Q)$.
We look for pens in $G$ that are as
``narrow'' as possible. We rely on the following fact.

\begin{remark}\label{rem:b9.6} The grid
$G$ can be identified with the set of its nodes, equipped with  the partial ordering
$\leq_R$, restricted to $G$, on a set of
$(r+1) (s+1)$ nodes.
Then, an $S$-walk $\gm$ in $G$ is a totally ordered subset of $G$ in which all bridges have length one. Explicitly, if $u \in \gm$, then the next node east or north of $u$ is $u +t$ for  some minimal gap-sum $t$.
\end{remark}
The following is now obvious.
\begin{prop}\label{prop:b9.7}
Let  $\gm'$ be a walk in $G$ from $u_{k,0}$ to $u_{\ell,s}$, with
$0 \leq k \leq \ell < s$,  starting with a bridge east.
\begin{enumerate} \ealph
  \item The nodes one step north to  $\gm'$ form  a walk $\dl'$ from
  $u_{k+1,0}$ to $u_{\ell+1,s}$.
  \item The walks $\gm = \gm' \cup \{u_{\ell+1,s}\} $ and
  $\dl =\{ u_{k,0}\} \cup \dl' $ form a pen $(\gm, \dl)$, where all bridges have length one.
\end{enumerate}
\end{prop}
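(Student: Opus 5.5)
The plan is to work directly in the grid coordinates of Construction~\ref{cons:b9.3}, using Remark~\ref{rem:b9.6}: nodes are $u_{i,j}$, an east step raises the second index, a north step raises the first, and every edge of $G$ is a blue walk of length one. I read $\gm'$ as a monotone lattice path that starts at $u_{k,0}$ with an east step and ends at $u_{\ell,s}$. The index bound should be $\ell < r$ (not $\ell < s$), so that all the shifted nodes below exist.

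\textbf{Part (a).} Write the nodes of $\gm'$ as $u_{i_0,j_0}=u_{k,0},\dots,u_{i_m,j_m}=u_{\ell,s}$, where consecutive pairs differ by one unit in exactly one coordinate. The proposed $\dl'$ has nodes $u_{i_p+1,j_p}$. These exist because $i_p\le \ell<r$. Consecutive nodes again differ by one unit in the same coordinate, so by the mesh~\eqref{eq:b9.1} each step is an edge of $G$ with the same label $t_{0,j+1}$ or $t_{i+1,0}$. Hence $\dl'$ is a blue $S$-walk in $G$ from $u_{k+1,0}$ to $u_{\ell+1,s}$. Equivalently, $\dl'$ is the translate $\gm'+t_{k+1,0}$ in the sense of the Translation Lemma~\ref{lem:b9.4}, provided one checks via the meshes that translating by $t_{i+1,0}$ along row $i$ agrees with moving north.

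\textbf{Part (b).} Set $\gm=\gm'$ followed by the north edge $u_{\ell,s}\to u_{\ell+1,s}$, and $\dl=$ the north edge $u_{k,0}\to u_{k+1,0}$ followed by $\dl'$. Then condition (a) of Definition~\ref{def:b9.4} holds: both walks start at $u_{k,0}$ and end at $u_{\ell+1,s}$. Each has length $m+1$, and both have gap-sum equal to the sum of the labels of $\gm'$ plus $t_{\ell+1,0}$ or $t_{k+1,0}$; the statement after Construction~\ref{cons:b9.3} already gives that they agree. Condition (b) holds since the length is $m+1\ge 2$, because $\gm'$ begins with an east step.

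The main obstacle is condition (c): there are no common interior nodes. For $\gm$, each node $u_{i_p,j_p}$ lies on or below the staircase, while $\dl$'s node with the same second coordinate sits strictly above it. The argument goes column by column. For fixed $j$, the nodes of $\gm'$ in column $j$ have first index in some interval $[a_j,b_j]$, and those of $\dl'$ have first index in $[a_j+1,b_j+1]$. Overlap within a column is therefore possible when $b_j>a_j$, i.e.\ when $\gm'$ contains a north step. So distinctness of indices genuinely needs an argument: either $\gm'$ must be taken with no north steps except in a controlled way, or one must use that the $\leq_R$-ordering (Remark~\ref{rem:b9.6}) identifies nodes only when indices agree.

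I would first try to prove that a node shared by both interiors would force $\gm$ and $\dl$ to have a common blue node before the end. This would hold if $\gm'$ is a ``narrow'' path; otherwise I would restrict the claim to paths with a single north run, or argue that coinciding interior nodes split the pair into a confluence interval plus smaller pens, in line with Proposition~\ref{prop:b9.4}. Note also that the grid nodes must be pairwise distinct; this follows from cancellativity of $U$ and the nonzero gap-sums $t$.
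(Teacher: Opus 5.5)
The gap is in condition (c) of Definition~\ref{def:b9.4}, and it cannot be closed along your lines. If $\dl'$ is the set of nodes one step north of $\gm'$, then (c) fails as soon as $\gm'$ has a north step, and such a step is forced whenever $k<\ell$.

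To see this, note that $\gm'$ starts east and walks are monotone, so $u_{k,0}$ is its only node in column $0$. Hence every north step $u_{a-1,b}\to u_{a,b}$ of $\gm'$ has $b\geq 1$. Its top $u_{a,b}$ lies on $\gm'\subset\gm$, and also on $\dl'\subset\dl$ as the north neighbour of $u_{a-1,b}$. It differs from $u_{k,0}$ because $b\geq 1$, and from $u_{\ell+1,s}$ because $a\leq\ell$. For instance, $\gm'\colon u_{0,0}\to u_{0,1}\to u_{1,1}\to u_{1,2}$ gives $\gm$ and $\dl$ meeting at $u_{1,1}$, which is two one-mesh pens touching at a corner.

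Your fallbacks do not help:
\begin{itemize}
\item A single north run still creates a shared node; a run of length $\geq 2$ even creates a shared edge, i.e.\ a confluence interval.
\item The ordering $\leq_R$ is irrelevant, because the coincidence is one of index pairs.
\item Splitting into confluence intervals and pens is just Proposition~\ref{prop:b9.4}, which gives up the claim.
\end{itemize}
Read literally, part (b) holds only when $k=\ell$.

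The paper gives no argument (``now obvious''), but its picture \eqref{eq:b9.3} shows the missing idea: the ceiling turns north one column \emph{west} of where the bottom does. So take $\dl'$ to be the nodes $u_{i+1,j-1}$, for the nodes $u_{i,j}$ of $\gm'$ with $j\geq 1$, followed by $u_{\ell+1,s}$. Since $\gm'$ starts east, this walk has the following properties:
\begin{itemize}
\item it begins at $u_{k+1,0}$, the image of $u_{k,1}$;
\item its steps repeat those of $\gm'$;
\item it ends with the east step $u_{\ell+1,s-1}\to u_{\ell+1,s}$.
\end{itemize}
Hence $\gm$ and $\dl$ have the same length, and they have the same gap-sum by the remark after Construction~\ref{cons:b9.3}. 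A common node other than the two ends would force both $u_{a,b}$ and $u_{a-1,b+1}$ onto the monotone walk $\gm'$, which is impossible. Also $u_{k,0}\notin\dl'$ and $u_{\ell+1,s}\notin\gm'$, by comparing rows. Your observation that the bound should read $\ell<r$ is correct.

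Two smaller slips:
\begin{itemize}
\item $\dl'$ is not $\gm'+t_{k+1,0}$ in general, since a north step out of row $i$ carries the label $t_{i+1,0}$.
\item Cancellativity separates only grid nodes with comparable indices; for example $u_{1,0}=u_{0,1}$ if $t_{1,0}=t_{0,1}$. So the combinatorial picture of Remark~\ref{rem:b9.6} is an implicit assumption, not a consequence.
\end{itemize}
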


\begin{defn}\label{def:b9.8} We call $(\gm, \dl)$ the \textbf{narrow channel} with \textbf{bottom} $\gm'$ and \textbf{ceiling} $\dl'$, and refer to these pens as
\textbf{narrow channels above} $\sig(P)$.
\end{defn}
Here is an illustration  such a channel
   \begin{equation}\label{eq:b9.3}
  \xymatrixrowsep{3mm}
\xymatrixcolsep{6mm}
    \xymatrix@R=0.5em@C=0.1em{
     \\ \\
    \\
    \\ \\  \\
  {u_{k+1,0}}    \\
  {u_{k,0}}
     \\
     \\
    & } \hskip -1.5em
         \xymatrix@R=0.7em@C=1.8em{
    u_{r,0} \ar@{-}[rrrrr]  && &&& u_{r,s}
    \\
    \\
   &&& \bullet \ar@{->}[r] \ar@{-}[rr] & & {\bullet} &  \hskip -14mm {u_{\ell+1,s}}
    \\
    &\bullet \ar@{-}[rr]^{\dl} \ar@{->}[r] & & \bullet \ar@{->}[u]  & \bullet \ar@{->}[r] &  {\bullet} &   \hskip -17mm {u_{\ell,s}}
    \\
   {\bullet} \ar@{->}[r] \ar@{->}[r] &\bullet  \ar@{->}[u] & \bullet \ar@{->}[r]
     \ar@{-}[rr]& & \bullet \ar@{->}[u]  &
        \\
 {\bullet}  \ar@{-}[rr]^{\gm} \ar@{->}[r] \ar@{-}[u] &  &\bullet   \ar@{->}[u] && &
     \\
     \\
     u_{0,0} \ar@{-}[uuuuuuu] \ar@{-}[rrrrr] && &&& \ar@{-}[uuuuuuu] u_{0,s}
     }
   \end{equation}
We introduce additional terminology for walks in the blue grid $G$. This will help us analyze  pens.

\begin{defn}\label{def:b9.9} We say that a walk  $\gm$ in $G$ is \textbf{transient} (in $G$), if $\gm$ starts at  a node ~$u_{k,0}$ on the vertical line $\sig(Q)$
and ends with  a node $u_{\ell,s}$ at the last vertical line of  $G$ (i.e., the vertical line starting at  $u_{0,s}$).
\end{defn}
For example, the walks $\gm'$ and $\dl'$ in the above diagram are transient. The walks $\gm$ and $\dl$, which form the lower and upper parts of the depicted pen, are also transient.  We denote the set of transient walks in $G$ by $\bTrns(G)$.

%In consequence of Remark \ref{rem:b9.6} we observe the following.
\begin{remark}\label{rem:b9.10} It follows by  Remark \ref{rem:b9.6} that, if $\gm$  is a transient walk in $G$, then the set $\gm \cap L$ of nodes of $\gm$ on any vertical line $L$ is an \textbf{interval} on $L$, i.e., a convex subset of the finite totally ordered set~ $L$.
\end{remark}

\begin{defn}\label{def:b9.11}
$ $
\begin{enumerate} \ealph
  \item  Given blue walks $\gm_1, \gm_2 \in \Trns(G)$, we say that
  $\gm_2$ \textbf{dominates} $\gm_1$, and write $\gm_1 \leq  \gm_2$, if, on no vertical line $L$ of $G$, a node of $\gm_1 \cap L $ is strictly above a node of $\gm_2\cap L$. In other words,  the interval $\gm_1 \cap L$ lies below the interval
  $\gm_2 \cap L$ except that the last node of $\gm_1$ on $L$ may coincide with the first node of $\gm_2$ on ~$L$.
  \item We say that
  $\gm_2$ \textbf{strictly dominates} $\gm_1$, and write $\gm_1 <  \gm_2$, if,
  for every vertical line~ $L$, the nodes of  $\gm_1$ on $L$ lie below the nodes of $\gm_2$ on $L$.
\end{enumerate}
\end{defn}

It follows then that
\begin{align*}
  \gm_1 \leq \gm_2, \gm_2\leq \gm_3 & \Rightarrow   \gm_1 \leq \gm_3, &
  \gm_1 \leq \gm_2, \gm_2 <  \gm_3 & \Rightarrow  \gm_1 < \gm_3, \\
    \gm_1 < \gm_2, \gm_2\leq \gm_3 & \Rightarrow  \gm_1 < \gm_3, &
  \gm_1 \leq \gm_2, \gm_2\leq \gm_1 & \Leftrightarrow  \gm_1 = \gm_2.
\end{align*}
The next proposition is evident from the definitions and the nature of blue grids.

\begin{prop}\label{prop:b9.13} Let $\gm_1$, $\gm_2$, and  $\dl$ be transient walks  in $G$ with $\gm_1 < \dl$ and $\gm_2 < \dl$.
\begin{enumerate}\ealph
  \item There  exists a unique $\eta \in \Trns(G)$, denoted as
  $\eta = \gm_1 \vee \gm_2$, such that $\eta \leq \zeta$ for any $\zeta < \dl$ satisfying
  $\gm_1 \leq \zeta$ and $\gm_2 \leq \zeta$.
  \item The walk $\eta $ is dominated by the bottom $\eps'$ of the narrow channel $(\eps,\dl)$ with ceiling $\dl'$, and hence  $\eta \leq \eps < \dl'$.
\end{enumerate}
\end{prop}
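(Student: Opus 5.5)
We model transient walks combinatorially via Remark~\ref{rem:b9.6} and Remark~\ref{rem:b9.10}. A transient walk $\zeta\in\Trns(G)$ is a monotone lattice path in the $(r+1)\times(s+1)$ grid, going east or north, from the line $\sig(Q)$ to the last vertical line. On each vertical line $L_j$ (column $j$) its nodes form an interval $[a_j(\zeta),b_j(\zeta)]$ of row indices. Since the only steps are east and north, $b_j(\zeta)=a_{j+1}(\zeta)$ for $0\le j<s$. So $\zeta$ is encoded by the nondecreasing sequence $a_0\le a_1\le\cdots\le a_s\le b_s$, with $b_j=a_{j+1}$.

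\textbf{Translating domination.} First we translate Definition~\ref{def:b9.11}. The relation $\zeta_1\le\zeta_2$ means $b_j(\zeta_1)\le a_j(\zeta_2)$ on every column $j$. Strict domination $\zeta_1<\zeta_2$ means $b_j(\zeta_1)<a_j(\zeta_2)$ for all $j$.

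\textbf{Part (a).} Given $\gm_1,\gm_2<\dl$, we define $\eta$ columnwise. Put $a_j(\eta):=\max(b_j(\gm_1),b_j(\gm_2))$ for $0\le j\le s$, and let $b_s(\eta):=a_s(\eta)$. Then:
\begin{itemize}
\item the sequence $a_j(\eta)$ is nondecreasing, because each $b_j(\gm_i)$ is nondecreasing in $j$;
\item hence it determines a genuine monotone path, with $b_j(\eta)=a_{j+1}(\eta)$, so $\eta\in\Trns(G)$;
\item by construction $b_j(\gm_i)\le a_j(\eta)$, so $\gm_1,\gm_2\le\eta$.
\end{itemize}
Now let $\zeta<\dl$ with $\gm_1,\gm_2\le\zeta$. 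Then $a_j(\zeta)\ge a_j(\eta)$. We must show $b_j(\eta)=a_{j+1}(\eta)\le a_j(\zeta)$, which is what $\eta\le\zeta$ requires.

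This is the main obstacle. The bound $a_{j+1}(\eta)=\max_i b_{j+1}(\gm_i)\le a_{j+1}(\zeta)$ is immediate, but a priori $a_{j+1}(\zeta)$ may exceed $a_j(\zeta)$. So the plan is to check whether ``minimal'' should be taken as the columnwise minimal path. If the inequality fails for the naive $\eta$, we modify the definition to the least path lying above both $\gm_i$ in the domination order. Concretely, define $a_j(\eta)$ as the smallest row index with $a_j(\eta)\ge b_j(\gm_i)$ and $a_{j}(\eta)\ge b_{j-1}(\eta)$, computed recursively from left to right, and then argue by induction on $j$ that $a_j(\eta)\le a_j(\zeta)$ for every admissible $\zeta$.

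Uniqueness then follows from antisymmetry ($\gm_1\le\gm_2\le\gm_1\Rightarrow\gm_1=\gm_2$). Two such $\eta$ would each be required to lie below the other, provided each is itself an admissible $\zeta$, i.e.\ $\eta<\dl$. That condition comes from $\gm_i<\dl$: columnwise, $b_j(\gm_i)<a_j(\dl)$ gives $a_j(\eta)<a_j(\dl)$, and the same recursion keeps $b_j(\eta)<a_j(\dl)$.

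\textbf{Part (b).} We take the narrow channel whose ceiling is $\dl'=\dl$ (Proposition~\ref{prop:b9.7}, Definition~\ref{def:b9.8}). Its bottom $\eps'$ consists of the nodes one step south of $\dl$, so $a_j(\eps')=a_j(\dl)-1$, and it is transient. Then $\eps<\dl'$ holds by construction, since the channel is a pen of width one.

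Next, $\gm_i<\dl$ gives $b_j(\gm_i)\le a_j(\dl)-1=a_j(\eps')$, so $\gm_i\le\eps'$. We also need $\eps'<\dl$, which holds. Thus $\eps'$ is an admissible $\zeta$ in (a), and the minimality in (a) yields $\eta\le\eps'\le\eps<\dl'$. The transitivity rules listed after Definition~\ref{def:b9.11} finish the chain.
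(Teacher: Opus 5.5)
Your translation of Definition~\ref{def:b9.11} ($\zeta_1\le\zeta_2$ iff $b_j(\zeta_1)\le a_j(\zeta_2)$ for all $j$) is correct, and you locate the crux correctly, but the proposal stops there. The relation $\eta\le\zeta$ requires $b_j(\eta)=a_{j+1}(\eta)\le a_j(\zeta)$, and your repair does not address this. In your encoding $b_{j-1}(\eta)=a_j(\eta)$ holds identically, so the constraint $a_j(\eta)\ge b_{j-1}(\eta)$ is vacuous. Since $m_j=\max_i b_j(\gm_i)$ is already nondecreasing, the recursion just returns the naive $\eta$. Moreover, an induction giving $a_j(\eta)\le a_j(\zeta)$ bounds only the lowest node of $\eta$ on $L_j$, not the highest.

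The needed inequality is in fact false, because $\le$ is not reflexive: a walk with a north step on some vertical line does not dominate itself. Take $r=3$, $s=2$, $\gm_1=\gm_2\colon u_{0,0}\to u_{0,1}\to u_{1,1}\to u_{1,2}$ and $\dl\colon u_{3,0}\to u_{3,1}\to u_{3,2}$, so that $\gm_i<\dl$. The walk $\zeta\colon u_{0,0}\to u_{1,0}\to u_{1,1}\to u_{1,2}$ satisfies $\zeta<\dl$ and $\gm_i\le\zeta$. But every $\eta$ with $\gm_1\le\eta$ has its nodes on $L_1$ at height $\ge 1$. Hence $b_0(\eta)=a_1(\eta)\ge 1>0=a_0(\zeta)$, and $\eta\not\le\zeta$. (Your naive $\eta$ is this very $\zeta$, and $\zeta\not\le\zeta$.) So no upper bound of $\gm_1,\gm_2$ lies below all admissible $\zeta$. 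If instead $\eta$ is not required to be an upper bound, uniqueness fails: both $u_{0,0}\to u_{0,1}\to u_{0,2}$ and $u_{0,0}\to u_{0,1}\to u_{0,2}\to u_{1,2}$ lie below every admissible $\zeta$. So (a) cannot be proved as stated; the paper gives no argument beyond calling it evident.

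The same defect breaks your other steps. For $\eta<\dl$ you only know $b_j(\eta)=m_{j+1}<a_{j+1}(\dl)$, and $a_{j+1}(\dl)$ may exceed $a_j(\dl)$. For example, with $\dl\colon u_{1,0}\to u_{2,0}\to u_{2,1}\to u_{2,2}$ and $\gm_1$ as above, $\gm_1<\dl$ but $b_0(\eta)=1=a_0(\dl)$. In (b), the south translate $\eps'$ of $\dl$ satisfies $\eps'<\dl$ only if $\dl$ has no north step: if $a_j(\dl)<b_j(\dl)$ then $b_j(\eps')=b_j(\dl)-1\ge a_j(\dl)$. Likewise $\eps'\le\eps$ fails as soon as $\eps'$ has a north step. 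So $\eps'$ need not be admissible, and the chain $\eta\le\eps'\le\eps<\dl'$ collapses.

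A correct version needs a reflexive order, e.g.\ comparing the nondecreasing sequences $(a_0,\dots,a_s,b_s)$ componentwise (``lies weakly below as a lattice path''). There the join is the componentwise maximum and (a) is immediate, but that is a reformulation of the proposition, not a proof of it.
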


Assume that $A$ is a nonempty set of the blue grid $G$, viewed as a partially ordered subset of $G$ under the restriction of $\leq_R$ to $G$.

\begin{defn}\label{def:b9.14}
$ $\begin{enumerate} \ealph
     \item The \textbf{core} $\crA$ of $A$ is the union of all meshes \eqref{eq:b9.1} contained in $A$, in other words the union of all sets
         $\{u_{i,j}, u_{i+11,j}, u_{i,j+1}, u_{i+1 ,j+1} \} \subset A$.
     \item
      The \textbf{NE-completion} $\olA$ of $A$ (short for ``north east completion'') is the smallest set $B \subset G$ such that  $B \supset A$ and $B = \crB$, which
      consists of all nodes that can be reached  from a node $a \in A $ by a blue walk of length $1$ or a ``mixed''  blue walk of length $2$ (i.e., north-east, or east-north).

   \end{enumerate}

\end{defn}

It may happen that $B = \emptyset$; however, if $B \neq \emptyset$, then $B$ is clearly unique. %We give some examples.

\begin{examples}
  \label{exmp:b9.15} $ $
  \begin{enumerate} \ealph
    \item Let $A = \sig(P) \cup \sig(Q)$. Then, $\crA = \emptyset$, and
    $\olA$ is the union of the two rectangles
    $\{u_{0,0}, u_{r,0}, u_{0,1}, u_{r,1} \}$
and
    $\{u_{0,0}, u_{1,0}, u_{0,s}, u_{1, s } \}$.

   \begin{equation}\label{eq:b9.4}
  \xymatrixrowsep{3mm}
\xymatrixcolsep{6mm}
         \xymatrix@R=0.5em@C=1.8em{
    u_{r,0} \ar@{-}[r]  & u_{r,1} & &&&
    \\
    \\
     \\
     \ar@{-}[uuu] \ar@{-}[rrrrr] & ^{ \qquad u_{1,1}} & &&&  u_{1,s}
     \\
     u_{0,0} \ar@{-}[uuuu] \ar@{-}[rrrrr] &  \ar@{-}[uuuu] & &&& \ar@{-}[u] u_{0,s}
     }
   \end{equation}
For every node $u \neq u_{1,1}$ in this union there is a unique shortest walk from $A$ to
$u$ of length $1$, while  for $u = u_{1,1}$ there are two shortest walks from $A $ to
$B$, both of length $1$.
\item If the set $A$ is a singleton  $\{ u_{i,j}\} $ with $0 \leq i <  r$ and  $0 \leq j < s$, then
    $M = \{u_{i,j}, u_{i+1,j}, u_{i,j+1}, u_{i+1, j+1 } \}$. The same holds, if $A$ is a proper subset of $M$ containing ~$u_{i,j}$. However, if $u_{i,j} \notin A$, then $\olA = \emptyset$.

    \item Assume that $A$ is is the set of nodes of a walk $\gm' $  from $u_{k,0}$ to $u_{\ell, s}$ with
    $0 \leq k \leq \ell < s$. Then, $\olA$ is the narrow channel $(\gm,\dl)$ with bottom
    $\gm'$ and ceiling $\dl'$ above $\sig(P)$, cf. Definition~\ref{def:b9.8}, as depicted in Diagram \eqref{eq:b9.3}.
  \end{enumerate}
\end{examples}

\begin{prop}\label{prop:b9.16}
Let $(A_\lm \mid \lm \in \Lm)$ be a family of subsets of $G$ such that,
for each $\lm \in \Lm$,  the NE-completion~ $\olA_\lm$ exists.
Then, $\bigcup _{\lm \in \Lm } A_\lm$ has the NE-completion
$$\overline{\bigcup _{\lm \in \Lm } A_\lm} = \bigcup _{\lm \in \Lm } \olA_\lm.$$

\end{prop}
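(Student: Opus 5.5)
The plan is to work directly from Definition~\ref{def:b9.14}(b), which characterises the NE-completion of a set $A\subseteq G$ in two ways. It is the smallest $B\supseteq A$ with $B=\crB$. Equivalently, it is the set of nodes reachable from some $a\in A$ by the trivial walk, by a blue walk of length $1$ (one step east or one step north), or by a mixed blue walk of length $2$ (east--north or north--east). I will use the second description as the working characterisation. Under this description $\olA=\bigcup_{a\in A}\overline{\{a\}}$, where $\overline{\{a\}}$ is the mesh $\{u_{i,j},u_{i+1,j},u_{i,j+1},u_{i+1,j+1}\}$ whenever $a=u_{i,j}$ with $i<r$ and $j<s$, as in Examples~\ref{exmp:b9.15}(b). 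Once this ``local'' description is in hand, the statement is formally a union of unions, and the identity follows by reassociating:
\[
\overline{\textstyle\bigcup_\lm A_\lm}=\textstyle\bigcup_{a\in\bigcup_\lm A_\lm}\overline{\{a\}}=\bigcup_\lm\bigcup_{a\in A_\lm}\overline{\{a\}}=\bigcup_\lm \olA_\lm .
\]

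To make this rigorous and not depend only on the reachability wording, I will also verify the two inclusions directly from the minimality description.

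For ``$\supseteq$'': set $B=\overline{\bigcup_\lm A_\lm}$. Then $B\supseteq A_\lm$ and $B=\crB$, so by minimality $\olA_\lm\subseteq B$ for every $\lm$.

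For ``$\subseteq$'': set $C=\bigcup_\lm\olA_\lm$. Clearly $C$ contains $\bigcup_\lm A_\lm$. Each $\olA_\lm$ equals its core, so it is a union of meshes. A union of sets each of which is a union of meshes is again a union of meshes contained in it, hence $C=\crC$. By minimality, $\overline{\bigcup_\lm A_\lm}\subseteq C$.

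The main obstacle is existence. The hypothesis that each $\olA_\lm$ exists is exactly what is needed to show that $C$ is a legitimate competitor, namely a nonempty set with $C=\crC$ containing the union, so that the completion of the union exists at all. There is a further subtlety: the existence of ``the smallest'' such $B$ requires that the admissible sets be closed under intersection, and the intersection of two unions of meshes need not be a union of meshes. I would therefore fall back on the reachability description, which is how the paper in effect defines $\olA$, and justify the identity through the pointwise formula above. The existence of each $\olA_\lm$ guarantees that every $a\in A_\lm$ is a lower-left corner of some mesh, as in Examples~\ref{exmp:b9.15}(b). This property then holds for every point of the union, so the completion of the union exists and is given by the displayed union.
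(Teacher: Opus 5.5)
Your core argument is the paper's own. The NE-completion is, in practice, the set of nodes reachable from $A$ by a trivial walk, a length-one walk, or a mixed length-two blue walk. That set is computed node by node, so it commutes with unions, and $\bigcup_\lm \olA_\lm$ is a union of meshes because each $\olA_\lm$ is. The paper's proof says exactly this: $B=\bigcup_\lm\olA_\lm$ is a union of meshes, and every $x\in B$ is reached from some $a\in A_\lm$. You are also right that the ``smallest'' wording in Definition~\ref{def:b9.14}(b) cannot be taken literally (an interior singleton lies in several minimal unions of meshes), so reading the definition as reachability is correct. Your minimality-based inclusions are conditional on existence and add nothing beyond that.

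One claim is false and should be dropped. In your last paragraph you say that existence of $\olA_\lm$ forces every $a\in A_\lm$ to be the lower-left corner of a mesh. Examples~\ref{exmp:b9.15}(a) contradicts this: $A=\sig(P)\cup\sig(Q)$ contains $u_{r,0}$, which lies on the top row, and yet $\olA$ exists. For the same reason, $\overline{\{a\}}$ need not exist for such boundary nodes. In your displayed formula, replace $\overline{\{a\}}$ by the reachable set $N(a)$ of $a$; this equals the mesh at $a$ only when $i<r$ and $j<s$. Existence then follows from
\[
N\Big(\textstyle\bigcup_\lm A_\lm\Big)=\textstyle\bigcup_\lm N(A_\lm)=\bigcup_\lm \olA_\lm ,
\]
which is a union of meshes by the observation in your ``$\subseteq$'' paragraph. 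No statement about individual points of $A_\lm$ is needed.
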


\begin{proof}
  $B=  \bigcup _{\lm \in \Lm } \olA_\lm$ is a union of the meshes of $G$. If $x \in B$, then
  $x \in \olA_\lm$ for some $\lm \in \Lm$. Hence, there exists a node $a \in A_\lm \subset B$
  with a blue walk of length $1$ or a mixed blue walk of length~ $2$ from $a$ to $x$.
\end{proof}
We call the subwalks of length $1$ of a blue walk $\gm$
the \textbf{basic subwalks} of $\gm$.  Subwalks of $A = \crA$ are mostly ``thin'' with respect to NE-completion.

\begin{prop}\label{prop:b9.17}
Let   $\gm$ be a union of walks in the blue grid $G$ of size $ r\times s $ (cf.  Construction~\ref{cons:b9.3}), and let ~$A$ be  a subset of $G$ with the following properties.
\begin{enumerate}\ealph
  \item $A$ is a union of  meshes of $G$, $A = \crA$.
  \item For every horizontal basic subwalk $\dl$ of $\gm$ in $A$, there exists a mesh
  $M \subset A$ whose north horizonal border is $\dl$.
  \item For every vertical basic subwalk $\eps$ in $A$, there exists a mesh $N \subset A$
      whose east vertical border is $\eps$.
\end{enumerate}
$$
   \xymatrixcolsep{6mm}
    \xymatrix@R=1em@C=1.5em{
      \ar@{..}[rr]^{\dl} &  &  \\
 && \\
      \ar@{-}[rr]_{M} \ar@{->}[r] \ar@{->}[u]  \ar@{-}[uu]&  &   \ar@{->}[u]
      \ar@{-}[uu] \\
     }
     \qquad
       \xymatrix@R=1em@C=1.5em{
      \ar@{-}[rr] \ar@{->}[r] &  &  \\
 && \\
      \ar@{-}[rr]_{N} \ar@{->}[r] \ar@{->}[u]  \ar@{-}[uu]&  &
      \ar@{..}[uu]_\eps \\
     }
$$

Then, $A\sm \gm$ has the NE-completion $(A \sm \gm)^- = A$.\footnote{A memory aid: think  of  $A$ as  a union of carpets $A_\lm$ with tears $A_\lm \sm (A \cap \gm) = A_\lm \sm \gm$. }

\end{prop}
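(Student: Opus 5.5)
We have to show $\overline{A\sm\gm}=A$. By Definition~\ref{def:b9.14}(b), $\overline{A\sm\gm}$ is the smallest set $B\supseteq A\sm\gm$ with $B=\crB$. It consists of the nodes reachable from $A\sm\gm$ by a blue walk of length $1$ or by a mixed blue walk of length $2$. The plan has two inclusions: first that $A$ itself is an admissible $B$, then that every node of $A$ is reached from $A\sm\gm$ in this sense.

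For $\overline{A\sm\gm}\subseteq A$: hypothesis (a) says $A=\crA$, and clearly $A\supseteq A\sm\gm$, so $A$ is one of the admissible sets $B$. Minimality of the NE-completion then gives this inclusion. It remains to check that $A\sm\gm$ is not empty, so that the completion exists. A mesh in $A$ has four nodes, while $\gm$ meets each mesh only along walks going east or north; I expect the southwest corner argument below to settle this.

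For $A\subseteq\overline{A\sm\gm}$: fix a mesh $M=\{u_{i,j},u_{i+1,j},u_{i,j+1},u_{i+1,j+1}\}\subseteq A$. By Examples~\ref{exmp:b9.15}(b), $\overline{\{u_{i,j}\}}=M$. So if the southwest corner $u_{i,j}$ is not in $\gm$, then $M\subseteq\overline{A\sm\gm}$ by Proposition~\ref{prop:b9.16}. The main obstacle is the case $u_{i,j}\in\gm$. Here I use hypotheses (b) and (c) and induct on $i+j$, descending southwest.

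If $\gm$ leaves $u_{i,j}$ going east (resp.\ north), that bridge is a horizontal (resp.\ vertical) basic subwalk of $\gm$ lying in $A$. By (b) (resp.\ (c)) there is a mesh $M'\subseteq A$ directly south (resp.\ west) of $M$, and its southwest corner has smaller index sum. If instead $\gm$ only enters $u_{i,j}$, use the incoming bridge in the same way to get a mesh further south or west. Iterating, we reach a mesh of $A$ whose southwest corner avoids $\gm$; such a mesh must exist, since the grid is finite and the meshes touching the bottom-left corner $u_{0,0}$ have nothing further south or west. Having found it, we propagate forward to show that every node of $M$ is reached.

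The key point in the propagation is that $M'$ and $M$ share an edge, so the nodes $u_{i,j}$ and $u_{i,j+1}$ (resp.\ $u_{i+1,j}$) of $M$ lie in $\overline{M'}$. The remaining corners of $M$ are reached from $u_{i,j}$, which lies in $\overline{A\sm\gm}$ by induction, via length-$1$ or mixed length-$2$ walks. Since $\overline{A\sm\gm}=\crB$ is a union of meshes closed under this completion, $M\subseteq\overline{A\sm\gm}$. I expect the careful bookkeeping, namely that the chain of meshes produced by (b) and (c) stays inside $A$ and terminates, to be the delicate step. Taking the union over all meshes, using (a), gives $A\subseteq\overline{A\sm\gm}$, and the two inclusions together give $\overline{A\sm\gm}=A$.
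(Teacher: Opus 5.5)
\textbf{There is a genuine gap, and it is in the forward propagation.} After your southwest descent you know that some mesh $M'$ whose southwest corner avoids $\gm$ lies in $\overline{A\sm\gm}$. You then argue that the remaining corners of $M$ are reached from $u_{i,j}$, which is already in $\overline{A\sm\gm}$. From this you conclude $M\subseteq\overline{A\sm\gm}$, because that set is a union of meshes ``closed under this completion''.

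But the NE-completion only contains nodes reached by a walk of length $1$, or a mixed walk of length $2$, from a node of the \emph{original} set $A\sm\gm$. It is not closed under adjoining the mesh northeast of each of its nodes, and $B=\crB$ does not give you that: a single mesh does not contain the mesh above its northeast corner. So whenever your descent passes through more than one mesh, the conclusion does not follow.

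This cannot be fixed by bookkeeping. Take $A=\{u_{0,0},u_{0,1},u_{1,0},u_{1,1},u_{2,0},u_{2,1}\}$, two stacked meshes, and let $\gm$ be the union of the east walks $u_{1,0}\to u_{1,1}$ and $u_{2,0}\to u_{2,1}$. Then (a)--(c) hold, with (c) vacuous, and your descent from the upper mesh correctly lands on the lower one. Yet $A\sm\gm=\{u_{0,0},u_{0,1}\}$, the smallest union of meshes containing it is the lower mesh, and neither $u_{2,0}$ nor $u_{2,1}$ is reached from it by a walk of length $1$ or a mixed walk of length $2$.

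\textbf{How the paper argues.} The paper's proof is a one-step argument. Each node of $\gm$ in $A$ is reached by a single north or east step from a node of $A\sm\gm$, namely the node directly south (resp.\ west) of it in the mesh supplied by (b) (resp.\ (c)). That is the only mechanism the completion offers. It tacitly needs that neighbouring node not to lie on $\gm$ itself, i.e.\ that the tears are separated, as in the example after the proposition where $J$ contains no two consecutive indices. The example above shows that (a)--(c) alone do not guarantee this. So you should make that separation hypothesis explicit and argue in one step rather than by induction.

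\textbf{A second problem.} Your treatment of the case where $\gm$ does not leave $u_{i,j}$ is also unsound. There may be no bridge of $\gm$ inside $A$ at $u_{i,j}$ at all, for instance a walk of length $0$ or one arriving from outside $A$; then (b) and (c) say nothing about $u_{i,j}$. When the incoming bridge does lie in $A$, the mesh you obtain sits diagonally southwest of $M$ and shares only the corner $u_{i,j}$ with it, not the edge your propagation step assumes.
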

\begin{proof}
  Every node of $\gm$ in $A$ can be reached from a node in in ~$A \sm \gm$ by a walk of length $1$ north or east direction.
\end{proof}

%Using Proposition \ref{prop:b9.17} we can repair a family of  tears in the union of carpets
%$A$ (see footnote~4) which roughly are apart by at least two nodes horizontally and vertically.

%We give two examples.

\begin{example}Let $0 < c < r$, $0 < d < s$, $\gm_1 = [u_{c,0}, u_{c,s}]$
$\gm_2 = [u_{0,d}, u_{r,d}]$, $\gm_3 = [u_{r,0}, u_{r,s}]$, and $\gm_4 = [u_{0,s}, u_{r,s}]$.
Then, $G \sm (\gm_1 \cup \gm_2 \cup \gm_3 \cup \gm_4)$ has the NE-completion $A = G$.

  \begin{equation}\label{eq:b9.4}
  \xymatrixrowsep{3mm}
\xymatrixcolsep{6mm}
         \xymatrix@R=0.5em@C=1.8em{
    u_{r,0} \ar@{.}[rrrrr] \ar@{-}[r]  & & &&&  u_{r,s}
    \\
    \\
     \\
     \ar@{-}[uuu] \ar@{.}[rrrrr] \ar@{-}[r] & ^{  u_{c,d}} & &&&
     \\
     u_{0,0} \ar@{-}[uuuu]  \ar@{-}[rrrrr] &  \ar@{.}[uuuu]  \ar@{-}[u]& &&& \ar@{.}[uuuu] u_{0,s}
     }
   \end{equation}

\end{example}
\begin{example}
  Let $A = (\gm_0,\gm_1) \cup \cdots \cup (\gm_{d-1},\gm_d)$ with narrow channels
  $(\gm_j,\gm_{j+1})$ above $\sig(P)$. Choose a set
  $J \subset \{ 1, \dots,d \}$ that does \textbf{not} contain any consecutive numbers
  $k, k+1$. By the construction of narrow channels (Definition \ref{def:b9.8}),
   $A$ is an  NE-completion of
   its subset $A \sm \bigcup_{j \in J} \gm_j$.
   \end{example}

By using NE-completions, we can also repair ``holes'' in a blue carpet.
% in a symmetric way.
\begin{construction}\label{const:b9.19}
  Let $G$ be a blue grid of size $r \times s$. Choose numbers $c,d, k,\ell$ such that
  $0 < c < c + k \leq r$ and  $0 < d< d +\ell \leq s$, to obtain a rectangular $k \times \ell $ blue subgrid $B \subset G$   with corners
  $u_{c,d}$, $u_{c+k,d}$, $u_{c,d+\ell}$, and $u_{c+k,d+\ell }$. We call  $A = G \sm B$ a \textbf{carpet in $G$
 with hole $B$}.
 Let $B'$ be the $(k-1) \times (\ell -1)$ subrectangle of $B$ with corners
 $u_{c+1,d+1}$, $u_{c+k,d+1}$, $u_{c+1,d+\ell}$, and $u_{c+k,d+\ell }$.  Then, $A = \crA$ has the NE-completion $\olA = G \sm B'$, except when  $k =1$ or $\ell =1$, in which case $\olA = G$, cf. the diagram

    \begin{equation}\label{eq:b9.5}
  \xymatrixrowsep{3mm}
\xymatrixcolsep{6mm}
         \xymatrix@R=0.5em@C=1.8em{
    u_{r,0} \ar@{-}[rrrrrr]  && &&& & u_{r,s}
    \\
     &    u_{c+k,d} \ar@{-}[r] & u_{c+k,d+1} \ar@{-}[rrr]  && &u_{c+k,d+\ell}
    \\
    \\
    \\ \qquad  G  & \qquad B & & \qquad B'
        \\
    &  &  u_{c+1,d+1} \ar@{-}[rrr] \ar@{-}[uuuu]  & && u_{c+1,d+\ell} \ar@{-}[uuuu]
     \\
    &    u_{c,d} \ar@{-}[rrrr] \ar@{-}[uuuuu]  && && u_{c,d+\ell} \ar@{-}[u]
     \\
     u_{0,0} \ar@{-}[uuuuuuu] \ar@{-}[rrrrrr] && &&& & \ar@{-}[uuuuuuu] u_{0,s}
     }
   \end{equation}

   In detail, for a node $x$ in the interval
    $[u_{c+1,d+1}, u_{c+k,d+1}]$
there exists  a unique node $a_1$ in
$[u_{c,d}, u_{c+k,d}]$ with horizontal basic walk from $a_1$ to $x$, and  for $y$ in
$[u_{c+1,d+1}, u_{c+1,d+\ell}]$
there exists  a unique vertical basic walk from a node  $a_2$ in  $[u_{c,d}, u_{c+1,d+\ell}]$
to $y$. Iterating this completion procedure, the hole vanishes after $k$ steps if $k \leq \ell $, and after $\ell$ steps if $\ell \leq k$. We  have ``repaired'' the carpet $A$ with  hole $B$.
\end{construction}

\section{Replacing $\SG$ by a larger finite subset $T$ of $\Gap(R,U)$}\label{sec:b10}
Let $(R,U)$ be an additive submonoid  $(R,+)$ with an \as\ submonoid $U$, Assume that every $ u \in U$ has finite gap height in $(R,U)$, where $U = \FGH(R,U)$. Given finite subsets $S$ and $T$ of $\Gap(R,U)$, we look for relations between the blue $S$-walks and blue $T$-walks in
 $\stab_S(R,U)$ and $\stab_T(R,U)$. We focus first on the case  $S \subset T$, in which
 $\stab_T(R,U) \subset \stab_S(R,U)$, cf. Remark \ref{rem:a9.12}. Any blue $T$-walk $\gm$ can be shifted  to a blue $S$-walk $\gm + w$ in $\stab_T(R,U)$ by adding a suitable  element $w \in U$ to all nodes.
 Two such walks $\gm + w_1$, $\gm + w_2$ can be shifted  to the same blue walk
 $\gm + w_1 + w _2 $ by adding $w_2$ and $w_1$, respectively, cf. the comment following Definition~ \ref{def:a9.19}.

 We first analyze what happens to a blue walk $\gm : \xymatrix@R=0.9em@C=1.1em{
  u_{0} \ar@{->}[r]^{s} & u_1
}$ of length $1$; that is, $\gm$ is  a set of two nodes $u_0$ and $u_1$ with
$u_0 + s = u_1$, where  $s$ a minimal element of $Z(S) \cap U$. Then,
$s\in Z(T) \cap  U$; however, often $s$ is not minimal in $Z(T) \cap U$. We choose a decomposition
$s = t_1 + \cdots + t_r$ with minimal elements in $Z(T) \cap U$, and obtain a blue walk
$
\xymatrix@R=0.9em@C=1.7em{u_0 \ar@{->}[r]^{\quad  t_1} & v_1 \ar@{->}[r]^{t_2} & \cdots \ar@{->}[r]^{t_r\quad } & v_r = u_1
}
$
which, after addition of a suitable $w \in U$, remains in $\stab_T(R,U)$
(e.g., $w = v_1 + \cdots + v_r$).

Although the walk $ \xymatrix@R=0.9em@C=1.1em{
  u_{0} \ar@{->}[r]^{s} & u_1
}$  is  simple, the new walk
$$ \gm':
\xymatrix@R=0.9em@C=1.7em{
(u_0 \ar@{->}[r]^{\quad  t_1} & v_1 \ar@{->}[r]^{t_2} & v_2 \ar@{->}[r] & \cdots \ar@{->}[r]^{t_r\quad } & v_r = u_k) + w
}
$$
can be more intricate, and there are  several choices for $\gm'$. This procedure for  producing  $T$-walks from
 $ \xymatrix@R=0.9em@C=1.1em{
  u_{0} \ar@{->}[r]^{s} & u_1
}$ readily generalizes to any blue $S$-walk
$
\xymatrix@R=0.9em@C=1.7em{
u_0 \ar@{->}[r]^{\quad  s_1} & u_1 \ar@{->}[r]^{s_2} &  \cdots \ar@{->}[r]^{s_m\quad } &  u_m
}
$
by trivially splicing the walks
 $ \xymatrix@R=0.9em@C=1.1em{
  u_{0} \ar@{->}[r]^{s_1} & u_1
}
, \dots,$ $
  \xymatrix@R=0.9em@C=1.1em{
  u_{m-1} \ar@{->}[r]^{s_m} & u_m
}$.
Working modulo the addition of constant elements $w \in U$, we may ask whether a given blue $T$-walk can be obtained from a blue $S$-walk of much shorter length, say of length $1$.

Assume that $S \subset T \subset \Gap(R,U)$. If $\gm$ is a minimal blue $S$-walk in
$\stab_S(R,U)$, we may regard $\gm$ as  a blue $T$-walk $\gm_T$; however, $\gm_T$
may have decomposable  bridges.

\begin{defn} Given a blue $T$-walk $\dl$ in $\stab_T(R,T)$, we say that $\dl$ is a \textbf{derivate} of $\gm$, and equivalently, that $\gm$ is a \textbf{nurse} of
$\dl$, if there exists  $w \in U$ with
$\dl= \gm_T + w$.
\end{defn}
 In this terminology, the walk $\gm'$ above is a derivate of the walk  $ \xymatrix@R=0.9em@C=1.1em{
  u_{0} \ar@{->}[r]^{s} & u_1
}$.
The Translation Lemma~\ref{lem:b9.4}, which is central to our constructions in this section, generalizes as follows.
\begin{prop}\label{prop::b10.2}
Let
$
\xymatrix@R=0.9em@C=1.7em{
\gm: u_0 \ar@{->}[r]^{\quad  t_1} & u_1 \ar@{->}[r]^{t_2} &  \cdots \ar@{->}[r]^{t_r\quad } &  u_r
}
$
be a blue $S$-walk
 in $(R,U)$, and let $z \in Z(S) \cap U$. The translation
$$\gm + z:
\xymatrix@R=0.9em@C=1.7em{
u_0 +z \ar@{->}[r]^{\quad  t_1} & u_1 +z  \ar@{->}[r]^{t_2} & u_2 +z \ar@{->}[r] & \cdots \ar@{->}[r]^{t_r\quad } &  u_r +z,
}
$$
of $\gm$ by $z$
 is again a blue $S$-walk
 in $(R,U)$.
\end{prop}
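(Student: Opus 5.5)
The proof splits into two checks. First, the translated sequence is again an $S$-walk with the same bridges. Second, the blue nodes of the translated walk are exactly the translates of the blue nodes of the original walk.

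For the first check, I would apply Proposition~\ref{prop:a9.1} with $u' = z \in U$. Since every node $u_i$ lies in $U$ and $z \in U$, we get $u_i + z \in U$, and $u_{i-1}+t_i = u_i$ gives $(u_{i-1}+z)+t_i = u_i+z$. So $\gm+z$ is an $S$-walk with the same length and the same bridges $t_i$.

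The same holds for an underlying maximal $S$-walk $P$ with $\sig(P) = \gm$, whose bridges lie in $S$. Its nodes lie in $\stab_S(R,U)$, and by Theorem~\ref{thm:a9.3} (the stability region is an upset) the translated nodes still lie in $\stab_S(R,U)$. Hence $P+z$ is again a maximal $S$-walk in $\stab_S(R,U)$.

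For the second check, I would compare the blue nodes of $P+z$ with those of $P$. Definition~\ref{def:a9.26} declares a node blue according to whether it lies in $U$. Each bridge $t_i$ of $\gm$ is a minimal gap-sum: a sum of elements of $S$ lying in $U$ whose proper partial sums avoid $U$. The key point is to show that $u_{h(i-1)} + z + (\text{proper partial sum})$ is still not in $U$.

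This step is the main obstacle, because in general adding $z$ can push an element back into $U$. My plan is to bypass it by using the intrinsic characterization of the blue walk through the bridges rather than the intermediate nodes. The blue nodes of $\gm+z$ are then read off as the nodes $u_{h(i)}+z$, with bridges $t_i$, which are minimal gap-sums independently of the base point, by Theorem~\ref{thm:a9.25} (length and gap-sum do not depend on the choice of $u_0\in\stab_S(R,U)$).

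If this does not suffice, I would instead argue as in the Translation Lemma~\ref{lem:b9.4}. There $z$ plays the role of the translating element, and the statement only asserts that $\gm+z$ is a blue walk with the same bridge sequence $(t_1,\dots,t_r)$ of minimal gap-sums. I would interpret ``blue $S$-walk'' as a walk whose successive bridges are minimal gap-sums in $U$ for $S$, which is exactly the form \eqref{eq:9.2}. Under that reading the claim follows immediately from the first step, since the bridges are unchanged and lie in $Z(S)\cap U$ with no proper subsum in $U$.
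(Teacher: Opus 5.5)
Your final argument is the paper's proof. From $u_{i-1}+t_i=u_i$ one gets $(u_{i-1}+z)+t_i=u_i+z$, so $\gm+z$ has the same bridges $t_1,\dots,t_r$; since being a minimal gap-sum is a property of $t_i$ alone (Definition~\ref{def:a9.20}), not of the base node, $\gm+z$ is again a blue $S$-walk (only $z\in U$ is actually used).

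You should commit to this bridge-based reading from the start and drop the detour through an underlying maximal walk $P$. That detour is not needed and contains a false step: by Definition~\ref{def:a9.26}, the non-blue intermediate nodes of $P$ lie outside $U$, so they cannot lie in $\stab_S(R,U)$.
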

\begin{proof}
  Note that
  $u_i = u_{i-1} + t_i$, for  $1 \leq i \leq r$, and hence $u_i + z = u_{i-1} + z + t_i$.
\end{proof}

\begin{remark}\label{rem:b10.3}\footnote{We identify a blue $S$-walk in $(R,U)$ with the totally ordered set of its nodes in $U$ with respect to the partial ordering $\leq_R$, now also when $\gm$ is not maximal, cf. Definition \ref{def:a4.3} and \S\ref{sec:a9}.}  $ $
\begin{enumerate} \ealph
  \item $(\gm+ z_1) + z_2 = \gm + (z_1 + z_2)$ for any $z_1,z_2 \in Z(S) \cap U$.
  \item If $\gm$ is a maximal blue $S$-walk (Definition \ref{def:a9.4}), then $\gm+z$ is maximal and has the same length and  gaps as $\gm$.
  \item If $z \in \stab_S(R,U)$, then
  $\gm + z \subset \stab_S(R,U)$
  for any $\gm \subset \stab_S(R,U)$.

  \item
  If $\gm \subset \stab_S(R,U)$, then $\gm + z \in \stab_S(R,U)$ for any $z \in Z(S)\cap U$.

  \item If $\dl$  is a derivate of $\gm$, then $\dl + z $ is a derivate of $\gm + z$
  for any $z \in Z(S)\cap U$.
\end{enumerate}
\end{remark}

\section{$S$-walks with nodes outside the stability region}\label{sec:b10}

We start with an additive \as monoid $(R,U)$, where $U$ has the upper-bound partial  ordering $\leq_ U$, which we often write simply as  $\leq$.
Recall that the monoid $(U,+)$ is cancellative, cf. Definition \ref{defn:1.1}.
A finite set $S \subset \Gap(R,U)$ will be specified only later.

Given $z \in \Gap(R,U)$, we define the sets
$$ M (z) := (U:z)  = \{ u \in U \mid z+u \in U\}
\dss{\text{and}}
\tlM (z) := M(z) + z = (U: z) + z \subset U. $$
These sets are upsets  in the monoid $(U,+) $ with respect to the ordering~ $\leq_U$, and thus  are unions of cosets of $U$ in the semigroup
$(U \sm\00, + )$.

\begin{lem}\label{lem:b10.1}  $ $

\begin{enumerate}\ealph
  \item If $x \in U$, $w \in U$, and $w \leq x$, then $(x:w) + w = x$.
  \item If $V \subseteq U$ is nonempty, $w \in U$, and $w \leq V$, then $(V:w) + w = V$.
\end{enumerate}

\end{lem}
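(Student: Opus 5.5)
The plan is to unwind the colon notation, exactly as it is used for $M(z)=(U:z)$. For a subset $V\subseteq U$ and $w\in U$ we read $(V:w)=\{u\in U\mid w+u\in V\}$, and for an element $x\in U$ we write $(x:w):=(\{x\}:w)$. We read the hypothesis $w\leq x$ in the ordering $\leq_U$ fixed at the start of the section, so it means that $w+u=x$ for some $u\in U$. Both statements then reduce to checking two inclusions of sets.

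For (a), the inclusion $(x:w)+w\subseteq\{x\}$ is immediate from the definition, since every $u\in(x:w)$ satisfies $u+w=x$. For the reverse inclusion, the hypothesis $w\leq_U x$ supplies some $u\in U$ with $w+u=x$. This $u$ lies in $(x:w)$, so $x=u+w\in(x:w)+w$. Hence $(x:w)+w=\{x\}=x$. As a side remark, since $(U,+)$ is cancellative (Definition~\ref{defn:1.1} and the remarks after it), $(x:w)$ is in fact the singleton consisting of the difference of $w$ and $x$. This is not needed for the equality itself.

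For (b), we read ``$w\leq V$'' as $w\leq_U v$ for every $v\in V$. First we decompose $(V:w)=\bigcup_{v\in V}(v:w)$, which holds directly from the definition. Since adding $w$ commutes with unions, this gives $(V:w)+w=\bigcup_{v\in V}\bigl((v:w)+w\bigr)$. Applying (a) to each $v\in V$ turns the right-hand side into $\bigcup_{v\in V}\{v\}=V$. Nonemptiness of $V$ only ensures that the statement is not vacuous.

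The only real obstacle is interpretive: $(x:w)$ must be read as $(\{x\}:w)$, and $\leq$ must be $\leq_U$ rather than $\leq_R$. With $\leq_R$ alone, the bridge from $w$ to $x$ might be a gap element outside $U$. Then $(x:w)$ would be empty, and (a) would fail. So the proof should state explicitly that it uses the $U$-ordering, in line with the section's convention.
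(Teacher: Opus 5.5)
Your proposal is correct and follows the paper's proof essentially verbatim. For (a) you unwind $(x:w)=(\{x\}:w)$ as the set of $y\in U$ with $w+y=x$, and for (b) you write $(V:w)=\bigcup_{v\in V}(v:w)$ and apply (a) to each $v$; your reading of $\leq$ as $\leq_U$ (the section's stated convention) and your check that $w\leq_U x$ makes $(x:w)$ nonempty simply spell out what the paper's ``Clear'' leaves implicit.
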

\begin{proof}
  (a): Clear, since $(x:w ) = (\{ x\} : w) $ is the set of all $y \in U$ with $x = w+y$.
  \pSkip
  (b): $(V:w )= \bigcup_{x \in V}(x:w)$, whence $(V:w) + w = \bigcup_{x \in V} (x:w) + w =
  \bigcup_{x \in V} \{ x \} = V$.
\end{proof}

\begin{thm}\label{thm:b10.2} Assume that $z \in \Gap(R,U)$, $w \in U$, and $w \leq (U:z)$. Then,
$M(z+w) + w = M(z)$.

\end{thm}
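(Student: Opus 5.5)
The plan is to prove the equality $M(z+w)+w = M(z)$ by showing both inclusions directly from the definition $M(x) = (U:x) = \{u \in U \mid x+u \in U\}$. The argument is the same one that gives Lemma~\ref{lem:b10.1}(b). In fact the cleanest route is to establish the identity
$$(M(z):w) = M(z+w)$$
and then apply Lemma~\ref{lem:b10.1}(b) with $V = M(z)$. That lemma is applicable because the hypothesis $w \leq (U:z)$ says exactly that $w \leq V$ and $V \neq \emptyset$, and it then gives $(M(z):w)+w = M(z)$.

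For the inclusion $M(z+w)+w \subseteq M(z)$, let $u \in M(z+w)$, so that $u \in U$ and $z+w+u \in U$. Since $U$ is a submonoid and $w,u \in U$, we have $u+w \in U$. Moreover $z+(u+w) = z+w+u \in U$, so $u+w \in M(z)$. This inclusion does not use the hypothesis on $w$.

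For the reverse inclusion $M(z) \subseteq M(z+w)+w$, let $x \in M(z)$. By hypothesis $w \leq x$, so there is some $y \in U$ with $x = w+y$; this is precisely the nonemptiness of $(x:w)$ used in Lemma~\ref{lem:b10.1}(a). Then $(z+w)+y = z+x \in U$, so $y \in M(z+w)$ and $x = y+w \in M(z+w)+w$. In the language of the colon sets, this argument shows $(M(z):w) \subseteq M(z+w)$, and the first inclusion gives the opposite containment, which yields the identity displayed above.

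The only delicate point, and the step I would check most carefully, is the meaning of $w \leq (U:z)$. The argument requires that the difference $y$ with $w+y = x$ lie in $U$, so the order must be $\leq_U$ on $U$, as fixed at the start of this section and as used in Lemma~\ref{lem:b10.1}. If only $w \leq_R x$ were assumed, the difference $y$ would be unique, since $U$ is \as, but it might be a gap element rather than an element of $U$, and then the second inclusion could fail. So I will state explicitly that $\leq$ means $\leq_U$ here. Uniqueness of $y$, again from the \as property, is not needed for the set equality.
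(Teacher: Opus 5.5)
Your proof is correct and takes essentially the paper's route: the paper also shows $(U:z+w)=((U:z):w)$ (its displayed chain ends with ``$(U:z)+w$'', which should read $((U:z):w)$) and then applies Lemma~\ref{lem:b10.1}(b), with $\leq$ read as $\leq_U$ exactly as you insist. One small correction: nonemptiness of $M(z)$ follows from $z$ being a gap, hence bridging, not from the hypothesis $w\leq (U:z)$, and your direct two-inclusion argument does not need it at all.
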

\begin{proof}  Lemma \ref{lem:b10.1}  implies  that $((U:z):w) +w = (U:z)$. Observe then  that
$$(U:z+w) = \{ x \in U \mid x+w \in (U:z)\}
= \{ x \in U \mid x+w+z \in U\} = (U:z)+w.$$ \vskip -20pt
\end{proof}

It follows by Lemma \ref{lem:a4.7} that the map
$$ \xymatrix@R=0.9em@C=1.7em{
M(z) \ar@{->}[r]^{  +z } & \tlM(z)}, \qquad u \mapsto u +z = \tlu,
$$
is injective. Note that $u < _R \tlu$;  however,  $u$ and $\tlu$ are incomparable  in the ordering $\leq_U$.
\begin{cor}\label{cor:b10.3}
If $w \leq (U:z)$, then we have a commuting triangle of injective maps between upsets in $(U,+)$
$$ \xymatrix@R=0.9em@C=1.7em{
M(z) \ar@{->}[rd]^{  +z \quad } &
\\ & \quad \tlM(z) = \tlM(z+w)\\
M(z+w) \ar@{->}[ru]_{  +(z+w) }  \ar@{->}[uu]_{  +w } . &  }$$

\end{cor}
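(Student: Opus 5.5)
We need to prove Corollary \ref{cor:b10.3}: if $w \leq (U:z)$, there is a commuting triangle of injective maps $M(z+w)\xrightarrow{+w} M(z)$, $M(z)\xrightarrow{+z}\tlM(z)$ and $M(z+w)\xrightarrow{+(z+w)}\tlM(z+w)$, with $\tlM(z)=\tlM(z+w)$.

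The plan is to read everything off Theorem \ref{thm:b10.2}, which gives $M(z+w)+w = M(z)$. Hence $x\mapsto x+w$ maps $M(z+w)$ into, and in fact onto, $M(z)$.

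\begin{enumerate}\eroman
\item \emph{Injectivity of the vertical map.} The map $+w$ is injective because $U$ is cancellative, which holds since $U$ is \as in $R$ (\S\ref{sec:1}). Both $x$ and $x+w$ lie in $U$, so $x+w=x'+w$ forces $x=x'$.
\item \emph{Injectivity of the two slanted maps.} These are injective by Lemma \ref{lem:a4.7}, as already noted before the corollary. Indeed, $u+z=u'+z\in U$ with $u,u'\in U$ implies $u=u'$. The same argument applies to $z+w$ in place of $z$.
\item \emph{Commutativity.} For $x\in M(z+w)$ we have $(x+w)+z = x+(z+w)$ by associativity and commutativity of $(R,+)$. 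So traversing the triangle either way gives the same element.
\item \emph{Equality of the targets.} By definition $\tlM(z+w) = M(z+w)+(z+w)$. Rewriting this as $(M(z+w)+w)+z$ and applying Theorem \ref{thm:b10.2} gives $M(z)+z=\tlM(z)$. This is exactly the asserted identity $\tlM(z)=\tlM(z+w)$, and it also shows the slanted map from $M(z+w)$ lands in the common target.
\end{enumerate}

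No step is a real obstacle, since the content lies in Theorem \ref{thm:b10.2}. The only point needing care is the hypothesis there: it uses $w\leq (U:z)$ (i.e.\ $w\leq_R$ every element of $M(z)$) to obtain surjectivity in Lemma \ref{lem:b10.1}(b). For the corollary itself only the inclusion $M(z+w)+w\subseteq M(z)$ is needed for well-definedness, and that inclusion is immediate from the definitions. Finally, $z+w$ need not be a gap element, but $M(z+w)=(U:z+w)$ is defined for any element of $R$, so the argument does not depend on this.
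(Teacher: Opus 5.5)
Your proof is correct and follows the paper's argument. The paper's proof is the one-line observation that $x+(z+w)=(x+w)+z$, relying on Lemma~\ref{lem:a4.7} (and cancellativity of $U$) for injectivity and on Theorem~\ref{thm:b10.2} for $\tlM(z)=\tlM(z+w)$; your steps (i)--(iv) just spell this out.

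One correction to your closing remark: in this section $w\leq (U:z)$ means $w\leq_U u$ for all $u\in M(z)$, not $w\leq_R u$. Under $\leq_R$ the set $(x:w)\subseteq U$ in Lemma~\ref{lem:b10.1} can be empty, and the conclusion fails: take $R=\NN$, $U=\{0,2,3,\dots\}$, $z=1$, $w=2$, so that $z+w=3$ and $4\in\tlM(1)\setminus\tlM(3)$.
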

\begin{proof}
This is evident, since
$x+(z+w) = (x+w) +z$
for any $x \in U$.
\end{proof}

Assume that $S$ is a reduced finite subset of $\Gap(R,U)$, and define
$$\Gap_S(R,U) := \Gap(R,U) \cap Z(S).$$
As in \S\ref{sec:b10}, we further assume that every $u\in U$ has a finite gap height in ~
$(R,+)$. This is a mild restriction that arises,  in particular,  if we replace $R$ by its submonoid $\FGH(R,U)$ consisting of the elements of finite gap height in $R$, cf. \S\ref{sec:a8}.
\begin{remark}\label{rem:b10.4} $ $
\begin{enumerate} \ealph
  \item If $x \in R$ and $U+x \subset U$, then  $x \in U$, since $0 \in U$.
  \item If  $W$ is a coset of $U$ in $U \sm \00$, where $W = w +U$, then  $w$ is the unique minimal element of $W$ with respect to both $\leq_U$ and  $\leq_R$.
   \end{enumerate}
\end{remark}

   If $z \in R$, $w \in U$, and $w+z \in U$, then $z \in \Gap_S(R,U)$. If such an element $z$ exists, we say that  $W$ \textbf{has gaps}. Then, there exists a smallest such gap $z_1$, and $z_1 \in S$.
These gaps in $W$ arise from the maximal blue walks
$\xymatrix@R=0.9em@C=1.7em{
w_0 \ar@{->}[r]^{  z_1} &
w_1}
$
of height $1$ in $W$.

We  introduce a ``lowering'' of an $S$-walk
$
\xymatrix@R=0.9em@C=1.7em{\gm:
u_0 \ar@{->}[r]^{\quad  z_1} & u_1 \ar@{->}[r]^{z_2} &  \cdots \ar@{->}[r]^{z_r\quad } & u_r
}
$
of length $r$, with $z_i \in Z(S)$, by a suitable element $w \in W$. We assume that $\gm$ runs in an upset~ $M$ of $U$ (with respect to $\leq_U$). Our main interest is in the set $M = \stab_S(R,U)$; nevertheless, the construction works for any upset $M$.

\begin{prop}\label{prop:b10.5}
  Assume that $w \in U$ and  $w \leq u_0$, and hence there  is a unique $p_0 \in U$ such that $p_0 +w = u$, since $U$ is cancellative  and subtractive in $R$. Assume further that
  \begin{equation}\label{eq:b10.1}
    p_0+ z_1 + \cdots + z_i \in U \ \  \text{ for } i =1, \dots, r.
  \end{equation}
  Then, there exists an $S$-walk
  $ \gm':
\xymatrix@R=1.9em@C=1.7em{
p_0 \ar@{->}[r]^{\quad  z_1} & p_1 \ar@{->}[r]^{z_2} & \cdots \ar@{->}[r]^{z_r\quad } & p_r}$
such that the following diagram commutes:
 $$
\xymatrix@R=1.9em@C=1.7em{
u_0 \ar@{->}[r]^{\quad  z_1} & u_1 \ar@{->}[r]^{z_2} & u_2 \ar@{->}[r] & \cdots \ar@{->}[r]^{z_r\quad } & u_r
\\
p_0 \ar@{->}[r]^{\quad  z_1} \ar@{->}[u]_w & p_1 \ar@{->}[r]^{z_2} \ar@{->}[u]_w & p_2 \ar@{->}[r]\ar@{->}[u]_w & \cdots \ar@{->}[r]^{z_r\quad } & p_r \ar@{->}[u]_w .
}$$

\end{prop}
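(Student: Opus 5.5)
The plan is to define the lowered nodes directly and then check the two things the diagram requires: that consecutive lowered nodes are joined by the bridges $z_i$, and that adding $w$ to each lowered node gives back $u_i$. First I read the hypothesis $w \leq u_0$ as $w \leq_R u_0$ with $u_0 - w$ lying in $U$; this is the intended meaning of ``$p_0 + w = u_0$'' (the statement's ``$u$'' is a typo for $u_0$). I then set
$$p_i := p_0 + z_1 + \cdots + z_i, \qquad 0 \leq i \leq r.$$
By hypothesis \eqref{eq:b10.1}, every $p_i$ lies in $U$. By construction $p_{i-1} + z_i = p_i$, so $p_0 \leq_R p_1 \leq_R \cdots \leq_R p_r$ is an increasing sequence in $U$. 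Since $U$ is \as in $R$ (Definition \ref{defn:1.1}), the bridge from $p_{i-1}$ to $p_i$ is unique, and it is exactly $z_i \in Z(S)$. Hence $\gm'$ is an $S$-walk (Definition \ref{def:a8.11}) with the same bridges as $\gm$.

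Next comes commutativity. The horizontal squares commute by construction. For the vertical arrows I will show $p_i + w = u_i$ by induction on $i$. The case $i=0$ is the choice of $p_0$. For the inductive step,
$$p_i + w = (p_{i-1} + w) + z_i = u_{i-1} + z_i = u_i,$$
using commutativity and associativity of $+$ in $R$. So every vertical arrow is the bridge $w$ from $p_i$ to $u_i$. This bridge is again unique by the \as property, and in fact $p_i \leq_U u_i$ because $w \in U$.

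The only delicate point is the first step: getting $p_0 \in U$ with $p_0 + w = u_0$, and making it unique. Existence is part of the hypothesis ($w \leq u_0$ in the ordering $\leq_U$). Uniqueness follows from cancellativity of $U$: if $p_0 + w = p_0' + w = u_0$, then adding $p_0$ and $p_0'$ as in Lemma \ref{lem:a4.7}, or applying the \as property to the pair $w \leq_R u_0$, gives $p_0 = p_0'$. Everything after that is formal. Finally, if $\gm$ runs in an upset $M$, the walk $\gm'$ need not lie in $M$, and the statement does not claim that it does. So I will not try to prove that, and will only note that $\gm' + w = \gm$ in the sense of Proposition \ref{prop:a9.1}.
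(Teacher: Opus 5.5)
Your proposal is correct and follows essentially the same route as the paper. You set $p_i = p_0 + z_1 + \cdots + z_i$, which lie in $U$ by \eqref{eq:b10.1}, and use $z_i + w = w + z_i$ to get $p_i + w = u_i$ inductively; the paper does the case $r=1$ and iterates, which is the same argument. Your justification of the uniqueness of $p_0$ by cancellativity, equivalently the \as property, is also the right one: the paper's appeal to subtractivity is not needed, since $U$ is only assumed \as.
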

\begin{proof}
  It suffices to verify the assertion for $r =1$, and then the claim follows by iteration. Let
  $p_1 := p_0 + z_1$. Then, $z_1 + w = w + z_1$, and thus the square
  $$ \xymatrix@R=1.5em@C=2.7em{
u_0 \ar@{->}[r]^{  z_1 \quad } & u_1 \\
p_0 \ar@{->}[r]^{  z_1 \quad } \ar@{->}[u]_{w} & p_1 \ar@{->}[u]_{w}
  }$$
commutes.  Proceed to the next walk $ \xymatrix@R=1.5em@C=1.7em{
u_1 \ar@{->}[r]^{  z_2 \quad } & u_2}$ and so on.
\end{proof}

\begin{defn}\label{def:b10.6}
We call the $S$-walk $\gm'$ the \textbf{push-down} of $\gm$ by $w$, and write
$\gm' = p_w(\gm)$, provided that $w \leq u_0$. We also define the push-down of a walk $\{ u_0\} $
of length zero by
$p_w(\{ u_0 \}) = \{ p_0\} $. We denote  the subwalk of $\gm$
from $u_k$ to $u_\ell$ by $\gm|_{[k,\ell]}$, for $1 \leq k \leq \ell \leq r$, often writing $\gm \leq \ell$, if $k=1$, and $\gm \geq k$, if $\ell = r$.
\end{defn}

\begin{remark}\label{rem:b10.7} $ $
\begin{enumerate}\ealph
  \item $p_w(\gm) + w = \gm$,
  \item$p_w(\gm|_{[k,\ell]}) = p_w(\gm)|_{[k,\ell]}$,
  \item $p_{w_1 + w_2}(\gm) = p_{w_1}(p_{w_2}(\gm))
= p_{w_2}(p_{w_1}(\gm))$
 \end{enumerate}
\end{remark}

We  have a version of  duality among push-downs of $\gm$. Assume that
$u_0 = p_0 + w$ with~ $p_0$ and  $w$ in $U$.
 We choose a decomposition $w = w_1 + w_2$, and then have the push-downs $p_w(\gm)$, $p_{w_1}(\gm)$, and $p_{w_2}(\gm)$, where
$  p_{w_1}(p_{w_2}(\gm))
= p_{w_2}(p_{w_1}(\gm)) = p_{w_1 + w_2}(\gm)$ by Remark ~\ref{rem:b10.7}. We set
$p^*_{w_1}(\gm) = p_{w_2}(\gm)$, and  consequently set
$p^*_{w_2}(\gm) = p_{w_1}(\gm)$.
Then,
$(p^*_{w_1}(\gm))^* = p_{w_1}(\gm)$ and $(p^*_{w_2}(\gm))^* = p_{w_2}(\gm)$.

Assuming that $(R,U)$ is an arbitrary \as\ monoid, we say that $u$ \textbf{contains} $s$, if $m >0$,  and that $u$ is \textbf{alien} to $s$ otherwise, and write  $m = \mu_s(u)$.
The set of elements in $U$ that are lonely for $\N s$ is denoted by $\Lo_s(U)$.

\begin{lem}\label{lem:b10.10}
If  $u \in U \sm \Lo_s(U)$, then
  \begin{equation}\label{eq:b10.2}
   u = ms + u_0
\end{equation}
for a unique maximal $m \in \NN$, denoted
  %\begin{equation}\label{eq:b10.3}
   $m = \mu_s(u)$,
%\end{equation}
 and some $u_0 \in U$ ($u_0$ is not necessarily unique).
\end{lem}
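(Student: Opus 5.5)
The plan is to reduce the lemma to showing that the set
$$ N(u) := \{ m \in \NN \mid u = ms + u_0 \text{ for some } u_0 \in U \} $$
is finite. Once this is done, $\mu_s(u)$ is simply $\max N(u)$; maximality determines $m$ uniquely, while $u_0$ is only required to exist, which matches the parenthetical remark in the statement. The set $N(u)$ is nonempty in any case, since $0 \in N(u)$ via $u = 0\cdot s + u$. The hypothesis $u \notin \Lo_s(U)$ guarantees that $N(u)$ contains a positive element, so that $u$ \emph{contains} $s$ and $\mu_s(u) > 0$. Everything therefore rests on an upper bound for $N(u)$ that depends only on $u$.

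For that bound I would use the standing assumption of this section that every element of $U$ has finite gap height, i.e.\ $U = \FGH(R,U)$, together with Theorem~\ref{thm:a8.5} and the gap-walk technique of Proposition~\ref{prop:a8.3}. Given $m \in N(u)$ with $u = ms + u_0$, I would construct a gap walk of length $m$ ending at $u$. Its natural candidate is the chain
$$ u_0 \To u_0 + s \To u_0 + 2s \To \cdots \To u_0 + ms = u, $$
each step having bridge $s \in R \sm U$. Such a walk would give $m \leq h(u)$, which is finite, and hence $N(u) \subseteq [0, h(u)]$.

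The main obstacle is that the intermediate nodes $u_0 + ks$ need not lie in $U$. My first remedy would be to replace $u_0$ by a suitable element of the same decomposition so that the chain stays inside $U$. One option is to use that $s$ is a gap, so there is $v \in U$ with $v + s = \tilde v \in U$, and to climb through
$$ u_0 + (m-k)v + k\tilde v, \qquad 0 \leq k \leq m, $$
which are all in $U$ and differ successively by $s$. This gives a gap walk of length $m$, but it ends at $u + mv$ rather than at $u$, so it bounds $m$ only by $h(u+mv)$, which grows with $m$.

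To close the gap I would therefore try the following, in order. First, pass to $\stab_{\{s\}}(R,U)$ via a translate by a fixed $w$, as in Remark~\ref{def:a9.19}. For $u + w$ the whole chain $u_0 + w + ks$ lies in $U$ (Theorem~\ref{thm:a9.3}), giving $m \leq h(u+w)$ with $w$ independent of $m$. Second, if that fails, use the antisymmetry of $\leq_R$ together with cancellativity of $U$: the elements $ms$ are pairwise distinct by \eqref{eq:1.2}, and an infinite $N(u)$ would produce an infinite strictly decreasing chain of $u_0$'s below $u$, contradicting finite gap height. The first route is the one I expect to work, since the bound $h(u+w)$ is independent of $m$; the translate by $w$ is the step requiring care.
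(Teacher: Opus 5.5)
The step your argument rests on fails. You are right that $N(u)$ must be shown finite, and right that the u.b.\ property alone will not do it. The paper's one-line justification, antisymmetry of $\leq_R$, does not produce a maximal $m$. For instance, $R=\{(0,b)\mid b\geq 0\}\cup\{(a,b)\mid a\geq 1,\ b\in\mathbb Z\}\subseteq\NN\times\mathbb Z$ is cancellative and u.b., so $U=\{(0,0)\}\cup\{(a,b)\mid a\geq 1\}$ is almost subtractive by Proposition~\ref{prop:1.13}. Here $s=(0,1)$ is a gap and $u=(1,0)$ satisfies $u+s\in U$, yet $u=ms+(1,-m)$ for every $m\in\NN$.

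The failing step is the claim that a fixed $w$ keeps the chain $u_0+w+ks$ inside $U$. Theorem~\ref{thm:a9.3} only gives $\stab_{\{s\}}(R,U)+U\subseteq\stab_{\{s\}}(R,U)$; it says nothing about closure under adding $s$. From $u_0+w\in\stab_{\{s\}}(R,U)$ you get $u_0+w+s\in U$ and nothing more. Take $R=\NN$, $U=\{0,3,5,6,8,9,10,\dots\}$, $s=1$. Then $w=5\in\stab_{\{s\}}(R,U)$ and $u=3=3s+0$, but $0+w+2s=7\notin U$. No fixed $w$ can keep chains of unbounded length in $U$; as you noticed, the $v,\tilde v$ device forces $w$ to grow with $m$.

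Your fallback fails too:
\begin{enumerate}
\item From $u=ms+u_m=m's+u_{m'}$ you cannot infer $u_{m'}\leq_R u_m$, because only $U$ is cancellative and $s\notin U$.
\item Even in a cancellative $R$, the bridges $(m'-m)s$ lie in $U$ when $\N s\cap U\neq\{0\}$, so such a chain need not be a gap walk.
\end{enumerate}
Separately, $u\notin\Lo_s(U)$ does not force $\mu_s(u)>0$: the paper explicitly allows alien elements of $U\sm\Lo_s(U)$. You do not need that claim anyway.

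The missing idea is to translate by $u$ itself, which needs no cancellation in $R$. If $u=ms+u_m=m's+u_{m'}$ with $u_m,u_{m'}\in U$ and $m<m'$, then
\[
u+u_m=u_{m'}+(u_m+ms)+(m'-m)s=(u+u_{m'})+(m'-m)s .
\]
So the elements $y_m:=u+u_m\in U$, for $m\in N(u)$ and with $u_0=u$, give walks ending at $y_0=2u$ whose bridges are positive multiples of $s$. There are two cases.
\begin{enumerate}
\item If $ks\notin U$ for all $k\geq 1$, these are gap walks, so $|N(u)|\leq h(2u)+1$.
\item If $ds\in U$ for some $d\geq 1$, then $N(u)$ is stable under $m\mapsto m-d$ (replace $u_m$ by $u_m+ds$). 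So an infinite $N(u)$ contains some $r+d\NN$. Choose $v\in U$ with $\tilde v=v+s\in U$; this is where $s\in\Gap(R,U)$ is used. The elements $y_m+i\tilde v+(d-1-i)v$, for $0\leq i<d$ and $m\in r+d\NN$, form walks whose bridges all equal $s$. They have length $m-r$ and end at the fixed element $y_r+(d-1)v$, contradicting finite gap height.
\end{enumerate}
This proves the lemma under the section's standing finite-gap-height assumption, with $s$ a gap.
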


\begin{proof}
  This is clear, since we always  assume that $\leq_R$ is a partial ordering on the set $R$.
  ($R$ is ``upper bound''.)
\end{proof}
We say that an element $u$ of $U \sm \Lo_s(U)$ \textbf{contains} $s$, if $m > 0$ in \eqref{eq:b10.2}, and that $u$ is \textbf{alien} to $s$, if $m=0$. If $u \in U \sm \Lo_s(U)$, we say for short that $u$ is \textbf{$s$-lonely}.
We aim  to understand the number
$\mu_s(u+v)$ for two elements $u,v $ of $U$ that are not lonely.

\begin{prop}\label{prop:b10.12} $ $
\begin{enumerate} \ealph
  \item $\mu_s(u+v) \geq \mu_s(u) + \mu_s(v)$
  for any $s \in \Gap(R,U)$ and $u,v \in  U \sm \Lo_s(U)$.
  \item  If the  sum
  $u + v$ is alien to $s$ whenever $u,v\in U\sm\Lo_s(U)$ are alien to $s$,  then  $\mu_s(u+v) = \mu_s(u) + \mu_s(v)$  for all
  $u,v \in  U \sm \Lo_s(U)$.
 \end{enumerate}
\end{prop}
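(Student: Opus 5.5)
Part (a) follows by adding decompositions; part (b) is proved by induction on $\mu_s(u)+\mu_s(v)$ via a cancellation step. Throughout, $\mu_s(u)$ is the largest $m\in\NN$ with $u=ms+u_0$ for some $u_0\in U$ (Lemma~\ref{lem:b10.10}). It exists because $\leq_R$ is a partial order: from $u\geq_R ms$ for all $m$ and the finiteness assumptions one gets a maximal such $m$.

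\emph{Part (a).} Write $u=ms+u_0$ and $v=ns+v_0$ with $m=\mu_s(u)$, $n=\mu_s(v)$ and $u_0,v_0\in U$. Adding gives $u+v=(m+n)s+(u_0+v_0)$ with $u_0+v_0\in U$. So $m+n$ is an admissible exponent for $u+v$, and $\mu_s(u+v)\geq m+n$ by maximality. One checks that $u+v\notin\Lo_s(U)$: it is bridged by $s$ since $u$ already is, using Proposition~\ref{prop:a4.2}(b) in contrapositive form.

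\emph{Part (b), reduction step.} The reverse inequality needs the hypothesis that sums of alien elements are alien. Write $u=ms+u_0$, $v=ns+v_0$ with $m,n$ maximal as before. The key claim is that the remainders $u_0,v_0$ are alien to $s$ and not $s$-lonely. Alienness: if $u_0=s+u_0'$ with $u_0'\in U$, then $u=(m+1)s+u_0'$, contradicting maximality of $m$. Non-loneliness of $u_0$ needs care and may require choosing the decomposition suitably; alternatively, one can apply the hypothesis to whatever elements are available. Granting the claim, $u_0+v_0$ is alien by hypothesis, so $\mu_s(u_0+v_0)=0$.

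\emph{Part (b), main obstacle.} It remains to show $\mu_s\big((m+n)s+w\big)=m+n$ for $w=u_0+v_0$ alien. This is the hard step, because $s\notin U$ means we cannot cancel $s$ directly. Suppose $(m+n)s+w=ks+w'$ with $k>m+n$ and $w'\in U$. I would cancel the common summand $(m+n)s$ to get $w=(k-m-n)s+w'$, contradicting alienness of $w$. To justify this cancellation, pick $x\in U$ with $x+s\in U$, available since $s$ is a gap, and add $(m+n)x$ to both sides. Then $(m+n)(x+s)+w=(m+n)(x+s)+(k-m-n)s+w'$. The terms $(m+n)(x+s)$ and $w$ lie in $U$. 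To apply cancellativity of $U$ (noted after Definition~\ref{defn:1.1}), I must know that $(k-m-n)s+w'\in U$. If that fails, I would fall back on the almost subtractive property: both $w$ and $(k-m-n)s+w'$ are differences from $(m+n)(x+s)\in U$ to the same element of $U$, hence equal by Definition~\ref{defn:1.1}. This yields $w=(k-m-n)s+w'$, the desired contradiction. Combined with part (a), this gives equality.
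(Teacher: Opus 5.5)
Your proof follows the same route as the paper. You write $u=ms+u_0$ and $v=ns+v_0$ with $m=\mu_s(u)$, $n=\mu_s(v)$. Adding gives $u+v=(m+n)s+(u_0+v_0)$, which proves (a), and for (b) you apply the hypothesis to the alien remainders.

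The paper then simply asserts that this decomposition, with $u_0+v_0$ alien, forces $\mu_s(u+v)=m+n$. Your device of adding $(m+n)x$, where $x,\,x+s\in U$, and then invoking uniqueness of differences is exactly what justifies that step. It is correct, and it is needed: you cannot cancel $(m+n)s$ directly, since $s\notin U$ and $R$ need not be cancellative. You can state it unconditionally. Definition~\ref{defn:1.1} only needs $y=(m+n)(x+s)\in U$ and $y+w\in U$, so there is no need to ask whether $(k-m-n)s+w'\in U$ or to route the argument through cancellativity of $U$. The induction you announce is never used; the argument is direct.

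The one step you leave as ``granted'' is that $u_0,v_0\notin\Lo_s(U)$, which is needed before the hypothesis of (b) can be applied. It holds for every decomposition with $m=\mu_s(u)$, so no special choice is required:
\begin{itemize}
\item if $m\ge 1$, then $u_0+ms=u\in U$;
\item if $m=0$, then $u_0=u$, which is not $s$-lonely by assumption.
\end{itemize}
Similarly, $u_0+ks\in U$ gives $(u_0+v_0)+ks\in U$, so $u_0+v_0\notin\Lo_s(U)$. The same one-line argument shows $u+v\notin\Lo_s(U)$ in part (a).

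Proposition~\ref{prop:a4.2}(b) is the wrong reference there. It concerns non-bridging elements of $R$, and no element of $U$ is lonely in that sense, since $u+0\in U$. For the existence of $\mu_s$, cite Lemma~\ref{lem:b10.10} instead of the vague appeal to ``finiteness assumptions.''

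With these repairs your proof is complete, and it fills in the step the paper leaves implicit.
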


\begin{proof} Note that
  $u = ms +u_0$ and $v = ns +v_0$,  where $u_0, v_0$ are alien to $s$ and $m = \mu_s(u)$,
  $n = \mu_s(v)$. Since $u_0$ and  $v_0$ are not lonely for $\N s$, it is clear that
  $u_0 + v_0$ is  not lonely for $\N s$. Furthermore,
    \begin{equation}\label{eq:b10.4}
 u +v = (m+n)s + u_0 + v_0.\end{equation}
  This implies that $m + n \leq \mu_s (u+v)$, which proves part (a).
    If $u + v $ is alien to $s$, then \eqref{eq:b10.4} implies  that
  $\mu_s(u +v) = m+n $, which proves part (b).
\end{proof}

We present an example in which the sum of two aliens is almost never an alien.

\begin{examp} Let $R = \NN / E$, where  $r \geq 2$ and $E$ a nonempty additive equivalence relation compatible with the standard weight function $w: \NN^r \to \NN$, as decribed in Theorem~\ref{thm:6.9}. Note that $d \geq 2$ is the minimal weight such that there exist elements of $w^{-1}(d)$ that are not singletons. Pick an extremal generator
$s_i = [t_i]_E$. Then \textbf{no} $E$-equivalence class of weight  $e > d$,  except
$ e[t_i]_E = e s_i$,    is alien to $s_i$.  Thus, no sum $x+y$, where  $x,y \notin \NN s_i$ and $w(x) + w(y) > d$,  is alien to $s_i$.
\end{examp}
\section{Ancestors of blue walks}\label{sec:d10}
It is natural to view a blue walk
$ \xymatrix@R=1.5em@C=1.7em{u_0 \ar@{->}[r]^{  z_1  } & u_1}$ of length one as a model of a gap in a glacier, and to look for traces of
%$ \xymatrix@R=1.5em@C=1.7em{
%u_0 \ar@{->}[r]^{  z_1  } & u_1
%}$
this gap in the geological past.
We pursue this idea by introducing a commutative diagram
\begin{equation}\label{eq:d10.1}
 \xymatrix@R=1.5em@C=2.7em{
u_0 \ar@{->}[r]^{  z_1 \quad } & u_1 \\
p_0 \ar@{->}[r]^{  z_1 \quad } \ar@{->}[u]_{w} & p_1 \ar@{->}[u]_{w}
  }
\end{equation}
of blue walks, i.e., where
$ \xymatrix@R=1.5em@C=1.5em{
p_0 \ar@{->}[r]^{  z_1  } & p_1
}$
is a push-down of
$ \xymatrix@R=1.5em@C=1.5em{
u_0 \ar@{->}[r]^{  z_1  } & u_1
}$.
  We call  $ \xymatrix@R=1.5em@C=1.5em{
p_0 \ar@{->}[r]^{  z_1  } & p_1
}$
an \textbf{ancestor}  of the gap
$ \xymatrix@R=1.5em@C=1.7em{
u_0 \ar@{->}[r]^{  z_1  } & u_1
}$, and call \eqref{eq:d10.1}
an  \textbf{ancestor diagram}.

Searching for ancestors of $ \xymatrix@R=1.5em@C=1.5em{
u_0 \ar@{->}[r]^{  z_1  } & u_1
}$ means choosing  elements $w \in U$ and $p_0 \in U$ with
$p_0 + w = u_0$ and testing  whether $p_1 := p_0 + z_1$ is in $U$.\footnote{Think of a drill into the glacier at $u_0$.} In this case, we have found an ancestor~\eqref{eq:d10.1}
of $ \xymatrix@R=1.5em@C=1.5em{
u_0 \ar@{->}[r]^{  z_1  } & u_1
}$ at   the ``time''~ $w$ in the past,
  $$ \xymatrix@R=1.5em@C=2.7em{
u_0 \ar@{->}[r]^{  z_1 \quad } & u_1 \\
p_0 \ar@{->}[r]^{  z_1 \quad } \ar@{->}[u]_{w} & p_1 \ar@{->}[u]_{w} .
  }$$
  This diagram is uniquely determined by the node $p_0$ and $w$, since
  $(U,+)$ is cancellative. Often, the label $w$ on the vertical arrows can be omitted.

  We describe this situation by using a partially ordered index subset $A$ and families
  $(p_{0,\al} \ds \mid \al \in A)$ and  $(w_\al \ds \mid  \al \in A)$ of elements of $U$ such that %\begin{equation*}\label{eq:d10.2}
    $u_0 = p_{0,\al} + w_\al.$
  %\end{equation*}
  for every $\al$.
  In addition,
  \begin{equation*}\label{eq:d10.3}
  w_\al = w_\bt \dss{\Leftrightarrow}
  p_{0,\al} = p_{0,\bt}
  \dss{\Leftrightarrow} \al= \bt
  \end{equation*}
  (faithful indexing) and $\al < \bt$ if and only if there exists  a  unique  $w_{\al,\bt} \in U$ such that
  %\begin{equation*}\label{eq:d10.4}
  $p_{0,\bt} + w_{\al, \bt} = p_{0,\al}$.
%\end{equation*}
For $\al < \bt$,
this gives the following commutative diagram
\begin{equation}\label{eq:d10.5} \xymatrix@R=1.5em@C=1.7em{
u_0 \ar@{->}[rr]^{  z_1 \quad } & & u_1 \\
p_{0, \al} \ar@{->}[rr]^{  z_1  } \ar@{->}[u] & & p_{1,\al} \ar@{->}[u]
 \\
& p_{0, \bt} \ar@{->}[rr]^{  z_1  } \ar@/^3.5pc/[uul]<0ex>,
\ar@{->}[ul] & & p_{1,\bt}
\ar@/_1.8pc/[uul]<0ex> \ar@{->}[ul] .
  }\end{equation}
Assuming, as before, that every $u \in U$ has finite gap height in $(R,+)$,  for every
$\al \in A$ there exists  a minimal element of $\al^\downarrow$. This gives rise to a \textbf{first} (or \textbf{primordial}) \textbf{ancestor}.

Our theory of ancestors of blue walks of length one extends to a theory of ancestors of a blue walk
  $$ \gm:
\xymatrix@R=1.9em@C=1.7em{
u_0 \ar@{->}[r]^{  z_1} & u_1 \ar@{->}[r]^{z_2} & u_2 \ar@{->}[r] & \cdots \ar@{->}[r]^{z_r } & u_r
}$$
of a fixed length $r > 1$ as follows.
Given $p_0 \in U$ and $w \in U$ with
$p_0+w = u_0$, we obtain ancestor diagrams
\begin{equation}\label{eq:d10.6} \xymatrix@R=1.5em@C=2.7em{
u_0 \ar@{->}[r]^{  z_1 \quad } & u_1 \\
p_0 \ar@{->}[r]^{  z_1 \quad } \ar@{->}[u]_{w} & p_1 \ar@{->}[u]_{w}, &
  }
\xymatrix@R=1.5em@C=2.7em{
u_1 \ar@{->}[r]^{  z_2 \quad } & u_2 \\
p_1 \ar@{->}[r]^{  z_2 \quad } \ar@{->}[u]_{w} & p_2 \ar@{->}[u]_{w}, & \dots  &
  }
  \xymatrix@R=1.5em@C=2.7em{
u_{k-1} \ar@{->}[r]^{  z_k \quad } & u_{k} \\
p_{k-1} \ar@{->}[r]^{  z_k \quad } \ar@{->}[u]_{w} & p_k \ar@{->}[u]_{w} .
  }
  \end{equation}
as long as $p_{i-1} + z_i \in U$ for $i = 1, \dots,  k$. Hence, we  obtain a commutative diagram of blue walks, in which $\gm'_{\leq k}$ is a push-down of $\gm_{ \leq k}$:
\begin{equation}\label{eq:d10.7} \xymatrix@R=1.5em@C=2.7em{
u_0 \ar@{->}[r]^{  z_1 \quad } & u_1 \ar@{->}[r]^{  z_2 \quad } & \cdots &
\ar@{->}[r]^{  z_{k-1} \quad } &
u_{k-1} \ar@{->}[r]^{  z_k \quad } & u_{k} \\
p_0 \ar@{->}[r]^{  z_1 \quad } \ar@{->}[u]_{w} & p_1 \ar@{->}[u]_{w}  \ar@{->}[r]^{  z_2 \quad } & \cdots &
\ar@{->}[r]^{  z_{k-1} \quad }
& p_{k-1} \ar@{->}[r]^{  z_k \quad } \ar@{->}[u]_{w} & p_k \ar@{->}[u]_{w} .
  }
  \end{equation}
We call the push-down $\gm'_{ \leq k}$ an \textbf{ancestor of $\gm_{ \leq k}$}, and, when  $k =r$,
an \textbf{ancestor} of $\gm$. These ancestors  are uniquely determined by $p_0$ and $w$.
 We refer to the diagrams in \eqref{eq:d10.7} as \textbf{ancestor diagrams}.
%We also consider \textbf{ancestor diagrams}, namely \eqref{eq:d10.7} for $\gm_{ \leq k}$ and $\gm$, respectively.

\begin{remark}\label{rem:d10.1}
$\gm'_{ \leq k}$ is an ancestor of  $\gm_{ \leq k}$ if and only if
$ p_0 + (z_1 + \cdots + z_i) = p_i \in U$
for $i = 1, \dots,k$, in other words, if and only if, for $i = 1, \dots,k$,
$$ \xymatrix@R=1.5em@C=1.7em{
 p_0 + (z_1 + \cdots + z_{i-1}) \ar@{->}[r]^{  z_i  } &  p_0 + (z_1 + \cdots + z_i)
}
$$ is an ancestor of
$$ \xymatrix@R=1.5em@C=1.7em{
 u_0 + (z_1 + \cdots + z_{i-1}) \ar@{->}[r]^{  z_i  } &  u_0 + (z_1 + \cdots + z_i)
}.
$$
\end{remark}

We introduce the sum of ancestor diagrams as follows.

\begin{defn}\label{def:d10.2}
The \textbf{sum} of two ancestor diagrams
\begin{equation}\label{eq:d10.8} \xymatrix@R=1.5em@C=2.7em{
u'_0 \ar@{->}[r]^{  z' \quad } & u'_1 \\
p'_0 \ar@{->}[r]^{  z' \quad } \ar@{->}[u] & p'_1 \ar@{->}[u], &
  }
\xymatrix@R=1.5em@C=2.7em{
u''_0 \ar@{->}[r]^{  z'' \quad } & u''_1 \\
p''_0 \ar@{->}[r]^{  z'' \quad } \ar@{->}[u] & p''_1 \ar@{->}[u] &
  }
  \end{equation}
  is the diagram
\begin{equation}\label{eq:d10.9} \xymatrix@R=1.5em@C=2.7em{
u'_0 + u''_0 \ar@{->}[r]^{  z'+z''  } & u'_1 + u''_1 \\
p'_0 + p''_0 \ar@{->}[r]^{  z' + z''   } \ar@{->}[u] & p'_1 + p''_1 . \ar@{->}[u]  &
  }
  \end{equation}
\end{defn}
This sum remains unchanged if we interchange the two diagrams in \eqref{eq:d10.8}. The following is readily verified.

\begin{prop}\label{prop:d10.3} The right ancestor diagram in \eqref{eq:d10.8} is related to the sum diagram by the following commutative diagram:
\begin{equation*}\label{eq:d10.9} \xymatrix@R=1.5em@C=2.7em{
u'_0 + u''_0 \ar@{->}[r]^{  z'+z''  } & u'_1 + u''_1 \\
p'_0 + p''_0 \ar@{->}[r]^{  z' + z''   } \ar@{->}[u]_{w} & p'_1 + p''_1\ar@{->}[u]_{w} &
\\
u''_0 \ar@{->}[r]^{  z'' \quad }
\ar@/^2.8pc/[uu]<0ex>
& u''_1  \ar@/_2.8pc/[uu]<0ex> \\
p''_0 \ar@{->}[r]^{  z'' \quad } \ar@{->}[u]
\ar@/^2.8pc/[uu]<0ex>
& p''_1 \ar@{->}[u] \ar@/_2.8pc/[uu]<0ex> &
  }
  \end{equation*}
\end{prop}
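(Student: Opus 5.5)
The plan is to check directly that the displayed diagram consists of ancestor diagrams that commute. I will fix notation for the two given ancestor diagrams in \eqref{eq:d10.8}. Write $u'_0 = p'_0 + w'$ and $u''_0 = p''_0 + w''$ with $w', w'' \in U$. Commutativity of the given diagrams then gives $u'_1 = p'_1 + w'$ and $u''_1 = p''_1 + w''$, where $p'_1 = p'_0 + z'$, $p''_1 = p''_0 + z''$ and $p'_1, p''_1 \in U$.

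First I will verify that the sum \eqref{eq:d10.9} is itself an ancestor diagram with vertical label $w = w' + w''$. All four corners lie in $U$, since $U$ is a submonoid. By commutativity and associativity of $+$, the horizontal arrows are correct: $(u'_0+u''_0)+(z'+z'') = u'_1+u''_1$ and $(p'_0+p''_0)+(z'+z'') = p'_1+p''_1$. The vertical arrows are correct too: $(p'_0+p''_0)+(w'+w'') = u'_0+u''_0$, and similarly on the right. Because $U$ is \as, each arrow label is the unique difference, so these labels are forced.

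Next I will identify the curved arrows. The arrows from $u''_0$ to $u'_0+u''_0$ and from $u''_1$ to $u'_1+u''_1$ both carry the bridge $u'_0$ on the left and $u'_1$ on the right. The arrows from $p''_0$ and $p''_1$ to the middle row carry $p'_0$ and $p'_1$ respectively. Commutativity of each square then reduces to identities in $(R,+)$. Take for example the square with corners $p''_0, p''_1, p'_0+p''_0, p'_1+p''_1$. Going one way gives $p''_0 + z'' + p'_1 = p''_0+p'_0+z'+z''$; going the other gives $p''_0 + p'_0 + z' + z''$, and these agree. The remaining squares are handled identically. The only identity needing care is the comparison of the two composite vertical routes from $p''_0$ to $u'_0+u''_0$: one route is $p''_0 + w'' + u'_0$, the other is $p''_0 + p'_0 + w$, and both equal $p'_0+p''_0+w'+w''$.

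Finally, uniqueness of differences in the \as monoid $U$ shows that any two arrows between the same pair of nodes have equal labels. So commutativity holds for the whole diagram. There is no real obstacle here; the only delicate point is bookkeeping of which bridge labels the curved arrows carry. I will settle that by defining them as the unique differences and checking that each lies in $U$.
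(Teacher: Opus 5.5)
Your proposal is correct and matches the paper, which only says the diagram is ``readily verified''. Your argument is exactly that verification:
\begin{itemize}
\item the sum is an ancestor diagram with vertical label $w=w'+w''$;
\item the curved arrows carry the bridges $u'_0,u'_1,p'_0,p'_1\in U$;
\item every square closes by associativity and commutativity of $+$.
\end{itemize}
Your closing remark is also sound: once each arrow is a genuine bridge between nodes of $U$, almost-subtractivity forces composite labels to agree. This is the same mechanism the paper uses in Theorem~\ref{thm:d10.4}.
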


We obtain an analogous diagram using the left ancestor diagram in \eqref{eq:d10.8}. The concept of ancestors and ancestor diagrams can be generalized based on the following fact.

\begin{thm}\label{thm:d10.4} Assume that $ \xymatrix@R=1.5em@C=1.7em{
p_0 \ar@{->}[r]^{  y } & p_1
}$ and $ \xymatrix@R=1.5em@C=1.7em{
q_0 \ar@{->}[r]^{  z } & q_1
}$
  are blue walks of length one, and let  $w_0$ and $w_1$ be elements of $U$ such that the diagram
  \begin{equation}\label{eq:d10.10}
 \xymatrix@R=1.5em@C=2.7em{
q_0 \ar@{->}[r]^{  z \quad } & q_1 \\
p_0 \ar@{->}[r]^{  y \quad } \ar@{->}[u]_{w_0} & p_1 \ar@{->}[u]_{w_1}
  }
\end{equation}
commutes. Then,
$ z + w_0 = y + w_1.$
\end{thm}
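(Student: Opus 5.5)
The plan is to unwind what commutativity of the square \eqref{eq:d10.10} means, and then compare the two ways of getting from $p_0$ to $q_1$. In this paper an arrow labelled $x$ from $a$ to $b$ always means $a + x = b$. So the four arrows of the diagram give the four identities
$$p_0 + y = p_1,\qquad q_0 + z = q_1,\qquad p_0 + w_0 = q_0,\qquad p_1 + w_1 = q_1 .$$
Commutativity says that the two composite walks from $p_0$ to $q_1$ (east then north, and north then east) have the same endpoint. Their bridge sums are $y + w_1$ and $w_0 + z$.

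First I compute $q_1$ in two ways using only the identities above and commutativity and associativity of $+$:
$$p_0 + (w_0 + z) = q_0 + z = q_1 = p_1 + w_1 = p_0 + (y + w_1).$$
Here $p_0, q_1 \in U$, and $w_0+z$ and $y+w_1$ are elements of $R$. Each is a bridge from $p_0$ to $q_1$ in the sense of Definition~\ref{def:a4.1}.

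Next I invoke almost subtractivity (Definition~\ref{defn:1.1}). Since $p_0 \leq_R q_1$ with both in $U$, there is a unique $x \in R$ with $p_0 + x = q_1$. Hence $w_0 + z = y + w_1$, which is the claim up to commutativity of addition. Equivalently, I would cite the remark after Definition~\ref{def:a4.1} that there is at most one bridge between two elements of $U$.

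The only possible obstacle is a bookkeeping point. The conclusion is an equation in $R$, not in $U$, since $y$ and $z$ are gaps. So I cannot simply cancel in the monoid $U$; the step must rely on the \as property of $U$ in $R$ applied to the pair $(p_0,q_1)$. This is exactly what that property provides, so I expect no genuine difficulty.
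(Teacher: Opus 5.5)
Your proposal is correct and matches the paper's proof: both $w_0+z$ and $y+w_1$ are bridges from $p_0$ to $q_1$, and almost subtractivity of $U$ in $R$ allows at most one such bridge. You also avoid the paper's extra claim that these sums lie in $\Gap(R,U)$. That claim is not needed, since only the bridge property is used.
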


\begin{proof} The sums $z+w_0$ and $y+w_1$ are elements of $\Gap(R,U)$ that are bridges from $p_0$ to $q_1$. Since $U$ is an \as~submonoid of $R$, there is at most one bridge from $p_0$ to $q_1$.
\end{proof}

We call diagram \eqref{eq:d10.10} a \textbf{mixed ancestor diagram}, and $ \xymatrix@R=1.5em@C=1.7em{
p_0 \ar@{->}[r]^{  y } & p_1
}$ a \textbf{mixed ancestor} of $ \xymatrix@R=1.5em@C=1.7em{
q_0 \ar@{->}[r]^{  z } & q_1
}$. The term ``mixed'' alludes to the fact that the two gap elements $z$ and $y$ are involved, rather than only one gap element, $z$, as in the case of ancestors.

\begin{remark}\label{rem:d10.15} $ $

\begin{enumerate}\ealph
  \item A mixed ancestor diagram \eqref{eq:d10.10} and a second such diagram
\begin{equation}\label{eq:d10.11} \xymatrix@R=1.5em@C=2.7em{
p_0 \ar@{->}[r]^{ z'  \quad } & p_1  \\
p'_0 \ar@{->}[r]^{  y'  \quad } \ar@{->}[u]^{w'_0} & p'_1 \ar@{->}[u]_{w'_1}
  }\end{equation}
can be combined to a mixed ancestor diagram
\begin{equation}\label{eq:d10.12} \xymatrix@R=1.5em@C=2.7em{
q_0 \ar@{->}[r]^{ z  \quad } & q_1  \\
p'_0 \ar@{->}[r]^{  y'  \quad } \ar@{->}[u]^{w_0 + w'_0} & p'_1 \ar@{->}[u]_{w_1+ w'_1}  .
  }\end{equation}
\item  A ``horizonal'' family of diagrams
\begin{equation*} \xymatrix@R=1.5em@C=2.7em{
u_0 \ar@{->}[r]^{  z_1 \quad } & u_1 \\
p_0 \ar@{->}[r]^{  y_1 \quad } \ar@{->}[u]_{w_0} & p_1 \ar@{->}[u]_{w_1}, &
  }
\xymatrix@R=1.5em@C=2.7em{
u_1 \ar@{->}[r]^{  z_2 \quad } & u_2 \\
p_1 \ar@{->}[r]^{  y_2 \quad } \ar@{->}[u]_{w_1} & p_2 \ar@{->}[u]_{w_2}, & \dots  &
  }
  \xymatrix@R=1.5em@C=2.7em{
u_{r-1} \ar@{->}[r]^{  z_r \quad } & u_{r} \\
p_{r-1} \ar@{->}[r]^{  y_r \quad } \ar@{->}[u]_{w_{r-1}} & p_r \ar@{->}[u]_{w_r}
  }
  \end{equation*}
  can be combined to a commutative  diagram
  \begin{equation}\label{eq:d10.13} \xymatrix@R=1.5em@C=2.7em{
u_0 \ar@{->}[r]^{  z_1 \quad } & u_1 \ar@{->}[r]^{  z_2 \quad } & \cdots &
\ar@{->}[r]^{  z_{r-1} \quad } &
u_{r-1} \ar@{->}[r]^{  z_r \quad } & u_{r} \\
p_0 \ar@{->}[r]^{  y_1 \quad } \ar@{->}[u]_{w_0} & p_1 \ar@{->}[u]_{w_1}  \ar@{->}[r]^{  y_2 \quad } & \cdots &
\ar@{->}[r]^{  y_{r-1} \quad }
& p_{r-1} \ar@{->}[r]^{  y_r \quad } \ar@{->}[u]_{w_{r-1}} & p_r \ar@{->}[u]_{w_r}
  }
  \end{equation}
with a blue walk $\gm$ in the upper horizonal line and a blue walk $\gm'$ in the lower
horizonal line.
We call $\gm'$ a \textbf{mixed ancestor of $\gm$}, and the diagram \eqref{eq:d10.13} a
\textbf{mixed ancestor diagram of blue walks}.

\end{enumerate}

\end{remark}

We introduce a ``splitting'' of gap elements of $(R,U)$.

\begin{defn}\label{def:d10.16} \ealph $ $
\begin{enumerate}
  \item We call an element $z \in \Gap(R,U)$ \textbf{pure}, if it has  \textbf{no}
  decomposition $z = z' + v$ with $v \in U \sm \00$. In other words, $z$ is minimal with respect to $\leq_U$. We then also say that~ $z$ is a \textbf{tight gap element} of~ $(R,U)$.
  \item We call
  $
\xymatrix@R=1.9em@C=1.6em{
\gm: u_0 \ar@{->}[r]^{  z_1} & u_1 \ar@{->}[r]^{z_2} &\cdots \ar@{->}[r]^{z_r } & u_r
}$  a \textbf{tight blue gap walk}, if  every $z_i$ is minimal with respect to ~$\leq_U$.

\end{enumerate}
\end{defn}
%Note that, if $\gm_1$ and $\gm_2$ are tight blue walks of same length , it may well happens
 % that $\gm_1 + \gm_2$ is not tight.

We have the following method for obtaining  tight blue walks that are mixed ancestors of a given blue walk $
\xymatrix@R=1.9em@C=1.7em{
\gm: u_0 \ar@{->}[r]^{  z_1} & u_1 \ar@{->}[r]^{z_2} &  \cdots \ar@{->}[r]^{z_r } & u_r
}$, assuming,  for simplicity,  that none of the bridges is tight. Note that, in the case $r =1$,
a tight ancestor is given by the diagram
$$
\xymatrix@R=1.5em@C=1.5em{
u_0 \ar@{->}[rr]^{  z_1 \quad }  \ar@{->}[rd]_{  z'_1  }  & & u_1 \\
& u'_1 \ar@{->}[ru]^{  v_1  } &
  }$$
 with $v_1 \in U \sm \00$. Starting
 with $\gm$, we choose a commutative diagram
 \begin{equation}\label{eq:d10.14}
   \xymatrix@R=1.5em@C=1.5em{
u_0 \ar@{->}[rr]^{  z_1 \quad }  \ar@{->}[rd]_{  z'_1  }  & & u_1  \ar@{->}[rr]^{  z_2 \quad }  \ar@{->}[rd]_{  z'_2  } && u_2  \ar@{->}[rr]^{  \quad } && \cdots \ar@{->}[rr] &&
u_{r-1} \ar@{->}[rr]^{  z_r \quad }  \ar@{->}[rd]_{  z'_r  } && u_r  \\
& u'_1 \ar@{->}[ru]^{  v_1  } & & u'_2 \ar@{->}[ru]^{  v_2  }
&& && &&  u'_r \ar@{->}[ru]^{  v_r  }
  }
 \end{equation}
  of walks in $(R,U)$
with  elements  $v_1, \dots, v_r \in U \sm \00$ such that the elements
$z'_1, \dots, z'_r$ are pure gaps.

\begin{lem}\label{lem:d10.17}
For any $i < r$ there exists a unique element $v_{i+1}\in U \sm \00$ such that the following  diagram  commutes
$$
\xymatrix@R=1.5em@C=2.7em{
u_i \ar@{->}[r]^{  z'_i \quad } & u_{i+1} \\
u'_i \ar@{->}[r]^{  z'_i \quad } \ar@{->}[u]_{v_i} & u'_{i+1} \ar@{->}[u]_{v_{i+1}} \ .  &
  }
  $$
\end{lem}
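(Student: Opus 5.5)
The plan is to read the square in the lemma as a mixed ancestor diagram in the sense of Theorem~\ref{thm:d10.4}, and to extract everything from the diagram \eqref{eq:d10.14} already chosen. That diagram encodes three relations for each $j$:
$$u_{j-1}+z'_j=u'_j,\qquad u'_j+v_j=u_j,\qquad z'_j+v_j=z_j .$$
I read the lower horizontal arrow as the unique bridge from $u'_i$ to $u'_{i+1}$. The labels in the displayed square look loose, so I will determine that bridge rather than take the label literally.

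\textbf{Step 1 (the lower row is a walk).} Using the relations above,
$$u'_{i+1}=u_i+z'_{i+1}=u'_i+(v_i+z'_{i+1}),$$
so $u'_i\leq_R u'_{i+1}$. By Definition~\ref{defn:1.1} the bridge from $u'_i$ to $u'_{i+1}$ is unique, and it equals $v_i+z'_{i+1}$.

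\textbf{Step 2 (existence).} I take $v_{i+1}$ to be the element already appearing in \eqref{eq:d10.14}, so that $u'_{i+1}+v_{i+1}=u_{i+1}$. To check that the square commutes, I compare the two composite bridges from $u'_i$ to $u_{i+1}$:
\begin{itemize}
\item through $u_i$, the composite is $v_i+z_{i+1}$;
\item through $u'_{i+1}$, the composite is $(v_i+z'_{i+1})+v_{i+1}$.
\end{itemize}
These are equal because $z'_{i+1}+v_{i+1}=z_{i+1}$. This is exactly the bridge-sum equality that Theorem~\ref{thm:d10.4} says must hold; since $U$ is \as, there is only one bridge from $u'_i$ to $u_{i+1}$ anyway, so commutativity is automatic once both routes exist.

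\textbf{Step 3 ($v_{i+1}\neq 0$).} This comes from the standing assumption that no bridge $z_{i+1}$ is tight. If $v_{i+1}=0$, then $z'_{i+1}=z_{i+1}$ would be pure, which is a contradiction. (Equivalently, $v_{i+1}\in U\sm\00$ was imposed when \eqref{eq:d10.14} was chosen.)

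\textbf{Step 4 (uniqueness).} Suppose $v,v'\in U$ both make the square commute. Then $u'_{i+1}+v=u'_{i+1}+v'=u_{i+1}$, with $u'_{i+1},u_{i+1}\in U$. Almost subtractivity of $U$ (or just cancellativity of $(U,+)$, as in Lemma~\ref{lem:a4.7}) gives $v=v'$.

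The only real obstacle is interpretive: pinning down which arrow the lower row of the square denotes and confirming it is a genuine bridge between elements of $U$, so that \as uniqueness applies. Once the lower bridge is identified as $v_i+z'_{i+1}$, every remaining step is a one-line use of cancellation.
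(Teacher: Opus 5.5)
Your proof is correct and is essentially the paper's argument. The paper simply says the lemma follows from Theorem~\ref{thm:d10.4}, i.e.\ from the uniqueness of bridges between elements of an \as submonoid, and your Steps~2 and~4 are that same uniqueness written out. Your Step~1 (showing $u'_i\leq_R u'_{i+1}$ with bridge $v_i+z'_{i+1}$) and Step~3 ($v_{i+1}\neq 0$ because $z_{i+1}$ is not tight) fill in details the paper leaves implicit, and your reading of the garbled horizontal labels is the sensible one.
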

\begin{proof}
  Follows immediately from  Theorem \ref{thm:d10.4}.
\end{proof}

Using this lemma iteratively, we uniquely complete the diagram of walks \eqref{eq:d10.14} to a commuting diagram, as shown below for $r=4$. The diagram consists of $r$ tight blue walks associated with the blue subwalks $\gm_{\geq k}$ of~ $\gm$ for $k=0,\dots,r-1$, together with push-downs connecting them.

 \begin{equation*}
   \xymatrix@R=1.5em@C=1.5em{
u_0 \ar@{->}[rr]^{  z_1 \quad }  \ar@{->}[rd]_{  z'_1  }  & & u_1  \ar@{->}[rr]^{  z_2 \quad }  \ar@{->}[rd]_{  z'_2  } && u_2  \ar@{->}[rr]^{  z_3 \quad }
\ar@{->}[rd]_{  z'_3  } &&
u_{3} \ar@{->}[rr]^{  z_4 \quad }  \ar@{->}[rd]_{  z'_4  } && u_4
 \\
& u'_1 \ar@{->}[ru]^{  v_1  } \ar@{->}[rd]_{  z'_2  }  \ar@{->}[rr] & &
u'_2 \ar@{->}[ru]^{  v_2  } \ar@{->}[rd]_{  z'_3  }  \ar@{->}[rr]
&& u'_3 \ar@{->}[ru]^{  v_3  } \ar@{->}[rd]_{  z'_4  }  \ar@{->}[rr] &&
u'_4 \ar@{->}[ru]^{  v_4  }
\\
  & & u''_2 \ar@{->}[ru]^{  v'_2  } \ar@{->}[rd]_{  z'_3  } \ar@{->}[rr]
&& u''_3 \ar@{->}[ru]^{  v'_3  }  \ar@{->}[rr]
\ar@{->}[rd]_{  z'_4  } &&  u''_4 \ar@{->}[ru]^{  v'_4  }
\\
  &
&& u'''_3  \ar@{->}[rr] \ar@{->}[ru]^{  v''_3  } \ar@{->}[rd]_{  z'_4  } &&  u'''_4 \ar@{->}[ru]^{  v''_4  }
\\
&&  &&  u^{(4)}_4 \ar@{->}[ru]^{  v'''_4  }
  }
 \end{equation*}
Here, the walks pointing southeast are tight, while those pointing northeast are in $U\sm\00$. The walks pointing east are mixed push-downs of $\gm$.

\section{Blue $S$-walks for $S$ a singleton, $S = \{ s \}$}\label{sec:e10}

Let $(R,+)$ be an \ub additive monoid and $U$ an \as submonoid of $R$.
Assume that $s \in R \sm U$ amd that there exist elements $u \in U$ such that
$u + ms \in U$ for some $m \in \N$. We denote the set of such elements $u$ by
$$B(U,s) := B(R,U,s) \footnote{We now abandon the assumption, made in  \S\ref{sec:a8},  that every $u \in  U\sm \00$ has finite gap height. } $$
Then,  for every $ u \in B(U,s)$, there exists a unique maximal blue $\{s\}$-walk (Definition \ref{def:a9.4})
\begin{equation}\label{eq:e10.1}
   \gm_{s,u}:
\xymatrix@R=1.9em@C=2.7em{
u= u_0 \ar@{->}[r]^{ \ m_1 s } & u_1 \ar@{->}[r]^{m_2 s } & u_2 \ar@{->}[r]^{m_3 s} & \cdots
}
\end{equation}
which is either infinite or  finite, in which the terminal node does not belong to $B(U,s)$. We call the $m_i s$ the
\textbf{blue bridges} of $\gm_{s,u}$ (or of the pair $(s,u)$), and call $m_1, m_2, \dots$ the \textbf{characteristic numbers} of $(s,u)$. We  write ``$s$-walk'' instead of $\{ s\} $-walk.

\begin{thm}\label{thm:a10.1}
  Every blue $s$-walk
 $\gm_{s,u}$ is tight.\end{thm}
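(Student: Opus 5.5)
The plan is to argue bridge by bridge. Fix $u\in B(U,s)$ and a blue bridge $u_{i-1}\xrightarrow{m_is}u_i$ of $\gm_{s,u}$. Because $\gm_{s,u}$ is the maximal blue $\{s\}$-walk, $u_i$ is the \emph{first} blue node after $u_{i-1}$ (Definition \ref{def:a9.26}). Concretely, $m_i$ is the least $m\in\N$ with $u_{i-1}+ms\in U$; for $0<k<m_i$ we have $u_{i-1}+ks\notin U$. To show the walk is tight (Definition \ref{def:d10.16}), I must rule out a decomposition
$$m_is=z'+v,\qquad v\in U\sm\00,$$
for each $i$. The bridge is then pure, i.e. minimal with respect to $\leq_U$.

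The first and main step is to control $z'$ and $v$. In the present context, all gaps and decompositions live in $Z(\{s\})_0=\NN s$, as in the $S$-walk framework of \S\ref{sec:a9}--\S\ref{sec:b10}, where bridges are taken in $Z(S)$. So I would first argue that such a decomposition can be taken inside $\NN s$, i.e. $z'=ks$ and $v=(m_i-k)s$ with $0\le k<m_i$. The monoid $\NN s$ is cancellative here: it sits inside the cancellative monoid generated by the walk, as noted after \eqref{eq:1.2}. Together with the upper-bound property of $R$ and the uniqueness of bridges (Theorem \ref{thm:d10.4}, applied to the bridges from $u_{i-1}$ to $u_i$), this should force the summands of $m_is$ to be multiples of $s$. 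This identification is the step I expect to be the real obstacle. For a general $z'\in R$ with $z'+v=m_is$ there is no a priori reason for $z'\in\NN s$. If it cannot be derived, I would state tightness relative to $Z(\{s\})$, which is the sense in which blue $S$-walks are used throughout.

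Granting $v=(m_i-k)s$ with $0<m_i-k\le m_i$ and $v\in U$, the contradiction is quick. First, $m_i-k<m_i$, since $m_i-k=m_i$ would mean $k=0$ and $m_is=v\in U$, which is handled separately. In that case, $u_{i-1}+v\in U$ because $U$ is a submonoid. But $u_{i-1}+v=u_{i-1}+(m_i-k)s$ with $0<m_i-k<m_i$, contradicting the minimality of $m_i$.

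The case $k=0$, i.e. $v=m_is$ and $z'=0$, is the trivial decomposition with $z'=0$; it is excluded because a gap must lie in $R\sm U$. More precisely, $m_is=v$ would put the bridge itself in $U$. That contradicts $z'\neq 0$ being required for a genuine decomposition $z=z'+v$ of a gap. If the bridge $m_is$ happens to lie in $U$, one simply notes that tightness only concerns splittings $z'+v$ with $z'$ a gap, and $z'=0$ is not one. Applying the argument to each $i$ gives that every bridge of $\gm_{s,u}$ is pure, so $\gm_{s,u}$ is tight.
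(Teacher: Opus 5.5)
Your argument is essentially the paper's own. The proof there starts from a splitting $m_is=v+ns$ with $v\in U\setminus\{0\}$ and $n\in\N$, so it too takes the non-$U$ summand to be a multiple of $s$. It then asserts $v=ks$ and contradicts the minimality of $m_i$, just as you do via $u_{i-1}+v\in U$. Its requirement $n\ge 1$ plays the role of your exclusion of $z'=0$. That case really occurs when $m_is\in U$ (e.g.\ $R=\N_0$, $U=\{0,2,3,4,\dots\}$, $s=1$, $u=0$, $m_1=2$), and then the bridge is not a gap at all. So it has to be excluded by convention rather than refuted.

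The step you flag as the obstacle is a genuine gap, and it cannot be closed: for arbitrary $z'\in R$ the statement is false. Take $R=\N_0$, $U=\N_0 3+\N_0 5=\{0,3,5,6,8,9,10,\dots\}$ (a.s.\ by Proposition~\ref{prop:1.13}), $s=4$, $u=5$.
\begin{itemize}
\item Since $5+4=9\in U$, we get $m_1=1$, and the bridge is $4\in\Gap(R,U)$.
\item But $4=1+3$ with $3\in U\setminus\{0\}$ and $1\in\Gap(R,U)$ (as $5+1=6\in U$).
\item Hence $\gm_{4,5}$ is not tight in the sense of Definition~\ref{def:d10.16}.
\end{itemize}
So neither the uniqueness of bridges (Theorem~\ref{thm:d10.4}) nor the u.b.\ property can force the summands into $\N_0 s$. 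What does hold is your fallback, and it is what both your argument and the paper's actually establish: $m_is$ has no splitting $ks+js$ with $k\ge 1$ and $js\in U\setminus\{0\}$.

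Even for that restricted version, your reason for the cancellativity of $\N_0 s$ is misplaced: the remark in \S\ref{sec:1} assumes that $U+\N_0 t$ is a.s., which is not given here. Cancellativity does hold, for a different reason. Suppose $as=(a+p)s$ with $p\ge 1$. Choose $m$ with $u+ms\in U$ (possible since $u\in B(U,s)$), and $j\ge 1$ with $jm\ge a$. Then $\tilde u=j(u+ms)\in U$ satisfies $\tilde u+ps=\tilde u=\tilde u+0$. Almost subtractivity gives $ps=0$, and the u.b.\ property then gives $s=0$, contradicting $s\notin U$. You need this to pass from $ks+js=m_is$ to $k+j=m_i$.
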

\begin{proof}
  Suppose that $m_i s = v + ns$ with $v \in U \sm \00$ and $n \in \N$. Then,
  $v = ks$ for some $k > 0$, whence $k+n = m_i$, and hence $0 < n < m_i$. It follows that
  $ns \in U$,  contradicting the minimality of $m_i$.
\end{proof}

We use an agricultural metaphor to understand the $s$-walks $\gm_{s,u}$ more thoroughly. Recall that the set of nodes of $\gm_{s,u}$ is a totally ordered subset of $U$ with respect to the partial ordering $\leq_R$ and its restriction $\leq_U$ to~ $U$ (Definition \ref{def:a4.3}). We say that $B(U,s)$ is the \textbf{blue seed} for $s$ in the subsoil $U$ of $R$, and that $u\in B(U,s)$ is a \textbf{corn} of the seed. We view $\gm_{s,u}$ as the stalk growing from $u$ in $U$, thinking of $s$ as a fertilizer for $u$.

%We resort to a picture from agriculture to understand the $s$-walks $\gm_{s,u}$ more thoroughly. Recall that the set of nodes of $\gm_{s,u}$ is a totally ordered subset of $U$ under the partial ordering ~$\leq_R$ and under  its restriction $\leq_U$ to~ $U$
%(Definition \ref{def:a4.3}). We say that $B(U,s)$ is the \textbf{blue seed} for $s$ in the subsoil $U$ of $R$, and that $ u \in B(U,s)$ is a \textbf{corn} of the seed. We view $\gm_{s,u}$ as the stalk growing from $u$ in $U$, thinking of $s$ as a fertilizer for $u$.

\begin{defn}\label{defn:e10.2}

Given a second corn $v\in B(U,s)$, we say that $\gm_{s,v}$ is \textbf{confluent to} $\gm_{s,u}$,  if there exists a node~ $w$ of $\gm_{s,v}$ that is also a node of $\gm_{s,u}$ and if $\gm_{s,u}$ is finite, is different from its last node.
\end{defn}

The following theorem is now immediate  from  the definitions.

\begin{thm}\label{thm:e10.1} $ $
\begin{enumerate} \ealph
  \item If $\gm_{s,v}$ is confluent to  $\gm_{s,u}$, then $\gm_{s,u}$ is  confluent to  $\gm_{s,v}$ and  $\gm_{s,u} \cap \gm_{s,v} = \gm_{s,w}$
for
   a unique node  $w $
of~ $\gm_{s,u}$.

  \item If $\gm_{s,u}$ is finite, that $\gm_{s,v}$ is finite, and both blue $s$-walks end at the same node.
  Otherwise, $\gm_{s,u}$ and $\gm_{s,v}$ are infinite.

  \item $\gm_{s,w}$ is a \textbf{truncation} of both $\gm_{s,u}$ and $\gm_{s,v}$, consisting of all nodes $x$ of $\gm_{s,u}$ such that $x\geq_u w$.
 \end{enumerate}
\end{thm}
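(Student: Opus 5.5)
The key observation is that each maximal blue $s$-walk $\gm_{s,u}$ is determined \emph{deterministically} by any one of its nodes: from a node $x\in B(U,s)$ the next node is $x+ms$, where $m$ is the least $m\in\N$ with $x+ms\in U$. Hence the part of $\gm_{s,u}$ lying at or above a node $x$ coincides with $\gm_{s,x}$. This is a ``successor function'' argument, and the plan is to derive all three parts from it.

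First I would make this precise. Define $\nu(x)=x+m(x)s$ for $x\in B(U,s)$, with $m(x)$ minimal. By the definition of a maximal blue $\{s\}$-walk, and by Theorem~\ref{thm:a10.1} (tightness, so no intermediate node of $U$ can be skipped), the nodes of $\gm_{s,u}$ are exactly $u,\nu(u),\nu^2(u),\dots$. The sequence continues as long as the current node lies in $B(U,s)$; it stops at the first node outside $B(U,s)$. Consequently, for any node $x$ of $\gm_{s,u}$ other than a terminal one, the tail of $\gm_{s,u}$ starting at $x$ is $\gm_{s,x}$.

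For (a), suppose $\gm_{s,v}$ is confluent to $\gm_{s,u}$ via a common node $x$ which is not the last node of $\gm_{s,u}$. Then $x\in B(U,s)$, so $x$ is not terminal in $\gm_{s,v}$ either, and both tails from $x$ equal $\gm_{s,x}$. This gives the symmetry of confluence. Next, take $w$ to be the $\leq_R$-least common node. Such a $w$ exists because the common nodes below $x$ form a finite set: both walks strictly increase from $u$ and from $v$ to $x$, so only finitely many nodes precede $x$ in each. Then $\gm_{s,w}\subseteq\gm_{s,u}\cap\gm_{s,v}$ by the tail property. Conversely, any common node $y$ is comparable with $w$, since both lie on the totally ordered chain $\gm_{s,u}$, and minimality of $w$ forces $y\ge w$; hence $y$ lies on the tail from $w$, which is $\gm_{s,w}$. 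Uniqueness of $w$ holds because it is the minimum of $\gm_{s,u}\cap\gm_{s,v}$.

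Parts (b) and (c) then follow at once. Both walks share the tail $\gm_{s,w}$, so they are finite or infinite together, and in the finite case both end at the last node of $\gm_{s,w}$. The intersection $\gm_{s,w}$ is precisely the set of nodes $x$ of $\gm_{s,u}$ with $x\ge w$ (in $\leq_U$, equivalently $\leq_R$ on the chain), that is, a truncation.

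The main obstacle I expect is in (a): ensuring that the common node is not terminal in $\gm_{s,v}$, and that the minimal common node really exists. Both points reduce to the facts that membership in $B(U,s)$ depends only on the node, and that nodes of a walk form a well-ordered increasing sequence indexed by $\N_0$. I would state these two facts explicitly before the case analysis.
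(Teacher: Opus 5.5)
Your proof is correct and is exactly the deterministic-successor argument that the paper leaves implicit when it calls the theorem ``immediate from the definitions''. One attribution should be fixed: that the nodes of $\gm_{s,u}$ are exactly $u,\nu(u),\nu^2(u),\dots$ follows from the minimality of the characteristic numbers in the definition of $\gm_{s,u}$, not from Theorem~\ref{thm:a10.1}, since tightness does not rule out skipped intermediate nodes and is not needed here.
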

\noindent
If $\gm_{s,u}$ is confluent to $\gm_{s,v}$, then, in accordance with \S\ref{sec:a8}, we say that ``$\gm_{s,u}$ and $\gm_{s,v}$ are confluent.''

Let $\Gm \times \Dl$ be a rectangular blue grid of size $k \times \ell $ (cf.  Construction~\ref{cons:b9.3}), spanned by  a vertical blue $s$-walk~$\Gm$ of length $k$ and a horizontal blue $s'$-walk $\Dl$ of length $\ell$, both starting at
$u = u_{0,0} \in U$. The nodes of $\Gm$ are~ $u_{i,0}$, $0\leq i\leq k$, and those of $\Dl$ are $u_{0,j}$, $0\leq j\leq\ell$, with characteristic numbers
$m_1,\dots,m_k$ and $m'_1,\dots,m'_\ell$, respectively. The grid $\Gm\times\Dl$ has the meshes

  \begin{equation*}\label{eq:e10.}
   \xymatrixcolsep{6mm}
    \xymatrix@R=1.3em@C=1.5em{
     u_{i+1,j} \ar@{->}[rr]^{m'_{j+1}s' } &  &  u_{i+1,j+1} \\
     u_{i,j} \ar@{->}[rr]_{m'_{j+1} s'} \ar@{->}[u]^{m_{i+1}s } &  &  u_{i,j+1}  \ar@{->}[u]_{m_{i+1}s}\\
     }
     \end{equation*}
Note that $\Gm \times \Dl$ is uniquely determined by the walks
$ \xymatrix@R=1.5em@C=1.7em{
u \ar@{->}[r]^{  m_1 s } & u_{1,0}
}$
and
$ \xymatrix@R=1.5em@C=1.7em{
u \ar@{->}[r]^{  m'_1 s' } & u_{0,1}
}$
in the blue seeds for $s$ and $s'$, respectively.
We quote an obvious fact about walks in a grid.

\begin{lem}\label{lem:e10.4} If $\gm$ is a blue walk of length $d \geq 1$ in the grid
$\Gm \times \Dl$ starting at a node $u_{p,q}$ (north or east), then ~$\gm$ remains  in the subrectangle
      \begin{equation}\label{eq:e10.2}
   \xymatrixcolsep{6mm}
    \xymatrix@R=1.3em@C=1.5em{
     u_{k,q} \ar@{->}[rr] &  &  u_{k,\ell} \\
     u_{p,q} \ar@{->}[rr] \ar@{->}[u] &  &  u_{p,\ell}  \ar@{->}[u]\\
     }
     \end{equation}
of $\Gm \times \Dl$ of size $(k-p) \times (\ell -q)$, where $d \leq (k-p) + (\ell -q)$. If $d =1$, then
     \begin{equation}\label{eq:e10.3}
 \gm= \eps =  \xymatrix@R=1.5em@C=1.7em{
u_{p,q} \ar@{->}[r] & u_{p+1,q}
}
%\end{equation}
\dss{or}
 %    \begin{equation}\label{eq:e10.4}
 \gm= \eps =  \xymatrix@R=1.5em@C=1.7em{
u_{p,q} \ar@{->}[r] & u_{p,q+1}
}
\end{equation}
When $d > 1$, there exists a unique factorization $\gm = \eps \circ \gm'$, where $\gm'$ has  length $d-1$.

\end{lem}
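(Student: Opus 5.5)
The plan is to reduce everything to the structure of the grid $\Gm \times \Dl$ described in Remark~\ref{rem:b9.6}. There the grid is identified with its set of nodes, partially ordered by $\leq_R$. A blue walk in the grid is a totally ordered chain whose consecutive nodes are joined by a single mesh edge, going either north ($+m_{i+1}s$) or east ($+m'_{j+1}s'$). So a blue walk $\gm$ of length $d$ starting at $u_{p,q}$ is a sequence of nodes $u_{p,q}=x_0<x_1<\cdots<x_d$. Each $x_{t+1}$ is the node directly north or directly east of $x_t$.

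\textbf{Step 1: the length-one case.} Let $d=1$. Since $\gm$ is a walk along grid edges, its unique bridge is either a vertical edge $u_{p,q}\to u_{p+1,q}$ or a horizontal edge $u_{p,q}\to u_{p,q+1}$, which is \eqref{eq:e10.3}. To see that no other length-one blue walk from $u_{p,q}$ stays in the grid, I would use that distinct grid nodes are distinct elements of $U$. Moving north adds a positive multiple of $s$ and moving east a positive multiple of $s'$. Hence the map $(i,j)\mapsto u_{i,j}$ is strictly increasing in each coordinate with respect to $\leq_R$, by upper boundedness and cancellativity of $U$, and a node reached by one edge must be an immediate neighbour.

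\textbf{Step 2: staying in the rectangle and bounding the length.} By induction on $t$, each $x_t=u_{i_t,j_t}$ satisfies $i_t\ge p$ and $j_t\ge q$. Moreover $(i_{t+1}+j_{t+1})=(i_t+j_t)+1$, since every step raises exactly one index by one. Hence $x_d=u_{i_d,j_d}$ with $i_d+j_d=p+q+d$. Since $i_d\le k$ and $j_d\le \ell$, this gives $d\le (k-p)+(\ell-q)$, and all nodes lie in the subrectangle \eqref{eq:e10.2}.

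\textbf{Step 3: factorization for $d>1$.} Let $\eps$ be the first edge $x_0\to x_1$; by Step~1 it is one of the two edges in \eqref{eq:e10.3}. Let $\gm'$ be the remaining walk $x_1\to\cdots\to x_d$, of length $d-1$. Then $\gm=\eps\circ\gm'$, with concatenation read in the paper's order of first step followed by the rest. Uniqueness holds because a walk is determined by its sequence of nodes, so $\eps$ must be $x_0\to x_1$ and $\gm'$ the rest.

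The only point needing care is the identification in Step~1: a blue step in the grid is exactly one mesh edge, and coordinates never decrease. I would justify this by Remark~\ref{rem:b9.6} together with the injectivity of $(i,j)\mapsto u_{i,j}$, which follows from Theorem~\ref{thm:a4.8} and the cancellativity of $U$. Everything else is routine bookkeeping.
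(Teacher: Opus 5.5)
Your lattice-path bookkeeping in Steps~2 and~3 is the whole content of this lemma. The paper gives no proof of it and introduces it only as ``an obvious fact about walks in a grid''. Once a walk in $\Gm\times\Dl$ is understood to run along horizontal and vertical mesh edges, as in Remark~\ref{rem:b9.6} and the discussion after Construction~\ref{cons:b9.3}, your argument is complete. The steps are: each step raises $i+j$ by exactly one, so $d=(i_d-p)+(j_d-q)\le (k-p)+(\ell-q)$; and the factorization is first edge followed by the rest.

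You should, however, drop the extra justification you give for Step~1, because it is false and would not prove what you want in any case.

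\begin{itemize}
\item \textbf{Injectivity fails.} The map $(i,j)\mapsto u_{i,j}$ need not be injective. Take $R=(\mathbb{N}_0,+)$, which is cancellative, so every submonoid is a.s. by Proposition~\ref{prop:1.13}. Let $U=\{0\}\cup\{4,5,6,\dots\}$, $s=1$, $s'=3$ and $u_{0,0}=4$; note $u_{0,0}$ lies in the stability region. All characteristic numbers equal $1$, so $u_{i,j}=4+i+3j$ and hence $u_{3,0}=7=u_{0,1}$. Neither Theorem~\ref{thm:a4.8} nor cancellativity excludes this, since here $3s=s'$.
\item \textbf{Monotonicity is true but insufficient.} What does hold is strict monotonicity in each coordinate separately. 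This uses that $U$ is a.s. (so $u+ms\neq u$), not merely that $U$ is cancellative, together with upper boundedness (so $ms\neq 0$). But this does not show that a single blue step from $u_{p,q}$ lands on an immediate neighbour rather than on, say, $u_{p+2,q}$ or $u_{p+1,q+1}$.
\end{itemize}

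The ``one step $=$ one mesh edge'' property is a convention about walks in the grid; it is not something derived from the order on $U$. The lemma should therefore be read as a statement about index paths $(i_t,j_t)$, not about subsets of $U$. Read that way, your Steps~2--3 go through verbatim and need no injectivity.
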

We call \eqref{eq:e10.2} an NE-rectangle with \textbf{initial node} $u_{p,q}$, and call $\eps$ the \textbf{expel} of $\gm$ at $u_{p,q}$, or the expel of the  seeds
$(u_{p,q}, s)$ and   $(u_{p,q}, s')$ in \eqref{eq:e10.3}.
We introduce a property of pairs $(\gm_1, \gm_2)$ of walks in $\Gm \times \Dl$ by an inductive definition that explicitly excludes confluence of $\gm_1$ and $\gm_2$.

\begin{defn}\label{def:e10.5}
Let  $\gm_1$ and  $\gm_2$ be two walks of the same length $d$ in the grid $\Gm \times \Dl$. We say that  $\gm_1$ and $\gm_2$
are \textbf{NE-dual} (or the pair  $(\gm_1, \gm_2)$ is NE-dual), if the following holds.
If $d=1$, then  $\gm_1$ and $\gm_2$ start in different directions, one north and the other east. If $d > 1$, then  $\gm_1$ and $\gm_2$ start in different directions, and are therefore in the situation \eqref{eq:e10.3} for $\gm_1$
and $\gm_2$, or vice versa.  We stipulate that the walks $\gm'_1$, $\gm'_2$ of length $d-1$ are also NE-dual.

\end{defn}
\begin{thm}\label{thm:e10.6}
Two NE-dual walks $\gm_1$ and $\gm_2$ in $\Gm\times\Dl$ that start at a common node $u_{a,b}$ and end at a common node $u_{c,d}$, with no other common nodes in between, surround a pen (cf. Definition \ref{def:b9.4}). More precisely, if $\gm_1$ starts north, then $\gm_2$ starts east. The walk $\gm_1$ is the upper part of the pen, and $\gm_2$ is the lower part; both have length $(c-a)+(d-b)$.
\end{thm}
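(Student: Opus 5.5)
The plan is to check the three conditions of Definition~\ref{def:b9.4} directly, using the grid description of Lemma~\ref{lem:e10.4}. Every walk in $\Gm\times\Dl$ moves by single north or east steps, and a north step $u_{i,j}\to u_{i+1,j}$ has bridge $m_{i+1}s$ while an east step $u_{i,j}\to u_{i,j+1}$ has bridge $m'_{j+1}s'$. So any walk from $u_{a,b}$ to $u_{c,d}$ makes exactly $c-a$ north steps and $d-b$ east steps. Hence its length is $(c-a)+(d-b)$, independent of the route. This gives equal lengths for $\gm_1$ and $\gm_2$.

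Next I will show that the gap-sums agree. In any monotone lattice path from $(a,b)$ to $(c,d)$, the north steps pass through the rows $a,\dots,c-1$ exactly once each, and the east steps pass through the columns $b,\dots,d-1$ exactly once each. Therefore the gap-sum of either walk is
$$\sum_{i=a+1}^{c} m_i s+\sum_{j=b+1}^{d} m'_j s'.$$
Alternatively, one can note that $u_{a,b}$ plus the gap-sum equals $u_{c,d}$ and invoke the uniqueness of differences in Definition~\ref{defn:1.1}, since $U$ is a.s. in $R$. Together with the first paragraph, this settles condition~(a).

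For condition~(b), NE-duality forces the first steps to differ: one walk goes north and the other goes east. Since both walks end at the common node $u_{c,d}$, this node cannot be reached in one step by both, so the common length must exceed~$1$. Condition~(c) is exactly the hypothesis that the two walks share no nodes other than $u_{a,b}$ and $u_{c,d}$.

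Finally, I identify the upper and lower parts using the convention stated after Definition~\ref{def:b9.4}. The walk whose first bridge goes north is the upper part, so if $\gm_1$ starts north then $\gm_2$ starts east and is the lower part. The step I expect to need some care is justifying that the two walks stay separated, with $\gm_1$ weakly above $\gm_2$ throughout, so that they genuinely bound a region. For this I would use the inductive factorization $\gm=\eps\circ\gm'$ from Lemma~\ref{lem:e10.4} together with the recursive clause of Definition~\ref{def:e10.5}. Two monotone paths that start in different directions and share no interior node cannot cross, since a crossing in the grid would force a common node. This is the key point, and it shows that the pair forms a pen with $\gm_1$ as the upper part.
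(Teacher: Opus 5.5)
Your proposal is correct and follows the same route as the paper, whose proof just declares the statement evident from Definition~\ref{def:b9.4} and the grid structure; you supply the omitted checks of conditions (a)--(c) and of the upper/lower labelling. The non-crossing step you flag is not needed for the pen conditions, because the labelling depends only on the directions of the first steps. Your argument for it is nevertheless sound: index nodes by the level $i+j$; the row coordinates of two north/east paths then differ by an amount that changes by at most one per level, so the paths cannot switch sides without meeting at a common node.
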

\begin{proof}
This is evident from the preceding discussion and Definition \ref{def:b9.4} of a pen.  \end{proof}

We call  $(\gm_1, \gm_2)$ a \textbf{self dual pen}.  By iteration, we obtain the following.
\begin{cor}\label{cor:e10.7}
If $\gm_1$ and $\gm_2$ are NE-dual walks in $\Gm\times\Dl$, both starting at a node $u$ of $\Gm\times\Dl$ and ending at a node $v$ of $\Gm\times\Dl$, then the pair $(\gm_1,\gm_2)$ surrounds a finite succession of pens. All of these pens are self-dual.
\end{cor}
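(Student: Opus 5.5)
We argue by splitting the pair $(\gm_1,\gm_2)$ at its common nodes and then applying Theorem~\ref{thm:e10.6} to each piece. Both walks run in the grid $\Gm\times\Dl$ from $u$ to $v$. By Remark~\ref{rem:b9.6} the nodes of each walk form a finite chain under $\leq_R$, so the set $C=\gm_1\cap\gm_2$ of common nodes is also a finite chain. Write it as $u=c_0<_R c_1<_R\cdots<_R c_m=v$. If $u=v$ the walks have length zero and there is nothing to prove, so we assume $m\geq 1$.

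Between consecutive common nodes $c_{j-1}$ and $c_j$ we take the subwalks $\gm_1[c_{j-1},c_j]$ and $\gm_2[c_{j-1},c_j]$. By construction they have no common node other than their endpoints. They also have the same length: if $c_{j-1}=u_{a,b}$ and $c_j=u_{c,d}$, every walk in the grid between them uses exactly $(c-a)+(d-b)$ north or east steps (Lemma~\ref{lem:e10.4} and the mesh structure~\eqref{eq:b9.1}). If this length is $1$, the two subwalks are the same edge, which is a confluence step and not a pen. If the length is $>1$, we claim the two subwalks are NE-dual. Given that, Theorem~\ref{thm:e10.6} shows they surround a self-dual pen, with the walk starting north as its upper part.

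So the pair $(\gm_1,\gm_2)$ decomposes into finitely many segments, each either a common edge or a self-dual pen. This matches Proposition~\ref{prop:b9.4}, which says such a pair is a succession of maximal confluence intervals and pens. There are finitely many pens because $m$ is finite, and that finiteness comes from the finite size of the grid.

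The main obstacle is showing that NE-duality, which is given for the whole pair $(\gm_1,\gm_2)$, passes to each segment between common nodes. Definition~\ref{def:e10.5} is inductive. It requires the two walks to leave their start in different directions, and it then asks the tails $\gm_1',\gm_2'$ from the unique factorization of Lemma~\ref{lem:e10.4} to be NE-dual again. We therefore induct on the length $d$ and repeatedly strip the first steps. Two points need care. First, at each common node $c_{j-1}$ the walks must leave in different directions; this holds because they cannot share the next step without creating a common node inside the segment, or else that node is itself the next $c_j$. Second, the truncated pair must still fit the recursive definition. If the inductive bookkeeping does not line up cleanly, the fallback is to read the definition as requiring only different initial directions at each common node, since that property is all Theorem~\ref{thm:e10.6} actually uses.
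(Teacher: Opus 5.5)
Your route is the one the paper intends: cut $(\gm_1,\gm_2)$ at the common nodes $c_0<\dots<c_m$ and apply Theorem~\ref{thm:e10.6} to each piece. The paper's proof is just ``by iteration''. The gap is at the step you flag yourself: you never show that the segment pairs are NE-dual, and your argument for ``different directions at each $c_{j-1}$'' does not work. Its second alternative (``or else that node is itself the next $c_j$'') is exactly the case of a shared edge, and nothing you say rules it out. That is why your decomposition ends with ``a common edge or a self-dual pen''. This is weaker than the statement, which asserts that the pair surrounds pens only. Your fallback of re-reading Definition~\ref{def:e10.5} would change the hypothesis rather than prove the claim.

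The missing observation is short.
\begin{itemize}
\item Unwind Definition~\ref{def:e10.5} by induction on the length, using the unique factorization $\gm=\eps\circ\gm'$ of Lemma~\ref{lem:e10.4}. Two walks of equal length are NE-dual if and only if, for every $i$, their $i$-th steps go in different directions (one north, one east).
\item Every step raises the coordinate sum $p+q$ of $u_{p,q}$ by exactly one. So a common node is reached by $\gm_1$ and $\gm_2$ after the same number of steps.
\item Hence the two segments between $c_{j-1}$ and $c_j$ cover the same range of step indices. They inherit the stepwise property and are therefore NE-dual.
\item In particular $\gm_1$ and $\gm_2$ never share an edge, since one-step walks from the same node in different directions end at different nodes. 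So every segment has length $\ge 2$ and no common edges occur.
\end{itemize}
Theorem~\ref{thm:e10.6} then gives a self-dual pen for each segment, and finiteness follows from the finite length of the walks.
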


The following general task seems to be relevant to much of what follows. Its answer depends on the submonoid $U$ of $R$.
 \begin{problem}\label{prob:e10.8}
    Let$\gm_1$ and $\gm_2$ be  NE-dual walks in $\Gm \times \Dl$ starting at the same node $u_{0,0}$ with expels (= basic walks)
   $ \xymatrix@R=1.5em@C=1.7em{
u_{0,0} \ar@{->}[r]^{  m_1 s } & u_{1,0}
}$
and
$ \xymatrix@R=1.5em@C=1.7em{
u_{0,0} \ar@{->}[r]^{  m'_1 s' } & u_{0,1}
}$. Decide, whether $\gm_1$ and $\gm_2$ meet again.
Refined version: Decide whether $\gm_1$ and $\gm_2$ have a common node other than $u_{0,0}$ and the last nodes of $\gm_1$ and $\gm_2$.
 \end{problem}

We  start by describing  arbitrary pens in $\Gm \times \Dl$. Recall that the grid $\Gm \times \Dl$ is a partially ordered set under~ $\leq _U$, with totally ordered $s$-walks in the north direction and  totally ordered $s'$-walks in the east direction. Assume that $(\gm_1, \gm_2) $ runs in the subrectangle  \eqref{eq:e10.2} of $\Gm \times \Dl$ with common nodes $u_{p,q} <_U u_{a,b}$  of $\gm_1$ and $\gm_2$. Then, the upper part~ $\gm_1$ is a totally ordered subset of the rectangle \eqref{eq:e10.2} of cardinality $(a-p) + (b-q) +1$, with first nodes
$u_{p,q}$, $u_{p+1,q}$ and last node $u_{a,b}$, while $\gm_2$ is a totally ordered subset of
\eqref{eq:e10.2} of same cardinality with first nodes
$u_{p,q}$, $u_{p,q+1}$ and  last node $u_{a,b}$. Both sets are maximal \as  walks from $u_{p,q}$ to $u_{a,b}$.

We say that $(\gm_1, \gm_2)$ is a \textbf{pen in $\Gm \times \Dl$ of type $(a-p, b-q)$)}, and call $\gm_1$ the \textbf{upper fence} and  $\gm_2$ the \textbf{lower fence}  of the pen. We sketch a pen of type $(3,4)$.
       \begin{equation}\label{eq:e10.5}    \xymatrixrowsep{3mm}
\xymatrixcolsep{6mm}
    \xymatrix@R=1.7em@C=1em{
   u_{p+3,q}
    \\
        \\
        \\ u_{p,q}
     }
\xymatrixcolsep{6mm}
    \xymatrix@R=1.5em@C=1.8em{
   \bullet \ar@{.}[rr] \ar@{.}[d]  & &\bullet  \ar@{->}[r] & \bullet  \ar@{->}[r] & \bullet     \\
    \bullet \ar@{->}[r] & \bullet  \ar@{->}[r]_{\gm_1} &\bullet  \ar@{->}[u] && \bullet \ar@{->}[u]
        \\  \bullet \ar@{->}[u] && &  \bullet \ar@{->}[r] &  \bullet \ar@{->}[u]
        \\
    \bullet \ar@{->}[r] \ar@{->}[u] & \ar@{->}[r]_{\gm_2}  \bullet &
    \ar@{->}[r] \bullet  &\bullet   \ar@{->}[u] & \bullet \ar@{.}[l] \ar@{.}[u]
     }
      \xymatrix@R=1.7em@C=1em{
   u_{p+3,q+4}
    \\
        \\
        \\ u_{p,q+4}
     }
\end{equation}
 The following is now immediate.
 \begin{thm}\label{thm:e10.9}
   The pens in the subrectangle $Q$ of $\Gm\times\Dl$ with corners $u_{p,q}$ and $u_{p+k,q+\ell}$ correspond uniquely to pairs $(\gm_1,\gm_2)$ of maximal totally ordered subsets of $Q$ that have exactly two common nodes, $u_{p,q}$ and $u_{a,b}$, where $a\leq p+k$ and $b\leq q+\ell$.
 \end{thm}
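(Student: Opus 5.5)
The plan is to make explicit the bijection between pens in $Q$ and pairs of maximal chains. I will work with the combinatorial model of $Q$ given by Remark~\ref{rem:b9.6}. The nodes of $Q$ are identified with the index pairs $(i,j)$, $p\le i\le p+k$, $q\le j\le q+\ell$. Moving north or east along a mesh \eqref{eq:b9.1} is a basic blue walk of length one, and $u_{i,j}\le_R u_{i',j'}$ holds within the grid exactly when $i\le i'$ and $j\le j'$. Injectivity of $(i,j)\mapsto u_{i,j}$ on $Q$ follows from the \as property: $U$ is cancellative and the bridges $m_{i}s$, $m'_j s'$ are nonzero. I will take this identification as the base of the argument.

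First step: map pens to pairs of chains. Let $(\gm_1,\gm_2)$ be a pen in $Q$ with common end nodes $u_{p,q}$ and $u_{a,b}$. Each $\gm_i$ is a walk of north/east unit steps, so its node set is a chain in $Q$. It is maximal as a totally ordered subset of the interval $[u_{p,q},u_{a,b}]$, because consecutive nodes differ by a single mesh edge and no node lies strictly between them. The node set has cardinality $(a-p)+(b-q)+1$, as in the discussion preceding \eqref{eq:e10.5}. Condition (c) of Definition~\ref{def:b9.4} says precisely that the two chains meet only in $u_{p,q}$ and $u_{a,b}$. Lemma~\ref{lem:e10.4} guarantees that both walks stay inside $Q$, which forces $a\le p+k$ and $b\le q+\ell$.

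Second step: go back from chains to pens. Given two maximal chains from $u_{p,q}$ to $u_{a,b}$ meeting exactly in those two nodes, I reconstruct the walks by listing nodes in increasing $\leq_R$ order. Maximality forces every consecutive pair to be a north or east mesh edge, hence a blue walk of length one. The two walks end at the same node, so by Lemma~\ref{lem:b9.4} (the grid property) they have the same gap-sum and the same length $(a-p)+(b-q)$. Their length exceeds $1$ because two distinct maximal chains cannot both be a single edge. Requiring exactly two common nodes gives condition (c) of Definition~\ref{def:b9.4}. Finally, since the first steps must differ, one walk goes north and the other east, which fixes the labelling as upper and lower part.

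Main obstacle: the only nontrivial point is the correct meaning of \emph{maximal}. It must mean maximal within the interval $[u_{p,q},u_{a,b}]$, equivalently saturated, and not maximal in all of $Q$. I also need that the index-based order agrees with $\le_R$ on grid nodes. That $i\le i'$, $j\le j'$ implies $u_{i,j}\le_R u_{i',j'}$ is clear from the meshes. For the converse I would argue by weight or by uniqueness of differences: a relation $u_{i,j}\le_R u_{i',j'}$ with, say, $i>i'$ would force a nonzero bridge backwards along a north walk, contradicting antisymmetry of the \ub order. Once these two points are settled, the two constructions are evidently mutually inverse, which proves uniqueness of the correspondence.
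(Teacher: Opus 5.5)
Your two directions are the paper's proof made explicit. The paper only observes that such chains are exactly the walks fitting the description of pens before \eqref{eq:e10.5}. Your argument is correct once $Q$ is taken to be the set of index pairs $(i,j)$ with the coordinatewise order.

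\textbf{The gap is in your last paragraph.} There you try to derive this model from the \as property, and both claims fail in general.
\begin{enumerate}
\item \emph{Order agreement.} Take $R=(\NN,+)$. It is cancellative, so every submonoid $U$ is \as by Proposition~\ref{prop:1.13}, yet $\leq_R$ is total, so any two grid nodes are $\leq_R$-comparable. Your antisymmetry argument only handles two nodes on a common vertical or horizontal line. For $i>i'$ and $j<j'$ it only yields $u_{i',j}\leq_R u_{i,j}\leq_R u_{i',j'}$, which contradicts nothing.
\item \emph{Injectivity.} For coordinatewise comparable index pairs, injectivity does follow from the \as property. For $i<i'$ and $j>j'$ it does not. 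An equality $u_{i,j}=u_{i',j'}$, compared through the common lower node $u_{i,j'}$, only gives equality of a sum of east bridges with a sum of north bridges, i.e.\ $n's'=ns$ for some $n,n'\in\N$. Nothing excludes this. For $s'=s$, uniqueness of the maximal blue $s$-walk gives $m'_1=m_1$, hence $u_{1,0}=u_{0,1}$.
\end{enumerate}

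With $\leq_R$ as the order, the statement itself is false. In $\NN$ the unique maximal chain of an interval is the whole interval, so no pair of distinct maximal chains exists to match the pens. This point therefore cannot be ``settled'' by an argument; it has to be part of the setup, as the paper tacitly assumes. Remark~\ref{rem:b9.6} regards a grid as a set of $(r+1)(s+1)$ distinct nodes. Before \eqref{eq:e10.5}, $\Gm\times\Dl$ is treated as the product of the two chains $\Gm$ and $\Dl$, in which only north and east walks are totally ordered. Replace your last paragraph by this convention and the rest of your argument goes through.

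One small repair remains: you must also require $\gm_1\neq\gm_2$, equivalently different first steps. When $(a-p)+(b-q)=1$, the pair consisting of the same single edge twice has exactly two common nodes but is not a pen. Your phrase ``two distinct maximal chains'' silently adds this hypothesis to the statement.
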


 \begin{proof}
Observe that such totally ordered subsets form subwalks in the grid $\Gm\times\Dl$ that fit into the description of pens above. \end{proof}

We return to the special case in which $(\gm_1,\gm_2)$ is a dual NE-pair of blue walks in $\Gm\times\Dl$ starting at a common node. Let $v$ be a common node of $\gm_1$ and $\gm_2$ different from the first and last common nodes of $\gm_1$ and $\gm_2$. By Corollary \ref{cor:e10.7}, there exists a self-dual pen $A$ ending at $v$ and a self-dual pen $B$ starting at $v$. Our goal is to simplify $A\cup B$ to a single self-dual pen by a \textbf{modification $(\gm'_1,\gm'_2)$ of $(\gm_1,\gm_2)$ at $v$} that eliminates the common node $v$. Let $x_j$ denote the last node of $\gm_j$ before $v$, and let $w_j$ denote the first node of $\gm_j$ after $v$ ($j=1,2$). Note that $x_j$ and $w_j$ are both \textbf{not} common nodes of $\gm_1$ and $\gm_2$.
%
% We return to the special case that $(\gm_1, \gm_2)$ is a dual NE-pair of blue walks in $\Gm \times \Dl$ starting at  some common node. Let $v$ be a common node of
% $\gm_1$ and $\gm_2$, different from the the first and last common nodes of $\gm_1$ and  $\gm_2$.
% By Corollary \ref{cor:e10.7}, there exists  a self-dual pen  $A$ ending at $v$
% and a selfdual pen $B$ staring at $v$. Our goal  is to simplify
% $A \cup B$ to one self-dual pen by a \textbf{modification $(\gm'_1, \gm'_2)$ of
% $(\gm_1, \gm_2)$ at $v$} which eliminates the common node ~$v$. Let ~ $x_j$
%denote the last node of $\gm_j$ before $v$ and $w_j$ denote the first node of $\gm_j$ after $v$ ($j = 1,2$). Note that $x_j$ and $w_j$ are both \textbf{not}
%common nodes of $\gm_1$ and  $\gm_2$.
Assume without loss of generality that $\gm_1$ contains the upper fence of $A$, and
$\gm_2$ contains the lower fence of $A$.

\begin{construction}\label{const:e10.10}
At $v$, $\gm_1$ changes direction from east to north, while $\gm_2$ changes direction from north to east. We obtain $\gm'_1$ by leaving $\gm_1$ at $x_1$, going one step north and then one step east to reach $w_1$. We then continue $\gm'_1$ along $\gm_1$ (see the sketch below).
Dually, we leave $\gm_2$ at $x_2$, going east and then north to reach $w_2$, and then continue $\gm'_2$ along $\gm_2$.
We call $(\gm'_1,\gm'_2)$ the \textbf{standard modification of $(\gm_1,\gm_2)$ at $v$}.
\end{construction}
$$
\xymatrixcolsep{6mm}
    \xymatrix@R=0.3em@C=0.5em{
     & &    & &\gm'_1  & & &
     \\ & & &&& & B
\\
     & & \bullet   \ar@{->}[rr]   & &\bullet_{w_2} \ar@{->}[uu] & & &
\\ &&& M_1& &&&
\\
 \gm_1  \ar@{->}[rr] &  & \bullet_{x_1} \ar@{->}[uu]    \ar@{.}[rr]  & &\bullet_{v}
 \ar@{.}[uu] \ar@{.}[rr] & & \bullet_{w_1}   \ar@{->}[rr] & & \gm'_2
\\
 &&& && M_2& &&&
\\
 &  &   & &\bullet_{x_2}
 \ar@{.}[uu] \ar@{->}[rr] & & \bullet  \ar@{->}[uu] & &
\\
      & & A &&& &  \\
           & &    & &\gm_2  \ar@{->}[uu] & & &
   }
   $$
The self-dual pen given by $(\gm'_1,\gm'_2)$ is obtained by adding two meshes $M_1$ and $M_2$ of $\Gm\times\Dl$ to $A\cup B$.

Let $\Gm \times \Dl$ be  a $k \times k$-square with $k > 0$,
$\Gm = \{ u_{0,0}, u_{1,0}, \dots, u_{k,0} \} $, and
$\Dl = \{ u_{0,0}, u_{0,1}, \dots, u_{0,k} \} $.
We establish a further approach to NE-duality by using the involution
     \begin{equation}\label{eq:e10.6}
          \tau: \Gm \times \Dl \To \Gm \times \Dl, \qquad \tau(u_{p,q}) = u_{q,p},
\end{equation}
which switches the coordinates in $\Gm \times \Dl$.

 \begin{thm}\label{thm:e10.11} Given a walk $\gm$ in the grid
 $\Gm \times \Dl$ from $u_{0,0}$ to $u_{p,q}$, where  $ p \leq k$ and $ q \leq k$,
 the dual walk ~$\gm^\vee$  is obtained by applying $\tau$ to the set $\gm$, that is, $\gm^\vee = \tau(\gm)$. Both walks $\gm$ and $ \gm^\vee$ have length $p + q$.
\end{thm}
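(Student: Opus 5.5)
The plan is to treat a walk in the square grid $\Gm \times \Dl$ as a lattice path: a sequence of nodes $u_{p_0,q_0}, u_{p_1,q_1}, \dots$ in which each step increases exactly one coordinate by $1$. This description follows from Lemma~\ref{lem:e10.4} and Remark~\ref{rem:b9.6}, which say that each basic subwalk is an expel going either north or east. A walk from $u_{0,0}$ to $u_{p,q}$ therefore consists of exactly $p$ north steps and $q$ east steps. This already gives the length $p+q$ asserted for $\gm$.

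Next, I would check that $\tau$ carries walks to walks. Since $\tau(u_{i,j}) = u_{j,i}$, a north step $u_{i,j}\to u_{i+1,j}$ is sent to an east step $u_{j,i}\to u_{j,i+1}$, and conversely. Because the grid is a $k\times k$ square, $\tau$ maps the node set of $\Gm\times\Dl$ bijectively onto itself, so no node leaves the grid. Hence $\tau(\gm)$ is again a sequence of unit north/east steps. It starts at $\tau(u_{0,0}) = u_{0,0}$, ends at $u_{q,p}$, and has length $p+q$. I would remark here that the bridges change from multiples $m_{i+1}s$ to $m'_{j+1}s'$. The statement is combinatorial and concerns node sets (identifying walks with their totally ordered node sets, as in the footnote to Remark~\ref{rem:b10.3}), so this change does not affect the argument.

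The key step is to show that $\tau(\gm)$ coincides with the dual walk $\gm^\vee$ in the sense of Definition~\ref{def:e10.5}. That is, I must show that $(\gm,\tau(\gm))$ is NE-dual, and that the dual is uniquely determined by this property once a common start is fixed. I would argue by induction on the length $d=p+q$. For $d=1$, $\gm$ is a single expel north or east, and $\tau$ turns it into the expel in the other direction, which is exactly the base case of the definition. For $d>1$, Lemma~\ref{lem:e10.4} gives the unique factorization $\gm = \eps\circ\gm'$. Since $\tau$ respects concatenation, $\tau(\gm) = \tau(\eps)\circ\tau(\gm')$. Here $\tau(\eps)$ starts in the opposite direction, and by induction $\tau(\gm')$ is dual to $\gm'$. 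The subtlety is that $\gm'$ starts at $u_{1,0}$ (say) while $\tau(\gm')$ starts at $u_{0,1}$. So the induction must be set up for walks starting at an arbitrary diagonal-symmetric pair of nodes $u_{a,b}$ and $u_{b,a}$, with $\tau$ applied there. I would strengthen the inductive statement accordingly: for any walk $\gm$ starting at $u_{a,b}$, the walk $\tau(\gm)$, which starts at $u_{b,a}$, is its NE-dual.

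The main obstacle I expect is this reconciliation with the recursive Definition~\ref{def:e10.5}. That definition is phrased only through first directions and tails, so "the" dual walk is really being defined by the theorem itself. I would therefore take $\gm^\vee := \tau(\gm)$ and verify, via the induction above, that it satisfies the NE-duality clauses at every stage. Uniqueness follows because each step of a dual walk is forced to go in the direction opposite to the corresponding step of $\gm$. This determines the dual walk node by node from its starting point, so any NE-dual walk from $u_{0,0}$ must equal $\tau(\gm)$.
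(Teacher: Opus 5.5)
Your argument is correct and is essentially the paper's: an induction on the length of $\gm$, driven by the observation that $\tau$ turns every north step into an east step and vice versa. The only difference is that the paper removes the \emph{last} step, so the shorter walk still starts at $u_{0,0}$ and no strengthening to arbitrary starting nodes $u_{a,b}$, $u_{b,a}$ is needed; your first-step version instead follows the recursive form of Definition~\ref{def:e10.5} literally and makes explicit the uniqueness of the dual walk, which the paper uses tacitly.
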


\begin{proof}
  We label the nodes of $\gm $ by  $u_{0,0} = u_0 < u_1 < \cdots < u_r = u_{p,q}$, and proceed  by induction on the length of~ $\gm$. For $r= 0$,  there is nothing to  do. Assume that  $r > 0$, and let $\gm'$ denote the subwalk of $\gm$ from $u_0$ to $u_{r-1}$ of length $r-1$. By the induction hypothesis, $\tau(\gm') = (\gm')^{\vee}$, and both $\gm'$ and $\tau(\gm')$ have length $p + q -1$.
  By the nature of the grid $\Gm \times \Dl$, there are exactly two cases:
  $u_{r-1}= u_{p-1,q}$ or $u_{r-1}= u_{p,q-1}$. Moreover,
  $(\gm')^\vee$ has nodes
  $$ u_{0,0} = v_0 = u_0 < v_1 < \cdots < v_{r-1}$$
  where $v_{r-1} = u_{q,p-1}$, if $u_{r-1} = u_{p-1,q}$, and
  $v_{r-1} = u_{q-1,p}$, if $u_{r-1} = u_{p,q-1}$. Thus, in the last step the walk ~ $\gm^\vee$ goes east, if $\gm$ goes north, and $\gm^\vee$ goes north, if $\gm$ goes east. This proves that
  $\tau(\gm) = \gm^\vee$, and that both $\gm$ and~ $\gm^\vee$ have length $p+q$.
\end{proof}

\begin{cor}\label{cor:e10.12} Let $\gm$  be a walk  from $u_{p,q}$ to $u_{a,b}$ in a  $k$-square $\Gm \times \Dl$,  where $ k \leq a$ and $ k \leq b$.  Then,  $\gm^\vee = \tau(\gm)$, and both  $\gm$ and  $\gm^\vee$ have length $a-p + b -q$.
\end{cor}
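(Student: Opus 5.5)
The plan is to reduce the corollary to Theorem~\ref{thm:e10.11} by prefixing $\gm$ with a walk from $u_{0,0}$. First I note what the hypotheses actually say. Every node $u_{a,b}$ of the $k$-square $\Gm\times\Dl$ satisfies $a\le k$ and $b\le k$. Together with the stated condition $k\le a$, $k\le b$, this forces $a=b=k$. So $\gm$ is a walk from $u_{p,q}$ to the corner $u_{k,k}$, which is the unique node fixed by the global involution $\tau$ of \eqref{eq:e10.6}. Hence $\tau(\gm)$ is again a totally ordered set of nodes, now from $u_{q,p}$ to $u_{k,k}$. This is why the global $\tau$, rather than some translated swap, is the right map here.

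Next I choose an auxiliary walk $\dl$ in $\Gm\times\Dl$ from $u_{0,0}$ to $u_{p,q}$. For instance, $\dl$ goes $p$ steps north along $\Gm$ and then $q$ steps east; this is possible since $p,q\le k$. The concatenation $\dl\circ\gm$ is then a walk from $u_{0,0}$ to $u_{k,k}$. Theorem~\ref{thm:e10.11} applies to it and gives $(\dl\circ\gm)^\vee=\tau(\dl\circ\gm)$, with length $2k$. The proof of Theorem~\ref{thm:e10.11} builds the dual step by step: each north step becomes an east step and vice versa, and the $i$-th node $u_{x,y}$ goes to $u_{y,x}$. It follows that the part of $(\dl\circ\gm)^\vee$ after its node $\tau(u_{p,q})=u_{q,p}$ is exactly $\tau(\gm)$, and that this part is obtained from $\gm$ by swapping the directions of all its steps. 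That is precisely $\gm^\vee$, in the sense of Definition~\ref{def:e10.5}: a walk of the same length whose steps go in the opposite directions. This gives $\gm^\vee=\tau(\gm)$.

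For the length, each step of a walk in the grid raises exactly one coordinate by $1$ (Lemma~\ref{lem:e10.4}). So $\gm$ has $(a-p)+(b-q)$ steps, and $\tau$ preserves this count because it only swaps coordinates. Equivalently, the lengths $2k$ of $\dl\circ\gm$ and $p+q$ of $\dl$ can be subtracted.

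The main obstacle is the bookkeeping in the second step. Theorem~\ref{thm:e10.11} defines the dual only for walks starting at $u_{0,0}$. I must therefore check that restricting the dual of $\dl\circ\gm$ to its tail does not depend on the choice of $\dl$, and that this tail coincides with the directional swap of $\gm$. I would handle this by induction on the length of $\gm$, exactly as in the proof of Theorem~\ref{thm:e10.11}, using that the last step's direction alone decides the last step of the dual.
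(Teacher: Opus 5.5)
Your proposal is correct and is essentially the paper's own argument. You prefix $\gm$ by a walk from $u_{0,0}$ to $u_{p,q}$, apply Theorem~\ref{thm:e10.11} to the concatenation, read off $\tau(\gm)=\gm^\vee$ on the tail, and get the length by subtracting $p+q$; the paper additionally applies Theorem~\ref{thm:e10.11} to the prefix and cancels using $(\eps\circ\gm)^\vee=\eps^\vee\circ\gm^\vee$. Your deduction $a=b=k$ from the literal hypothesis is harmless but unnecessary, since the hypothesis is evidently meant as $a,b\le k$, and the argument (like the paper's) works verbatim for any endpoint $u_{a,b}$; likewise the remark that $u_{k,k}$ is fixed by $\tau$ plays no role, because $\tau$ maps every walk in the square to a walk, swapping north and east steps.
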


\begin{proof}
  Choose a walk $\eps$ from $u_{0,0}$ to $u_{p,q}$.
  Then, $\eps \circ \gm$ is a walk from $u_{0,0}$ to $u_{a,b}$,  $\tau(\eps)$
  is a walk  from $u_{0,0}$ to $u_{q,p}$, and
  $\tau(\eps \circ \gm)$ is a walk from $u_{0,0}$ to $u_{b,a}$. Clearly
  $\tau(\eps \circ \gm)= \tau(\eps) \circ \tau(\gm)$, $(\eps \circ \gm)^\vee = \eps^\vee \circ \gm^\vee$.
By Theorem~ \ref{thm:e10.11}, $\tau(\eps) = \eps^\vee$ and $\tau(\eps \circ \gm) = (\eps \circ \gm)^\vee = \eps^\vee \circ \gm ^\vee$. It follows that
$\tau(\gm) = \gm^\vee$, that  $\eps$ and $\eps^\vee $ both have length $p +q$,
and that $\eps \circ \gm$ and   $\eps^\vee \circ \gm^\vee$ both have length $a+b$. Thus, $\gm$ and
$\gm^\vee$ have length $(a-p)+ (b-q)$.
\end{proof}

We develop a way to ``narrow'' a given self-dual pen $A$ by omitting two of its dual meshes.
\begin{construction}\label{const:e10.13}
Let $\Gm \times \Dl$ be a self-dual $k\times k$
 square, where
 $\Gm = \{ u_{0,0}, u_{1,0}, \dots, u_{k,0} \} $ and
$\Dl = \{ u_{0,0}, u_{0,1}, \dots, u_{0,k} \} $, and  let
$A$ be a self-dual pen in $\Gm \times \Dl $ that  starts at $u_{0,0} $ and ends at $u_{k.k}$ (for simplicity).  Label the upper fence $\gm$ of $A$ in the usual way by
$$ \gm: u_0 = u_{0,0} < u_1 < \cdots < u_r = u_{k,k},$$
and obtain  the lower fence
$$ \gm^\vee = \tau(\gm): v_0 = u_{0,0} < v_1 < \cdots < v_r = u_{k,k}$$
of $A$, where $v_i = \tau(u_i)$.

Assume that $u_m$ is a node of $\gm$ with $0 < m  < k$, where the orientation of $\gm$ changes from north to east. We say that $u_m$ is a \textbf{north-east corner} of $\gm$. Then, there is a unique mesh~ $M$ in $\Gm \times \Dl$,
      $$ \begin{array}{c} \\ \\
            M :
          \end{array}
   \xymatrixcolsep{6mm}
    \xymatrix@R=1.3em@C=1.5em{
     u_{m} \ar@{->}[rr] &  &  u_{m+1} \\
     u_{m-1} \ar@{->}[rr] \ar@{->}[u] &  &  u'_{m}  \ar@{->}[u]
     }
 $$
and a new walk $\gm'$
obtained from $\gm$ by replacing $u_m$ with $u'_m$ and retaining all other nodes of $\gm$.
% where $u_m$ is replaced by $u'_m$, and all other nodes of $\gm $ are retained
Thus, $u'_m$ is an ``east-north corner'' of $\gm'$. In the dual walk $(\gm')^\vee = \tau(\gm')$, the node $v_m$ is replaced by $v'_m = \tau(u'_m)$, while all other nodes of ~$\gm'$ are retained in $(\gm')^\vee$. We further have the dual mesh
     $$\begin{array}{c} \\ \\
            \tau(M) :
          \end{array}
   \xymatrixcolsep{6mm}
    \xymatrix@R=1.3em@C=1.5em{
     v'_{m} \ar@{->}[rr] &  &  v_{m+1} \\
     v_{m-1} \ar@{->}[rr] \ar@{->}[u] &  &  v_{m}  \ar@{->}[u].
     }
 $$

It is then evident, that
$A' := \tau (A)$ is a self-dual pen with upper fence $\gm'$ and lower fence $(\gm')^\vee = \tau(\gm')$,
obtained from $A$ by removing exactly the two meshes $M$ and $\tau(M)$.
% where exactly two meshes $M$ and $\tau(M)$ of $A$ are taken out.
 We say that $A'$ is the \textbf{narrowing} of $A$ at the nodes $u_m$ and $v_m = \tau(u_m)$.
Note that  $\tau(u_m) = v_m$ is an east-north corner of $\tau(\gm)$.
\end{construction}

We present another way to produce self-dual pens.

\begin{construction}\label{const:e10.14}
Let $Q_1$ and $Q_2$ be overlapping $k\times k$-square and  $\ell \times \ell$-square in
$\Gm \times \Dl$.
Suppose that $Q_1$ starts at $u_{0,0}$ and ends at $u_{k,k}$, while $Q_2$ starts at $u_{p,p}$ and ends at $u_{a,a}$, where
$a = k+\ell - p$ and  $0 < p < k < a$.
$$
\xymatrixcolsep{6mm}
    \xymatrix@R=0.3em@C=0.5em{
     & &  \Gm  & & & & &
     \\ & & &&& &
\\
     & & \bullet_{u_{a,0}}   \ar@{.}[rr] \ar@{.}[uu]  & &\bullet \ar@{-}[rrrr] & & & &
     \bullet_{u_{a,a}} &
\\ &&& M_1& &&& Q_2
\\
 &  & \bullet_{u_{k,0}} \ar@{.}[uu]    \ar@{-}[rr]  & &\bullet_{u_{k,p}}
 \ar@{-}[uu] \ar@{-}[rr] & & \bullet_{u_{k,k}}   & &
\\
 && &   && B & &&&
\\
 &  & \bullet_{u_{p,0}} \ar@{-}[uu] & &\bullet_{u_{p,p}}
 \ar@{-}[uu] \ar@{-}[rr] & & \bullet_{u_{p,k}}  \ar@{-}[uu]  \ar@{-}[rr]& &  \ar@{-}[uuuu] \bullet_{u_{p,a}}
\\
      & &  & Q_1 && && M_2  &  \\
           & & \ar@{-}[uu] \bullet_{u_{0,0}} \ar@{-}[rr]    & &\bullet_{u_{0,p}} \ar@{-}[rr]   & & \bullet_{u_{0,k}} \ar@{.}[rr] \ar@{-}[uu]  && \bullet_{u_{0,a}} \ar@{.}[rr]
            \ar@{.}[uu] & & \Dl
   }
   $$
   Then,  $Q_1 = \tau(Q_1)$ and $Q_2 = \tau(Q_2)$, and thus  $A = Q_1 \cup Q_2 = \tau(A)$. Hence,  $A$ is a self-dual pen, obtained from the square starting at $u_{0,0}$ and ending at $u_{a,a} $
   by omitting rectangles ~$M_1$ and $M_2 = \tau(M_1)$. Note that $Q_1 \cap Q_2$ is a square $B$ from $u_{p,p}$ and $u_{k,k}$.
   \end{construction}

We begin  a deeper study of self-dual pens in $\Gm \times \Dl$, of which Construction \ref{const:e10.14} may be regarded as a very special case. We denote a node $u_{a,b}$ in  $\Gm \times \Dl$ by the pair $(a,b)$, the ``coordinates'' of $u_{a,b}$. Recall that a pen $A$ in $\Gm \times \Dl$ is self-dual if and only if $\tau(A) = A$ for the involution $\tau: (a,b) \mapsto (b,a)$ on $\Gm \times \Dl$, cf. Theorem \ref{thm:e10.11} and Corollary \ref{cor:e10.12}.

\begin{thm}\label{thm:e10.15}
Let $A$ and $B$ be self-dual pens in $\Gm \times \Dl$ such that  $A \cap B$ is nonempty and is not a singleton, and neither $A \subset B$ nor $B \subset A$. (A pen is identified with the set of members  it contains.) Assume that $A$ starts at $(0,0)$ and ends at $(k,k)$, while $B$ starts at $(p,p) $ and ends at $(p+\ell, p + \ell)$  with
$0 < p < k < p+\ell = k +c$.  Then, $A \cup B$ is a self-dual pen with subpens  $A \sm B$, $A \cap B$, and $B \sm A$,  whose upper fence $\gm$ is obtained by
combining three walks $\gm_1, \gm_2,$ and  $ \gm_3$ which are the upper fences of
$A \sm B$, $A \cap B$, and $B \sm A$, respectively.

Conversely, suppose that a self-dual pen $A$ is partitioned as $A = A_1 \; \dot \cup \;  A_2 \; \dot \cup \;  A_3 $ into  three self-dual pend  such that $A_1$ starts at $(0,0) $ and ends at $(p,p)$,
 $A_2$ starts at $(p,p) $ and ends at $(k,k)$, and
  $A_3$ starts at $(k,k) $ and ends at $(p+ \ell,p + \ell) = (k+c,  k+c)$, with upper fences
  $\gm_1, \gm_2, \gm_3$. Then, there exists a unique subpen $B$ of $A$ such that $A_1 = A \sm B$ and
  $A_2 = B\sm A$ and the upper fences  $\gm_1, \gm_2, \gm_3$ combine to form the  upper fence $\gm$ of $A$.
\end{thm}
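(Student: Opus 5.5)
We would work purely combinatorially in the grid $\Gm \times \Dl$, identifying nodes with coordinates $(a,b)$ and a pen with the set of nodes lying between its upper fence and its lower fence. By Theorem~\ref{thm:e10.11} and Corollary~\ref{cor:e10.12}, a pen is self-dual exactly when it is $\tau$-invariant, and then the lower fence is $\tau$ of the upper fence. So everything reduces to statements about monotone lattice paths and their reflections $\tau(a,b)=(b,a)$.

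For the direct part, let $\gm_A$ and $\gm_B$ be the upper fences of $A$ and $B$. Since $0<p<k<p+\ell$, the path $\gm_A$ runs from $(0,0)$ to $(k,k)$ and $\gm_B$ runs from $(p,p)$ to $(k+c,k+c)$. Inside the square between $(p,p)$ and $(k,k)$ both paths are present. We define $\gm_2$ as the pointwise upper envelope there; this is the boundary of $A\cap B$ on the upper side, and its reflection bounds $A\cap B$ from below. We define $\gm_1$ as the part of $\gm_A$ before it reaches the region of $B$, and $\gm_3$ as the part of $\gm_B$ after it leaves $A$.

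First, we check that $\gm_1\circ\gm_2\circ\gm_3$ is a single monotone path from $(0,0)$ to $(k+c,k+c)$ using unit north and east steps (Remark~\ref{rem:b9.6}). Second, we check that the set enclosed by this path and its $\tau$-image equals $A\cup B$. Third, we note that $A\cup B$ is $\tau$-invariant because both $A$ and $B$ are, so it is a self-dual pen. The three subpens are then cut out by the common nodes $(p,p)$ and $(k,k)$. Since $\tau$ fixes these diagonal nodes, each of the three pieces is $\tau$-invariant and hence self-dual.

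The main obstacle is the fence-gluing step. We need that $\gm_A$ actually meets the diagonal region near $(p,p)$ and that $\gm_B$ meets it near $(k,k)$, so that the concatenation is continuous and the enclosed regions do not have extra common nodes, which would violate condition (c) of Definition~\ref{def:b9.4}. We would handle this with a convexity argument. By Remark~\ref{rem:b9.10}, each fence meets every vertical line in an interval. Comparing these intervals line by line, we take maxima for the upper fence and, by $\tau$, minima for the lower fence, in the spirit of Proposition~\ref{prop:b9.13}. If common nodes do arise, we would remove them with the standard modification of Construction~\ref{const:e10.10}.

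For the converse, we set $B := A_2\cup A_3$, meaning the pen from $(p,p)$ to $(k+c,k+c)$ whose upper fence is $\gm_2\circ\gm_3$. Its $\tau$-invariance follows from that of $A_2$ and $A_3$. Then $A\setminus B=A_1$ holds up to the shared node $(p,p)$, and the fences combine to $\gm=\gm_1\circ\gm_2\circ\gm_3$. Uniqueness holds because a pen is determined by its upper fence, and the fence of $B$ is forced to be the portion of $\gm$ beyond $(p,p)$.
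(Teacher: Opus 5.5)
There is a genuine gap, and it sits at the fence-gluing step that you yourself flag as the main obstacle.

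For a self-dual pen the lower fence is $\tau(\gm)$, and $\tau$ fixes every diagonal node $(q,q)$. So any diagonal node on the upper fence $\gm$ is automatically a common node of both fences, and condition (c) of Definition~\ref{def:b9.4} allows this only at the two ends. Since $\gm$ starts north and each step changes $a-b$ by $\pm 1$, the upper fence of a self-dual pen stays strictly in the region $a>b$ between its endpoints. Hence, if $A\cup B$ is a pen, its upper fence passes through neither $(p,p)$ nor $(k,k)$. It is therefore not a concatenation $\gm_1\circ\gm_2\circ\gm_3$ with junctions there. Nor can the three pieces be ``cut out by the common nodes $(p,p)$ and $(k,k)$'', because these are not common nodes of the fences. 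If you instead force the fence through them, $A\cup B$ violates (c). Repairing that with Construction~\ref{const:e10.10} adds two meshes, so the resulting pen is no longer $A\cup B$. Independently, the pointwise upper envelope of $\gm_A$ and $\gm_B$ bounds $A\cup B$, not $A\cap B$; the upper fence of $A\cap B$ is the lower envelope. Also, $\gm_A$ and $\gm_B$ may cross several times inside $[p,k]^2$, so the upper fence of $A\cup B$ need not split into three pieces at all.

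In fact the direct part cannot be proved as you read it. Take $k=2$ and $p=c=1$. The only self-dual pen from $(0,0)$ to $(2,2)$ has upper fence north, north, east, east, so $A=[0,2]^2$; likewise $B=[1,3]^2$. Then $A\sm B=\{(0,0),(1,0),(2,0),(0,1),(0,2)\}$ is an L-shaped set, not a pen from $(0,0)$ to $(1,1)$. Meanwhile $A\cup B$ is the self-dual pen with upper fence $(0,0),(1,0),(2,0),(2,1),(3,1),(3,2),(3,3)$, which avoids $(1,1)$ and $(2,2)$. This is exactly the picture of Construction~\ref{const:e10.14}, to which the paper's one-line proof appeals.

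What your column-interval idea (Remark~\ref{rem:b9.10}) does establish is the following. On each vertical line $b$ with $p\le b\le k$, both $A$ and $B$ meet it in an interval containing $(b,b)$. Hence $A\cup B$ is a self-dual pen whose upper fence is the upper envelope of $\gm_A$ and $\gm_B$. Likewise, $A\cap B$ is a self-dual pen from $(p,p)$ to $(k,k)$ whose upper fence is their lower envelope.

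The converse has the same obstruction. Your $B:=A_2\cup A_3$ has fences meeting at $(k,k)$, so it is not a pen. The hypothesis that $\gm_1,\gm_2,\gm_3$ combine to the upper fence of $A$ likewise makes $A$ a succession of pens as in Corollary~\ref{cor:e10.7}, not a pen. Moreover, with $B\subseteq A$ the required identity $A_2=B\sm A$ would force $A_2=\emptyset$, a clause your argument silently drops.
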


\begin{proof}
   The assertions follow immediately  from the definitions,  Construction \ref{const:e10.14}, Definition  \ref{def:e10.5},  and  Theorem~\ref{thm:e10.6}.
\end{proof}

\begin{comment}\label{comm:e10.16} %
The possibility of understanding the upper fence of a self-dual pen $A$ in $\Gm\times\Dl$ as a sequence $\gm_1,\gm_2,\gm_3$ of upper fences of three subpens of $A$ is less uniform (less ``dull'') than may be visible at first glance. Note that the labels of the two edges of a mesh from $(i,j)$ to $(i+1,j+1)$ are $m_{i+1}s$ and $m'{j+1}s'$, with characteristic numbers $m{i+1}$ and $m'_{j+1}$, which may vary considerably.      $$ \begin{array}{c}
                      \end{array}
   \xymatrixcolsep{6mm}
    \xymatrix@R=1.3em@C=1.5em{
     (i+1,j) \ar@{->}[rr]^{m'_{j + 1} s' } &  &  (i+1, j+1) \\
     (i,j) \ar@{->}[rr]^{m'_{j + 1} s' } \ar@{->}[u]^{m_{i+1} s}  &  &  (i, j+1)  \ar@{->}[u]^{m_{i+1} s}\\
     }
 $$

\end{comment}

\section{Idempotent gap walks in $(R,U)$}\label{sec:f10}

As before, we assume that $U$ is an \as\ submonoid of $R$ and that $R$ is upper bounded; cf. \S\ref{sec:1}. We always work with the \ub\ ordering $\leq_R$, which we abbreviate to $\leq$. Recall from \S\ref{sec:2} that $\Idm(R)$ is the set of fixed points in $R$ under the action of $\N$ on the semigroup $(R,+)$. We mention the following basic fact.

\begin{lem}\label{lem:f10.1}
  If $x,y \in R$ and $x+y = t \in \Idm(R)$, then $mx + ny = t$ for any $m,n \in \NN$ that are not both zero.
\end{lem}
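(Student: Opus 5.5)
The plan is to reduce everything to the single identity $t = t + t = (x+y)+(x+y) = 2x + 2y$, and then to propagate it by induction on each coefficient.

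\emph{Step 1 (both coefficients $\ge 1$).} Fix $m,n\ge 1$. Since $t\in\Idm(R)$, we have $kt = t$ for every $k\ge 1$. First I would show $mx + y = t$ for all $m\ge 1$ by induction on $m$. The case $m=1$ is the hypothesis. For the inductive step,
$$(m+1)x + y = x + (mx + y) = x + t = x + (x+y) + t' \ \text{ with } t' = t,$$
so more cleanly $(m+1)x + y = x + t = x + (x+y) + (x+y)$. One then regroups this as $(x + y) + (2x + y)$ and continues. The neatest route avoids nested regrouping altogether: set $k=\max(m,n)$. Then
$$kt = k(x+y) = kx + ky = t .$$
Now add $(k-m)$ copies of $x$... but that enlarges the element rather than shrinking it. So I would instead write
$$mx + ny + t = mx + ny + (x+y) + \cdots ,$$
and use $t + t = t$ as follows:
$$mx + ny = mx + ny + 0 .$$
The cleanest version is to prove the claim by simultaneous induction: assume $ax + by = t$ for some $a,b\ge1$. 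Then
$$(a+1)x + by = x + (ax+by) = x + t = x + (x+y) + (x+y) = (2x+y) + (x+y),$$
and one shows that $2x+y = t$ first, via
$$2x + y = x + t = x + t + t = (x + t) + t .$$
Here a short auxiliary lemma, $x + t = t$, is the crux. It follows from
$$x + t = x + (x+y) + (x+y) \quad\text{versus}\quad t = (x+y)+(x+y),$$
together with the fact that $t = 2x + 2y = x + (x + 2y)$. Concretely, I would prove $x + t = t$ by the computation
$$x + t = x + 2t = x + 2x + 2y = 3x + 2y,$$
and then show $3x + 2y = t$ by the symmetric computation. I expect this to be the main obstacle: in an arbitrary monoid, absorption $x+t=t$ is not automatic, and the \ub hypothesis on $R$ may be needed. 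The first thing I would try is to combine $x \le_R t$ and $t \le_R x+t \le_R t + t = t$ with antisymmetry of $\le_R$, which yields $x+t = t$ directly. The same argument gives $y + t = t$.

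Given this absorption, the induction is immediate: $(a+1)x + by = x + t = t$, and likewise for $b+1$. Hence $mx + ny = t$ for all $m,n\ge 1$.

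\emph{Step 2 (one coefficient zero).} Say $n=0$ and $m\ge1$. We need $mx = t$. From absorption we get $mx + t = t$, so $mx \le_R t$. To obtain equality, one would additionally need $t \le_R mx$, i.e.\ that $y$ is absorbed by $mx$. I would check whether this follows from idempotency. It does not in general: taking $x = 0$ and $y = t \neq 0$ gives $mx = 0 \neq t$. So the equality in this boundary case genuinely requires that both $x$ and $y$ occur, or some extra hypothesis such as $x \neq 0$ together with a condition on $R$.

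I would therefore present Step 1 as the proof of the lemma for $m,n\in\N$, which is the form used in all later applications where both summands appear. For the boundary case I would flag the degenerate example $x=0$, $y=t\neq0$ as showing that it is valid only in the sense $mx + t = t$. I do not see a way to prove the equality $mx + ny = t$ when one coefficient is zero, so in that case the plan establishes only the weaker statement $mx + t = t$ (equivalently $mx \le_R t$), not the lemma as worded.
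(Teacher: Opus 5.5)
Your final argument is correct for $m,n\ge 1$, and you are right about the boundary case: the statement as worded is false when exactly one coefficient is $0$. Your example $x=0$, $y=t\neq 0$, $(m,n)=(1,0)$ works, and so does a non-degenerate one: in the bipotent monoid $(\N_0\cup\{-\infty\},\max)$ take $x=3$, $y=5$, $t=5$, so that $1\cdot x+0\cdot y=3\neq t$. The paper's proof is the one-line sandwich $t\le mx+ny\le (m+n)(x+y)=(m+n)t=t$. Its first inequality holds only when $m,n\ge 1$, so the paper in effect proves exactly the version you prove.

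Your route is different. Instead of bounding $mx+ny$ between $t$ and $(m+n)t$ in one step, you first prove absorption, $x+t=t$, from $x\le_R t$ and $t\le_R x+t\le_R 2t=t$. You then induct on the coefficients via $(a+1)x+by=x+t=t$, and symmetrically for $b$.

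Both arguments rest on antisymmetry of $\le_R$, and that hypothesis is genuinely needed. In $\mathbb Z$ with $x=1$, $y=-1$, $t=0$ one gets $2x+y\neq t$.

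The paper's sandwich is shorter and handles all $m,n\ge 1$ at once. Your version isolates a slightly more general fact that can be reused: $z+t=t$ whenever $z\le_R t$ and $t$ is idempotent.

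When you write it up, drop the abandoned attempts, in particular the circular reduction of $x+t=t$ to $3x+2y=t$. Keep only the antisymmetry argument for absorption and the induction, and state the lemma for $m,n\ge 1$.
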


\begin{proof}
  $t \leq mx+ ny \leq (m +n)(x+y) = (m+n) t  =t$, since $m+n >0$.
\end{proof}

In all that follows, we assume the following axiom.
\begin{axiom}\label{ax:f10.2}
$\Idm(R)$ is a well ordered set under $\leq _R$, i.e., every descending chain in $\Idm(R)$ terminates.
\end{axiom}
This is a mild condition that, in particular, holds if $R$ has only finitely many idempotents.

\begin{defn}\label{def:f10.3}
We call a gap walk in $(R,U)$ whose bridges lie in $\Idm(R)$ an \textbf{idempotent gap walk}, and we also say that its bridges are \textbf{idempotent gaps}.
\end{defn}

\begin{lem}\label{lem:f10.4}
 Given an idempotent gap
$ \xymatrix@R=1.5em@C=1.4em{
x \ar@{->}[r]^{t} & u
}$, with $t \in \Idm(R) \sm \00$ and  $x,u \in U$, any commutative diagram
\begin{equation}\label{eq:f10.1}
   \xymatrix@R=0.5em@C=2.7em{
u  & \\
& x'  \ar@{->}[lu]_{ t' }, &  \qquad \text{with }  t',h \in \Idm(R), \text{ and }  x' \in U, \\
x \ar@{->}[uu]^{t} \ar@{->}[ru]_{h}
}
\end{equation}
is uniquely  determined  by  $h$ and also  by $t'$.

\end{lem}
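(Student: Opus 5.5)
The plan is to reduce both uniqueness claims to the \as property of $U$, namely the uniqueness of differences (Definition~\ref{defn:1.1}). In diagram \eqref{eq:f10.1} we have $x+h=x'$ and $x'+t'=u$, with $x,x',u\in U$ and $x+t=u$.

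\emph{Uniqueness from $h$.} Suppose $h$ is given. Then $x'=x+h$ is forced. Since $x'\le_R u$ and both lie in $U$, the element $t'$ is the unique difference of $x'$ and $u$. So the whole diagram is determined by $h$. This is immediate and uses nothing beyond Definition~\ref{defn:1.1}.

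\emph{Uniqueness from $t'$.} Now suppose two diagrams share the same $t'$:
\[
x+h_1=x_1',\quad x_1'+t'=u, \qquad x+h_2=x_2',\quad x_2'+t'=u.
\]
I first show that $h_1=h_2$; then $x_1'=x_2'$ follows automatically. Composing gives $x+(h_i+t')=u$. Since $x+t=u$ and $x,u\in U$, the \as property yields $h_1+t'=t=h_2+t'$. This identity holds in $\Idm(R)$, so Lemma~\ref{lem:f10.1} applies. What remains is to cancel $t'$ on both sides.

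Cancelling $t'$ directly is not legitimate in $R$. So I will use the cancellativity of $U$ (Lemma~\ref{lem:a4.7}) by placing the relevant elements inside $U$. We have $x_1'+t'=u=x_2'+t'$ with $x_1',x_2'\in U$. Lemma~\ref{lem:a4.7} then gives $x_1'=x_2'$ directly. With $x_1'=x_2'$ established, the relations $x+h_1=x_1'$ and $x+h_2=x_1'$ are two differences between the same pair $x\le_R x_1'$ in $U$. By the \as property, $h_1=h_2$.

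The step I expected to be the main obstacle was cancelling $t'$ in $h_1+t'=h_2+t'$. That cancellation would fail in general, since idempotents absorb. The way around it is to apply Lemma~\ref{lem:a4.7} to the $U$-nodes $x_i'$ rather than to the gap elements $h_i$. With that step, the remainder is a direct application of the \as property. Axiom~\ref{ax:f10.2} and the idempotence of $t', h$ are not needed for uniqueness itself.
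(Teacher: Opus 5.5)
Your proof is correct and follows the paper's route. The paper simply says the lemma is obvious from the \as property, and you supply the details, correctly noting that the $t'$-direction needs the cancellativity of $U$ in the form of Lemma~\ref{lem:a4.7} (itself a consequence of the \as property) rather than the bare uniqueness of differences; the detour through $h_1+t'=t=h_2+t'$ and Lemma~\ref{lem:f10.1} is unnecessary and can be deleted.
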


\begin{proof}
This is obvious since $(R,U)$ is almost subtractive.
\end{proof}

\begin{thm}\label{thm:f10.5}
 Given a gap
$ \xymatrix@R=1.2em@C=1.4em{
x \ar@{->}[r]^{t} & u
}$ with $x,u \in U$
and a bridge
$t \in \Idm(R) \sm \00$,  there is a \textbf{minimal} idempotent $t_1 < t$ and a gap walk
$ \xymatrix@R=1.5em@C=1.7em{
x \ar@{->}[r]^{h_1} & x_1  \ar@{->}[r]^{t_1} & u
}$
refining $ \xymatrix@R=1.5em@C=1.4em{
x \ar@{->}[r]^{t} & u
}$.
\end{thm}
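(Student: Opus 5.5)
We will realize the minimal idempotent by a descent argument based on Axiom~\ref{ax:f10.2}. Consider the set
$$\mathcal{T} := \{ t' \in \Idm(R) \mid t' \leq t \text{ and there exist } h \in \Idm(R),\ x' \in U \text{ with } x + h = x',\ x' + t' = u \},$$
i.e., the set of idempotents $t'$ occurring as the second leg of a commutative triangle \eqref{eq:f10.1} over the gap $x \to u$. This set is nonempty, since $h = 0$, $x' = x$, $t' = t$ is admissible. By Axiom~\ref{ax:f10.2}, every descending chain in $\Idm(R)$ terminates, so $\mathcal{T}$ has a minimal element $t_1$ with respect to $\leq_R$. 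Choose a triangle $x \xrightarrow{h_1} x_1 \xrightarrow{t_1} u$ realizing it. By Lemma~\ref{lem:f10.4}, this triangle is uniquely determined by $t_1$, so $h_1$ and $x_1$ are well defined.

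Next we check that $x \to x_1 \to u$ is a gap walk refining $x \to u$. Since $x_1 + t_1 = u$ with $x_1, u \in U$ and $t_1$ idempotent, Remark~\ref{rem:2.2} shows that $t_1 \notin U$ whenever $t_1 \neq 0$. The case $t_1 = 0$ would force $x_1 = u$ and $x + h_1 = u = x + t$; by almost subtractivity this gives $h_1 = t$, which is the degenerate refinement. We either exclude this case or interpret it via the convention that the walk is then $x \xrightarrow{t} u \xrightarrow{0} u$. Likewise $h_1 \notin U$ whenever $h_1 \neq 0$. It then remains to verify $t_1 \leq t$, and to establish strictness $t_1 < t$ whenever a nontrivial refinement exists. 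Lemma~\ref{lem:f10.1} is useful here: from $x + h_1 + t_1 = x + t$, almost subtractivity gives $h_1 + t_1 = t$. Hence $t_1 \leq t$, and Lemma~\ref{lem:f10.1} yields $m h_1 + n t_1 = t$ for all $(m,n) \neq (0,0)$, so both legs are dominated by $t$ and absorbed into it.

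The main obstacle is the meaning of ``minimal'': a minimal element of $\mathcal{T}$ is only minimal among idempotents admitting such a factorization, and strictness $t_1 < t$ can fail when $t$ itself admits no proper factorization. I would handle this by stating the minimality relative to $\mathcal{T}$, noting that $h_1 + t_1 = t$ forces $t_1 \leq t$, and treating $t_1 = t$ (or $h_1 = 0$) as the trivial case. Finally, I would observe that the descending-chain condition is exactly what guarantees the existence of the minimal element, since without it one could refine indefinitely.
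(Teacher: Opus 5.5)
Your descent argument takes minimal elements of the set of admissible idempotents, which is also the paper's route, but as you define $\mathcal{T}$ the minimization is vacuous. The element $0$ always lies in $\mathcal{T}$: take $h=t$ and $x'=u$, so that $x+t=u$ and $u+0=u$. Moreover $0\le_R y$ for every $y\in R$, so $0$ is the least element of $\mathcal{T}$. Hence the minimal element your argument produces is always $t_1=0$, $h_1=t$, $x_1=u$. That is exactly the degenerate case you set aside, and Axiom~\ref{ax:f10.2} is never actually used.

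The content of the theorem is a minimal \emph{nonzero} idempotent $t_1$; this is how it is used in Construction~\ref{cont:f10.6} and Theorem~\ref{thm:f10.10}. The repair is to minimize over $\mathcal{T}\setminus\{0\}$, which is nonempty because $t\neq 0$ belongs to it (take $h=0$, $x'=x$). With that change the argument closes:
\begin{enumerate}
\item The axiom yields a minimal $t_1\neq 0$.
\item Since $t_1$ is a nonzero idempotent, it lies in $R\setminus U$ by Remark~\ref{rem:2.2}, so it is a genuine gap.
\item From $x+(h_1+t_1)=x+t=u$ with $x,u\in U$, almost subtractivity gives $h_1+t_1=t$, hence $t_1\le_R t$. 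So your side condition $t'\le t$ in the definition of $\mathcal{T}$ is automatic.
\end{enumerate}
This corrected version is precisely the paper's proof, which takes minimal elements of the set of nonzero idempotents $t'$ occurring in diagram \eqref{eq:f10.1}.

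Your caveat about strictness is correct, and the paper's proof does not address it either. The set being minimized contains $t$ itself, so one can get $t_1=t$. In that case Lemma~\ref{lem:a4.7} forces $x_1=x$, and then almost subtractivity forces $h_1=0$. This happens, for example, whenever $t$ is a minimal element of $\Idm(R)\setminus\{0\}$. The first step is then the zero bridge, not a gap. So the statement should be read with $t_1\le t$, with minimality relative to the admissible nonzero $t'$, and with the refinement possibly trivial, as you propose.
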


\begin{proof}
The set of nonzero idempotents $t' \neq 0$ occurring in Diagram \ref{eq:f10.1} is a well ordered subset of $\Idm(R) \sm \00$, and hence has minimal elements $t_1$, typically more than one, satisfying $h_1 + t_1 =t$
(cf.  Theorem~ \ref{thm:a4.8}).
\end{proof}

\begin{construction}\label{cont:f10.6}
  Given a gap walk  $
\xymatrix@R=1.9em@C=1.4em{
\gm: u_0 \ar@{->}[r]^{  z_1} & u_1 \ar@{->}[r]^{z_2} &  \cdots \ar@{->}[r]^{z_r } & u_r
}$ of length $r \geq 1$ and a minimal nonzero idempotent gap
$ \xymatrix@R=1.5em@C=1.4em{
v_0 \ar@{->}[r]^{t} & u_0
}$, we  build  an idempotent gap walk
 $
\xymatrix@R=1.9em@C=1.7em{
\sig: v_0 \ar@{->}[r]^{  z_1} & v_1 \ar@{->}[r]^{z_2} &  \cdots \ar@{->}[r]^{z_r } & v_r
}$ below $\gm$ \footnote{As in \S\ref{sec:d10}, think of $t_0$ as a drill into a glacier at $u_0$ }.  We start with a commutative diagram
\begin{equation}\label{eq:f10.0}
  \xymatrix@R=1.2em@C=2.7em{
u_0  \ar@{->}[r]^{z_1} &  u_1 \\
&   \ar@{->}[u]^{ t_1 } {v_1}   \\
v_0 \ar@{->}[uu]^{t= t_0} \ar@{->}[r]^{z_1} & v_0 + z_1 \ar@{->}[u]^{ h_1 }
}
\end{equation}
Such a diagram exists by Theorem \ref{thm:f10.5}. We repeat this construction for the diagram
$$    \xymatrix@R=1.2em@C=2.7em{
u_1  \ar@{->}[r]^{z_2} &  u_2 \\
&    {v_2} \ar@{->}[u]^{ t_2 }  \\
v_1 \ar@{->}[uu]^{t_1} \ar@{->}[r]^{z_2} & v_1 + u_2 \ar@{->}[u]^{ h_2 }
}
$$
and iterate until we arrive at a walk
$    \xymatrix@R=1.2em@C=1.7em{
v_{r-1} + z_r \ar@{->}[r]^{\quad h_r} &  v_r  \ar@{->}[r]^{t_r} &  u_r }
$ in the last column with an idempotent $h_r$ and a minimal nonzero idempotent $t_r$.
In this way, we obtain an idempotent gap walk
 $
\xymatrix@R=1.9em@C=1.7em{
\sig: v_0 \ar@{->}[r]^{  z_1} & v_1 \ar@{->}[r]^{z_2} &\cdots \ar@{->}[r]^{z_r } & v_r
}$
which we sketch here for $r=4$.

\begin{equation}\label{eq:f10.2}
\xymatrix@R=1.3em@C=1.8em{
u_0 \ar@{->}[r]^{  z_1} & u_1 \ar@{->}[r]^{z_2} & u_2 \ar@{->}[r]^{z_3} & u_3 \ar@{->}[r]^{z_4 } & u_4 \\
 & & & & v_4  \ar@{->}[u]^{  t_4}  \\
  & & & v_3 \ar@{->}[r]^{z_4}  \ar@{->}[uu]^{t_3}  & v_3 + z_4  \ar@{->}[u]^{  h_4} \\
  & &  v_2 \ar@{->}[uuu]^{t_2} \ar@{->}[r]^{z_3} & v_2+ z_3   \ar@{->}[u]^{h_3}  &
\\
   &  v_1 \ar@{->}[uuuu]^{t_1} \ar@{->}[r]^{z_2} & v_1+ z_2   \ar@{->}[u]^{h_2}  &
\\ v_0  \ar@{->}[uuuuu]^{t = t_0} \ar@{->}[r]^{z_1} & v_0 + z_1 \ar@{->}[u]^{h_1}
}
\end{equation}

We call $\sig$ an \textbf{idempotent gap walk supporting} $\gm$ by
$ \xymatrix@R=1.5em@C=1.4em{
v_0 \ar@{->}[r]^{t} & u_0
}$,
and say that $\sig$ is a \textbf{$t$-support} of~ $\gm$ ($t$ is minimal $> 0$ in $\Idm(R)$).

\end{construction}

\begin{prop}\label{prop:f10.7}
If $\sig$ is an idempotent gap walk supporting $\gm$ by
$ \xymatrix@R=1.5em@C=1.4em{
v_0 \ar@{->}[r]^{t} & u_0
}$, then, for any $x \in U$, $\sig +x$ is an
idempotent gap walk supporting $\gm + x $ by
$ \xymatrix@R=1.5em@C=1.4em{
v_0 +x \ar@{->}[r]^t & u_0 +x.
}$
\end{prop}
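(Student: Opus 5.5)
The plan is to translate the entire diagram \eqref{eq:f10.2} by $x$ and check, relation by relation, that everything survives. By construction, $\sig$ comes with a family of equalities in $R$. Namely,
\[
v_0+t=u_0,\quad v_{i-1}+z_i+h_i=v_i,\quad v_i+t_i=u_i\quad(1\le i\le r),
\]
together with $u_{i-1}+z_i=u_i$. The idempotents $t=t_0,t_1,\dots,t_r$ and $h_1,\dots,h_r$ lie in $\Idm(R)$, and all nodes $u_i, v_i$ lie in $U$. Adding $x\in U$ to every node, and leaving the bridges unchanged, preserves each of these equalities, by associativity and commutativity of $+$. Since $U$ is a submonoid, the translated nodes $u_i+x$ and $v_i+x$ lie in $U$. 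Hence $\gm+x$ is a walk with the same bridges $z_i$, exactly as in Proposition~\ref{prop:a9.1}, and the translated diagram commutes.

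Next I check the gap property. Every bridge is the same element of $R$ as before: the $z_i$, $t_i$ and $h_i$ still lie in $R\sm U$ where they did before, and the idempotent bridges still lie in $\Idm(R)$. So $\sig+x$ is again an idempotent gap walk, and $v_0+x\xrightarrow{t}u_0+x$ is still an idempotent gap. The intermediate nodes $v_{i-1}+z_i+x$ are handled the same way. If the construction requires them to lie in $U$, this is inherited from $v_{i-1}+z_i\in U$.

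The main obstacle is minimality. Construction~\ref{cont:f10.6} requires $t$ to be a minimal nonzero idempotent and each $t_i$ to be minimal in the sense of Theorem~\ref{thm:f10.5}. For $t$, minimality is a property of the element $t$ in $\Idm(R)$ alone, so it is unaffected by translation. For the $t_i$, one must show that no smaller nonzero idempotent $t'$ fits into a diagram \eqref{eq:f10.1} at the translated gap $v_{i-1}+z_i+x\to u_i+x$. I would try to argue that the minimal idempotents in the construction are minimal in $\Idm(R)\sm\00$ absolutely, as the remark ``$t$ is minimal $>0$ in $\Idm(R)$'' suggests, so that translation cannot spoil them.

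If instead minimality is only relative to the diagram, there is a fallback. Any refinement $v_{i-1}+z_i+x\xrightarrow{h'}y\xrightarrow{t'}u_i+x$ has the same bridge sum as the original. So I would compare it with the original via the uniqueness statements of Lemma~\ref{lem:f10.4} and Theorem~\ref{thm:a4.8}, using cancellativity of $U$ and the almost-subtractive property. The aim is to conclude that the translated data are the $t$-support produced by Construction~\ref{cont:f10.6} from $\gm+x$ at $v_0+x$.
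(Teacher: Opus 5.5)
Your translation bookkeeping in the first two paragraphs matches the paper's proof, which is only the remark that the claim is obvious from Construction~\ref{cont:f10.6}; the paper does not discuss minimality at all. You then name minimality as the main obstacle and leave it unresolved, and neither route you sketch closes it.

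The first route does not match what the construction supplies. In Theorem~\ref{thm:f10.5}, the idempotent $t_1$ is minimal only among the nonzero $t'$ occurring in diagrams \eqref{eq:f10.1} for the given gap, not absolutely in $\Idm(R)\sm\00$.

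The fallback does not work as sketched. Lemma~\ref{lem:f10.4} and Theorem~\ref{thm:a4.8} are uniqueness statements for fixed endpoints. They cannot turn a refinement $v_{i-1}+z_i+x\xrightarrow{h'}y\xrightarrow{t'}u_i+x$ of the translated gap into a refinement of the original gap. That would require $v_{i-1}+z_i+h'\in U$. Since $U$ is only almost subtractive, not subtractive, $v_{i-1}+z_i+h'+x\in U$ does not give it. Translation can only enlarge the set of competitors for $t_i$, so nothing in your argument shows that $t_i$ stays minimal.

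The missing idea makes the issue disappear: under the standing almost-subtractive hypothesis the statement is vacuous. Suppose $t\in\Idm(R)$ and $v_0+t=u_0$ with $u_0\in U$. Then $u_0+t=v_0+2t=v_0+t=u_0=u_0+0$, so $t$ and $0$ are both differences of $u_0$ and $u_0$. Almost subtractivity then forces $t=0$; this is the argument of Remark~\ref{rem:2.2}. Hence no nonzero idempotent gap ends at an element of $U$, and no $t$-support $\sig$ of any $\gm$ exists.

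The same computation applied to $v_i=v_{i-1}+z_i+h_i$ and $u_i=v_i+t_i$, with $v_i,u_i\in U$, shows that every $h_i$ and $t_i$ would have to vanish. More generally, every idempotent gap walk in $(R,U)$ has length $0$. Adding this observation completes your proof and removes the minimality question entirely. As written, though, your proposal leaves its own key step open.
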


\begin{proof}
  This is  obvious from Construction  ~\ref{cont:f10.6} and the definitions.
\end{proof}
We now describe a construction ``above'' a given gap walk $\gm$ in a modified submonoid $U$.

\begin{construction}\label{cont:f10.8}
Let $t$ be a minimal element of $\Idm(R) \sm \00$, and let
$
\xymatrix@R=1.9em@C=1.7em{
 \gm: v_0 \ar@{->}[r]^{  z_1} & v_1 \ar@{->}[r]^{z_2} & \cdots \ar@{->}[r]^{z_r } & v_r
}$
be a gap walk of length $r \geq 1$ in $(U \Ng t)_0$. Applying  Construction  ~\ref{cont:f10.6}
to $\gm + t$, we obtain a commutative diagram of walks as follows
 $$
\xymatrix@R=1.3em@C=1.8em{
v_0 + t \ar@{->}[r]^{  z_1} & v_1 +t  \ar@{->}[r]^{z_2} & v_2 + t \ar@{->}[r] & \cdots  \ar@{->}[r]^{z_r } & v_r +t  \\
 & & &  \cdots   & v_{r}   \ar@{->}[u]^{  t_r}  \\
  & &  v_2 \ar@{->}[uu]^{t_2} \ar@{->}[r] & \cdots  \ar@{->}[r]^{z_r}     & v_{r-1} + z_r  \ar@{->}[u]^{  h_r}
\\
   &  v_1 \ar@{->}[uuu]^{t_1} \ar@{->}[r]^{z_2} & v_1+ z_2   \ar@{->}[u]^{h_2}  &
\\ v_0  \ar@{->}[uuuu]^{t = t_0} \ar@{->}[r]^{z_1} & v_0 + z_1 \ar@{->}[u]^{h_1}
}$$
where each $t_i$ minimal lies in $\Idm(R) \sm \00$, $h_1, \dots, h_r \in \Idm(R)$, $v_i \in (U \Ng t_i)_0$, $0 \leq i \leq r$.
Hence,  $v_i + z_{i+1} \in (U \Ng t_i)_0$, and
$
\xymatrix@R=1.9em@C=1.7em{
\sig: v_0 \ar@{->}[r]^{  z_1 + h_1} & v_1 \ar@{->}[r]^{z_2 + h_2} &\cdots \ar@{->}[r]^{z_r  + h_r} & v_r
}$
is an idempotent gap walk supporting $\gm +t$ by
$  \xymatrix@R=1.5em@C=1.4em{
v_0  \ar@{->}[r]^{t} & v_0 +t.
}$

\end{construction}
We take a closer look at the nodes $u_i$ in the diagram \eqref{eq:f10.2} from Construction~\ref{cont:f10.6}.

\begin{defn}\label{def:f10.9}We say that a node $u_k$ of $\gm$, $0 < k \leq r$, is \textbf{active}
(for $t$), if the node $v_k$ is different from $v_{k-1}$. In other words, the idempotent $h_k$ is $> 0$, equivalently, the minimal idempotent  $t_k$ is smaller than $t_{k-1}$. Otherwise, we say that $u_k$ is \textbf{passive} (for $t$).
\end{defn}
\noindent (Example: in diagram \eqref{eq:f10.2} all nodes $u_k$ are active.)

If $u_k$ is {passive}, then removing $u_k$ from the walk $\gm$ and $v_k$ from the walk $\sig$, and replacing $z_k$ by $z_{k-1} + z_k$, we obtain walks $\gm'$ and $\sig'$ of length $r-1$ with $\sig'$ again supporting $\gm'$ by $t$.
Doing this successively for all passive nodes of~ $\gm$, we obtain a walk $\htgm$ with only active nodes under $t$ and a walk $\htsig$ of the same length, which supports $\htgm$ by $t$. We say that $\htgm$ and $\htsig$ are \textbf{condensed by $t$}, or for short, that $\htgm$ and $\htsig$
are \textbf{$t$-condensed}.

Given a commutative diagram \eqref{eq:f10.2} built from a minimal nonzero idempotent  gap
$ \xymatrix@R=1.5em@C=1.4em{
v_0 \ar@{->}[r]^{t_0} & u_0
}$ and a gap $ \xymatrix@R=1.5em@C=1.4em{
u_0 \ar@{->}[r]^{z_i} & u_1
}$
as above (Construction \ref{cont:f10.6}), we say that
$ \xymatrix@R=1.5em@C=1.4em{
v_1 \ar@{->}[r]^{t_1} & u_1
}$
is a \textbf{modification} of $ \xymatrix@R=1.5em@C=1.4em{
v_0 \ar@{->}[r]^{t_0} & u_0
}$ by
$ \xymatrix@R=1.5em@C=1.4em{
u_0 \ar@{->}[r]^{z_i} & u_1
}$.
\begin{thm}\label{thm:f10.10}
  Modifications correspond uniquely to the minimal fixed points (with respect to $\leq_R$) of the action of the semigroup $(\N, +)$  on the convex set
  $$ [v_0 + z_1 , u_1] = \{ x\in R \ds | v_0 + z_1 \leq x \leq u_1  \}.   $$
  These are precisely the elements $x$ in $[v_0 + z_1, u_1]$ satisfying $x = 2x$. Moreover, every decomposition
  $t = t_0 + t_1$ in $\Idm(R)$ with $t_0$ minimal nonzero arises in this way.
\end{thm}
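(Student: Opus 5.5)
The plan is to make the correspondence explicit on both sides, then verify the two characterizations and the surjectivity claim separately. Fix the data of \eqref{eq:f10.0}: $v_0+t=u_0$ with $t=t_0$ minimal in $\Idm(R)\sm\00$, and $u_0+z_1=u_1$, so that
$$u_1=(v_0+z_1)+t .$$
A modification of $v_0\to u_0$ by $u_0\to u_1$ is a factorization
$$v_0+z_1\ \xrightarrow{h_1}\ v_1\ \xrightarrow{t_1}\ u_1, \qquad h_1,t_1\in\Idm(R),\ v_1\in U,$$
with $t_1$ minimal nonzero. Thus every modification produces a point of the convex set $[v_0+z_1,u_1]$ from which the rest of the path to $u_1$ is idempotent.

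First I would dispose of the characterization of fixed points. If $x\in R$ satisfies $nx=x$ for all $n\in\N$, then taking $n=2$ gives $x=2x$. Conversely, $x=2x$ gives $nx=x$ by induction; this is also the case $y=x$ of Lemma~\ref{lem:f10.1}. So the fixed points in the interval are exactly the elements $x=2x$ lying in it, and minimality is taken with respect to $\leq_R$ restricted to these elements.

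Next I would construct the map from modifications to fixed points, and its inverse. To a modification I would attach the idempotent part
$$x:=v_0+z_1+(h_1+t_1)$$
read inside the interval; the aim is to show that the part of $x$ carrying the gap structure is idempotent. The tool is Lemma~\ref{lem:f10.1}, which lets any positive multiples of $h_1$ and $t_1$ be absorbed into $t$. Conversely, from a minimal fixed point $x$ in $[v_0+z_1,u_1]$ I would recover $v_1$ and $t_1$ as follows. Write $u_1=x'+t'$ along the $\leq_R$-chain through $x$. Then Axiom~\ref{ax:f10.2} together with Theorem~\ref{thm:f10.5} gives a minimal nonzero $t_1$. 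Uniqueness of $h_1$ and $t_1$ is Lemma~\ref{lem:f10.4}, since $(R,U)$ is almost subtractive. This uniqueness is what makes the correspondence bijective, and the minimality of $x$ should match the minimality of $t_1$.

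Finally I would treat the decompositions. Given $t=t_0+t_1$ in $\Idm(R)$ with $t_0$ minimal nonzero, set $h_1:=t_0$. Lemma~\ref{lem:f10.1} gives $h_1+t_1=t$, so
$$v_0+z_1+h_1+t_1=u_1 .$$
It then remains to check that $v_0+z_1+h_1\in U$, which should follow from $v_0\in[U\Ng t_0]$ and the construction of $[U\Ng s]_0$ in \S\ref{sec:2}. This yields a modification realizing the given decomposition.

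I expect the main obstacle to be the middle step. Nonzero elements of $U$ are never idempotent (Remark~\ref{rem:2.2}), and $v_0+z_1$ generally is not idempotent either, so the fixed point attached to a modification cannot literally be $v_1$. One must pin down which element of $[v_0+z_1,u_1]$ satisfies $x=2x$, and the candidate to test first is $x$ built from $h_1+t_1$ as above. If that fails, I would redefine the action of $(\N,+)$ as iterated addition of the idempotent bridge, so that fixed points are the points $x$ with $x+t_1=x$, and prove minimality there.
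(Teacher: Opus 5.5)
\textbf{There is a genuine gap: the correspondence itself is never established.} That correspondence is the whole content of the statement.

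The point you attach to a modification is $x=v_0+z_1+h_1+t_1$. It equals $v_1+t_1=u_1$ for \emph{every} modification, so your map is constant. Moreover $u_1$ is a nonzero element of $U$, hence not idempotent by the argument of Remark~\ref{rem:2.2}, so it is not even a fixed point. The proposed inverse (``write $u_1=x'+t'$ along the chain through $x$'') is not a construction. The fallback of redefining the $(\N,+)$-action changes the theorem.

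Your appeal to Lemma~\ref{lem:f10.1} to absorb multiples of $h_1,t_1$ into $t$ presupposes $h_1+t_1=t$, which you never derive for a given modification. That relation is exactly what the paper's proof extracts. It doubles diagram~\eqref{eq:f10.0}, using $2t_0=t_0$, $2h_1=h_1$, $2t_1=t_1$, and compares the two routes from $2v_0+2z_1$ to $2u_1$. It then invokes almost subtractivity to conclude $t_0=t_1+h_1$, after which fixed points are identified with $[v_0+z_1,u_1]\cap\Idm(R)$ via $x=2x$. Comparing two bridges into the same node of $U$ by almost subtractivity is the one substantive ingredient, and your plan does not contain it.

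\textbf{The ``moreover'' part has two further problems.}
First, with $h_1:=t_0$ the top bridge $v_1\to u_1$ is the complementary summand. That summand need not be minimal or even nonzero, contrary to your own definition of a modification.
Second, $v_0+z_1+h_1\in U$ does not follow from $v_0+s\in U$ for an idempotent $s$. Since $z_1\notin U$, describing $[U\Ng s]$ as a union of cosets of $U$ gives no control over adding $z_1$. The only automatic case, $h_1=t$, gives $v_1=u_1$ and then a zero top bridge.

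\textbf{The missing argument in fact makes the statement vacuous.} Apply almost subtractivity at a node of $U$. If $u\in U$ and $u=y+s$ with $s\in\Idm(R)$, then $u+s=y+2s=u=u+0$. Uniqueness of the difference from $u$ to $u$ (Definition~\ref{defn:1.1}) then forces $s=0$. Applied to $u_0=v_0+t$ this gives $t=0$, and applied to $u_1=v_1+t_1$ it gives $t_1=0$.

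So, under the standing assumption that $U$ is almost subtractive, no nonzero idempotent gap ends in $U$, and the data of the theorem never occur. The statement therefore holds only vacuously. There is no nontrivial correspondence to construct, which is why your middle step could not be completed.
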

\begin{proof}
  An element $x \in R$ is a fixed point under the action of $(\N, +) $ if and only if $x = 2x$ (cf. Lemma \ref{lem:f10.1}). Applying the additive map $x \mapsto 2x$ to the diagram \eqref{eq:f10.0},
 we obtain a commutative diagram of increasing walks in $R$ with nodes in $U$,
 \begin{equation}\label{eq:f10.4}
   \xymatrix@R=0.5em@C=2.7em{
2 u_0  \ar@{->}[r]^{2 z_1} & 2 u_1   & \\
& & 2 v_1 \ar@{->}[lu]_{t_1} \\
2 v_0  \ar@{->}[uu]^{t_0}  \ar@{->}[r]^{2z_1} &2 v_0 + 2 z_1  \ar@{->}[uu]^{t_0} \ar@{->}[ru]_{h_1}
}
\end{equation}
Since $U$ is \as\ in $R$, we conclude the $t_0 = t_1 + h_1$. Now observe that the additive map $x \mapsto 2x$ on $[v_0 + z_1, u_1]$ is a contraction whose  set of fixed points ia
$[v_0 + z_1, u_1] \cap \Idm(R)$. (Note that $u_0 + z = u_1$.)
  \end{proof}

  We can build blue gap walks $\gm = (u_0, u_1, u_2, \dots )$ of infinite length $\om$ in the obvious sense, as unions of gap walks of finite  length $r \in \N$, $r \to \om$. Given any such walk $\gm$
  and an idempotent gap $ \xymatrix@R=1.5em@C=1.7em{
v_0 \ar@{->}[r]^{t_0} & u_0
}$, we have a walk $\sig = (v_0, v_1, v_2, \dots )$ supporting $\gm $
in various ways. In consequence of Axiom \eqref{eq:f10.0},
there is a minimal    $k < \om$ such that every node $u_r$ with $r > k $ is passive, in other words
$t_k = t_{k+1} = \cdots$, cf. Definition \ref{def:f10.9}.

 We use  Theorem \ref{thm:f10.10} to study idempotent supports of walks in the grid $\Gm \times \Dl$, $\Gm = \{ u_{0,0},  u_{1,0},  u_{2,0}, \dots \}$,
 $\Dl= \{ u_{0,0},  u_{0,1},  u_{0,2}, \dots \}$,
 restriced to an $(r \times s)$-rectangle of $\Gm \times \Dl$, with the meshes $(1 \leq i \leq r, 1 \leq j \leq j)$
   \begin{equation}\label{eq:f10.5}
    \xymatrix@R=1.9em@C=2.7em{
u_{i, j-1}  \ar@{->}[r]^{z_{0,j}} &  u_{i,j} \\
u_{i-1, j-1}  \ar@{->}[r]^{z_{0,j}} \ar@{->}[u]^{z_{i,0}}  &  \ar@{->}[u]^{z_{i,0}}  u_{i-1,j} .
}
\end{equation}

Concerning the ``first'' mesh  \eqref{eq:f10.5} starting at $u_{0,0}$, we have a commutative diagram as follows
 \begin{equation}\label{eq:f10.6}
    \xymatrix@R=1.2em@C=2.7em{
& u_{1, 0}  \ar@{-}[rr]^{z_{0,1}} & &  u_{1,1} \\
u_{0, 0}  \ar@{-}[rr] \ar@{-}[ru]^{z_{1,0}} & \ar@{-}[r]^{z_{0,1}} &  u_{0,1} \ar@{-}[ru]^{z_{1,0}}& v_{11} \ar@{-}[u]_{t_{1,1}} \\
&&& \ar@{-}[u]_{h_{1,1}} \cdot \\
& v_{1,0} \ar@{-}[uuu]^{t_{1,0}}
\ar@{-}[urr]^{\qquad z_{0,1}  }
& v_{0,1} \ar@{-}[uu]^{t_{0,1}}
\ar@{-}[ur]_{z_{1,0}}& \\
&  \ar@{-}[u]^{h_{1,0}}  \ar@{-}[r] & \ar@{-}[r]^{ z_{0,1}} &  \ar@{-}[uu]\\
v_{0, 0}  \ar@{-}[rr]^{z_{0,1}} \ar@{-}[ru]^{z_{1,0}}
 \ar@{-}[uuuu]^{t_{0,0}}  & &   \ar@{-}[ru]^{z_{1,0}}
\ar@{-}[u]^{ h_{0,1}} \ar@{-}[uu]&
}
\end{equation}
Here we have two supporting gap walks from $v_{0,0}$ to $v_{1,1}$,
$\xymatrix@R=1.2em@C=2.7em{
v_{0,0} \ar@{-}[r]^{z_{1,0} + h_{1,0}} & v_{1,0} \ar@{-}[r]^{z_{0,1} + h_{1,1}} & v_{1,1}}$
and
$\xymatrix@R=1.2em@C=2.7em{
v_{0,0} \ar@{-}[r]^{z_{0,1} + h_{0,1}} & v_{1,0} \ar@{-}[r]^{z_{1,0} + h_{1,0}} & v_{1,1}}$.

More generally, for a mesh  \eqref{eq:f10.5} with arbitrary $i \geq 1$, $j\geq 1$, we obtain a diagram \eqref{eq:f10.7}, as  below, in which the west and rast bridge from $v_{i-1, j-1}$ to $v_{i,j}$ have lengths $z_{i,0} + h_{i,j-1}$ and  $z_{i,0} + h_{i,j}$, respectively.  These lengths are different if and only if $u_{i,j}$ is active. Similarly,  the south and north bridges  have lengths
 $z_{0,j} + h_{i-1,j}$ and $z_{0,j} + h_{i,j}$, respectively, and are different if and only if $u_{i,j}$ is active.

 \begin{equation}\label{eq:f10.7}
    \xymatrix@R=1.2em@C=2.7em{
& u_{i, j-1}  \ar@{-}[rr]^{z_{0,j}} & &  u_{i,j} \\
u_{i-1, j-1}  \ar@{-}[rr] \ar@{-}[ru]^{z_{i,0}} & \ar@{-}[r]^{z_{0,j}} &  u_{i-1,j} \ar@{-}[ru]^{z_{i,0}}& \\
&&&   \\
& %\ar@{-}[uuu]^{t_{1,0}}
& & \\
&  v_{i-1, j}  \ar@{-}[uuuu]^{t_{i,j-1}}  \ar@{-}[r] & \ar@{-}[r]^{ z_{0,j}+h_{i,j}} &  \ar@{-}[uuuu]^{t_{i,j}} v_{i, j} \\
v_{i-1, j-1}  \ar@{-}[rr]^{z_{0,j} + h_{i-1,j}} \ar@{-}[ru]^{z_{i,0}+ h_{i,j-1}}
 \ar@{-}[uuuu]^{t_{i-1,j-1}}  & &   \ar@{-}[ru]_{z_{i,0}+h_{i,j}}
\ar@{-}[uuuu]^{ t_{i-1,j}} \ar@{-}[uu] v_{i-1, j}  &
}
\end{equation}

Returning to Construction~\ref{cont:f10.6}, we reinterpret it with the goal of extending the construction by adding meshes below the supporting walk $\sig$ of $\gm$.

\begin{construction}\label{cont:f10.11}
In diagram \eqref{eq:f10.2}, we rewrite the first rectangle as follows:      \begin{equation*}%\label{eq:e10.2}
   \xymatrixcolsep{6mm}
    \xymatrix@R=1.3em@C=1.5em{
     u_{0} \ar@{->}[rr]^{z_1} &  &  u_{1} \\
     v_{0} \ar@{->}[rr]^{z'_1} \ar@{->}[u]^{t= t_0} &  &  v_{1}  \ar@{->}[u]_{t_1}
     & \qquad \text{ with } z'_1 = z_1 + h_1. \\
     }      \end{equation*}
     Our idea is that the minimal nonzero idempotent $t_0$ is `` reduced'' to a minimal nonzero idempotent $t_1$ by a ``catalyst'' $z_1$ (a term borrowed from chemistry). In exactly the same way, we reduce the $(k-1)$st minimal nonzero idempotent $t_{k-1}$ to a nonzero idempotent $t_k$, and thus arrive at a commutative diagram.
      \begin{equation*}%\label{eq:e10.2}
   \xymatrixcolsep{6mm}
    \xymatrix@R=1.5em@C=1.9em{
     \gm :& u_{0} \ar@{->}[r]^{z_1}   &  u_{1}  \ar@{->}[r]^{z_2} & u_2  \ar@{->}[r]^{z_3} & u_3  \ar@{->}[r] & \cdots \\
     \sig: &v_{0} \ar@{->}[r]^{z'_1} \ar@{->}[u]_{t_0}  &  v_{1}  \ar@{->}[r]^{z'_2} \ar@{->}[u]_{t_1} & v_{2}  \ar@{->}[r]^{z'_3} \ar@{->}[u]_{t_2} & v_3
     \ar@{->}[r] \ar@{->}[u]_{t_3} & \cdots \\
     }      \end{equation*}
If we find a nonzero idempotent gap ending at $v_0$, then, by Axiom~\eqref{eq:f10.1}, we may choose a minimal nonzero idempotent gap
     $ \xymatrix@R=1.5em@C=1.4em{
v'_0 \ar@{->}[r]^{t'_0} & v_0
}$  and repeat the construction. We sketch a diagram with a two-step iteration
   \begin{equation}\label{eq:f10.8}
   \xymatrixcolsep{6mm}
    \xymatrix@R=1.5em@C=1.9em{
     \gm :& u_{0} \ar@{->}[r]^{z_1}   &  u_{1}  \ar@{->}[r]^{z_2} & u_2  \ar@{->}[r]^{z_3} & u_3  \ar@{->}[r] & \cdots \\
     \sig: &v_{0} \ar@{->}[r]^{z'_1} \ar@{->}[u]_{t_0}  &  v_{1}  \ar@{->}[r]^{z'_2} \ar@{->}[u]_{t_1} & v_{2}  \ar@{->}[r]^{z'_3} \ar@{->}[u]_{t_2} & v_3
     \ar@{->}[r] \ar@{->}[u]_{t_3} & \cdots
     \\
     \sig': &v'_{0} \ar@{->}[r]^{z''_1} \ar@{->}[u]_{t'_0}  &  v'_{1}  \ar@{->}[r]^{z''_2} \ar@{->}[u]_{t'_1} & v'_{2}  \ar@{->}[r]^{z''_3} \ar@{->}[u]_{t'_2} & v'_3
     \ar@{->}[r] \ar@{->}[u]_{t'_3} & \cdots
     \\
     \sig'': &v''_{0} \ar@{->}[r]^{z'''_1} \ar@{->}[u]_{t''_0}  &  v''_{1}  \ar@{->}[r]^{z'''_2} \ar@{->}[u]_{t''_1} & v''_{2}  \ar@{->}[r]^{z'''_3} \ar@{->}[u]_{t''_2} & v''_3
     \ar@{->}[r] \ar@{->}[u]_{t''_3} & \cdots \\
     }      \end{equation}
     This diagram is determined  by the walk north
      $ \xymatrix@R=1.5em@C=1.3em{
v''_0 \ar@{->}[r] & v'_0 \ar@{->}[r]  &
v_0 \ar@{->}[r] & u_0
}$
and the  walk east
$ \xymatrix@R=1.5em@C=1.3em{
u_0 \ar@{->}[r] & u_1 \ar@{->}[r]  &
u_2  \ar@{->}[r] & u_3
}$
in a controlled way (cf. Lemma \ref{lem:f10.4}).
\end{construction}

\begin{remark}\label{rem:f10.12}
We have decompositions
$$\begin{array}{llll}
    z'_1 = z_1 + h_1, & z'_2 = z_2 + h_2, & \dots  \\
    z''_1 = z_1 + h_1 + h'_1, & z''_2 = z_2 + h_2 +h'_2,& \dots  \\
    z'''_1 = z_1 + h_1 + h'_1 + h''_1, & z'''_2 = z_2 + h_2 + h'_2 +h''_2,& \dots  \\
  \end{array} $$
We speak of active and passive nodes of $\sig$ and $\sig'$ in the same way as for $\gm$.
\end{remark}

\begin{remark}\label{def:f10.13} $ $
\begin{enumerate}\ealph
  \item Given $u \in U $, let $\mig(u)$ denote the set of minimal nonzero idempotent gaps
  $ \xymatrix@R=1.5em@C=1.4em{
v \ar@{->}[r]^{t} & u
}$ below $u$. In other words, $\mig(u)$ is the flock of maximal fixed points of $(\N, +)$ in $u^\downarrow$.

  \item  Given an infinite blue walk
  $\xymatrix@R=1.9em@C=1.7em{\gm:
u_0 \ar@{->}[r]^{  z_1} & u_1 \ar@{->}[r]^{z_2}  & \cdots
}$, for any $ \xymatrix@R=1.5em@C=1.7em{
v \ar@{->}[r]^{t} & u_0
 }$ in $\mig(u_0)$, there is  a unique idempotent gap walk $\sig(t)$ supporting $\gm$ by $t$, using the catalysts $z_1, z_2, \dots$.

  \item Given a new idempotent gap
  $ \xymatrix@R=1.5em@C=1.7em{
v' \ar@{->}[r]^{t'} & u_0
}$, we say that $\sig(t)$ and $\sig(t')$ are \textbf{confluent under} $\gm$, if $\sig(t)$ and $\sig(t')$ have at least one common node. Since the same catalysts are used to build $\sig(t)$ and $\sig(t')$, it is evident that there is a first number $k>0$ such that
$\sig(t)_{\geq k} = \sig(t')_{\geq k} .$
We say that $\sig(t)$ and $\sig(t')$ are \textbf{confluent at} $u_k$.
(Note that confluence at $u_0$ holds if and only if $t=t'$.)
\end{enumerate}

\end{remark}

We consider the special case in which only idempotent gaps occur.

\begin{prop}\label{prop:f10.14}
Assume that in the infinite blue walk
  $\xymatrix@R=1.9em@C=1.7em{\gm:
u_0 \ar@{->}[r]^{  z_1} & u_1 \ar@{->}[r]^{z_2} & u_2 \ar@{->}[r] & \cdots
}$
all bridges $z_i$ are idempotents, and that
 $ \xymatrix@R=1.5em@C=1.7em{
v_0 \ar@{->}[r]^{t_0} & u_0
 }$ in $\mig(u_0)$ is a minimal nonzero idempotent. Then, in the commuting diagram ~ \eqref{eq:f10.2}, all $h_i$ are idempotents, and all $t_i$ are  minimal nonzero idempotents.
\end{prop}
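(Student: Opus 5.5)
The plan is an induction along the walk, following Construction~\ref{cont:f10.6} step by step. We show that at each stage $k$ the minimal idempotent $t_k$ and the connecting element $h_k$ are idempotents, with $t_k\neq 0$ minimal. The base case is the hypothesis: $t_0$ is a minimal nonzero idempotent, and $v_0+t_0=u_0$ with $v_0\in U$.

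For the inductive step, assume $v_{k-1}+t_{k-1}=u_{k-1}$ with $t_{k-1}\in\Idm(R)\sm\00$ minimal. Since $z_k\in\Idm(R)$, the element $v_{k-1}+z_k$ lies below $u_k=u_{k-1}+z_k=v_{k-1}+z_k+t_{k-1}$. Theorem~\ref{thm:f10.5} applies to this position. It supplies the factorization $v_{k-1}+z_k\xrightarrow{h_k}v_k\xrightarrow{t_k}u_k$ with $v_k\in U$, $t_k$ minimal in $\Idm(R)\sm\00$, and $h_k$ taken from the diagram~\eqref{eq:f10.1}, where $h$ is by definition required to be in $\Idm(R)$. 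Strictly, Theorem~\ref{thm:f10.5} is stated for a gap $x\to u$ with $x\in U$, whereas $v_{k-1}+z_k$ need not lie in $U$. So I would redo its argument directly: consider the set of nonzero idempotents $t'$ admitting $h\in\Idm(R)$ and $x'\in U$ with $v_{k-1}+z_k+h=x'$ and $x'+t'=u_k$. This set is nonempty, since $t'=t_{k-1}$, $h=0$ is not enough ($x'$ must lie in $U$), but $h=t_{k-1}$ with $t'=t_{k-1}$ works provided $u_k\in U$. Axiom~\ref{ax:f10.2} then gives a minimal element. A priori that gives only a minimal element of this restricted set.

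To upgrade this to minimality in $\Idm(R)\sm\00$, I would use Theorem~\ref{thm:f10.10}. Applying $x\mapsto 2x$ to the square, and using that $z_k$ and $t_{k-1}$ are idempotent, gives $t_{k-1}=t_k+h_k$ in $\Idm(R)$; here almost subtractivity is used exactly as in that proof. Idempotency of $h_k$ can also be read off Lemma~\ref{lem:f10.1}: once $h_k+t_k=t_{k-1}\in\Idm(R)$, we get $2h_k+t_k=t_{k-1}$, and almost subtractivity of $U$ then forces $2h_k=h_k$, provided the relevant nodes lie in $U$.

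The main obstacle is exactly this last point: checking that all the nodes compared lie in $U$ (or in $[U\Ng t]_0$, via Theorem~\ref{thm:2.1}), so that uniqueness of differences can be invoked. I would try first to place $v_{k-1}+z_k$ and $v_k+h_k$ inside $[U\Ng t_{k-1}]_0$ and apply Theorem~\ref{thm:2.1}. If that fails, I would fall back on Lemma~\ref{lem:f10.1}, using that idempotent sums are absorbing.
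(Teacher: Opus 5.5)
Your proposal does not reach a conclusion. The last paragraph leaves the decisive verification open, and the step it relies on is unsupported. You assert that almost subtractivity ``forces $2h_k=h_k$, provided the relevant nodes lie in $U$''. But you never exhibit an element of $U$ from which both $h_k$ and $2h_k$ lead to the same element of $U$, and the fallbacks (via $[U\Ng t_{k-1}]_0$ or Lemma~\ref{lem:f10.1}) are only announced.

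The detour is also unnecessary. The paper's proof is a one-liner. In Construction~\ref{cont:f10.6} each $h_i$ and $t_i$ is taken from a diagram of the form \eqref{eq:f10.1}, where $h,t'\in\Idm(R)$ by definition and $t_i$ is chosen minimal among the admissible $t'$. So $z_i=2z_i$, $t_i=2t_i$, $h_i=2h_i$ are read off the diagram. You said exactly this in your second paragraph, and the proof should have stopped there. In particular, the ``upgrade'' via Theorem~\ref{thm:f10.10} to minimality in all of $\Idm(R)\setminus\{0\}$ is not needed.

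The idea that would have settled your worries about membership in $U$ is already exhibited by your own nonemptiness witness. With $h=t'=t_{k-1}$ you get $x'=u_k$ and $u_k+t_{k-1}=u_k=u_k+0$ with $u_k\in U$. Definition~\ref{defn:1.1}, applied to the pair $u_k\le_R u_k$, then gives $t_{k-1}=0$.

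More generally, if $u\in U$, $v\in R$ and $v+t=u$ with $t\in\Idm(R)$, then $u+t=v+2t=u$, so $t=0$. Hence $\mig(u_0)=\emptyset$. Likewise every idempotent bridge $z_i$ between nodes $u_{i-1},u_i\in U$ equals $0$.

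So the hypothesis of the proposition (a nonzero $t_0$ with $v_0+t_0=u_0\in U$) can never be satisfied, and the statement holds vacuously. Your observation that Theorem~\ref{thm:f10.5} is stated only for $x\in U$ is fair, but it becomes moot. You should have recognized that witness as a contradiction with almost subtractivity rather than as evidence that the admissible set is nonempty.
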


\begin{proof}
  It is immediate from the diagram that $z_i = 2 z_i$, $t_i = 2 t_i$, and $h_i = 2h_i$.
\end{proof}

More generally, by the same argument, we see that in the diagram \eqref{eq:f10.8}, all arrows are idempotent gaps and all vertical arrows are minimal nonzero idempotents. We call this situation an \textbf{idempotent confluence diagram}. (All catalysts $z_i',z_i''$ are idempotents.)

Given a blue walk
$
\xymatrix@R=1.9em@C=1.7em{
\gm: u_0 \ar@{->}[r]^{  z_1} & u_1 \ar@{->}[r]^{z_2} & \cdots \ar@{->}[r]^{z_r } & u_r
}$
of length $r$ and finitely many nonzero idempotent gaps $ t_{0,1}, \dots, t_{0,s} \in \mig(u_0)$, we can study idempotent confluence diagrams below $\gm$. This seems to suggest a direction for further research into idempotent confluence.
\begin{example}\label{exmp:f10.15} We sketch such an  idempotent confluence diagram for $r =2$ and $s=5$.
$$
\xymatrix@R=0.7em@C=1.7em{
u_0 \ar@{->}[r]^{  z_1} & u_1 \ar@{->}[r]^{z_2} & u_2 \\
\sig(t_{0,1}) \ar@{->}[rd] & & \\
\sig(t_{0,2})\ar@{->}[r] & \sig(t_{1,1}) = \sig(t_{1,2}) \ar@{->}[rd] \\
\sig(t_{0,3}) \ar@{->}[r] & \sig(t_{1,3}) = \sig(t_{1,4}) \ar@{->}[r] & \sig(t_{2,1}) = \cdots  = \sig(t_{{2,5}})\\
\sig(t_{0,4})\ar@{->}[ru] & & \\
\sig(t_{0,5})\ar@{->}[r] & \sig(t_{1,5}) \ar@{->}[ruu]  & \\
}$$

\end{example}

\end{document}